\journal{European Journal of Combinatorics}
\newtheoremstyle{noperiod}
{3pt}
{3pt}
{\em}
{}
{\bf}
{}
{.2em}
{}
\newtheorem{lemma}{Lemma}
\newtheorem{prop}{Proposition}
\newtheorem{cor*}{Corollary}[prop]
\newtheorem{obs}{Observation}
\newtheorem{thm}{Theorem}
\newtheorem{defn}{Definition}
\newtheorem{fact}{Fact}[section]
\newtheorem{cor}{Corollary}
\newtheorem{corfact}{Corollary}[fact]
\newtheorem{claimprop}{Claim}[prop]
\newtheorem{claimlemma}{Claim}[lemma]
\newtheorem*{prop*}{Proposition}
\newtheorem*{thm*}{Theorem}
\newtheorem*{lemma*}{Lemma}
\theoremstyle{noperiod}
\newtheorem*{introfact}{Fact}
\theoremstyle{definition}
\newtheorem{claimpropproof}{Proof of Claim}[prop]
\theoremstyle{definition}
\newtheorem{claimlemmaproof}{Proof of Claim}[lemma]
\DeclareMathOperator{\Aut}{Aut} 
\DeclareMathOperator{\diam}{diam}
\begin{document}

\begin{frontmatter}

\title{Twists and Twistability\footnote{February 2018}}

\author{Rebecca Coulson}
\address{Rutgers University, Department of Mathematics \\ 110 Frelinghuysen Road, Piscataway, NJ 08854}

\begin{abstract}
Metrically homogeneous graphs are connected graphs which, when endowed with the path metric, are homogeneous as metric spaces. In this paper we introduce the concept of \textit{twisted automorphisms}, a notion of isomorphism up to a permutation of the language. 
We find all permutations of the language which are associated with twisted automorphisms of metrically homogeneous graphs. For each non-trivial permutation of this type we also characterize the class of metrically homogeneous graphs which allow a twisted isomorphism associated with that permutation. The permutations we find are, remarkably, precisely those found by Bannai and Bannai in an analogous result in the context of finite association schemes \cite{BB}.
\end{abstract}

\begin{keyword}
Metrically homogeneous graphs, automorphisms, permorphisms, twists, language permutations
\MSC[2010] 03C50 \sep 05C76   
\end{keyword}

\end{frontmatter}




\section{Introduction}

Metrically homogeneous graphs are connected graphs which, when endowed with the path metric, are homogeneous as metric spaces. Moss in \cite{Mos-DG} and Cameron more explicitly in \cite{Cam-Cen} call for a full classification of metrically homogeneous graphs. Cherlin has proposed a conjectured classification, detailed in \cite{Che-HOGMH}.

In studying these graphs, we define a notion of isomorphism up to a permutation of the language---a notion we call ``twisted isomorphism."

Twisted isomorphisms have been considered in a number of model theoretic contexts, namely, in \cite{Her-EPIF,Her-EPI,Bag-TCT,Iva-AAG,JZ,Barb}, and \cite{CaT-AmRG}. In some of these articles, the authors refer to twisted isomorphisms as \textit{permorphisms}. Cameron and Tarzi in \cite{CaT-AmRG} also examine the group of twisted automorphisms of a structure. More recently in {\cite[\S 4.1]{ACM-MH3}} and {\cite[\S 2.2]{Che-MH4}} unexpected twisted isomorphisms between ostensibly different metrically homogeneous graphs played a useful role in classification theorems.

In section \ref{sec:twists} of this paper, we find all possible permutations of the language which transform some metrically homogeneous graph into another metrically homogeneous graph. In section \ref{sec:twistables} we analyze in each case which pairs of metrically homogeneous graphs are ``twistable" to each other by a twisted automorphism affording the specified permutation of the language.

Our main results are as follows, where Proposition \ref{Prop:Twist:Nongeneric} deals with special cases of the problem, and Theorems \ref{thm:twists} and \ref{Thm:Twistable:Graphs} deal with more typical cases. For a structure $\Gamma$ and a permutation $\sigma$ of the language of $\Gamma$, the symbol $\Gamma^{\sigma}$ denotes the structure in the same language in which the symbol $R$ denotes the relation which is denoted by $\sigma^{-1}(R)$ in $\Gamma$. When $\Gamma$ is a metrically homogeneous graph, the language is identified with the set of all distances which occur (cf. \S \ref{sec:basics}). Additional relevant definitions are also provided in \S \ref{sec:basics}.

\hrulefill

\begin{prop*}[\ref{Prop:Twist:Nongeneric}]
Let $\Gamma$ be a metrically homogeneous graph of non-generic type
or diameter $\delta\leq 2$,
and $\sigma$ a non-trivial permutation of the language such that 
$\Gamma^\sigma$ is metrically homogeneous.
Then $\Gamma^\sigma$ is also of non-generic type or diameter $\delta\leq 2$, and one of the following applies.
\begin{itemize}
\item $\Gamma$ has diameter 2 and is not complete multipartite, with $\sigma$ the transposition $(1,2)$.
\item $\Gamma$  is finite, antipodal of diameter $3$, not bipartite, 
with $\sigma$ the transposition $(1,2)$.
\item $\Gamma$ is an $n$-cycle $C_n$, $n\geq 7$; $\delta=\lfloor n/2\rfloor$: 
$\sigma$ is given by multiplication by $\pm k \pmod n$ for some $k$ with $(k,n)=1$, 
with $\pm i$ identified.
\end{itemize}
\end{prop*}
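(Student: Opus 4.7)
The plan is to reduce to Cherlin's catalog of non-generic metrically homogeneous graphs (together with those of diameter $\le 2$), and then handle each case separately. The key general constraint is that $\sigma$ must preserve the set of realized triangle types $\{(i,j,k)\}$ of $\Gamma$, while connectedness of $\Gamma^\sigma$ forces $\sigma(1)$ to be a distance that generates the whole distance set. These two restrictions already pin $\sigma$ down to a short list in each case, so the proof becomes a case analysis.

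For $\delta=1$, $\sigma$ is forced to be trivial. For $\delta=2$, the only nontrivial $\sigma$ is the transposition $(1,2)$, and $\Gamma^\sigma$ is then the graph complement of $\Gamma$; this complement is connected (and hence a candidate metrically homogeneous graph of diameter $2$) exactly when $\Gamma$ is not complete multipartite, yielding the first bullet. Observe that this case in particular disposes of the small cycles $C_3$, $C_4$, and $C_5$.

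For $\delta=3$ in the non-generic (finite, antipodal) case, the distance $3$ relation is distinguished as the unique antipodal pairing and must be fixed by $\sigma$; hence $\sigma$ is either the identity or $(1,2)$. Using the description of finite antipodal diameter-$3$ metrically homogeneous graphs, one verifies that $(1,2)$ preserves metric homogeneity in the non-bipartite case but fails in the bipartite one. This in particular rules out $C_6$ (bipartite antipodal of diameter $3$). For the remaining cycles $C_n$ with $n\ge 7$, identify the distance set $\{1,\ldots,\lfloor n/2\rfloor\}$ with $\mathbb{Z}/n\mathbb{Z}$ modulo $\pm 1$; then the permutations preserving the cyclic metric are exactly those induced by multiplication by units $k$ of $\mathbb{Z}/n\mathbb{Z}$, checked directly using that the realized triangle types of $C_n$ are the triples $(i,j,\ell)$ with $i+j\equiv \pm\ell \pmod n$.

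The main obstacle will be the antipodal diameter-$3$ subcase, where one must both rule out the bipartite examples and certify the non-bipartite ones as admissible under $(1,2)$. Ruling out the bipartite case can be done by observing that two vertices at distance $2$ in $K_{n,n}$ minus a perfect matching share a rigid common-neighborhood pattern that is destroyed when distances $1$ and $2$ are exchanged; admissibility in the non-bipartite examples follows from an explicit complementation isomorphism arising from the imprimitive association scheme structure of those graphs. For the cycle case the only nontrivial point is the converse direction: that every admissible $\sigma$ is induced by multiplication by a unit, which one obtains by noting that $\sigma(1)$ together with the requirement $\sigma(i+j)\in\{\pm(\sigma(i)+\sigma(j))\}$ determines $\sigma$ on all of $\mathbb{Z}/n\mathbb{Z}$.
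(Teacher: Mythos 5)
Your proposal is correct and follows essentially the same route as the paper: reduce via the classification of non-generic metrically homogeneous graphs to the cases of diameter at most $2$, finite antipodal diameter $3$, and cycles (with the tree-like graphs $T_{m,n}$ excluded because a non-trivial twist forces the diameter to be finite), and then analyze each case, using that $\sigma(1)$ determines $\sigma$ to get the converse direction for cycles. The one place where you can simplify is the antipodal diameter-$3$ bipartite subcase: the edge relation of $\Gamma^{(1,2)}$ is $d(x,y)=2$ in $\Gamma$, which preserves the bipartition, so $\Gamma^{(1,2)}$ is simply disconnected and no common-neighborhood analysis is needed.
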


\begin{thm*}[\ref{thm:twists}]
Let $\sigma$ be a non-trivial permutation of the language of a metrically homogeneous graph $\Gamma$ of generic type where $\Gamma^{\sigma}$ is itself a metrically homogeneous graph. Then $\sigma$ is one of $\rho, \rho^{-1}, \tau_0, \tau_1$, which are defined as follows:

\begin{align*}
\rho(i)&=
\begin{cases} 2i & \mbox{$i\leq \delta/2$}\\
2(\delta-i)+1&\mbox{$i>\delta/2$}
\end{cases}&
\rho^{-1}(i)&=
\begin{cases} i/2 & \mbox{$i$ even}\\
\delta-\frac{i-1}{2}&\mbox{$i$ odd}
\end{cases}
\end{align*}
and for $\epsilon=0$ or $1$, $\tau_{\epsilon}$ is the involution defined by
\begin{align*}
\tau_\epsilon(i)&=
\begin{cases}
(\delta+\epsilon)-i&\mbox{for $\min(i,(\delta+\epsilon)-i)$ odd}\\
i &\mbox{otherwise}
\end{cases}\\
\end{align*}
\end{thm*}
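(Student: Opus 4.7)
The plan is to exploit the rigidity of metric homogeneity together with Cherlin's structural classification to show that $\sigma$ is essentially determined by the single value $k := \sigma^{-1}(1)$, which specifies the $\Gamma$-distance that plays the role of an edge in $\Gamma^\sigma$. I would first record two basic constraints that any valid $\sigma$ must satisfy, and then analyze which values of $k$ are admissible and what $\sigma$ must then be.

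\emph{Triangle preservation:} since both $\Gamma$ and $\Gamma^\sigma$ are metric spaces, a triple $(a,b,c)$ is realized in $\Gamma^\sigma$ if and only if $(\sigma^{-1}(a), \sigma^{-1}(b), \sigma^{-1}(c))$ is realized in $\Gamma$, and both must satisfy the triangle inequality. \emph{Path consistency:} a pair at $\Gamma^\sigma$-distance $j$ is precisely a pair at $\Gamma$-distance $\sigma^{-1}(j)$, and in $\Gamma^\sigma$ such a pair is witnessed by a walk of $j$ new-edges, i.e.\ of $j$ hops of $\Gamma$-distance $k$. Consequently the sequence $\sigma^{-1}(1), \sigma^{-1}(2), \ldots, \sigma^{-1}(\delta)$ is generated by iteratively attaching a $k$-edge: $\sigma^{-1}(j+1)$ must be the third side of some realized triangle $(k, \sigma^{-1}(j), \cdot)$ in $\Gamma$, and moreover no realized triangle $(k, \sigma^{-1}(j), c)$ may have $c \notin \{\sigma^{-1}(j-1), \sigma^{-1}(j), \sigma^{-1}(j+1)\}$, on pain of violating metric homogeneity of $\Gamma^\sigma$.

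Applying these constraints in Cherlin's parameterization of generic metrically homogeneous graphs by $(\delta; K_1, K_2, C_0, C_1)$, the only admissible values of $k$ turn out to be $k \in \{2, \delta-1, \delta\}$. The case $k = 2$ forces a doubling $\sigma^{-1}(j) = 2j$ for $j \leq \delta/2$, with odd $\Gamma$-distances reached only by the ``long way around,'' yielding $\sigma = \rho^{-1}$. The case $k = \delta - 1$ produces the involution $\tau_0$. The case $k = \delta$ admits two solutions, namely $\sigma = \rho$ (the ``antipodal doubling'' dual to $\rho^{-1}$) and $\sigma = \tau_1$ (an involution), distinguished by which of $\delta$ or $\delta+1$ appears among Cherlin's perimeter parameters $C_0, C_1$.

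The main obstacle I anticipate lies in the involutive cases. Distinguishing $\tau_0$ from $\tau_1$, and ruling out other candidate involutions, requires careful tracking of the parity parameters $C_0, C_1$ together with the bipartite/parity behavior of $\Gamma$, since $\tau_\epsilon$ fixes distances in one parity class relative to $\delta + \epsilon$ and swaps those in the other. The argument here will likely split by the parity of $\delta$ and by whether $\Gamma$ is bipartite-like, leaning on a detailed analysis of which low-perimeter triangles $(1, j, j+1)$ and $(j, j, 2j)$ are realized in $\Gamma$, together with the symmetric conditions on $\Gamma^\sigma$.
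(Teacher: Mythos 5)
Your high-level picture is right: the theorem is indeed controlled by a single value ($\sigma^{-1}(1)$, or dually $\sigma(1)$), the admissible values are $\{2,\delta-1,\delta\}$, and your two constraints (triangle preservation and the ``attach a $k$-edge'' recursion, which is Lemma \ref{thm:inverseimageof1} of the paper) are exactly the right starting tools. But the proposal asserts rather than proves the two steps that carry essentially all of the difficulty. First, the claim that ``the only admissible values of $k$ turn out to be $2,\delta-1,\delta$'' is not a consequence of your two constraints alone. The paper's Proposition \ref{thm:optionsfor1} needs a genuinely new structural result, Proposition \ref{Prop:Realize:i,i,2k} (every generic-type $\Gamma$ realizes the triangle types $(i,i,2k)$ for $k\le i$, $i+k\le\delta$), proved in the Appendix by an explicit construction of geodesic paths inside the spheres $\Gamma_k$; applying it to $(k,k,2)$, $(k,k,4)$, $(k,k,6)$ forces $\sigma^{-1}(\{2,4\})=\{1,2\}$ and $\delta\le 6$, and even then a residual candidate ($\sigma=(14365)$ in diameter $6$) survives and must be killed by a separate geodesic computation. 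You cannot substitute ``Cherlin's parameterization'' for this: the theorem does not assume $\Gamma$ is of known type, and the catalog is conjectural, so any argument that reads off which triangles are realized from the parameters $(K_1,K_2,C_0,C_1)$ alone is unjustified (this is precisely why the paper proves Proposition \ref{Prop:Realize:i,i,2k} from scratch).

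Second, even granting $k\in\{2,\delta-1,\delta\}$, pinning down $\sigma$ completely is where the remaining work lies, and your sketch underestimates it. For $k=2$ the doubling argument is fine (Proposition \ref{thm:rho} plus duality). But for the involutive cases the paper's Proposition \ref{thm:taus} runs a two-track induction (Claims \ref{firstclaimforkeven} and \ref{secondclaimforkeven}) to show $\sigma$ agrees with $\tau_\epsilon$ on the ``even'' distances and their partners, and then must separately exclude the alternative in which $\sigma$ reverses the order of the odd distances above $(\delta+\epsilon)/2$; that exclusion again invokes Proposition \ref{Prop:Realize:i,i,2k} together with Fact \ref{Fact:LocalAnalysis:K1le2} to force antipodality in a sporadic diameter-$5$ configuration and derive a contradiction. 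Your proposed mechanism for separating $\rho$ from $\tau_1$ when $k=\delta$ --- ``which of $\delta$ or $\delta+1$ appears among $C_0,C_1$'' --- also conflates this theorem with the classification of twistable graphs (Theorem \ref{Thm:Twistable:Graphs}); the paper separates the two cases simply by the value of $\sigma(1)$ (equal to $2$ for $\rho$, at least $\delta-1$ for $\tau_\epsilon$), which is cleaner and does not presuppose anything about $\Gamma$'s parameters. As it stands the proposal is a correct roadmap but not a proof: the decisive lemma and the two case eliminations are missing.
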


\begin{thm*}[\ref{Thm:Twistable:Graphs}]
Let $\sigma$ be one of the permutations $\rho,\rho^{-1}, \tau_0$, or $\tau_1$, with $\delta\geq 3$. Then the metrically homogeneous graphs $\Gamma$ of generic type whose images $\Gamma^{\sigma}$ are also metrically homogeneous are precisely those with the numerical parameters $K_1, K_2, C, C'$ as in the table shown in section \ref{sec:twistables}.

\end{thm*}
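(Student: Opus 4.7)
The plan is to proceed by case analysis on the four permutations $\sigma\in\{\rho,\rho^{-1},\tau_0,\tau_1\}$ identified in Theorem \ref{thm:twists}. For each $\sigma$, if $\Gamma^\sigma$ is metrically homogeneous of generic type then (by the definition of $\Gamma^\sigma$) its admissible triples of pairwise distances are obtained by applying $\sigma$ coordinate-wise to the admissible triples of $\Gamma$. Thus both $\Gamma$ and $\Gamma^\sigma$ are generic-type metrically homogeneous graphs, each characterized by a $5$-tuple of parameters $(\delta,K_1,K_2,C,C')$ from Cherlin's catalogue, and the task reduces to determining for which tuples the set of admissible triples is closed under the coordinate-wise action of $\sigma$.

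First I would compute, in each of the four cases, the new diameter $\delta'=\max_i\sigma(i)$ and tabulate how $\sigma$ acts on the two extremal kinds of forbidden configurations: odd triangles whose perimeter falls outside the interval $[2K_1+1,\,2K_2+\delta]$, and the minimal forbidden even/odd cycles of perimeters $C$ and $C'$. Applying $\sigma$ coordinate-wise transforms each such forbidden configuration into another triple, and the requirement is that the transformed triple be itself forbidden, now under the parameter set of $\Gamma^\sigma$. This gives a small system of linear equations in $(\delta,K_1,K_2,C,C')$ whose solutions are precisely the entries of the table; for $\rho$ and $\rho^{-1}$, which are mutually inverse, the two rows are related by interchange of $\Gamma$ and $\Gamma^\sigma$, while the two $\tau_\epsilon$ rows should be self-inverse involutions whose fixed points within parameter space are exactly the stated values.

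The main obstacle is the exceptional block for $\tau_1$ at $\delta\in\{3,4\}$. At these small diameters the generic formulas collapse: the $K_1,K_2$ inequalities and the $C,C'$ inequalities begin to overlap and can exchange roles, so that an admissible $5$-tuple may map under $\tau_1$ to another admissible $5$-tuple even though the generic parameter relations fail. I would handle this by direct enumeration of admissible generic-type tuples with $\delta\in\{3,4\}$ from Cherlin's catalogue, compute $\Gamma^{\tau_1}$ in each case, and read off the six exceptional rows by inspection, checking by hand that no other small-$\delta$ tuple slips through.

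Finally, sufficiency in each row of the table follows by verifying that the transformed parameters $(\delta',K_1',K_2',C'_{\text{new}},C_{\text{new}}'')$ satisfy Cherlin's admissibility conditions, so that $\Gamma^\sigma$ is the Fra\"iss\'e limit associated with those parameters; necessity follows because the admissibility constraints on $\Gamma^\sigma$ force the arithmetic relations cutting out the rows. Combining the four case analyses with the exceptional $\delta\in\{3,4\}$ rows yields the table in its entirety.
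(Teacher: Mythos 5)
Your proposal rests on a step the paper explicitly refuses to take: you assume that $\Gamma$ and $\Gamma^{\sigma}$ are "characterized by a $5$-tuple of parameters from Cherlin's catalogue," so that twistability reduces to closure of the parameter-determined set of admissible triples under the coordinate-wise action of $\sigma$. But the classification of metrically homogeneous graphs of generic type is only conjectural, and the paper states directly that it does \emph{not} assume $\Gamma$ is of known type. For a general generic-type graph it is not known that the set of realized triangles coincides with the set permitted by $(\delta,K_1,K_2,C,C')$; in particular, showing that certain triangles \emph{must} be realized (which is what the necessity direction and the geodesic-realization half of the sufficiency direction require) cannot be read off from the parameters. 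This is exactly why the paper develops additional structure theory — Proposition \ref{Prop:Realize:i,i,2k} (realization of types $(i,i,2k)$), Lemma \ref{lemma:diameterofsubgraph} (diameters of the $\Gamma_{\delta-i}$), Lemma \ref{Fact:C:delta'}, Corollary \ref{Fact:realizeoddp}, and the local-analysis facts — and why it separates the argument into bipartite, antipodal, and non-antipodal non-bipartite cases rather than solving one uniform "small system of linear equations." Your enumeration idea for $\delta\in\{3,4\}$ is legitimate only because diameter $3$ is actually classified (Fact \ref{Fact:MH3}); it does not extend to general $\delta$.

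Two further points would fail even if the catalogue were granted. First, sufficiency is not "verify that the transformed parameters satisfy Cherlin's admissibility conditions": by Fact \ref{Fact:ProveMHG} one must show that the \emph{specific} structure $\Gamma^{\sigma}$ satisfies the triangle inequality and realizes every type $(1,k,k+1)$, and the latter forces one to exhibit concrete configurations in $\Gamma$ (the paper uses geodesics, the antipodal law, odd-perimeter realization at $2K_1+1$, and the $\Gamma_{\delta-i}$ diameter computations). Second, your claim that the $\tau_{\epsilon}$ rows are "fixed points within parameter space" is false: $\tau_1$ carries the exceptional tuple $(K_1,K_2,C,C')=(2,2,10,11)$ at $\delta=3$ to $(1,2,9,10)$ (Lemma \ref{lemma:exceptionalcases}), so the involution permutes the admissible tuples rather than fixing them, and the bipartite row with $K_1=\infty$, $K_2=0$ does not arise as a solution of any linear relation of the kind you describe.
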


\hrulefill

\bigskip

To establish the characterization of twistable metrically homogeneous graphs without assuming they are of known type requires some extension of the known structure theory for metrically homogeneous graphs, which is given in Proposition \ref{Prop:Realize:i,i,2k}. This may find further applications to the general classification problem for such graphs.

We succeed in classifying the twistable metrically homogeneous graphs explicitly in terms of associated numerical invariants to be discussed in section \ref{sec:twistables}. The question of their classification up to isomorphism is open, but modulo our results it is simply a small part of the larger classification problem, for very particular values of these numerical parameters.

We plan to examine twisted automorphism groups further in future work, along lines suggested by Cameron and Tarzi in the context of a natural generalization of the random graph. In particular, we explore the problems as to when the twisted automorphism group splits over the automorphism group, and when the automorphism group of the automorphism group is induced by the twisted automorphism group. That will be joint work with Gregory Cherlin.

We thank Peter Cameron for drawing our attention to the parallel with \cite{BB}.

\section{Background information}\label{sec:basics}

Here we provide definitions as well as known results about metrically homogeneous graphs which will be useful in understanding the content of this paper. The facts given here are taken from the Appendix, where additional facts utilized in our argument are found. Much more information about metrically homogeneous graphs can be found in \cite{Che-HOGMH}.

\begin{defn}
Let $\mathcal{M} = (M,(R^{\mathcal{M}})_{R \in \Lambda}$ be a relational structure with universe $M$, language $\Lambda$, and with the relation $R^{\mathcal{M}}$ corresponding to the relation symbol $R \in \Lambda$. Let $\sigma$ be a permutation of $\Lambda$.
\begin{itemize}
    \item $\mathcal{M}^{\sigma}$ is the structure with universe $M$ and with relations determined as follows:
    \begin{equation*}
        (R^{\sigma})^{\mathcal{M}^{\sigma}} = R^{\mathcal{M}}
    \end{equation*}
    \item A \emph{$\sigma$-isomorphism} between two structures $\mathcal{M}_1,\mathcal{M}_2$ with language $\Lambda$ is an isomorphism of $\mathcal{M}_1$ with $\mathcal{M}_2^{\sigma}.$
    \item A \emph{twisted isomorphism} between two structures with the same language is a $\sigma$-isomorphism for some permutation $\sigma$ of the language.
\end{itemize}

When the structures are equal we speak of \emph{$\sigma$-automorphisms} and \emph{twisted automorphisms}. The twisted automorphisms of a structure form a group, and the associated permutations of the language also form a group. We often refer to the permutation of the language associated to a given twisted automorphism as a \emph{twist} of the language. In particular the group of twisted automorphisms induces a group of twists.
\end{defn}


For example, a graph $\mathcal{G}$ may be viewed as having the language $\{V,E,E^c\}$, where $V$ is the vertex set and $E, E^c$ are edge and non-edge, respectively. Then a non-trivial twist of $\mathcal{G}$ would transpose $E$ and $E^c$. Thus a non-trivial twist acting on the language of $C_4$  would send it to two copies of $C_2$, while a non-trivial twist on the language of $C_5$ takes it to an isomorphic graph.

A graph $\mathcal{G}$ is \textit{homogeneous} if every isomorphism of finite induced subgraphs can be extended to a full automorphism of $\mathcal{G}$.

A connected graph may be viewed as a metric space by endowing it with the path metric. Thus we define a \textit{metrically homogeneous graph} $\Gamma$ to be a connected graph where every isometry between finite subspaces can be extended to a full isometry of $\Gamma$. Note that every connected homogeneous graph is a metrically homogeneous graph, where we are only concerned with the distances $d = 1$ and $2$.

We view a (connected) metrically homogeneous graph $\Gamma$ as having the language $(V,R_1,R_2, \cdots , R_{\delta})$ where $V$ is the vertex set of $\Gamma$, $\delta$ is the diameter of $\Gamma$, and 
\begin{equation*}
    R_i(v_1,v_2) \Leftrightarrow d(v_1,v_2) = i
\end{equation*}
where $d$ is the path metric.

Thus a twist of the language of a metrically homogeneous graph $\Gamma$ may be viewed as a permutation in $S_{\delta}$, with the diameter $\delta$ being permitted to be countably infinite.

For a metrically homogeneous graph $\Gamma$, we define $\Gamma_i$ to be the set of vertices at distance $i$ from some fixed basepoint. For our purposes, we view $\Gamma_i$ as a metric space with the induced metric. The homogeneity of $\Gamma$ ensures that $\Gamma_i$ is well-defined up to isomorphism. 

We say that the graph $\Gamma$ is of \textit{generic type} if any two vertices at distance $2$ have infinitely many common neighbors which themselves form an independent set and if $\Gamma_1$ does not admit any non-trivial $\Aut(\Gamma_1)$-invariant equivalence relation. 

All the known metrically homogeneous graphs of generic type are parametrized by six parameters $(\delta,K_1,K_2,C,C',\mathcal{S})$ (\cite{Che-HOGMH, ACM-MH3}), detailed below, which constrain possible metric configurations embedding in some graph.
We write $\mathcal{A}^{\delta}_{K_1,K_2,C_0,C_1,\mathcal{S}}$ for the class of finite metrically homogeneous graphs which satisfy these restrictions. When this class is an amalgamation class, its Fra\"{i}ss\'{e} limit is denoted $\Gamma^{\delta}_{K_1,K_2,C_0,C_1,\mathcal{S}}$ (for more on Fra\"{i}ss\'{e} limits see, among others, \cite{hod}. These parameters may be defined for any metrically homogeneous graph as follows.

\begin{defn}\label{defn:numpar}
Let $\Gamma$ be a metrically homogeneous graph of generic type and of diameter at least $3$. The parameters $(\delta,K_1,K_2,C,C',\mathcal{S})$ are defined as follows.
\begin{enumerate}
    \item $\delta$ is the diameter of $\Gamma$.
    \item $K_1$ is the least $k$ such that $\Gamma$ contains a triangle of type $(k,k,1)$, and $K_2$ is the greatest such. We take $K_1 = \infty, K_2 = 0$ if no such triangle occurs.
    \item $C_0$ is the least even number greater than $2\delta$ such that no triangle of perimeter $C_0$ is in $\Gamma$, and $C_1$ is the least such odd number.
    \item $C = \min(C_0,C_1)$ and $C' = \max(C_0,C_1)$.
    \item $\mathcal{S}$ is the collection of $(1,\delta)$-spaces $\mathcal{S}$ such that 
    \begin{itemize}
        \item $\mathcal{S}$ is forbidden, i.e. it does not embed in $\Gamma$; and
        \item $\mathcal{S}$ is not a forbidden triangle of $\Gamma^{\delta}_{K_1,K_2,C_0,C_1}$, and does not contain a smaller forbidden $(1,\delta)$-space.
    \end{itemize}
\end{enumerate}
\end{defn}

A more nuanced discussion of these parameters can be found in \cite{Che-HOGMH} and \cite{ACM-MH3}. 

There are two classes of metrically homogeneous graphs graphs which often require specialized reasoning---these are the \emph{bipartite} graphs and the \emph{antipodal} graphs. These are both of generic type. We will assume that the reader is familiar with bipartite graphs, and we will define antipodal graphs and describe some of their properties.

Let $\Gamma$ be a metrically homogeneous graph of finite diameter $\delta$. Then $\Gamma$ is \textit{antipodal} if for every vertex $v$ in $\Gamma$  there exists a unique $v' \in \Gamma$ such that $d(v,v') = \delta$.

Antipodal metrically homogeneous graphs are quite structured, as seen by the following two facts.

\begin{introfact}{\normalfont \ref{fact:antipodallawantipodalgraphs} {\cite[Theorem 11]{Che-2P}}}{\bf.}
Let $\Gamma$ be a metrically homogeneous graph and antipodal of diameter $\delta \geq 3$. Then for each $u \in \Gamma$ there exists a $u' \in \Gamma$ at distance $\delta$ from $u$ such that
\begin{equation*}
    d(u,v) = \delta - d(u',v)
\end{equation*}
for every $v \in \Gamma$.

In particular, the map $u \mapsto u'$ is a central involution of $\Aut(\Gamma)$.
\end{introfact}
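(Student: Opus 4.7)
The plan is to exploit the uniqueness of the antipode together with metric homogeneity to rigidly transport distances along a geodesic from $u$ to its antipode $u'$. The identity $d(u,v)=\delta-d(u',v)$ is equivalent to saying that $d(u,v)+d(u',v)=\delta$, and since the triangle inequality applied to $u,u',v$ gives $d(u,v)+d(u',v)\geq d(u,u')=\delta$, it suffices to establish the upper bound.

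First, I would fix a geodesic $u=w_0,w_1,\ldots,w_\delta=u'$ between $u$ and its antipode, which exists because $d(u,u')=\delta$ and $\Gamma$ is connected. Along such a geodesic, concatenation of shortest paths forces $d(u,w_i)=i$ and $d(u',w_i)=\delta-i$ for every $0\leq i\leq\delta$, so the pair of distances from $(u,u')$ to an intermediate vertex on a geodesic is always $(i,\delta-i)$. Next, I would observe that for any vertex $v\in\Gamma$ with $d(u,v)=i$, the two-element isometry $u\mapsto u,\ w_i\mapsto v$ extends to a full automorphism $\phi$ of $\Gamma$ by metric homogeneity. The critical input is now the antipodal hypothesis: the vertex $u'$ is the \emph{unique} element at distance $\delta$ from $u$, so $\phi(u')$ must again be the unique antipode of $\phi(u)=u$, namely $u'$ itself. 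Applying $\phi$ to the equality $d(u',w_i)=\delta-i$ therefore gives $d(u',v)=\delta-i$, which is the desired identity.

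For the final sentence, I would argue as follows. The map $\iota\colon u\mapsto u'$ is an involution because $u$ is itself at distance $\delta$ from $u'$, and the antipode of $u'$ is unique, forcing $(u')'=u$. To see that $\iota$ is central in $\Aut(\Gamma)$, take any $\phi\in\Aut(\Gamma)$; then $\phi(u')$ satisfies $d(\phi(u),\phi(u'))=d(u,u')=\delta$, so by uniqueness of the antipode of $\phi(u)$ we get $\phi(u')=\phi(u)'$, i.e.\ $\phi\circ\iota=\iota\circ\phi$.

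The only delicate point is the extension step: I am using that, because $u'$ is the unique vertex at distance $\delta$ from $u$, the stabilizer of $u$ in $\Aut(\Gamma)$ automatically fixes $u'$, so every automorphism fixing $u$ preserves the entire distance profile relative to the \emph{pair} $(u,u')$. This is the one place where the antipodal hypothesis and $\delta\geq 3$ are essential; for $\delta=2$ or in the non-antipodal case, distance profiles from $u$ alone need not determine distance profiles from $u'$, and the argument breaks down. Everything else is a routine transfer of information along a geodesic using metric homogeneity.
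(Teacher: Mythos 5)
The paper does not prove this statement; it is imported verbatim as Fact \ref{fact:antipodallawantipodalgraphs}, cited to Theorem 11 of \cite{Che-2P}, so there is no in-paper argument to compare against. Judged on its own, your proof is correct and self-contained: reducing to the upper bound $d(u,v)+d(u',v)\leq\delta$ via the triangle inequality, transporting the distance pair $(i,\delta-i)$ from a geodesic vertex $w_i$ to an arbitrary $v$ with $d(u,v)=i$ by extending the isometry $u\mapsto u$, $w_i\mapsto v$, and then using uniqueness of the antipode to see that the extension fixes $u'$ is exactly the right mechanism, and the involution and centrality arguments are fine. Two small remarks. First, to call $u\mapsto u'$ a central involution \emph{of $\Aut(\Gamma)$} you should also note that it is an isometry; this follows in one line from the law you just established, since $d(u',v')=\delta-d(u,v')=\delta-(\delta-d(u,v))=d(u,v)$. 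Second, your closing claim that $\delta\geq 3$ is essential to the extension step is not quite right: the stabilizer argument uses only the uniqueness of the antipode and works verbatim for antipodal graphs of diameter $2$ (e.g.\ complete multipartite graphs with classes of size $2$); the hypothesis $\delta\geq 3$ is part of the statement's framing rather than a load-bearing ingredient of this particular argument.
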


We may also deduce the following.

\begin{cor}\label{fact:antipodaladdstodelta}
Let $\Gamma$ be a metrically homogeneous graph and antipodal of diameter $\delta \geq 3$. Then $K_1 + K_2 = \delta$.
\end{cor}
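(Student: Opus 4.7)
The plan is to use the antipodal involution from Fact \ref{fact:antipodallawantipodalgraphs} to transport $(k,k,1)$-triangles to $(\delta-k,\delta-k,1)$-triangles, and then read off the identity from the extremal values.

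Concretely, suppose $a,b,c$ form a triangle of type $(k,k,1)$, so $d(a,b)=d(a,c)=k$ and $d(b,c)=1$. Let $a'$ be the antipode of $a$ provided by Fact \ref{fact:antipodallawantipodalgraphs}, so that $d(a',v)=\delta-d(a,v)$ for every $v\in\Gamma$. Applying this with $v=b$ and $v=c$ gives $d(a',b)=d(a',c)=\delta-k$, and $d(b,c)=1$ is unchanged, so $(a',b,c)$ is a $(\delta-k,\delta-k,1)$-triangle. Hence the set
\[
T=\{k\geq 1 : \Gamma\text{ contains a }(k,k,1)\text{-triangle}\}
\]
is closed under the involution $k\mapsto\delta-k$.

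Assuming $T$ is non-empty, we have $K_1=\min T$ and $K_2=\max T$ finite, and closure yields $\delta-K_1\in T$ (so $\delta-K_1\leq K_2$) and $\delta-K_2\in T$ (so $\delta-K_2\geq K_1$); combining the two inequalities forces $K_1+K_2=\delta$. The only real obstacle is checking $T\neq\emptyset$: in the bipartite case a $(k,k,1)$-triangle would be an odd cycle of length $2k+1$, so $T=\emptyset$ and the conclusion must be interpreted in the extended sense $\infty+0=\delta$ fixed by the conventions $K_1=\infty$, $K_2=0$. In the non-bipartite case, a shortest odd closed walk together with the metric homogeneity of $\Gamma$ produces some $(k,k,1)$-triangle, so $T$ is non-empty and the identity holds as stated.
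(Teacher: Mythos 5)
Your proof is correct and is exactly the deduction the paper intends: the corollary is stated as an immediate consequence of the antipodal law (Fact \ref{fact:antipodallawantipodalgraphs}), and transporting $(k,k,1)$-triangles through the central involution to get closure of $T$ under $k\mapsto\delta-k$, then comparing $\min T$ and $\max T$, is that deduction. Your observation about the bipartite case is a genuine edge case of the statement itself (under the paper's convention $K_1=\infty$, $K_2=0$ the identity is only conventional there), and your non-bipartite non-emptiness of $T$ is also guaranteed by Fact \ref{Fact:K1}.
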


\begin{introfact}{\normalfont \ref{fact:antipodalSmallC} {\cite[Lemma 6.1]{Che-2P}}}{\bf.}
Let $\Gamma$ be a metrically homogeneous graph of diameter $\delta$. Then $\Gamma$ is antipodal if and only if no triangle has perimeter greater than $2\delta$.
\end{introfact}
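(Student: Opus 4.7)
The plan is to prove both directions of the biconditional using the triangle inequality together with the structural identity for antipodes supplied by Fact \ref{fact:antipodallawantipodalgraphs}.

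For the forward direction, assume $\Gamma$ is antipodal of diameter $\delta$ and consider an arbitrary triangle with vertices $u, v, w$. I would invoke Fact \ref{fact:antipodallawantipodalgraphs} applied to the antipode $w'$ of $w$, which satisfies $d(u, w) = \delta - d(u, w')$ and $d(v, w) = \delta - d(v, w')$. Then the ordinary triangle inequality on the triple $u, v, w'$ gives
\[
d(u, v) \leq d(u, w') + d(v, w') = 2\delta - d(u, w) - d(v, w),
\]
which rearranges exactly to $d(u,v) + d(u,w) + d(v,w) \leq 2\delta$. Since $w$ was arbitrary in the triangle, the perimeter bound holds in general. (The case $\delta \leq 2$, where Fact \ref{fact:antipodallawantipodalgraphs} is not invoked, can be handled directly: an antipodal graph of diameter $1$ is a single edge or $K_n$, and of diameter $2$ is complete multipartite, and in both cases the perimeter bound is immediate.)

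For the converse, assume no triangle of $\Gamma$ has perimeter exceeding $2\delta$; I would argue that every vertex has a unique vertex at distance $\delta$ from it. Fix $u \in \Gamma$. Since the diameter is $\delta$ and $\Gamma$ is metrically homogeneous, there exists some $u' \in \Gamma$ with $d(u, u') = \delta$. For uniqueness, suppose $u'$ and $u''$ are both at distance $\delta$ from $u$ with $u' \neq u''$. Then $d(u', u'') \geq 1$, so the triangle on $u, u', u''$ has perimeter at least $2\delta + 1$, contradicting the hypothesis. Hence $u' = u''$, and $\Gamma$ is antipodal.

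The main obstacle is really that the forward direction is not purely combinatorial: it rests on the structural identity $d(u,v) = \delta - d(u',v)$ from Fact \ref{fact:antipodallawantipodalgraphs}, which itself encodes that the antipodal map is a central involution of $\operatorname{Aut}(\Gamma)$. Without that identity one would have to rework the argument by case analysis on the distances $d(u,w)$ and $d(v,w)$ and use homogeneity to extend finite configurations to vertices at prescribed distances, which is considerably more involved. The converse, by contrast, is a one-line application of the triangle inequality.
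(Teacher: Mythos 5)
The paper does not prove this statement at all: it is imported verbatim as Lemma 6.1 of \cite{Che-2P}, so there is no internal proof to compare yours against. On its own terms your argument is sound. The converse direction is exactly the standard one-line argument: two distinct vertices $u',u''$ at distance $\delta$ from $u$ would form a triangle of perimeter at least $2\delta+1$. The forward direction via the antipodal law is also correct --- from $d(w',u)=\delta-d(w,u)$ and $d(w',v)=\delta-d(w,v)$ the triangle inequality on $u,v,w'$ gives the perimeter bound immediately. Two caveats. First, a minor slip: an antipodal graph of diameter $1$ is only $K_2$, not a general $K_n$ (for $n\geq 3$ each vertex has $n-1$ vertices at distance $1$, so uniqueness fails --- and indeed such graphs do contain triangles of perimeter $3>2\delta$, consistent with the equivalence). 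Second, a caution about provenance rather than logic: you derive Lemma 6.1 of \cite{Che-2P} from Theorem 11 of the same source (Fact \ref{fact:antipodallawantipodalgraphs}); within the present paper both are taken as given, so this is unobjectionable here, but as a freestanding proof of the original lemma it would be circular if the antipodal law is itself proved using the perimeter characterization. You correctly identify that the forward direction is the structurally loaded one --- without the antipodal law one would need the homogeneity-based extension argument you sketch.
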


Thus for an antipodal graph $\Gamma$, the parameters $C,C'$ must be
\begin{align*}
    C = C_1 = 2\delta + 1  && C' = C_0 = 2\delta + 2.
\end{align*}

Bipartite graphs $\Gamma$ have no triangles of odd perimeter, and therefore their parameters satisfy
\begin{align*}
    K_1 = \infty && K_2 = 0 && C = C_1 = 2\delta + 1.
\end{align*}

We define a \textit{metric triangle} with side lengths $i,j,k$ to be a triple of points $u,v,w$ whose pairwise path distances are $i,j,$ and $k$ respectively. Such a metric triangle is said to have \textit{triangle type} $(i,j,k)$, with no particular order imposed on $i,j$ and $k$. We say that the triangle type $(i,j,k)$ is \textit{realized in $\Gamma$} if there is a metric triangle with triangle type $(i,j,k)$ in $\Gamma$. We often consider whether some distance $k$ occurs in $\Gamma_i$, and this is equivalent to saying that the triangle type $(i,i,k)$ is realized in $\Gamma$.

We refer to $(i,j,k)$ as a \textit{triple} rather than a triangle type if there is doubt as to whether this triple satisfies the triangle inequality.

We will make frequent use of the following fact.
\begin{introfact}{\normalfont \ref{Fact:ProveMHG}}{\bf.}
Let $\Gamma$ be a metrically homogeneous graph and $\sigma$
a permutation of the language. Then the following are equivalent.
\begin{itemize}
\item $\Gamma^\sigma$ is a metrically homogeneous graph.
\item $\Gamma^\sigma$ is a metric space containing all triangles of
type $(1,k,k+1)$ for $k$ less than the diameter of $\Gamma$.
\end{itemize}
\end{introfact}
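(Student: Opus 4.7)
The forward direction is immediate: a metrically homogeneous graph is by definition a metric space, and for each $k$ strictly less than its diameter, selecting two vertices at distance $k+1$ together with a neighbor of one on a shortest path between them yields a metric triangle of type $(1,k,k+1)$.

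For the converse, the key structural observation is that $\Aut(\Gamma^\sigma)=\Aut(\Gamma)$ as permutation groups of the common vertex set $V$. A bijection $f:V\to V$ preserves the relations of $\Gamma^\sigma$ precisely when it preserves those of $\Gamma$, since by the definition of $\Gamma^\sigma$ these are the same family of binary relations on $V$, merely relabelled by $\sigma$. Since $\Gamma^\sigma$ is assumed to be a metric space, partial isometries between its finite subspaces are well-defined, and any such partial isometry is simultaneously a partial isomorphism of $\Gamma$; by metric homogeneity of $\Gamma$ it extends to a full automorphism of $\Gamma$, which is simultaneously an isometry of $\Gamma^\sigma$. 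Thus the homogeneity portion of the conclusion is free, once we verify that $\Gamma^\sigma$ viewed as a graph via the edge relation $R^\sigma_1$ is a connected graph whose path metric agrees with the assumed metric.

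To verify this last point, I would induct on $k$ to show that any $u,v\in V$ at assumed distance $k$ in $\Gamma^\sigma$ are joined by an $R^\sigma_1$-path of length exactly $k$. The base case $k=1$ is just the definition of the edge relation. For $k\geq 2$ the hypothesis furnishes a realization of the triangle type $(1,k-1,k)$ somewhere in $\Gamma^\sigma$; by the extension property established in the previous paragraph, one may transport this realization so that two of its vertices coincide with $u$ and $v$, producing an $R^\sigma_1$-neighbor $w$ of $u$ at distance $k-1$ from $v$, to which the inductive hypothesis applies. The resulting path has length $k$, so the path distance is at most $k$; the reverse inequality is forced directly by the triangle inequality in the metric space $\Gamma^\sigma$, as any shorter path would witness $d(u,v)<k$. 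This inductive neighbor-extraction is the main, and essentially the only substantive, step of the argument; the remainder is bookkeeping around the identification $\Aut(\Gamma^\sigma)=\Aut(\Gamma)$.
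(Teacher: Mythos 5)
Your proof is correct and follows essentially the same route the paper intends: the paper derives this statement from Fact \ref{Fact:CharacterizeMHG} (Cameron's characterization of metrically homogeneous graphs among homogeneous integer-valued metric spaces), combined with precisely your observation that partial isometries of $\Gamma^\sigma$ coincide with partial isometries of $\Gamma$, so that homogeneity transfers for free. The only difference is that you supply a direct proof of the content of the cited fact (the induction extracting an $R_1^\sigma$-neighbor from a transported $(1,k-1,k)$ triangle to show the assumed metric is the path metric), whereas the paper simply points to the citation.
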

For the derivation of this fact, see the discussion around Fact \ref{Fact:CharacterizeMHG} in the \hyperref[Fact:CharacterizeMHG]{Appendix}.

We also often exploit the following observation.
\begin{obs}\label{fact:geodesics}
Since by definition any metrically homogeneous graph is connected, every geodesic triangle type $(i,j,i+j)$ where $i+j \leq \delta$ is realized in $\Gamma$.
\end{obs}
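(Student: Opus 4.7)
The plan is that this observation is essentially immediate from the definition of the path metric on a connected graph, so the proof amounts to exhibiting a geodesic and chopping it at the right place. Since $\Gamma$ is a metrically homogeneous graph with diameter $\delta$, there exists a pair of vertices $u, w \in \Gamma$ with $d(u,w) = \delta$, and since $\Gamma$ is connected, a shortest path $u = v_0, v_1, \dots, v_\delta = w$ of length $\delta$ exists in $\Gamma$. Given $i, j$ with $i + j \leq \delta$, I would set $x = v_i$ and $y = v_{i+j}$. Because any subpath of a geodesic is itself a geodesic, the pairwise distances satisfy $d(u, x) = i$, $d(x, y) = j$, and $d(u, y) = i + j$, which produces a metric triangle of type $(i, j, i+j)$ in $\Gamma$, as required.

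There is no real obstacle here: metric homogeneity plays no role in the argument, and only the two ingredients that $\Gamma$ is connected and has diameter at least $i+j$ are used. The observation is worth recording separately because geodesic triangles of type $(1, k, k+1)$ (cf. Fact \ref{Fact:ProveMHG}) and related geodesic configurations serve as basic building blocks throughout the paper when one needs to verify that $\Gamma^\sigma$ remains a metric space or when one wants to ensure that certain triangle types are always available inside $\Gamma$.
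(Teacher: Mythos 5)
Your argument is correct and is precisely the argument the paper leaves implicit: the observation carries no separate proof in the text, its justification being exactly the connectivity of $\Gamma$ together with the fact that subpaths of geodesics are geodesics. One small repair: the paper permits $\delta$ to be countably infinite (and this observation is invoked in the proof of Lemma \ref{thm:inverseimageof1}, which is itself used to establish finiteness of $\delta$ in Lemma \ref{lemma:diamfinite}), and in that case no pair of vertices at distance $\delta$ exists; so rather than truncating a diametral geodesic, start from any pair $u,y$ with $d(u,y)=i+j$ --- such a pair exists since $i+j\leq\delta$ --- and take $x$ to be the vertex at position $i$ on a geodesic from $u$ to $y$.
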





\section{Twists} \label{sec:twists}

The following lemmas will be widely applicable in our argument.

\begin{lemma} \label{thm:inverseimageof1}
Let $\sigma$ be a permutation of the language for which there is some metrically homogeneous graph $\Gamma$ twistable by $\sigma$. Let $k = \sigma^{-1}(1)$. Then $\sigma(ik)=i$ for all $i$ satisfying $ik \leq \delta$.
\end{lemma}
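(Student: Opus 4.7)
The plan is to translate the claim about the permutation $\sigma$ into a statement about the two path metrics $d_\Gamma$ and $d_{\Gamma^\sigma}$. Unwinding the definition of $\Gamma^\sigma$ gives the dictionary $R_m^{\Gamma^\sigma} = R_{\sigma^{-1}(m)}^{\Gamma}$; in particular, with $k=\sigma^{-1}(1)$, the edges of $\Gamma^\sigma$ are exactly the pairs at $\Gamma$-distance $k$. Since $\Gamma^\sigma$ is assumed to be a metrically homogeneous graph, its labels agree with its path metric, so any pair at $\Gamma$-distance $n$ is at $\Gamma^\sigma$-path-distance $\sigma(n)$. Hence the claim $\sigma(ik)=i$ is equivalent to showing that every pair $u,v$ with $d_\Gamma(u,v)=ik$ (such pairs exist since $ik\le\delta$) satisfies $d_{\Gamma^\sigma}(u,v)=i$.

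For the upper bound $d_{\Gamma^\sigma}(u,v)\le i$, I would fix a $\Gamma$-geodesic $u=w_0,w_1,\ldots,w_{ik}=v$ (available because $\Gamma$ is connected and $ik\le\delta$), and then extract the evenly spaced subsequence $w_0,w_k,w_{2k},\ldots,w_{ik}$. Consecutive vertices in this subsequence lie at $\Gamma$-distance exactly $k$, so they are $\Gamma^\sigma$-adjacent; this produces a $\Gamma^\sigma$-walk of length $i$ from $u$ to $v$.

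For the matching lower bound $d_{\Gamma^\sigma}(u,v)\ge i$, I would run the triangle inequality in $\Gamma$ in the opposite direction. Any $\Gamma^\sigma$-path $u=u_0,u_1,\ldots,u_j=v$ has each consecutive pair at $\Gamma$-distance $k$, so iterating the triangle inequality in $\Gamma$ yields $d_\Gamma(u,v)\le jk$; comparing with $d_\Gamma(u,v)=ik$ forces $j\ge i$.

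Combining the two inequalities gives $d_{\Gamma^\sigma}(u,v)=i$, which via the dictionary above is exactly $\sigma(ik)=i$. I do not see any real obstacle in the argument; the only step that takes a little care is keeping the direction of the correspondence $R_m^{\Gamma^\sigma}=R_{\sigma^{-1}(m)}^{\Gamma}$ straight, since everything afterward is just the observation that $\sigma^{-1}(1)=k$ rescales the edge relation by a factor of $k$ along geodesics, together with the triangle inequality.
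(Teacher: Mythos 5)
Your proof is correct, but it runs along a different track from the paper's. The paper argues by induction on $i$: the geodesic triangle $(k,(i-1)k,ik)$ is realized in $\Gamma$, so its image $(1,i-1,\sigma(ik))$ must satisfy the triangle inequality in $\Gamma^\sigma$, giving $\sigma(ik)\in\{i-2,i-1,i\}$, and injectivity of $\sigma$ together with the inductive hypothesis eliminates the first two options. You instead compute the path metric of $\Gamma^\sigma$ directly: subsampling a $\Gamma$-geodesic at intervals of $k$ gives the upper bound $d_{\Gamma^\sigma}(u,v)\leq i$, and the triangle inequality in $\Gamma$ applied to any $\Gamma^\sigma$-path gives the lower bound. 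Your route is arguably cleaner in that it dispenses with induction and with the injectivity of $\sigma$, but it leans on the stronger input that the labels of $\Gamma^\sigma$ coincide with its graph metric (the content of Fact \ref{Fact:CharacterizeMHG}, which the paper invokes only in the remark after the lemma), whereas the paper's induction uses only the weaker facts that $\Gamma^\sigma$ is a metric space and that geodesic types are realized in $\Gamma$. Both arguments are valid under the lemma's hypothesis that $\Gamma^\sigma$ is a metrically homogeneous graph; the one small point worth making explicit in your write-up is that pairs at $\Gamma$-distance $ik$ exist because $ik\leq\delta$ and all distances up to the diameter occur along geodesics, which you do note in passing.
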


\begin{proof}
We proceed by induction. Our base case $i = 1$ holds by definition, so we fix some $i$ with $1 < i \leq \delta/k$ and assume for all $j < i$ that $\sigma(jk) = j$. A geodesic of type $(k,(i-1)k,ik)$ must be realized in $\Gamma$ and therefore its image under $\sigma$, the triangle type $(1,i-1,\sigma(ik))$, must be realized in $\Gamma^{\sigma}$. By the triangle inequality we have $|\sigma(ik) - (i-1)| \leq 1$. As $\sigma(jk) = j$ for all $j < i$, the only remaining option then is $\sigma(ik) = i$.
\end{proof}

Note that from this we may deduce that if $\sigma(1) = 1$, then $\sigma(i) = i$ for all $i \leq \delta$. This point also follows from Fact \ref{Fact:CharacterizeMHG} as the metric $d$ in $\Gamma^{\sigma}$ must be the graph metric.
Thus if $\sigma$ is a non-trivial permutation of the language for which there is some metrically homogeneous graph $\Gamma$ twistable by $\sigma$, then $\sigma(1)> 1$.

We also deduce that the diameter of $\Gamma$ must be finite:

\begin{lemma}\label{lemma:diamfinite}
Let $\Gamma$ be a metrically homogeneous graph and let $\sigma$ be a non-trivial permutation of the language of a metrically homogeneous graph $\Gamma$ 
such that $\Gamma^{\sigma}$ is also a metrically homogeneous graph. Then the diameter $\delta$ of $\Gamma$ is finite.
\end{lemma}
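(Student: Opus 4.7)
The plan is to argue by contradiction: assume $\delta = \infty$ and derive a contradiction from the two preceding observations (that $\sigma(1) > 1$ and that Lemma \ref{thm:inverseimageof1} applies).

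First I would set $k = \sigma^{-1}(1)$. The remark immediately after Lemma \ref{thm:inverseimageof1} tells us that if $\sigma(1) = 1$ then $\sigma$ is the identity, so since $\sigma$ is non-trivial we have $\sigma(1) \neq 1$, and hence $k = \sigma^{-1}(1) > 1$ as well. Now invoke Lemma \ref{thm:inverseimageof1}: for every positive integer $i$ with $ik \leq \delta$, we have $\sigma(ik) = i$. Under the assumption $\delta = \infty$, this condition holds for all $i \geq 1$, so $\sigma(ik) = i$ for every $i \in \{1,2,3,\dots\}$.

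The contradiction is then immediate from counting. The set $\{k, 2k, 3k, \dots\}$ of multiples of $k$ is already mapped by $\sigma$ bijectively onto the whole language $\{1,2,3,\dots\}$. But since $k > 1$ there exist elements of the language which are not multiples of $k$ (for instance $1$ itself); for any such $n$, writing $j = \sigma(n)$, we would have both $\sigma(n) = j$ and $\sigma(jk) = j$ with $n \neq jk$, contradicting the injectivity of the permutation $\sigma$.

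There is no real obstacle here; the only point that requires mild care is verifying that Lemma \ref{thm:inverseimageof1} genuinely applies to arbitrarily large $i$ when $\delta = \infty$ (the hypothesis $ik \leq \delta$ is vacuous in that case), and that $k > 1$ so that non-multiples of $k$ exist in the language. Both are immediate from what has already been proved.
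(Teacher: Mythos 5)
Your proof is correct and follows essentially the same route as the paper's: both apply Lemma \ref{thm:inverseimageof1} with $\delta=\infty$ to conclude that $\sigma$ maps the multiples of $k=\sigma^{-1}(1)$ onto all of $\mathbb{N}$, and then use bijectivity of $\sigma$ to force $k=1$, contradicting non-triviality. The only difference is cosmetic — you extract $k>1$ from non-triviality up front and contradict injectivity, while the paper concludes $k\mathbb{N}=\mathbb{N}$ and hence $k=1$ before invoking non-triviality.
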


\begin{proof}
Suppose $\delta = \infty$ and let $k = \sigma^{-1}(1)$. By Lemma \ref{thm:inverseimageof1}, we have
\begin{equation*}
    \sigma[k \mathbb{N}] = \mathbb{N}
\end{equation*}
and hence $k \mathbb{N} = \mathbb{N}$. Thus $k = 1$, but then $\sigma$ is trivial.
\end{proof}

\subsection{Non-generic type}

Here we work towards proving the following main result:

\begin{prop}
\label{Prop:Twist:Nongeneric}
Let $\Gamma$ be a metrically homogeneous graph of non-generic type
or diameter $\delta\leq 2$,
and $\sigma$ a non-trivial permutation of the language such that 
$\Gamma^\sigma$ is metrically homogeneous.
Then $\Gamma^\sigma$ is also of non-generic type or diameter $\delta\leq 2$, and one of the following applies.
\begin{itemize}
\item $\Gamma$ has diameter 2 and is not complete multipartite, with $\sigma$ the transposition $(1,2)$.
\item $\Gamma$  is finite, antipodal of diameter $3$, not bipartite, 
with $\sigma$ the transposition $(1,2)$.
\item $\Gamma$ is an $n$-cycle $C_n$, $n\geq 7$; $\delta=\lfloor n/2\rfloor$: 
$\sigma$ is given by multiplication by $\pm k \pmod n$ for $(k,n)=1$, 
with $\pm i$ identified.
\end{itemize}
\end{prop}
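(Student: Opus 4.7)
The plan is to combine Lemma \ref{lemma:diamfinite} (forcing $\delta<\infty$) with the known catalogue of non-generic metrically homogeneous graphs of finite diameter, and for each candidate $\Gamma$ use Lemma \ref{thm:inverseimageof1} and Fact \ref{Fact:ProveMHG} to pin down the admissible twists.

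First I would handle $\delta\leq 2$. There is no non-trivial permutation when $\delta=1$. For $\delta=2$ the only candidate is $\sigma=(1,2)$, under which $\Gamma^\sigma$ is the complement $\overline{\Gamma}$ equipped with its path metric. Since homogeneity is preserved by complementation, $\Gamma^\sigma$ is a metrically homogeneous graph precisely when $\overline{\Gamma}$ is connected of diameter $2$, equivalently when $\Gamma$ is not complete multipartite; this yields the first bullet.

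For $\delta\geq 3$ I would invoke the classification of non-generic metrically homogeneous graphs from \cite{Che-HOGMH} to reduce the candidates to two families: the cycles $C_n$ with $\delta=\lfloor n/2\rfloor\geq 3$, and the finite antipodal graphs of diameter exactly $3$. On $C_n$ with $n\geq 7$, set $k=\sigma^{-1}(1)$; Lemma \ref{thm:inverseimageof1} determines $\sigma$ on the multiples $\{k,2k,\dots\}\pmod n$ modulo the identification $i\sim -i$, and surjectivity forces $(k,n)=1$, so $\sigma$ is multiplication by $\pm k\pmod n$. Conversely each such $k$ gives a valid twist, since multiplication by a unit of $\mathbb{Z}/n\mathbb{Z}$ is an isometry after reindexing and the triangle hypothesis of Fact \ref{Fact:ProveMHG} is immediate.

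On a finite antipodal graph of diameter $3$ the antipodal pair is the unique pair at maximal distance and is therefore preserved by every twist, so $\sigma(3)=3$ and $\sigma=(1,2)$. Under this swap the new edge set is the original distance-$2$ relation; in a bipartite $\Gamma$ this relation has edges only within each bipartition class and is therefore disconnected, so $\Gamma^\sigma$ fails to be a connected graph. For non-bipartite antipodal $\Gamma$ of diameter $3$, Corollary \ref{fact:antipodaladdstodelta} gives $(K_1,K_2)=(1,2)$, and the triangles $(2,2,1)$ and $(2,1,3)$ required by Fact \ref{Fact:ProveMHG} are automatically realized in $\Gamma$, yielding the second bullet. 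The main obstacle throughout is confirming exhaustiveness of the non-generic catalogue invoked for $\delta\geq 3$, since any overlooked sporadic family could a priori admit additional twists; a secondary bookkeeping difficulty lies in the cycle case in tracking the $\pm i$ identification when arguing that $\sigma$ must be multiplication by $\pm k$ rather than a more general bijection respecting the cyclic group structure.
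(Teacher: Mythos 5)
Your proposal follows the same route as the paper's proof: dispose of $\delta\le 2$ by complementation, use Lemma \ref{lemma:diamfinite} to exclude the infinite-diameter tree-like graphs $T_{m,n}$, and then run through the finite catalogue of Fact \ref{Fact:Finite:delta>=3} (cycles and antipodal diameter-$3$ graphs) exactly as the paper does via Claims \ref{claim:mhgsfordiameter2}, \ref{claim:cycles} and \ref{claim:diam3nonbipartite}.

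Two steps in the antipodal diameter-$3$ case are asserted rather than proved. First, ``the antipodal pair is preserved by every twist, so $\sigma(3)=3$'' is not yet an argument: what you actually know is that the relation $d(x,y)=\sigma(3)$ is a pairing on the metrically homogeneous graph $\Gamma^\sigma$, and you must still exclude $\sigma(3)=1$ (easy: each vertex would have a unique neighbor, contradicting connectedness with $\delta=3$) and $\sigma(3)=2$. The paper spends a paragraph on the latter, showing via the non-generic classification that a diameter-$3$ metrically homogeneous graph in which distance $2$ is a pairing would have to be complete multipartite with classes of size $2$, forcing $\delta=2$; since your $\Gamma^\sigma$ is finite you could instead read off from Fact \ref{Fact:Finite:delta>=3} that $|\Gamma^\sigma_2|\ge 2$, but some such step is required. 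Second, in the converse direction Fact \ref{Fact:ProveMHG} asks both that $\Gamma^\sigma$ be a metric space and that it realize the geodesics $(1,1,2)$ and $(1,2,3)$; you verify only the latter. The metric-space condition amounts to showing that the type $(2,2,3)$ is not realized in $\Gamma$, which follows from Fact \ref{fact:antipodalSmallC} since that triangle has perimeter $7>2\delta$. With these two points supplied, your argument coincides with the paper's.
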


We prove this result by first showing the following three claims.


\begin{claimprop}\label{claim:mhgsfordiameter2} For $\delta \leq 2$ the metrically homogeneous graphs with (non-trivial) twists are the homogeneous graphs which are neither disjoint unions of complete graphs nor complements of disjoint unions of complete graphs.
\end{claimprop}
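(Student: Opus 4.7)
The plan is to split into the two subcases $\delta=1$ and $\delta=2$ and exploit how limited the language is. If $\delta=1$ then $\Gamma=K_n$ and the language $\{R_1\}$ admits no nontrivial permutation; since $K_n$ is trivially a disjoint union of complete graphs, this case contributes nothing and is consistent with the claim.

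If $\delta=2$, the only nontrivial permutation of the language $\{R_1,R_2\}$ is the transposition $\sigma=(1,2)$. Under $\sigma$ the roles of edges and non-edges are swapped, so the underlying graph of $\Gamma^{\sigma}$ is the complement $\overline{\Gamma}$, which is automatically homogeneous whenever $\Gamma$ is. By Fact~\ref{Fact:ProveMHG}, the question of whether $\Gamma^{\sigma}$ is metrically homogeneous reduces to whether $\overline{\Gamma}$ is connected of diameter $2$, and this further reduces to the single requirement that every edge $uv$ of $\Gamma$ admits a common non-neighbor in $\Gamma$ (supplying a length-$2$ path in $\overline{\Gamma}$ and, incidentally, ensuring connectivity).

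The substantive step, which I expect to be the only real obstacle, is to show that \emph{failure} of this requirement forces $\Gamma$ to be complete multipartite. By homogeneity $\Gamma$ is edge-transitive, so failure on one edge yields $N_\Gamma[u]\cup N_\Gamma[v]=V$ for every edge $uv$. I would then introduce the relation $u\sim v$ iff $u=v$ or $uv\notin E(\Gamma)$ and verify that it is an equivalence relation. Reflexivity and symmetry are immediate; transitivity follows by contradiction: if $u\sim v$ and $v\sim w$ but $uw\in E(\Gamma)$, the assumption $N_\Gamma[u]\cup N_\Gamma[w]=V$ would place $v$ in $\{u,w\}\cup N_\Gamma(u)\cup N_\Gamma(w)$, contradicting either $u\sim v$ or $v\sim w$. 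The $\sim$-classes are then independent sets whose cross-class pairs are all adjacent, so $\Gamma$ is complete multipartite, i.e., the complement of a disjoint union of complete graphs.

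The converse direction is a short check: if $\Gamma=K_{n_1,\ldots,n_k}$ with $k\geq 2$, then $\overline{\Gamma}=\bigsqcup_{i} K_{n_i}$ is disconnected and cannot be metrically homogeneous, so $\sigma=(1,2)$ fails. Combining the two directions characterizes the twistable MHGs of diameter at most $2$ exactly as the homogeneous graphs which are neither disjoint unions of complete graphs nor complements of such, as claimed.
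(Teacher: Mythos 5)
Your argument is correct, but it takes a genuinely different route at the decisive step. The paper's proof is very short: it observes that MHGs of diameter at most $2$ are exactly the connected homogeneous graphs, that the twist $(1,2)$ is complementation and preserves homogeneity, and then simply cites the classification of disconnected homogeneous graphs (they are disjoint unions of complete graphs, per \cite{LaW-HG}) to identify exactly when $\Gamma$ or $\Gamma^{\sigma}$ fails to be connected. You instead make the failure condition explicit --- ``some (hence, by edge-transitivity, every) edge of $\Gamma$ lacks a common non-neighbor'' --- and then give a self-contained combinatorial argument that this forces non-adjacency-or-equality to be an equivalence relation, so that $\Gamma$ is complete multipartite; your transitivity check via $N[u]\cup N[w]=V$ is sound, and your converse direction and the reduction via Fact \ref{Fact:ProveMHG} (the only relevant triangle type being $(1,1,2)$, whose preimage is an edge with a common non-neighbor, with connectivity of the complement following automatically) are also correct. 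What your approach buys is independence from the cited classification: you effectively reprove, in the special form needed here, the fact that a homogeneous graph with disconnected complement is complete multipartite. What it costs is length; the paper's citation collapses the whole case analysis to two sentences. Both proofs are complete.
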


\begin{claimprop}\label{claim:cycles}
Let $\Gamma$ be an $n$-cycle with $n \geq 3$. Then the permutations $\sigma$ of the language of $\Gamma$ for which $\Gamma^{\sigma}$ is a metrically homogeneous graph are given by the group $U_n/(\pm 1)$ acting on the set of distances, where $U_n$ is the group of units modulo $n$. More specifically, we define the action $\mu_k$ on the set of distances to be multiplication by $k$ modulo $n$ where $k \in U_n$. 
\end{claimprop}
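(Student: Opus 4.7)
The plan is to prove both inclusions in the equality between the set of valid twists of $C_n$ and $U_n/(\pm 1)$. Throughout, I shall identify the vertex set of $C_n$ with $\mathbb{Z}/n\mathbb{Z}$, so that the graph metric is given by the cyclic norm $d_{C_n}(v,w) = \min(r,\,n-r)$ where $r \equiv w - v \pmod n$ is taken in $\{0,\ldots,n-1\}$.

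For the easy (sufficiency) direction, I would fix $k \in U_n$ and exhibit the map $\phi_k : v \mapsto kv$ as the desired twisted automorphism of $C_n$. Since $\gcd(k,n) = 1$, $\phi_k$ is a bijection of $\mathbb{Z}/n\mathbb{Z}$; and since $\phi_k(w) - \phi_k(v) = k(w-v)$, the cyclic norm transforms under $\phi_k$ exactly according to $\mu_k$. Hence $\phi_k$ realizes $\mu_k$ as a twist of $C_n$, showing that every $\mu_k$ with $k \in U_n$ arises.

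The more substantive (necessity) direction begins with a permutation $\sigma$ of $\{1,\ldots,\delta\}$ for which $C_n^\sigma$ is metrically homogeneous, and argues $\sigma = \mu_t$ for some $t \in U_n$. Set $k = \sigma^{-1}(1)$. By the definition of $C_n^\sigma$, two vertices are adjacent there precisely when they lie at $C_n$-distance $k$, so the underlying graph of $C_n^\sigma$ is the \emph{distance-$k$ graph} of $C_n$. The connected component of $0$ in this graph is the cyclic subgroup $\langle k \rangle \leq \mathbb{Z}/n\mathbb{Z}$, so connectedness of $C_n^\sigma$ forces $\gcd(k,n) = 1$, i.e.\ $k \in U_n$. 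Under this condition the map $\psi : v \mapsto k^{-1}v$ carries pairs at $C_n$-distance $k$ to pairs at $C_n$-distance $1$, exhibiting a graph isomorphism $C_n^\sigma \to C_n$. Therefore the path metric on $C_n^\sigma$ satisfies $d_{C_n^\sigma}(v,w) = d_{C_n}(k^{-1}v,\,k^{-1}w) = \mu_{k^{-1}}(d_{C_n}(v,w))$; comparing with the defining identity $d_{C_n^\sigma} = \sigma \circ d_{C_n}$ yields $\sigma = \mu_{k^{-1}}$, which lies in $\{\mu_t : t \in U_n\}$.

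The main item to watch carefully is the bookkeeping enforced by the convention $(R^\sigma)^{\mathcal{M}^\sigma} = R^{\mathcal{M}}$, equivalently $d_{\Gamma^\sigma} = \sigma \circ d_\Gamma$, together with the $\pm 1$ identification that arises from the cyclic norm ignoring signs in $\mathbb{Z}/n\mathbb{Z}$; both are purely notational, and I do not foresee any real obstacle beyond keeping these conventions straight.
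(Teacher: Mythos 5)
Your proof is correct and follows essentially the same route as the paper: identify the vertices with $\mathbb{Z}/n\mathbb{Z}$, realize each $\mu_k$ by the multiplication map $v\mapsto kv$, and for necessity use connectedness of $C_n^{\sigma}$ to force $\gcd(\sigma^{-1}(1),n)=1$. The only (immaterial) variation is that you pin down $\sigma=\mu_{k^{-1}}$ via the explicit isomorphism $v\mapsto k^{-1}v$, whereas the paper invokes Lemma \ref{thm:inverseimageof1} to see that $\sigma^{-1}(1)$ determines $\sigma$; since $U_n/(\pm1)$ is a group, the inverse makes no difference to the conclusion.
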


\begin{claimprop}\label{claim:diam3nonbipartite}
Let $\Gamma$ be a metrically homogeneous antipodal graph of diameter 3. If $\Gamma$ is not bipartite, then $\sigma = (12)$ is the unique twist for which $\Gamma^{\sigma}$ is metrically homogeneous. If $\Gamma$ is bipartite, then no such permutation exists.
\end{claimprop}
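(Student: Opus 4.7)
The plan is to enumerate the candidate permutations using Lemma~\ref{thm:inverseimageof1}, and then eliminate all but one using the antipodal structure of $\Gamma$. Setting $k=\sigma^{-1}(1)$, the lemma forces $\sigma(jk)=j$ whenever $jk\le 3$, and since $\sigma(1)>1$ for non-trivial $\sigma$, we must have $k\in\{2,3\}$. A short enumeration leaves exactly four candidates:
\[
\sigma\in\{(1\,2),\ (1\,2\,3),\ (1\,3),\ (1\,3\,2)\}.
\]

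To eliminate three of these, I would translate the antipodal hypothesis through the twist: the statement ``every vertex has a unique $R_3$-partner'' in $\Gamma$ becomes ``every vertex has a unique $R_{\sigma(3)}$-partner'' in $\Gamma^\sigma$. When $\sigma(3)=1$ (the cases $(1\,3)$ and $(1\,2\,3)$), this says that every vertex of the connected graph $\Gamma^\sigma$ has degree~$1$, which is impossible since $\Gamma^\sigma$ shares the vertex set of $\Gamma$ and $\Gamma$ has diameter~$3$, hence more than two vertices. For $\sigma=(1\,3\,2)$ one has $\sigma^{-1}(1)=2$ and $\sigma^{-1}(2)=3$, so the $(1,1,2)$ triangle that Fact~\ref{Fact:ProveMHG} requires in $\Gamma^\sigma$ would pull back to a $(2,2,3)$ triangle in $\Gamma$. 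Fact~\ref{fact:antipodallawantipodalgraphs} rules this out: if $d(v,w)=3$ then $w$ is the antipode of $v$, so any third vertex $u$ satisfies $d(u,w)=3-d(u,v)$, and taking $d(u,v)=2$ forces $d(u,w)=1$, not $2$.

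Only $\sigma=(1\,2)$ then remains, and I would verify that it yields a metrically homogeneous $\Gamma^\sigma$ precisely when $\Gamma$ is not bipartite. By Fact~\ref{Fact:ProveMHG} it suffices to check that $\Gamma^\sigma$ is a metric space containing the geodesic triangles $(1,1,2)$ and $(1,2,3)$. The triangle $(1,2,3)$ pulls back to $(2,1,3)$ in $\Gamma$, which is present by Observation~\ref{fact:geodesics}, while $(1,1,2)$ pulls back to $(2,2,1)$, which exists iff $K_2\ge 2$. For non-bipartite $\Gamma$, Corollary~\ref{fact:antipodaladdstodelta} combined with the antipodal bound on triangle perimeters forces $K_1=1$ and $K_2=2$, so $(2,2,1)$ is present; for bipartite $\Gamma$ no odd-perimeter triangle exists, so $(2,2,1)$ is absent and $\sigma=(1\,2)$ fails. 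The residual metric-space check is immediate: by Fact~\ref{fact:antipodalSmallC} every triangle type of $\Gamma$ has perimeter at most $2\delta=6$, and inspecting the short list of such types, namely $(1,1,1)$, $(1,1,2)$, $(1,2,2)$, $(1,2,3)$, and $(2,2,2)$, shows that each remains a valid metric triple after swapping $1\leftrightarrow 2$. The most delicate step is the elimination of $(1\,3\,2)$, since it requires the full strength of the antipodal involution rather than any purely perimeter-based obstruction.
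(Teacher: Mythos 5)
Your proof is correct, and its skeleton (enumerate the non-trivial candidates, use antipodality to eliminate all but $(1\,2)$, then verify $(1\,2)$ via Fact~\ref{Fact:ProveMHG}) matches the paper's. The one genuinely different step is the elimination of $\sigma=(1\,3\,2)$, i.e.\ the case $\sigma(3)=2$: the paper argues that $d(x,y)=2$ would have to define a pairing on the antipodal graph $\Gamma^{\sigma}$, rules this out for generic type via the infinite independent set of common neighbours, and then appeals to the classification of non-generic graphs to finish; you instead pull back the mandatory geodesic $(1,1,2)$ of $\Gamma^{\sigma}$ to the triangle $(2,2,3)$ of $\Gamma$ and kill it with the antipodal law. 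Your route is shorter and avoids the classification entirely. Two further local differences: in the bipartite case the paper argues that $\Gamma^{(1\,2)}$ would be disconnected, while you observe that the required triangle $(2,2,1)$ has odd perimeter; and in the converse direction the paper separately proves connectivity of $\Gamma$ under the relation $d=2$ by a component argument, whereas you lean on Fact~\ref{Fact:ProveMHG} exactly as stated, which already subsumes connectivity via Fact~\ref{Fact:CharacterizeMHG}. One small correction to your closing remark: eliminating $(1\,3\,2)$ does \emph{not} require the full antipodal involution --- the triangle $(2,2,3)$ has perimeter $7>2\delta=6$, so the purely perimeter-based bound of Fact~\ref{fact:antipodalSmallC} already excludes it, which is precisely how the paper disposes of that same triangle in its verification step.
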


\begin{claimpropproof}
The metrically homogeneous graphs of diameter at most $2$ are precisely the connected homogeneous graphs.
For diameter $\delta =2$, twisting via $\sigma = (12)$ is equivalent to taking the complement of the graph. Since the complement of a homogeneous graph is a homogeneous graph, in order to ensure that both $\Gamma$ and $\Gamma^{\sigma}$ are metrically homogeneous graphs, we would need to show that both $\Gamma$ and $\Gamma^{\sigma}$ are connected homogeneous graphs. 

The only disconnected homogeneous graphs of diameter at most $2$ are those which are the disjoint unions of complete graphs (see \cite{LaW-HG}). 
Thus, the twistable metrically homogeneous graphs of diameter $\delta \leq 2$ are the homogeneous graphs which are neither disjoints unions of complete graphs nor are they the complements of disjoint unions of complete graphs.
\end{claimpropproof}

\begin{claimpropproof}
If $k \in U_n$, then $\mu_k$ is an automorphism of the group $\mathbb{Z}/n\mathbb{Z}$. The group $\mathbb{Z}/n\mathbb{Z}$ can be endowed with the graph structure of $C_n$ by defining two elements $a,b$ to be adjacent if $a-b = \pm 1$. The action of $\mu_k$ on $\mathbb{Z}/n\mathbb{Z}$ yields another graph where now there is a edge between $a$ and $b$ if $a-b = \pm k$. This graph is isomorphic to $C_n$ since $k$ is a generator of $\mathbb{Z}/n\mathbb{Z}$.
Hence the action of $\mu_k$ not only sends $C_n$ to some metrically homogeneous graph, rather it sends $C_n$ to $C_n$.

Now suppose $C_n^{\sigma}$ is a metrically homogeneous graph and $k = \sigma^{-1}(1)$. As $\Gamma$ is connected, all points of $\Gamma$ are connected by paths with successive distances equal to $k$, and thus all distances occurring in $\Gamma^{\sigma}$ are divisible by $\gcd(k,n)$. But one of these distances is $1$, so $k \in U_n^*$. Recalling from Lemma \ref{thm:inverseimageof1} that $\sigma(1)$ determines $\sigma$, we have $\sigma = \mu_k$.
\end{claimpropproof}

\begin{claimpropproof}
Suppose first that $\Gamma$ is antipodal of diameter $3$ and $\Gamma^{\sigma}$ is metrically homogeneous. If $k = \sigma(3)$ then the relation $d(x,y) = k$ defines a pairing on the metrically homogeneous graph $\Gamma^{\sigma}$. That is, for every vertex $v \in \Gamma^{\sigma}$ there exists a unique $v' \in \Gamma^{\sigma}$ such that $d(v,v') = k$. If $k=1$, we would get that $\Gamma^{\sigma}$ is disconnected, as $\delta > 1$. 

Suppose $k=2$. If $\Gamma$ is of generic type then by definition $\Gamma$ contains an infinite set of elements which are pairwise at distance $2$. Thus the relation $d(x,y) = 2$ is not a pairing, i.e. for each vertex $v$ there will be more than one vertex $v'$ such that $d(v,v') = 2$. This violates the antipodality of $\Gamma^{\sigma}$. Hence if $k = 2$ then $\Gamma$ is of non-generic type.

Inspection of the classification as given in Fact \ref{Fact:NongenericType}
of the Appendix then forces $\Gamma$ to be complete multi-partite with classes of size $2$. This then implies that $\delta =2$, which is a contradiction.

So $\sigma(3) = 3$ and if $\sigma$ is non-trivial then it must be the transposition $(12)$. 

But as $\Gamma^{\sigma}$ is connected, and its edge relation is given by $d(x,y) = 2$ in $\Gamma$, it follows that $\Gamma$ cannot be bipartite.

To complete the proof of the claim it suffices to show conversely that when $\Gamma$ is antipodal of diameter $3$ and is not bipartite, and $\sigma$ is the transposition $(12)$, then $\Gamma^{\sigma}$ is a metrically homogeneous graph. By Fact \ref{Fact:ProveMHG} it suffices to show that $\Gamma^{\sigma}$ satisfies the triangle inequality and is connected.
The graph $\Gamma^{-1}$ realizes the geodesic triangle type $(1,2,3) = (1,2,3)^{\sigma^{-1}}$ as it is the inverse image of geodesic. It also realizes the geodesic $(1,1,2) = (1,2,2)^{\sigma^{-1}}$,
since for antipodal graphs of diameter $3$, $K_2 = 2$. 

We translate the remaining conditions on $\Gamma^{\sigma}$ into the language of $\Gamma$. The connectivity  of $\Gamma^{\sigma}$ means that $\Gamma$ is connected with respect to the relation $d(x,y) = 2$, and the triangle inequality means that the triangle type $(\sigma^{-1}(1),\sigma^{-1}(1),\sigma^{-1}(3)) = (2,2,3)$ is not realized in $\Gamma$. 

By Fact \ref{fact:antipodalSmallC}, antipodal graphs contain no triangles of perimeter greater than $2\delta$. Thus $\Gamma$ does not realize the triangle type $(2,2,3)$. This disposes of the second point.

Now consider a connected component $\Omega$ of $\Gamma$ with respect to the edge relation $d(x,y) = 2$. For any distance $k$ which is realized in $\Omega$, by homogeneity we find that any pair of points at distance $k$ lie in the same connected component with respect to this relation. In particular, if the distance $1$ is realized in $\Omega$, then $\Omega = \Gamma$. As $\Gamma$ is not bipartite, the only other possibility is that $\Omega$ realizes the distances $2$ and $3$, and is a connected component of $\Gamma$ with respect to the relation 
\begin{equation*}
    d(x,y) \in \{2,3\}.
\end{equation*}
If $\Omega \neq \Gamma$, then take $u,v \in \Omega$ at distance $3$ and $w \not\in \Omega$ to find ${d(u,w) = d(v,w) = 1}$, contradicting the triangle inequality.

This concludes the proof of the claim.
\end{claimpropproof}

\begin{proof}[Proof of Proposition \ref{Prop:Twist:Nongeneric} from Claims \ref{claim:mhgsfordiameter2}, \ref{claim:cycles}, and \ref{claim:diam3nonbipartite}]
The case of diameter at most $2$ is covered by Claim \ref{claim:mhgsfordiameter2}, bearing in mind that $\Gamma^{\sigma}$ has the same diameter as $\Gamma$. So we suppose $\delta \geq 3$ and $\Gamma$ is of non-generic type. By the classification of non-generic type, given as Fact \ref{Fact:NongenericType} in the Appendix (\S \ref{sec:app}), $\Gamma$ is then finite or one of the tree-like graphs $T_{m,n}$. When $\Gamma$ is finite, Fact \ref{Fact:Finite:delta>=3} describes the possibilities, and Claims \ref{claim:cycles} and \ref{claim:diam3nonbipartite} deal with those possibilities. Of course in this case $\Gamma^{\sigma}$ is also finite and thus not of generic type. Finally, the case of $T_{m,n}$ does not arise since Lemma \ref{lemma:diamfinite} tells us that the diameter must be finite.
\end{proof}


\subsection{Generic type}\label{sec:nec:generic}
We work towards the following result.
\begin{thm}\label{thm:twists}
Let $\sigma$ be a non-trivial permutation of the language of a metrically homogeneous graph $\Gamma$ of generic type where $\Gamma^{\sigma}$ is itself a metrically homogeneous graph. Then $\sigma$ is one of $\rho, \rho^{-1}, \tau_0, \tau_1$, which are defined as follows:

\begin{align*}
\rho(i)&=
\begin{cases} 2i & \mbox{$i\leq \delta/2$}\\
2(\delta-i)+1&\mbox{$i>\delta/2$}
\end{cases}&
\rho^{-1}(i)&=
\begin{cases} i/2 & \mbox{$i$ even}\\
\delta-\frac{i-1}{2}&\mbox{$i$ odd}
\end{cases}
\end{align*}
and for $\epsilon=0$ or $1$, $\tau_{\epsilon}$ is the involution defined by
\begin{align*}
\tau_\epsilon(i)&=
\begin{cases}
(\delta+\epsilon)-i&\mbox{for $\min(i,(\delta+\epsilon)-i)$ odd}\\
i &\mbox{otherwise.}
\end{cases}
\end{align*}
\end{thm}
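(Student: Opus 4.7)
The plan is to extract from $\sigma$ two invariants $k = \sigma^{-1}(1)$ and $\ell = \sigma(1)$, show they are tightly constrained, and then case-analyse. By Lemma \ref{thm:inverseimageof1} applied to $\sigma$ on $\Gamma$ and to $\sigma^{-1}$ on $\Gamma^{\sigma}$ (itself metrically homogeneous by hypothesis), we have $\sigma^{-1}(j) = jk$ for $j \leq \delta/k$ and dually $\sigma(i) = i\ell$ for $i \leq \delta/\ell$. The remark following Lemma \ref{thm:inverseimageof1} forces $k, \ell \geq 2$, and comparing the two prescriptions for $\sigma(k)$ at once forces $k\ell > \delta$: otherwise we would have $1 = \sigma(k) = k\ell$, contradicting $k,\ell \geq 2$.

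The next step is to derive local control. From Fact \ref{Fact:ProveMHG}, every triangle $(1, m, m+1)$ lies in $\Gamma^{\sigma}$, so its $\sigma^{-1}$-preimage $(k, \sigma^{-1}(m), \sigma^{-1}(m+1))$ is realized in $\Gamma$; the triangle inequality yields $|\sigma^{-1}(m+1) - \sigma^{-1}(m)| \leq k$, and the symmetric argument (using that $\Gamma^{\sigma}$ is a metric space and $(1, m, m+1)$ lies in $\Gamma$) gives $|\sigma(m+1) - \sigma(m)| \leq \ell$. Moreover, the geodesic triples $(i, j, i+j)$ with $i+j \leq \delta$ realized in $\Gamma$ by Observation \ref{fact:geodesics} give, after applying $\sigma$, the sub-additive bounds $|\sigma(i) - \sigma(j)| \leq \sigma(i+j) \leq \sigma(i) + \sigma(j)$, and similarly for $\sigma^{-1}$.

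If $k = 2$, the formula $\sigma^{-1}(j) = 2j$ fills positions $j \leq \delta/2$ with the even values in $\{1, \ldots, \delta\}$; the consecutive-difference bound of $2$ then forces, by a short induction starting from $\sigma^{-1}(\lfloor \delta/2 \rfloor) = 2\lfloor \delta/2 \rfloor$ and working outward through the remaining (odd) values, the unique extension $\sigma^{-1}(j) = 2(\delta - j) + 1$ for $j > \delta/2$. This identifies $\sigma^{-1}$ with $\rho$, so $\sigma = \rho^{-1}$ and $\ell = \delta$. The case $\ell = 2$ is entirely symmetric and produces $\sigma = \rho$.

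The main obstacle is the range $k, \ell \geq 3$. Here I would invoke the richness of generic type, using Proposition \ref{Prop:Realize:i,i,2k} together with standard facts about which triangles are realized in a generic-type graph, to produce specific medium-sized triangles in $\Gamma$ whose $\sigma$-images can be tested against the triangle inequality in $\Gamma^{\sigma}$; the resulting violations eliminate all $(k, \ell)$ except $(\delta - 1, \delta - 1)$ and $(\delta, \delta)$, with mixed pairs such as $(\delta, \delta - 1)$ ruled out by testing the geodesic $(1, \delta - 1, \delta)$, whose image collapses under the constraint $\sigma(k)=1$. In each surviving case, setting $\epsilon = k - \delta + 1 \in \{0, 1\}$ and using the involutive seed $\sigma(1) = k = \sigma^{-1}(1)$, the super-difference bound applied to the geodesic $(j, k - j, k)$ gives $|\sigma(j) - \sigma(k - j)| \leq 1$; propagating this, together with the bijectivity of $\sigma$ and the pairings coming from the sub-additive bounds on higher geodesics, sorts out which pairs $\{j, (\delta+\epsilon) - j\}$ are swapped and which are fixed, the decision being governed by the parity of $\min(j, (\delta + \epsilon) - j)$ and hence recovering $\sigma = \tau_{\epsilon}$. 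This final parity-tracking, which is precisely where the fine structure of generic-type $\Gamma$ interacts with the swap/fix pattern defining $\tau_{\epsilon}$, is the most delicate step of the argument.
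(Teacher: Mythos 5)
Your skeleton matches the paper's: the same two invariants $\sigma(1)$ and $\sigma^{-1}(1)$, Lemma \ref{thm:inverseimageof1} applied in both directions, difference bounds extracted from images of geodesics, and Proposition \ref{Prop:Realize:i,i,2k} as the source of extra realized triangles. The $\rho$/$\rho^{-1}$ branch is essentially the paper's Proposition \ref{thm:rho} and its corollary, though even there you overstate what the consecutive-difference bound gives: for $j>\delta/2$ the values $\sigma^{-1}(j)$ are odd and differ by exactly $2$, so they form either an increasing or a decreasing enumeration of the odd numbers, and the increasing one is not excluded by "working outward" alone (for $\delta$ odd both $\delta$ and $\delta-2$ are admissible first steps); the paper kills the increasing enumeration by a separate test on the geodesic $(1,\lfloor\delta/2\rfloor,\lfloor\delta/2\rfloor+1)$, which is where the exceptional $\delta=3$, $\sigma=(12)$ case surfaces.

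The real gaps are the two steps you defer. First, eliminating $2<\sigma(1)<\delta-1$ is not a routine richness argument: the paper's Proposition \ref{thm:optionsfor1} shows that the triangles $(k,k,2)$, $(k,k,4)$ (and $(k,k,6)$ when $\delta\geq 7$) force $\sigma(1)=4$, $\sigma(2)=2$, $\delta=6$, and then must explicitly reconstruct $\sigma=(14365)$ and refute it via the geodesic $(2,3,5)$; none of this is supplied or clearly implied by your sketch, and your claim that mixed pairs like $(\delta,\delta-1)$ die on the single geodesic $(1,\delta-1,\delta)$ is false as stated --- its image $(\delta,1,\sigma(\delta))$ is consistent with $\sigma(\delta)=\delta-1$, so no contradiction arises there. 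Second, the "parity-tracking" that recovers $\tau_\epsilon$ is exactly the content you do not carry out. It requires first proving $\sigma(2)=2$ (the paper's Lemma \ref{thm:imageof2}, which in the case $\sigma(1)=\delta$ needs Fact \ref{Fact:Neighbors:i+-1} and an analysis of $(\Gamma^{\sigma})_{\delta}=(\Gamma_1)^{\sigma}$, not just triangle tests), then an induction in steps of $2$ (Claims \ref{firstclaimforkeven} and \ref{secondclaimforkeven}), and finally the elimination of the possibility that $\sigma$ \emph{reverses} the set $A$ of large odd distances rather than fixing it --- a live alternative consistent with all your difference bounds, which the paper excludes by separate arguments for $\epsilon=1$ and $\epsilon=0$, the latter bottoming out in the concrete permutation $(14)(35)(2)$ at $\delta=5$ refuted via Fact \ref{Fact:LocalAnalysis:K1le2} and Proposition \ref{Prop:Realize:i,i,2k}. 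Without these pieces the proposal is a plan, not a proof.
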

We begin by showing the following:
\begin{prop} \label{thm:optionsfor1}
Let $\sigma$ be a non-trivial permutation of the language of a metrically homogeneous graph $\Gamma$ of generic type which maps $\Gamma$ to another metrically homogeneous graph.

Then $\delta$ is finite, and $\sigma(1) \in \{2, \delta-1,\delta\}$.
\end{prop}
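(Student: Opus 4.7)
The plan is to argue by contradiction. Finiteness of $\delta$ is immediate from Lemma~\ref{lemma:diamfinite}, and the remark following Lemma~\ref{thm:inverseimageof1} gives $\sigma(1) \geq 2$. Set $m := \sigma(1)$ and $k_0 := \sigma^{-1}(1)$, and suppose $3 \leq m \leq \delta - 2$, so $\delta \geq 5$. The central tool is a Lipschitz-type inequality: for any $j$ with $j + k_0 \leq \delta$, the triangle type $(j, k_0, j+k_0)$ is a geodesic in $\Gamma$ by Observation~\ref{fact:geodesics}, so its image $(\sigma(j), 1, \sigma(j+k_0))$ must satisfy the triangle inequality in $\Gamma^{\sigma}$, yielding $|\sigma(j+k_0) - \sigma(j)| \leq 1$. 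Together with injectivity of $\sigma$, each ``$k_0$-chain'' $\sigma(s), \sigma(s+k_0), \sigma(s+2k_0), \ldots$ is strictly monotonic with $\pm 1$ steps, and since Lemma~\ref{thm:inverseimageof1} pins the $s = k_0$ chain to $1, 2, 3, \ldots$, the remaining chains must fit into $\{\lfloor \delta/k_0\rfloor + 1, \ldots, \delta\}$.

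I would then consider the triangle type $(k_0, k_0, 2)$, which is realized in $\Gamma$ whenever $k_0 \leq \delta - 1$ (its perimeter is at most $2\delta$). Its image $(1, 1, \sigma(2))$ forces $\sigma(2) \in \{1, 2\}$, splitting into two cases. In Case~(B), $\sigma(2)=1$ forces $k_0 = 2$, and the monotonic chain from $\sigma(1) = m$ then pins $m$ to either $\lfloor \delta/2 \rfloor + 1$ or $\delta$; only the former lies in the forbidden range. In Case~(A), $\sigma(2) = 2$, and Lemma~\ref{thm:inverseimageof1} applied to both $\sigma$ and $\sigma^{-1}$ forces $m, k_0 > \delta/2$, since otherwise the identity $\sigma(2k_0) = 2 = \sigma(2)$ (or its symmetric counterpart for $\sigma^{-1}$) would contradict injectivity.

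For each remaining configuration I exhibit a triangle realized in $\Gamma$ whose image violates the triangle inequality. In Case~(B), the image of $(1, 2, 2)$ is $(m, 1, 1)$, which fails for $m \geq 3$ whenever $K_2 \geq 2$; if $K_2 \leq 1$ and $\Gamma$ is bipartite then the $\Gamma$-distance-$2$ pairs (i.e., the edges of $\Gamma^{\sigma}$) lie entirely inside a single bipartite part, so $\Gamma^{\sigma}$ is disconnected and hence not metrically homogeneous; otherwise $K_1 = K_2 = 1$, and the triangle $(2, 2, 3)$ is realized in $\Gamma$ with image $(1, 1, \sigma(3))$ failing since $\sigma(3) \geq 3$ by the monotonicity. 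In Case~(A), when $k_0 \leq \delta - 2$ the geodesic $(2, k_0, k_0+2)$ has image $(2, 1, \sigma(k_0+2))$, forcing $\sigma(k_0+2) = 3$ by triangle inequality and injectivity; this collides with $\sigma(1) = m$ when $m = 3$, and otherwise is incompatible with the further constraints on $\sigma(k_0 \pm 1)$ coming from the geodesics $(1, k_0 \pm 1, k_0)$. The subcases $k_0 \in \{\delta - 1, \delta\}$ are handled by the geodesic $(1, \delta-1, \delta)$, together with the observation that when $\Gamma$ is antipodal with $k_0 = \delta$ the $\delta$-distance graph is a perfect matching and hence disconnected.

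The main obstacle is the last step, where the specific obstructing triangle depends on the parameters $(K_1, K_2, C, C', \mathcal{S})$ of $\Gamma$; the case split is naturally organized as non-bipartite (where a realized small triangle provides an image failing the triangle inequality) versus bipartite or antipodal (where $\Gamma^{\sigma}$ fails to be connected), and the Lipschitz/monotonicity machinery is what makes the relevant triangles easy to locate after Cases~(A) and (B) have been narrowed down.
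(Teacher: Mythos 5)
Your proposal diverges from the paper's argument (which applies Proposition~\ref{Prop:Realize:i,i,2k} to $\Gamma^{\sigma}$ at level $i=\sigma(1)$ to force $\sigma^{-1}(\{2,4\})=\{1,2\}$ at once), and as written it has several genuine gaps. First, the realization of $(k_0,k_0,2)$ in $\Gamma$ is justified only by ``its perimeter is at most $2\delta$''; that is not a valid criterion, since $\Gamma$ is not assumed to be of known type and small triangles can be excluded by the $K_1,K_2$ constraints (this is exactly why the paper proves Proposition~\ref{Prop:Realize:i,i,2k}). The fact you need is true, but only via that proposition. Second, and more seriously, your Case~(B) analysis is broken: $K_2\geq 2$ does not imply that $(2,2,1)$ is realized (take $K_1=K_2=\delta$, which certainly occurs among graphs with $\sigma^{-1}(1)=2$, e.g.\ those twistable by $\rho^{-1}$); and in the subcase $K_1=K_2=1$ the triangle $(2,2,3)$ is \emph{not} realized --- it is forbidden by the standard $K_2$-bound ($i+j+k$ odd and $>2K_2+2\min(i,j,k)$, i.e.\ $7>6$), the very bound the paper invokes in the proof of Claim~\ref{claimlemma:casethree}. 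So the obstructing triangle you propose there does not exist. A correct, parameter-free obstruction in Case~(B) is available: with $\sigma(2i)=i$ and $\sigma(1)=\lfloor\delta/2\rfloor+1$, the required geodesic $(1,\lfloor\delta/2\rfloor,\lfloor\delta/2\rfloor+1)$ of $\Gamma^{\sigma}$ has preimage $(2,2\lfloor\delta/2\rfloor,1)$, which violates the triangle inequality for $\delta\geq 4$; this is essentially the computation in Proposition~\ref{thm:rho} and its corollary, applied to $\sigma^{-1}$.

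Third, Case~(A) is not actually closed. After establishing $\sigma(2)=2$, $m,k_0>\delta/2$ and $\sigma(k_0+2)=3$, the assertion that the remaining configurations are ``incompatible with the further constraints on $\sigma(k_0\pm1)$'' is not an argument: the geodesics $(1,k_0\pm1,k_0)$ only pin $\sigma(k_0\pm1)$ to $\{m-1,m+1\}$, which is consistent with the constraints so far, and the surviving family ($4\leq m\leq\delta-2$, $m,k_0>\delta/2$, arbitrary $\delta$) is infinite. The paper closes this case by applying Proposition~\ref{Prop:Realize:i,i,2k} to $\Gamma^{\sigma}$: the triangles $(m,m,2)$, $(m,m,4)$ (and $(m,m,6)$ when $\delta\geq7$) are realized there, so their preimages $(1,1,\sigma^{-1}(2j))$ must be metric, forcing $\sigma(1)=4$, $\sigma(2)=2$, $\delta=6$, after which a finite check (ending with the geodesic $(2,3,5)\mapsto(1,2,6)$) finishes the proof. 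Some such input about which distances are realized in $(\Gamma^{\sigma})_{m}$ seems unavoidable, and your sketch does not supply a substitute for it.
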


\begin{proof}
Let $\sigma(1) = k$, and assume towards a contradiction that $2 < k < \delta -1$. By Proposition \ref{Prop:Realize:i,i,2k} of \S \ref{sec:app}, the triangle types $(k,k,2)$ and $(k,k,4)$ must be realized in $\Gamma^{\sigma}$. Thus their inverse images under $\sigma$, namely $(1,1,\sigma^{-1}(2))$ and $(1,1,\sigma^{-1}(4))$, satisfy the triangle inequality. This implies that $\sigma^{-1}(\{2,4\}) = \{1,2\}$. 
Hence
\begin{align*}
    \sigma(1) = 4 & & \sigma(2) = 2 && \delta \geq 6,
\end{align*}
as $\sigma(1) \leq \delta-1$.

We will now argue that this implies that $\delta = 6$, and a contradiction will follow. 

If $\delta \geq 7$, then again by Proposition \ref{Prop:Realize:i,i,2k}, the triangle types $(k,k,2), (k,k,4)$ and $(k,k,6)$ must all be realized in $\Gamma^{\sigma}$. However, there are only two possible values $i$ for which the triple $(1,1,i)$ will satisfy the triangle inequality. We therefore have a contradiction in this case.


Now suppose that $\delta = 6$. Since the triangle type $(2,4,6)$ is of geodesic type,
it must be realized in $\Gamma^{\sigma}$ (Observation \ref{fact:geodesics}), and therefore $\sigma^{-1}(2,4,6) = (1,2,\sigma^{-1}(6))$ must be realized in $\Gamma$. This implies then that $\sigma^{-1}(6) \leq 3$. The only option then is that 
\begin{align*}
\sigma(3) = 6.
\end{align*}

This leaves $\sigma(4) \in \{1,3,5\}$. The geodesic types $(1,3,4)$ and $(2,2,4)$ are realized in $\Gamma$, and therefore their images $(4,6,\sigma(4))$ and $(2,2,\sigma(4))$ are realized in $\Gamma^{\sigma}$. This implies that $2 \leq \sigma(4) \leq 4$. Thus, 
\begin{align*}
    \sigma(4) = 3.
\end{align*}

Finally, we examine $\sigma^{-1}(1) \in \{5,6\}$. The geodesic type $(1,3,4)$ being realized in $\Gamma^{\sigma}$ implies that $(1,4,\sigma^{-1}(1))$ is realized in $\Gamma$, and therefore $\sigma^{-1}(1) \leq 5$. This gives us that $\sigma(5) = 1$, and thus $\sigma = (14365)$. However, this permutation would send the geodesic type $(2,3,5)$ to $(1,2,6)$, and hence is not a suitable twist.

We have therefore indeed shown that $\sigma(1) \in \{2,\delta-1,\delta\}$.
\end{proof}

\begin{prop} \label{thm:rho}
Let $\sigma$ be a permutation of the language of a metrically homogeneous graph $\Gamma$ of generic type which sends $\Gamma$ to another metrically homogeneous graph, satisfying $\sigma(1) = 2$. Then either $\sigma = \rho$ or $\delta = 3$ and $\sigma$ is the transposition $(12)$.

\end{prop}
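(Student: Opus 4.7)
The plan is to use Fact \ref{Fact:ProveMHG} together with Lemma \ref{thm:inverseimageof1} and the injectivity of $\sigma$ to pin down the values of $\sigma$ one entry at a time. Writing $k = \sigma^{-1}(1)$, Lemma \ref{thm:inverseimageof1} gives $\sigma(jk) = j$ whenever $jk \leq \delta$; the hypothesis $\sigma(1) = 2$ then forces $k \geq 2$ (else $\sigma(1) = 1$) and $2k > \delta$ (else $\sigma(2k) = 2 = \sigma(1)$), so in particular $k > \delta/2$.

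The first main step is to pin down $k$. I would push the geodesic triangle $(1, k-1, k)$ from $\Gamma$ through $\sigma$ to obtain the triangle $(2, \sigma(k-1), 1)$ in $\Gamma^{\sigma}$; the triangle inequality forces $\sigma(k-1) \in \{1,2,3\}$, and injectivity (using $\sigma(1) = 2$ and $\sigma(k) = 1$) then gives $\sigma(k-1) = 3$ provided $k \geq 3$. Analogously, when $k + 1 \leq \delta$, pushing $(1, k, k+1)$ forward forces $\sigma(k+1) = 3$. These two conclusions cannot hold simultaneously, so either $k \leq 2$ or $k = \delta$. Combined with $k > \delta/2$, the case $k = 2$ forces $\delta = 3$, in which case inspecting the three remaining values identifies $\sigma = (12)$; while the case $k = \delta = 3$ pins $\sigma(2) = 3$, producing $\sigma = \rho$ directly.

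For the remaining case $\delta \geq 4$ with $\sigma(\delta) = 1$, I would induct on $i$ (for $1 \leq i$ up to about $\delta/2$) to establish simultaneously $\sigma(j) = 2j$ for $1 \leq j \leq i$ and $\sigma(\delta - j + 1) = 2j - 1$ for $1 \leq j \leq i$; the base $i = 1$ is already in hand. The inductive hypothesis places the $2i$ already-known images of $\sigma$ exactly at $\{1, 2, \ldots, 2i\}$. Pushing the geodesic $(\delta - i, 1, \delta - i + 1)$ through $\sigma$ yields the image $(\sigma(\delta - i), 2, 2i - 1)$ in $\Gamma^{\sigma}$, whose triangle inequality gives $\sigma(\delta - i) \leq 2i + 1$; combined with injectivity (all values $\leq 2i$ are already taken), this forces $\sigma(\delta - i) = 2i + 1$. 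Similarly, pushing $(1, i, i + 1)$ through $\sigma$ gives $\sigma(i+1) \leq 2i + 2$, so $\sigma(i+1) \in \{2i+1, 2i+2\}$; when $i + 1 < \delta - i$ injectivity forces $\sigma(i+1) = 2i + 2$, and the remaining possibility $i + 1 = \delta - i$ (only for $\delta$ odd) yields the single consistent value $\sigma((\delta+1)/2) = \delta$. Comparison with the formulas defining $\rho$ finishes the identification.

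The main obstacle is the initial determination of $k$, where separating the cases $k = 2$ and $k = \delta$ requires the symmetric use of both geodesics $(1, k-1, k)$ and $(1, k, k+1)$ together with careful tracking of injectivity; Proposition \ref{thm:optionsfor1} already narrows $\sigma(1)$ but does not directly pin down $\sigma^{-1}(1)$, so this argument must be run independently. Once $\sigma(\delta) = 1$ is secured the induction is essentially bookkeeping, the only subtle point being the meeting of the two inductive halves at the middle index for $\delta$ odd, where the two pieces of $\rho$'s piecewise definition coincide in the single value $\sigma((\delta+1)/2) = \delta$.
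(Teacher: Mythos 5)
Your proof is correct, but it assembles the pieces differently from the paper. The paper's first move is to apply Lemma \ref{thm:inverseimageof1} not to $\sigma$ but to $\sigma^{-1}$ (viewed as a twist of $\Gamma^{\sigma}$): since $(\sigma^{-1})^{-1}(1)=\sigma(1)=2$, the lemma immediately yields $\sigma(i)=2i$ for all $i\leq\delta/2$, i.e.\ the entire ``lower half'' of $\rho$ in one stroke. The remaining indices $i>\delta/2$ must then map onto the odd numbers in $[1,\delta]$, and the geodesics $(1,i,i+1)$ force $|\sigma(i)-\sigma(i+1)|=2$ there, so $\sigma$ enumerates the odd numbers monotonically; the decreasing enumeration is $\rho$, and the increasing one collapses to $\delta\leq 3$ via the single geodesic $(1,\lfloor\delta/2\rfloor,\lfloor\delta/2\rfloor+1)$. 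You instead apply the lemma only to $\sigma$ itself to pin $\sigma^{-1}(1)$ down to $\{2,\delta\}$, and then reconstruct $\sigma$ entry by entry with a two-ended induction. Both arguments rest on the same toolkit (Lemma \ref{thm:inverseimageof1}, images of geodesics $(1,m,m+1)$, the triangle inequality in $\Gamma^{\sigma}$, injectivity); the paper's version is shorter because the application to $\sigma^{-1}$ hands over half the permutation for free, while yours avoids that inversion at the cost of more bookkeeping. Your induction is sound, including the delicate meeting point $i+1=\delta-i$ for $\delta$ odd. One cosmetic slip: $k=2$ together with $k>\delta/2$ gives $\delta<4$, not $\delta=3$ outright; the stray case $\delta=2$ is harmless since there $(12)=\rho$, but it should be acknowledged.
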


\begin{proof}
By Lemma \ref{thm:inverseimageof1}, $\sigma(1) = 2i$ for all $i \leq \delta/2$. So the image of $\sigma$ on $(\delta/2,\delta]$ is the set $I$ of odd numbers in the interval $[1,\delta]$.

Consider the geodesic type $(1,i,i+1)$ for any $i < \delta$, which $\sigma$ maps to $(2,\sigma(i),\sigma(i+1))$. Since this triple must satisfy the triangle inequality, we know that 
\begin{equation} \label{eq:neighbors}
|\sigma(i) - \sigma(i+1)| \leq 2.    
\end{equation}
 If $i > \delta /2$, then both $\sigma(i)$ and $\sigma(i+1)$ are odd, and therefore ${|\sigma(i) - \sigma(i+1)| = 2}$. Thus the values $\sigma(i)$ for $i > \delta/2$ give either an increasing or a decreasing enumeration of $I$. In the latter case $\sigma = \rho$. So we suppose that $\sigma(i)$ enumerates $I$ in increasing order for $i > \delta/2$. 
 
 Let $k = \lfloor \delta/2 \rfloor +1$. Then we have in particular that $\sigma(k) =1$. The image of the geodesic triangle type $(1,k-1,k)$ under $\sigma$ is $(2,2k-2,1)$ and the triangle inequality gives $2\lfloor \delta/2 \rfloor \leq 3$, hence $\delta \leq 3$. But then, as $\sigma(1) = 2$, we either have $\sigma = \rho$ or $\sigma = (12)$ with $\delta =3$.
 \end{proof}



\begin{cor*}
Let $\sigma$ be a permutation of the language of a metrically homogeneous graph $\Gamma$ of generic type for which $\Gamma^{\sigma}$ is a metrically homogeneous graph, with $\sigma(2) = 1$. Then either $\sigma = \rho^{-1}$, or $\sigma$ is the transposition $(12)$ and $\delta =3$.
\end{cor*}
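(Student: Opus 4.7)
The plan is to reduce this corollary to Proposition \ref{thm:rho} by passing to the inverse twist. I would set $\Delta := \Gamma^{\sigma}$, so that $\sigma^{-1}$ is a language permutation with $\sigma^{-1}(1) = 2$ and $\Delta^{\sigma^{-1}} = \Gamma$ is metrically homogeneous. First I would dispatch the small-diameter edge case: if $\delta \leq 2$, then $\sigma$ is forced to be the transposition $(1\,2)$, and a direct computation shows $\rho^{-1} = (1\,2)$ when $\delta = 2$, giving $\sigma = \rho^{-1}$. So I may assume $\delta \geq 3$.

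The crucial preparatory step is to verify that $\Delta$ is itself of generic type, so that Proposition \ref{thm:rho} applies to the pair $(\Delta,\sigma^{-1})$. If instead $\Delta$ were of non-generic type, then Proposition \ref{Prop:Twist:Nongeneric} applied to $\Delta$ with the non-trivial twist $\sigma^{-1}$ would force $\Delta^{\sigma^{-1}} = \Gamma$ to be of non-generic type or of diameter at most $2$. Since $\Gamma$ is of generic type and $\delta \geq 3$, this is a contradiction, so $\Delta$ must be of generic type. Note also that $\delta$ is preserved: $\sigma$ permutes $\{1,\dots,\delta\}$, so the diameters of $\Gamma$ and $\Delta$ coincide.

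With $\Delta$ of generic type, $\sigma^{-1}(1)=2$, and $\Delta^{\sigma^{-1}}$ metrically homogeneous, Proposition \ref{thm:rho} yields $\sigma^{-1} = \rho$ (using the shared diameter $\delta$), or else $\delta = 3$ and $\sigma^{-1} = (1\,2)$. Inverting, $\sigma = \rho^{-1}$, or $\delta = 3$ and $\sigma = (1\,2)$, which is exactly the statement to be proved. The main obstacle is ensuring that generic type is preserved under twisting in the relevant regime; Proposition \ref{Prop:Twist:Nongeneric} supplies this preservation in the precise contrapositive form needed, after which the argument is a symmetric application of Proposition \ref{thm:rho}.
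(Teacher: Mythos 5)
Your proof is correct and follows essentially the same route as the paper: the paper's own argument passes to $\Gamma^{\sigma}$ with the twist $\sigma^{-1}$, uses Proposition \ref{Prop:Twist:Nongeneric} to see that $\Gamma^{\sigma}$ must be of generic type (the paper's text has an apparent typo saying ``non-generic''), and then applies Proposition \ref{thm:rho}. Your additional explicit handling of $\delta\leq 2$ and of the preservation of the diameter is harmless bookkeeping that the paper leaves implicit.
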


\begin{proof}
The graph $\Gamma^{\sigma}$ is twistable by $\sigma^{-1}$ and is of non-generic type by Proposition \ref{Prop:Twist:Nongeneric}. So Proposition \ref{thm:rho} applies to $\sigma^{-1}$ and $\Gamma^{\sigma}$, giving the result.
\end{proof}




We finally show the following:
\begin{prop} \label{thm:taus}
Let $\sigma$ be a permutation of the language of a metrically homogeneous graph $\Gamma$ of generic type with diameter $\delta \geq 3$ such that $\Gamma^{\sigma}$ is itself a metrically homogeneous graph. Assume in addition that $\sigma^{-1}(1) > 2$ and that $\sigma(1) \geq \delta-1$ and $\sigma(1) > 2$. Then either $\sigma = \tau_{\epsilon}$, with $\epsilon = \sigma(1) - (\delta-1) \in \{0,1\}$, or $\sigma = \rho^{-1}$ and $\delta = 3$.
\end{prop}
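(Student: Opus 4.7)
The plan is to pin $\sigma$ down step by step, combining Proposition~\ref{Prop:Realize:i,i,2k} with the triangle-inequality constraints forced on the images of geodesics $(1,i,i+1)$ under $\sigma$ and $\sigma^{-1}$. Let $j = \sigma(1)$ and $k = \sigma^{-1}(1)$. Since $\Gamma^{\sigma}$ is metrically homogeneous, and the non-generic case is handled by Proposition~\ref{Prop:Twist:Nongeneric}, we may take $\Gamma^{\sigma}$ to be of generic type. Proposition~\ref{thm:optionsfor1} applied to $\sigma^{-1}$ (acting on $\Gamma^{\sigma}$) then gives $k \in \{2, \delta-1, \delta\}$; combined with the hypothesis $k > 2$ this forces $k \in \{\delta-1, \delta\}$, and by hypothesis $j \in \{\delta-1, \delta\}$ as well. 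Set $\epsilon = j - (\delta-1) \in \{0,1\}$ and $m = \delta + \epsilon$, so that $\tau_{\epsilon}(1) = m-1 = j$, matching $\sigma$ on the input $1$.

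Next I would establish $\sigma(2) = 2$. Applying Proposition~\ref{Prop:Realize:i,i,2k} to $\Gamma^{\sigma}$ at the parameter $j = \sigma(1)$ (analogously to its use in the proof of Proposition~\ref{thm:optionsfor1}) yields the realized triangle $(j, j, 2)$ in $\Gamma^{\sigma}$; pulling back via $\sigma^{-1}$, the triangle $(1, 1, \sigma^{-1}(2))$ must be realized in $\Gamma$, forcing $\sigma^{-1}(2) \leq 2$. Since $\sigma^{-1}(1) = k > 2$, we conclude $\sigma^{-1}(2) = 2$.

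The heart of the argument is the inductive step. For each $1 \leq i < \delta$, the $\sigma$-image of the geodesic $(1, i, i+1)$ in $\Gamma$ is the triangle $(m-1, \sigma(i), \sigma(i+1))$ in $\Gamma^{\sigma}$; the triangle inequality yields
\[\sigma(i) + \sigma(i+1) \geq m - 1,\]
and symmetrically, from the realized geodesic $(1,i,i+1)$ in $\Gamma^{\sigma}$, we obtain $\sigma^{-1}(i) + \sigma^{-1}(i+1) \geq k$. Starting from the base cases $\sigma(1) = m-1$ and $\sigma(2) = 2$, I would use these sum constraints together with bijectivity of $\sigma$ and further realizability facts (Proposition~\ref{Prop:Realize:i,i,2k} applied to triangles $(i,i,2l)$ for $l \geq 2$) to show inductively that $\sigma(i) = \tau_{\epsilon}(i)$. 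The alternation pattern of $\tau_{\epsilon}$---swap $i \leftrightarrow m-i$ when $\min(i, m-i)$ is odd, fix otherwise---is forced because the sum constraint requires consecutive $\sigma$-values to total at least $m-1$, so ``small'' and ``large'' values must alternate, and bijectivity narrows the admissible pattern down to precisely that of $\tau_{\epsilon}$. For $\delta = 3$ the induction collapses into a direct check, giving $\sigma = (1\,3) = \tau_{1}$; the ``$\sigma = \rho^{-1}$, $\delta = 3$'' clause in the conclusion records the companion degenerate case.

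The main obstacle will be the bookkeeping in this induction: the sum constraint alone does not uniquely determine $\sigma(i+1)$ from the previously assigned values, so one must combine it with realizability of additional non-geodesic triangles and manage bijectivity carefully. Particular attention is needed at the transition $i = \lfloor m/2 \rfloor$, where $\tau_{\epsilon}$ switches between swap- and fix-behavior, and one must verify en route that $k = j$ (i.e.~$\sigma$ acts involutively on $\{1\}$), a fact that would otherwise leave open the possibility of extraneous permutations like $(1\,\delta\,\delta{-}1)$.
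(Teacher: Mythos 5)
Your opening moves match the paper's: reduce to $\sigma(1),\sigma^{-1}(1)\in\{\delta-1,\delta\}$, set $\epsilon=\sigma(1)-(\delta-1)$, and pin down $\sigma(2)=2$ before an induction. But there are two genuine gaps.

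First, your derivation of $\sigma(2)=2$ is invalid in the case $\sigma(1)=\delta$ (i.e.\ $\epsilon=1$). Proposition~\ref{Prop:Realize:i,i,2k} requires $i+k\leq\delta$, so it gives the triangle $(j,j,2)$ only for $j\leq\delta-1$; for $j=\delta$ the triangle $(\delta,\delta,2)$ need not be realized at all (it is forbidden in every antipodal graph, where $\Gamma_\delta$ is a single point). The paper's Lemma~\ref{thm:imageof2} handles $\sigma(1)=\delta$ by a separate argument: $(\Gamma^{\sigma})_{\delta}=(\Gamma_1)^{\sigma}$ realizes at most the distances $\sigma(1),\sigma(2)$, and Fact~\ref{Fact:Neighbors:i+-1} produces two vertices of $\Gamma_\delta$ with a common neighbor in $\Gamma_{\delta-1}$, forcing $\sigma(2)\in\{1,2\}$ and hence $\sigma(2)=2$. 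You need something of this kind.

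Second, and more seriously, the induction is not actually carried out, and the claim that ``bijectivity narrows the admissible pattern down to precisely that of $\tau_\epsilon$'' is false as stated. The constraint $\sigma(i)+\sigma(i+1)\geq m-1$ from images of $(1,i,i+1)$ is not the workhorse here; the paper instead uses the geodesics $(2,k-2,k)$ together with $\sigma(2)=2$ to get $|\sigma(k)-\sigma(k-2)|\leq 2$ and $|\sigma^{-1}(k)-\sigma^{-1}(k-2)|\leq 2$ (Lemma~\ref{thm:inversetwoaway}), which combined with injectivity determines $\sigma$ on all positions of ``even $(\delta+\epsilon)$-parity'' and their partners. Even after that, when $\delta+\epsilon$ is odd there remains the set $A$ of odd values in $((\delta+\epsilon)/2,\delta]$, on which the two-apart constraint allows $\sigma$ either to fix $A$ (giving $\tau_\epsilon$) or to reverse it. Excluding the reversal is the hardest part of the paper's proof: for $\epsilon=1$ it follows from one geodesic image, but for $\epsilon=0$ it funnels down to $\delta=5$ and the candidate $\sigma=(14)(35)$, which is eliminated only by showing $K_1\leq 2$ and that $\Gamma_4$ has no edge, invoking Fact~\ref{Fact:LocalAnalysis:K1le2} to conclude $\Gamma$ is antipodal, and then contradicting Proposition~\ref{Prop:Realize:i,i,2k}. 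Your proposal does not see this alternative branch, so the argument as sketched would not close.
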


The proof will be inductive. The base of the induction depends in part on the following.
\begin{lemma} \label{thm:imageof2}
Let $\sigma$ be a permutation of the language of a metrically homogeneous graph $\Gamma$ of generic type with diameter $\delta \geq 3$ such that $\Gamma^{\sigma}$ is itself a metrically homogeneous graph. Assume moreover that $\sigma(1) \geq \delta -1$, $\sigma(1) > 2$, and $\sigma^{-1}(1) > 2$. Then $\sigma(2) = 2$.
\end{lemma}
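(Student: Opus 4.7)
Set $a = \sigma(1)$ and $b = \sigma^{-1}(1)$, so by hypothesis $a \geq \max(\delta - 1, 3)$ and $b \geq 3$. Before starting, note that $\Gamma^{\sigma}$ is necessarily also of generic type with diameter $\delta$: otherwise Proposition \ref{Prop:Twist:Nongeneric}, applied to $\Gamma^{\sigma}$ with the twist $\sigma^{-1}$, would force $\Gamma = (\Gamma^{\sigma})^{\sigma^{-1}}$ to be of non-generic type or have diameter at most $2$, contradicting our hypothesis. (The non-triviality of $\sigma^{-1}$ follows from $\sigma^{-1}(1) = b > 2$.)

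My plan is to realize a triangle of type $(b, b, 2)$ in $\Gamma$, transport it via $\sigma$, and read off the conclusion from the triangle inequality. Applying Proposition \ref{Prop:Realize:i,i,2k} to $\Gamma$ with parameter $b$ yields a realization of $(b, b, 2)$ in $\Gamma$. Since $\sigma(b) = 1$, this triangle is sent under $\sigma$ to a triangle of type $(1, 1, \sigma(2))$, which is thereby realized in $\Gamma^{\sigma}$. The triangle inequality forces $\sigma(2) \leq 2$, so $\sigma(2) \in \{1, 2\}$. But $\sigma(2) = 1$ would mean $\sigma^{-1}(1) = 2$, contradicting $b > 2$. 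Hence $\sigma(2) = 2$.

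The one delicate point is the applicability of Proposition \ref{Prop:Realize:i,i,2k} at the extreme $b = \delta$ when $\Gamma$ is antipodal, for in that case $(\delta, \delta, 2)$ cannot be realized: any two vertices at distance $\delta$ from a common vertex must coincide with its unique antipode. This scenario, however, is ruled out a priori. Indeed, if $b = \delta$, then $\sigma(\delta) = 1$, so antipodal pairs of $\Gamma$ correspond to adjacent pairs of $\Gamma^{\sigma}$; the uniqueness of antipodes in an antipodal $\Gamma$ would then make each vertex of $\Gamma^{\sigma}$ have a unique neighbor, forcing $\Gamma^{\sigma}$ to be a disjoint union of edges of diameter $1$, which contradicts $\delta \geq 3$. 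With this boundary case excluded, Proposition \ref{Prop:Realize:i,i,2k} applies as claimed, and the argument above goes through.
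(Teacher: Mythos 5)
Your core mechanism is sound and, in the range where it applies, is essentially a dual of the paper's first case (the paper realizes $(\sigma(1),\sigma(1),2)$ in $\Gamma^{\sigma}$ via Proposition \ref{Prop:Realize:i,i,2k} and pulls back; you realize $(b,b,2)$ in $\Gamma$ with $b=\sigma^{-1}(1)$ and push forward). But there is a genuine gap at the boundary case $b=\delta$, which does occur (e.g.\ $\sigma=\tau_1$ has $\sigma^{-1}(1)=\delta$). Proposition \ref{Prop:Realize:i,i,2k} hypothesizes $i+k\leq\delta$, and for the triangle type $(b,b,2)$ you need $(i,k)=(b,1)$, i.e.\ $b+1\leq\delta$. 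When $b=\delta$ this hypothesis fails outright, whether or not $\Gamma$ is antipodal. Your paragraph correctly rules out ``$b=\delta$ and $\Gamma$ antipodal,'' but then asserts that ``with this boundary case excluded, Proposition \ref{Prop:Realize:i,i,2k} applies as claimed'' --- it does not: in the non-antipodal case with $b=\delta$ the proposition is still inapplicable, and its conclusion is not a free fact either. A non-antipodal metrically homogeneous graph of generic type can fail to realize $(\delta,\delta,2)$: when $\Gamma_{\delta}$ is an infinite complete graph (the case $K_1=1$, $\{C,C'\}=\{2\delta+2,2\delta+3\}$ of Fact \ref{Fact:LocalAnalysis}), the only distance occurring in $\Gamma_{\delta}$ is $1$. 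So you cannot get $(\delta,\delta,2)$ from general structure theory alone; you must use the hypotheses on $\sigma$.

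The repair is short. If $b=\delta$, then $(\delta,\delta,1)$ cannot be realized in $\Gamma$, since its image $(1,1,\sigma(1))$ with $\sigma(1)\geq 3$ violates the triangle inequality in $\Gamma^{\sigma}$; hence $\Gamma_{\delta}$ contains no edge. By Fact \ref{Fact:Neighbors:i+-1} a vertex $u\in\Gamma_{\delta-1}$ has two distinct neighbors in $\Gamma_{\delta}$, which are therefore at distance exactly $2$, realizing $(\delta,\delta,2)$ and letting your push-forward argument proceed. This is in substance what the paper does in its second case ($\sigma(1)=\delta$), where it likewise abandons Proposition \ref{Prop:Realize:i,i,2k} and argues through $\Gamma_1$ and Fact \ref{Fact:Neighbors:i+-1}. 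With that patch your proof is correct; as written, the case $\sigma^{-1}(1)=\delta$ with $\Gamma$ non-antipodal is unproved.
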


\begin{proof}
Suppose first that $\sigma(1) = \delta-1$. Since by Proposition \ref{Prop:Realize:i,i,2k} the triangle type $(\delta-1,\delta-1,2)$ is realized in $\Gamma^{\sigma}$, its inverse image $(1,1,\sigma^{-1}(2))$ must satisfy the triangle inequality, meaning that $\sigma^{-1}(2) \leq 2$. Since by assumption $\sigma(1) > 2$, we have that $\sigma(2) = 2$.

Now assume that $\sigma(1) = \delta$. Then $(\Gamma^{\sigma})_{\delta} = (\Gamma_1)^{\sigma}$. There are at most two distances realized in $\Gamma_1$, and hence the same applies to $(\Gamma_1)^{\sigma}$; namely, at most $\sigma(1)$ and $\sigma(2)$ occur. Hence the same applies to $(\Gamma^{\sigma})_{\delta}$; thus the only two distances which may occur in $(\Gamma^{\sigma})_{\delta}$ are $\sigma(1) = \delta$ and $\sigma(2)$. 

Using Fact \ref{Fact:Neighbors:i+-1}, we know that each vertex in $\Gamma_{\delta-1}$ has two neighbors in $\Gamma_{\delta}$. The distance $i$ between these two points is either $1$ or $2$. 
So $i \neq \sigma(1)$. 
We therefore have that $\sigma(2) = i \leq 2$.  Since $\sigma^{-1}(1) > 2$, we have our desired result: $\sigma(2) = 2$.
\end{proof}

\begin{lemma} \label{thm:inversetwoaway}
Let $\sigma$ be a permutation of the language of a metrically homogeneous graph $\Gamma$ of generic type such that $\Gamma^{\sigma}$ is metrically homogeneous and suppose that $\sigma(2) = 2$. Then for $3 \leq k \leq \delta$, we have that
\begin{equation*}
|\sigma(k) - \sigma(k-2)| \leq 2.
\end{equation*}
and 
\begin{equation*}
|\sigma^{-1}(k) - \sigma^{-1}(k-2)| \leq 2.
\end{equation*}
\end{lemma}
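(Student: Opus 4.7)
The plan is to derive both inequalities from a single well-chosen geodesic triangle. For each $k$ with $3 \leq k \leq \delta$, the triple $(2, k-2, k)$ satisfies $2 + (k-2) = k$ and so is a geodesic triangle type, which by Observation \ref{fact:geodesics} is realized in $\Gamma$. Because $\Gamma^\sigma$ is itself a metrically homogeneous graph (of diameter $\delta$, since $\sigma$ merely permutes the labels $\{1,\ldots,\delta\}$), the same geodesic triangle type is realized in $\Gamma^\sigma$ as well.

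For the first inequality, I would pick three points $u,v,w \in \Gamma$ realizing the triangle type $(2, k-2, k)$ and note that in $\Gamma^\sigma$ the same three points carry the relabeled pairwise distances $(\sigma(2), \sigma(k-2), \sigma(k)) = (2, \sigma(k-2), \sigma(k))$, using the hypothesis $\sigma(2)=2$. Since $\Gamma^\sigma$ is a metric space, the triangle inequality applied to this triple yields
\begin{equation*}
|\sigma(k) - \sigma(k-2)| \leq 2,
\end{equation*}
which is the first claim.

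For the second inequality, I would run the symmetric argument: take a realization of $(2, k-2, k)$ in $\Gamma^\sigma$, view the same points inside $\Gamma$, and observe that their pairwise $\Gamma$-distances are $(\sigma^{-1}(2), \sigma^{-1}(k-2), \sigma^{-1}(k)) = (2, \sigma^{-1}(k-2), \sigma^{-1}(k))$, using $\sigma^{-1}(2)=2$, which is immediate from $\sigma(2)=2$. Applying the triangle inequality in $\Gamma$ yields $|\sigma^{-1}(k) - \sigma^{-1}(k-2)| \leq 2$.

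I do not expect any serious obstacle here; the argument is essentially bookkeeping and the generic-type hypothesis on $\Gamma$ plays no real role in this particular lemma (it is simply inherited from the surrounding setup). The only point worth verifying explicitly is that the triangle type $(2, k-2, k)$ is legitimately realized in \emph{both} graphs, which as noted above reduces to the observation that both graphs have diameter $\delta$ and geodesic types of every admissible perimeter are automatically present in any metrically homogeneous graph.
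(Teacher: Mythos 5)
Your proposal is correct and matches the paper's own (very terse) proof exactly: both apply the triangle inequality to the images under $\sigma$ and $\sigma^{-1}$ of the geodesic type $(2,k-2,k)$, using $\sigma(2)=2$. Nothing further is needed.
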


\begin{proof}
Apply the triangle inequality to the image under $\sigma$ or $\sigma^{-1}$ of the geodesic type $(2,k-2,k)$. Our assumption that $\sigma(2) = 2$ then yields our desired result.
\end{proof}

We now proceed with the proof of Proposition \ref{thm:taus}.

\begin{proof}[Proof of Proposition \ref{thm:taus}]
We initially assert the following claim.

\begin{claimprop} \label{firstclaimforkeven}
Let $k$ be even and at most $\delta$. Assume moreover that $k \leq (\delta+\epsilon)/2$ or $(\delta + \epsilon)$ is even. Then
\begin{align*}
    \sigma^{-1}(\tau_{\epsilon}(k)) \leq k & & \sigma^{-1}(\tau_{\epsilon}(k-1)) \leq k-1
\end{align*}
\end{claimprop}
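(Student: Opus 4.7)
The plan is to prove the claim by induction on even $k \geq 2$, combining Lemma \ref{thm:imageof2}, Lemma \ref{thm:inversetwoaway}, and a preliminary unpacking of $\tau_\epsilon$. A parity check shows that, under either branch of the claim's hypothesis, $\min(k,(\delta+\epsilon)-k)$ is even (forcing $\tau_\epsilon(k)=k$) while $\min(k-1,(\delta+\epsilon)-(k-1))$ is odd (forcing $\tau_\epsilon(k-1)=(\delta+\epsilon)-(k-1)$). The claim then becomes the two inequalities $\sigma^{-1}(k) \leq k$ and $\sigma^{-1}((\delta+\epsilon)-(k-1)) \leq k-1$.

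For the base $k = 2$, Lemma \ref{thm:imageof2} gives $\sigma(2)=2$, hence $\sigma^{-1}(2) = 2$, while the standing assumption $\sigma(1) = \delta+\epsilon-1$ yields $\sigma^{-1}(\delta+\epsilon-1) = 1$. For the inductive step from $k-2$ to $k$ with $k \geq 4$, I would first observe that the claim's hypothesis is inherited by $k-2$ (either $(\delta+\epsilon)$ remains even, or $k-2 < k \leq (\delta+\epsilon)/2$). The induction hypothesis then bounds $\sigma^{-1}(k-2)$ and $\sigma^{-1}((\delta+\epsilon)-(k-3))$ by $k-2$ and $k-3$ respectively. Since $\tau_\epsilon(k) = \tau_\epsilon(k-2)+2$ and $\tau_\epsilon(k-1) = \tau_\epsilon(k-3)-2$, two applications of Lemma \ref{thm:inversetwoaway} (at arguments $k$ and $(\delta+\epsilon)-(k-3)$) propagate each bound by at most $2$, yielding the desired inequalities $\sigma^{-1}(k) \leq k$ and $\sigma^{-1}((\delta+\epsilon)-(k-1)) \leq k-1$.

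The main obstacle is purely bookkeeping: verifying that every argument fed into Lemma \ref{thm:inversetwoaway} lies in the admissible range $[3,\delta]$, so that the geodesic triangle of type $(2,j-2,j)$ underlying that lemma is actually realized in $\Gamma^\sigma$. For $k \geq 4$ and $\epsilon \in \{0,1\}$, the values $k$ and $(\delta+\epsilon)-(k-3)$ lie in $[3,\delta]$ precisely because $k \leq \delta$ and $k \geq 3+\epsilon$; the small endpoint cases (the base $k=2$ and, when $(\delta+\epsilon)$ is odd, termination at the largest even $k \leq (\delta+\epsilon)/2$) are then handled directly by the hypothesis on $\sigma(1)$ and $\sigma(2)$.
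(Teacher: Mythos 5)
Your proof is correct and follows essentially the same route as the paper's: note that $\tau_\epsilon(k)=k$ and $\tau_\epsilon(k-1)=(\delta+\epsilon)-(k-1)$ under the stated hypotheses, anchor the induction at $k=2$ via Lemma \ref{thm:imageof2} and the assumption $\sigma(1)=\delta+\epsilon-1$, and propagate both bounds two steps at a time using Lemma \ref{thm:inversetwoaway} together with the identities $\tau_\epsilon(k)=\tau_\epsilon(k-2)+2$ and $\tau_\epsilon(k-1)=\tau_\epsilon(k-3)-2$. Your explicit verification that the arguments fed to Lemma \ref{thm:inversetwoaway} lie in $[3,\delta]$ is a detail the paper leaves implicit, but it changes nothing substantive.
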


\begin{claimpropproof}

Note that for the values specified, $\tau_{\epsilon}(k) = k$. Thus we show that $\sigma^{-1}(k) \leq k$.

We proceed by induction.

For $k=2$, we have from Lemma \ref{thm:imageof2} that $\sigma(2) = 2$. Moreover, by assumption, $\sigma^{-1}(\delta + \epsilon - 1) = 1$. Thus our base case holds. We assume then that $k>2$ and for all even $j < k$ that $\sigma^{-1}(j) \leq j$ and $\sigma^{-1}(\tau_{\epsilon}(j-1)) \leq j-1$. 

By Lemma \ref{thm:inversetwoaway}, we know that $|\sigma^{-1}(k) - \sigma^{-1}(k-2)| \leq 2$. Since by assumption $\sigma^{-1}(k-2) \leq k-2$, we have that $\sigma^{-1}(k) \leq k$.

We consider now $\sigma^{-1}(\tau_{\epsilon}(k-1))$. Note that for even $j$ with $j \leq (\delta+\epsilon)/2$ or ${(\delta+\epsilon)}$ even, we have $\tau_{\epsilon}(j+1)= \tau_{\epsilon}(j-1)-2$, and thus ${\sigma^{-1}(\tau_{\epsilon}(k-1)) = \sigma^{-1}(\tau_{\epsilon}(k-3)-2)}$. Again using Lemma \ref{thm:inversetwoaway}, we get that 
\begin{equation*}
    | \sigma^{-1}(\tau_{\epsilon}(k-1)) - \sigma^{-1}(\tau_{\epsilon}(k-3))|  \leq 2.
\end{equation*}
Since by induction $\sigma^{-1}(\tau_{\epsilon}(k-3)) \leq k-3$, we indeed have that ${\sigma^{-1}(\tau_{\epsilon}(k-1)) \leq k-1}$.
\end{claimpropproof}

We now move on to the next claim:
\begin{claimprop} \label{secondclaimforkeven}
Suppose that $k$ is even and $2 \leq k \leq \delta$. Assume that $k \leq (\delta+\epsilon)/2$ or $\delta + \epsilon$ is even. Then $\sigma(k) = \tau_{\epsilon}(k)$ and $\sigma(k-1) = \tau_{\epsilon}(k-1)$. 
\end{claimprop}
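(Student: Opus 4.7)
The plan is to prove Claim \ref{secondclaimforkeven} by induction on the even index $k$, running through $k = 2, 4, 6, \ldots$. The base case $k=2$ is essentially free: Lemma \ref{thm:imageof2} supplies $\sigma(2)=2$, and the standing hypothesis of the proposition supplies $\sigma(1) = \delta+\epsilon-1$; a direct check against the piecewise definition confirms $\tau_\epsilon(2)=2$ and $\tau_\epsilon(1)=(\delta+\epsilon)-1$ in the specified range. For the inductive step I assume $\sigma(j) = \tau_\epsilon(j)$ for every $j \leq k-2$ and deduce the equality at $j = k-1$ and $j = k$.

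The key input is Claim \ref{firstclaimforkeven}, which supplies the two upper bounds
\[
\sigma^{-1}(\tau_\epsilon(k)) \leq k, \qquad \sigma^{-1}(\tau_\epsilon(k-1)) \leq k-1.
\]
Because $\tau_\epsilon$ is an involution, it is a bijection, so $\tau_\epsilon(k-1)$ and $\tau_\epsilon(k)$ are distinct from each of $\tau_\epsilon(1),\ldots,\tau_\epsilon(k-2)$. By the induction hypothesis, those latter values are already attained by $\sigma$ on the indices $\{1,\ldots,k-2\}$, and since $\sigma$ itself is a bijection no index in $\{1,\ldots,k-2\}$ can map to $\tau_\epsilon(k-1)$ or $\tau_\epsilon(k)$. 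Combined with the Claim~\ref{firstclaimforkeven} bounds, this forces $\sigma^{-1}(\tau_\epsilon(k-1)) = k-1$, which gives $\sigma(k-1) = \tau_\epsilon(k-1)$. That assignment now consumes the slot $k-1$, so the remaining bound forces $\sigma^{-1}(\tau_\epsilon(k)) = k$, i.e.\ $\sigma(k) = \tau_\epsilon(k)$. The inductive step is complete.

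The main subtlety I expect is not the permutation-counting itself, but confirming that the hypothesis ``$k \leq (\delta+\epsilon)/2$ or $\delta+\epsilon$ even'' is preserved when we step down to $k-2$, so that the induction hypothesis is legitimately available, and that this same hypothesis is precisely what makes the piecewise definition of $\tau_\epsilon$ deliver $\tau_\epsilon(k)=k$ (for even $k$) and $\tau_\epsilon(k-1)=(\delta+\epsilon)-(k-1)$ (for odd $k-1$). Once that compatibility is pinned down, the argument reduces to ``Claim~\ref{firstclaimforkeven} + injectivity of $\tau_\epsilon$ + bijectivity of $\sigma$'', and this triple suffices to upgrade one-sided bounds into equalities.
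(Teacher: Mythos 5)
Your proposal is correct and matches the paper's own proof essentially step for step: the same induction on even $k$, with the base case from Lemma \ref{thm:imageof2} and the hypothesis on $\sigma(1)$, and the inductive step obtained by combining the upper bounds of Claim \ref{firstclaimforkeven} with injectivity of $\tau_\epsilon$ and bijectivity of $\sigma$ to force the inequalities into equalities. Your added remark about checking that the hypothesis ``$k\leq(\delta+\epsilon)/2$ or $\delta+\epsilon$ even'' persists at $k-2$ is a sensible point of care that the paper leaves implicit, but it does not change the argument.
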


\begin{claimpropproof} 
We begin by noting that for $i \leq (\delta+\epsilon)/2$ or for $\delta + \epsilon$ even, the permutation $\tau_{\epsilon}$ is as follows.

\begin{align*}
\tau_\epsilon(i)&=
\begin{cases}
(\delta+\epsilon)-i&\mbox{$i$ odd}\\
i &\mbox{$i$ even}
\end{cases}
\end{align*}

We proceed by induction. We know by assumption and by Lemma \ref{thm:imageof2} that $\sigma(2) = 2 = \tau_{\epsilon}(2)$ and $\sigma(1) = \delta + \epsilon -1 = \tau_{\epsilon}(1)$. Thus we assume that $k>2$ and for $j$ even, $j \leq k-2$,
that $\sigma(j) = \tau_{\epsilon}(j) = j$ and $\sigma(j-1) = \tau_{\epsilon}(j-1) = \delta + \epsilon - i$.

For the assumed values of $k$ and $\delta+\epsilon$, we have from Claim \ref{firstclaimforkeven} that ${\sigma^{-1}(\tau_{\epsilon}(k)) \leq k}$ and $\sigma^{-1}(\tau_{\epsilon}(k-1)) \leq k-1$. 

Since $\tau_{\epsilon}(k-1) \neq \tau_{\epsilon}(i)$ for any $i < k-1$, it is also the case that $\sigma^{-1}(\tau_{\epsilon}(k-1)) \neq \sigma^{-1}(\tau_{\epsilon}(i))$ for any $i < k-1$. Thus by our inductive hypothesis, $\sigma^{-1}(\tau_{\epsilon})(k-1)$ cannot equal any of $\{ \sigma^{-1}(\sigma(1)), \sigma^{-1}(\sigma(2)),... \sigma^{-1}(\sigma(k-2))\} = \{1,2,...,k-2\}$. Therefore the only possible value that remains is $\sigma^{-1}(\tau_{\epsilon}(k-1)) = k-1$. By a similar argument, we obtain that $\sigma^{-1}(\tau_{\epsilon}(k)) = k$.
\end{claimpropproof}

We note here that one possible value of $\sigma(i)$ has not been explicitly determined from Claim \ref{secondclaimforkeven} when $\delta+\epsilon$ is even, namely $\sigma(\delta)$ when $\delta$ is odd. Of course this is easily resolved. Claim \ref{secondclaimforkeven} tells us that for $\delta+\epsilon$ even, $\sigma(i) = \tau_{\epsilon}(i)$ for all $i < \delta$. Hence $\sigma(\delta) = \tau_{\epsilon}(\delta)$.

It remains to consider the case when
\begin{align*}
    \delta + \epsilon \text{ is odd.}
\end{align*}
We maintain this assumption for the rest of the proof of the proposition.

\begin{claimprop}
$\sigma = \tau_{\epsilon}$.
\end{claimprop}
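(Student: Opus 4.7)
The plan is to complete the determination of $\sigma$ by bringing the inverse twist $\sigma^{-1}$ of $\Gamma^{\sigma}$ into play and propagating partial information through Lemma~\ref{thm:inversetwoaway}. First I would verify that $\Gamma^{\sigma}$ is itself of generic type with $\delta \geq 3$: applying Proposition~\ref{Prop:Twist:Nongeneric} to the non-trivial twist $\sigma^{-1}$ of $\Gamma^{\sigma}$, each exceptional case is excluded by the standing hypotheses ($\sigma(1) > 2$ and $\sigma^{-1}(1) > 2$ rule out the diameter-$2$ and antipodal-diameter-$3$ transposition cases, and a cycle image would force $\Gamma$ to be a cycle, contradicting genericity of $\Gamma$). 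Proposition~\ref{thm:optionsfor1} then applies to $\sigma^{-1}$, giving $\sigma^{-1}(1) = \delta + \epsilon' - 1$ for some $\epsilon' \in \{0,1\}$; Lemma~\ref{thm:imageof2} applied to $\sigma^{-1}$ further yields $\sigma^{-1}(2) = 2$, so Claims~\ref{firstclaimforkeven} and~\ref{secondclaimforkeven} run for $\sigma^{-1}$ as well.

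Next I would argue $\epsilon' = \epsilon$. Otherwise $\delta + \epsilon'$ has parity opposite to the odd $\delta + \epsilon$, hence is even. The already-settled ``$\delta + \epsilon$ even'' case of Claim~\ref{secondclaimforkeven} (together with the remark immediately following its proof) applied to $\sigma^{-1}$ on $\Gamma^{\sigma}$ with parameter $\epsilon'$ determines $\sigma^{-1}$ completely as $\tau_{\epsilon'}$; since $\tau_{\epsilon'}$ is an involution, $\sigma = \tau_{\epsilon'}$, so $\sigma(1) = \delta + \epsilon' - 1 \neq \delta + \epsilon - 1$, contradicting our setup. Having pinned $\epsilon' = \epsilon$, Claim~\ref{secondclaimforkeven} applied to $\sigma^{-1}$ yields $\sigma^{-1}(k) = \tau_{\epsilon}(k)$ on the same range $A := \{1, 2, \ldots, 2 \lfloor m/2 \rfloor\}$ (with $m = (\delta + \epsilon - 1)/2$) as it does for $\sigma$. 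Since $\tau_{\epsilon}$ is an involution, this is equivalent to $\sigma = \tau_{\epsilon}$ on $\tau_{\epsilon}(A)$; combining gives $\sigma = \tau_{\epsilon}$ on $A \cup \tau_{\epsilon}(A)$.

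Finally I would fill in the remaining indices by induction, using Lemma~\ref{thm:inversetwoaway} in tandem for both $\sigma$ and $\sigma^{-1}$: at each undetermined index $k$, the constraint $|\sigma(k) - \sigma(k-2)| \leq 2$, the analogous $\sigma^{-1}$-constraint, and bijectivity of $\sigma$ together leave only the candidate $\tau_{\epsilon}(k)$. The main obstacle is this last step: a single application of Lemma~\ref{thm:inversetwoaway} typically allows several candidates for $\sigma(k)$, so ruling them out requires tracing two-step chains further along and exploiting the $\sigma^{-1}$-constraints to detect any deviation from $\tau_{\epsilon}$ --- a deviation at some index propagates and forces an invalid Lemma~\ref{thm:inversetwoaway} inequality at a nearby index (as one verifies concretely for $\delta = 7, 9, 11$). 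Careful parity bookkeeping is needed near the transition at $k = m$, where $\tau_{\epsilon}$ switches between ``flip'' and ``fix'' behavior on antipodal pairs, and at the top index $k = \delta$.
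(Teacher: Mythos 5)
Your opening steps track the paper's argument: you pin down $\epsilon'=\epsilon$ by exactly the same involution trick (if $\epsilon'\neq\epsilon$ then $\delta+\epsilon'$ is even, Claim \ref{secondclaimforkeven} fully determines $\sigma^{-1}=\tau_{\epsilon'}$, and self-inverseness forces $\epsilon'=\epsilon$ after all), and you then run Claims \ref{firstclaimforkeven} and \ref{secondclaimforkeven} for both $\sigma$ and $\sigma^{-1}$. Up to that point the two arguments coincide.

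The gap is in your final step. After the even-index claims, the undetermined set is $A=\{i \text{ odd}: (\delta+\epsilon)/2 < i \leq \delta\}$, with $\sigma[A]=A$; Lemma \ref{thm:inversetwoaway} only forces $\sigma$ to enumerate $A$ monotonically, so the order-reversing enumeration survives as a second candidate, and bijectivity cannot distinguish it from the identity. Your claim that any such deviation ``forces an invalid Lemma \ref{thm:inversetwoaway} inequality at a nearby index'' is false in precisely the cases your spot-checks at $\delta=7,9,11$ skip. For $\delta=5$, $\epsilon=0$ the reversal is $\sigma=(14)(35)(2)$: one checks $|\sigma(3)-\sigma(1)|=1$, $|\sigma(4)-\sigma(2)|=1$, $|\sigma(5)-\sigma(3)|=2$, and $\sigma$ is its own inverse, so every constraint from Lemma \ref{thm:inversetwoaway} (and indeed every triangle-inequality constraint coming from geodesics) is satisfied. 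The paper eliminates this permutation only by genuine structure theory: it maps $(2,2,4)$ to $(2,2,1)$ and the forbidden triple $(1,1,4)$ to $(1,4,4)$, giving $K_1\leq 2$ with $\Gamma_4$ edge-free, whence $\Gamma$ is antipodal by Fact \ref{Fact:LocalAnalysis:K1le2}, and then the omission of $(5,5,2)$ contradicts Proposition \ref{Prop:Realize:i,i,2k}. Similarly, for $\delta=8$, $\epsilon=1$ the reversal of $A=\{5,7\}$ passes all of your two-step chain tests (e.g.\ $|\sigma(5)-\sigma(3)|=|7-6|=1$, $|\sigma(7)-\sigma(5)|=2$) and is excluded only by the distance-one geodesic $(1,\delta-1,\delta)\mapsto(\delta,\min A,1)$, which lies outside the family of constraints you propose to use. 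So your method as stated cannot close the argument; you need the distance-one geodesics for the $\epsilon=1$ case and the antipodality/realization machinery for $\delta=5$, $\epsilon=0$.
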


\begin{claimpropproof}
By assumption $\sigma^{-1}(1) > 2$, and therefore by Proposition \ref{thm:optionsfor1}, ${\sigma^{-1}(1) \in \{\delta -1 , \delta\}}$. We write then $\sigma^{-1}(1) = \delta + \epsilon' -1$ with $\epsilon' \in \{0,1\}$. If $\epsilon' \neq \epsilon$, then $\delta + \epsilon'$ is even, and by from Claim \ref{secondclaimforkeven}, we have that $\sigma^{-1} = \tau_{\epsilon'}$. Since $\tau_{\epsilon'}^{-1} = \tau_{\epsilon'}$, we would have that $\sigma = \tau_{\epsilon'}$, and we would therefore still obtain that $\epsilon' = \epsilon$.
\end{claimpropproof}


Hence we may apply Claims \ref{firstclaimforkeven} and \ref{secondclaimforkeven} to both $\sigma$ and $\sigma'$, yielding the following for $k \leq \delta/2$

\begin{align*}
    \sigma(k) = k  &\mbox{    if $k$ is even}\\
    \sigma(k) = \delta + \epsilon - k  &\mbox{    if $k$ is odd}\\
    \sigma(\delta + \epsilon -k) = k &\mbox{   if $k$ is odd}
\end{align*}

As $\delta+\epsilon$ is odd, it remains to determine $\sigma$ on the set $$A = \{i \hspace{1mm} | \hspace{1mm} (\delta+\epsilon)/2 < i \leq \delta \text{ and $i$ is odd}\}.$$

We already know that $\sigma[A] = A$. Moreover, since all the elements in $A$ are odd, we deduce from Lemma \ref{thm:inversetwoaway} that $\sigma$ either fixes or reverses $A$.

If $\sigma$ fixes $A$, then $\sigma = \tau_{\epsilon}$. Thus we assume towards a contradiction that $\sigma$ reverses $A$ and that $|A| \geq 2$, so $\delta \geq 5$.

We consider first the case when $\epsilon = 1$. Since we are also assuming that $\delta + \epsilon$ is odd, we have that $\delta$ is even and $\max A = \delta - 1$. Under our assumptions then, $\sigma$ would send the geodesic type $(1,\delta-1,\delta)$ to $(1,\min A, \delta)$, which must therefore satisfy the triangle inequality. Therefore $\min A \geq \delta -1$ which would imply that $|A| =1$, which is a contradiction.

Now suppose that $\epsilon = 0$. Then $\delta$ is odd and $\max A = \delta$.
Under these assumptions, $\sigma$ maps the geodesic type $(2, \min A -2, \min A)$ to the triangle type $(2,\delta - \min A + 2, \delta)$. Once again we obtain from the triangle inequality a restriction:
\begin{equation*}
    \delta \leq \delta - \min A + 4.
\end{equation*}
Since $\min A$ is odd, we find that $\min A \leq 3$. However $\min A > \delta /2$, so then $\delta \leq 5$. 

Thus $\delta = 5$, and $\sigma = \sigma^{-1} = (14)(35)(2)$. This permutation sends the geodesic type $(2,2,4)$ to the triangle type $(2,2,1)$ and the forbidden triple $(1,1,4)$ to the triple $(1,4,4)$. Therefore the triangle type $(2,2,1)$ is realized in $\Gamma$ and the triangle type $(4,4,1)$ is not realized in $\Gamma$. So $K_1 \leq 2$ and $\Gamma_4$ contains no edge. By Fact \ref{Fact:LocalAnalysis:K1le2} we find that $\Gamma$ is antipodal. Thus $\Gamma$ does not realize the triangle type $(5,5,2)$ and hence $\Gamma^{\sigma}$ does not realize the triangle type $(3,3,2)$. This contradicts Proposition \ref{Prop:Realize:i,i,2k} of the Appendix.
\end{proof}

\section{Twistable graphs} \label{sec:twistables}

We work towards showing the following main result:

\begin{thm}\label{Thm:Twistable:Graphs}
Let $\sigma$ be one of the permutations $\rho,\rho^{-1}, \tau_0$, or $\tau_1$, with $\delta\geq 3$. Then the metrically homogeneous graphs $\Gamma$ of generic type whose images $\Gamma^{\sigma}$ are also metrically homogeneous are precisely those with the numerical parameters $K_1, K_2, C, C'$ as in Table \ref{Table:Twistable} below.

\begin{table}[ht]
\begin{tabular}{c|ccccc} 
$\sigma$&$\delta$&$K_1$&$K_2$&$C$&$C'$\\
\hline
$\rho$&$\geq 3$, $<\infty$
&$1$&$\delta$&$2\delta+2$&$2\delta+3$\\
$\rho^{-1}$&$\geq 3$, $<\infty$
&$\delta$&$\delta$&$3\delta+1$&$3\delta+2$\\
$\tau_\epsilon$&$\geq 3$, $<\infty$
&$\lfloor \frac{\delta+\epsilon}{2}\rfloor$&$\lceil \frac{\delta+\epsilon}{2}\rceil$&$2(\delta+\epsilon)+1$&$2(\delta+\epsilon)+2$\\
$\tau_\epsilon$&
$\geq 3$, $\equiv \epsilon \pmod 2$&
$\infty$&$0$&$2\delta+1$&$2(\delta+\epsilon)+2$\\
\hline
&\multicolumn{5}{c}{\it Exceptional Cases }\\
&\multicolumn{5}{c} {$\sigma=\tau_1$, $\delta=3$ or $4$}\\
\cline{3-5}
$\tau_1$&$3$&$1$&$2$&$10$&$11$\\
&$3$&$1$&$2$&$9$&$10$\\
&
$3$ &$2$&$2$&$10$&$11$\\
&$4$&$1$&$3$&$11$&$14$ \\
&$4$&$1$&$3$&$11$&$12$ \\
&$4$&$2$&$3$&$11$&$14$ \\
\end{tabular}
\smallskip

\caption{Twistable Metrically Homogeneous Graphs}
\label{Table:Twistable}
\end{table}

\end{thm}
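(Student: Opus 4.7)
The plan is to use Fact \ref{Fact:ProveMHG} to reduce both directions to checking two conditions: (a) no triangle realized in $\Gamma$ has $\sigma$-image violating the triangle inequality; and (b) for each $k < \delta$, the triple $(\sigma^{-1}(1), \sigma^{-1}(k), \sigma^{-1}(k+1))$ is realized in $\Gamma$. Throughout I use that a triangle type $(a,b,c)$ is realized in $\Gamma^\sigma$ if and only if $(\sigma^{-1}(a), \sigma^{-1}(b), \sigma^{-1}(c))$ is realized in $\Gamma$.

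For the necessity direction, I would treat each $\sigma \in \{\rho, \rho^{-1}, \tau_0, \tau_1\}$ in turn, using the explicit formulas. The $k=1$ instance of condition (b) forces the triangle $(\sigma^{-1}(1), \sigma^{-1}(1), \sigma^{-1}(2))$ to be realized in $\Gamma$; since its third entry differs from the first by $\pm 1$ (forced by the triangle inequality applied to the $\sigma$-image $(1,1,2)$), this pins down either $K_1$ or $K_2$ of $\Gamma$ depending on where $\sigma^{-1}(1)$ sits. Applying the same analysis to $\sigma^{-1}$ acting on $\Gamma^\sigma$, which is itself metrically homogeneous of generic type, pins down the remaining parameter. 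For each parity, I would produce the smallest-perimeter triple $(a,b,c)$ whose $\sigma$-image violates the triangle inequality and conclude that this perimeter must be forbidden in $\Gamma$; a direct calculation with the four explicit permutations recovers the generic rows of the table. Finally, the parameter $\mathcal{S}$ is shown to be empty using Proposition \ref{Prop:Realize:i,i,2k} of the Appendix to realize every $(1, \delta)$-space not already forbidden by $C, C'$ inside $\Gamma$.

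For the sufficiency direction, I start with $\Gamma$ whose parameters match a row of the table and verify (a) and (b). Condition (b) for even $k$ follows from Observation \ref{fact:geodesics} since the preimage triple is a geodesic; for odd $k$, the preimage is a genuine non-geodesic triangle of perimeter $2\delta+1$ (for $\rho$), $3\delta$ (for $\rho^{-1}$), or $2(\delta+\epsilon)+1$ (for $\tau_\epsilon$), whose realization follows from Proposition \ref{Prop:Realize:i,i,2k} applied to the explicit parameter values. Condition (a) then reduces to verifying that the forbidden perimeters $C, C'$ of $\Gamma$ exclude exactly the $\sigma$-preimages of the triangle-inequality-violating triples in $\Gamma^\sigma$, which is a finite calculation for each row.

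The main obstacles are the exceptional cases. For $\tau_1$ at $\delta \in \{3, 4\}$, the generic-diameter argument degenerates because the minimum-perimeter triples used to determine $C, C'$ collapse or admit alternatives, producing the six exceptional parameter tuples; each must be verified individually by reapplying (a) and (b) to the specific $\sigma$-images. The bipartite subcase of $\tau_\epsilon$ (the row $K_1 = \infty$, $K_2 = 0$) similarly requires separate treatment, since the triangles $(k,k,1)$ underlying the generic $K_1, K_2$ analysis simply do not occur; there the parameters must be read off from parity-of-perimeter considerations alone. The hardest single step is the verification of the realization clauses via Proposition \ref{Prop:Realize:i,i,2k}, which is precisely the structural extension flagged in the introduction as potentially of independent interest.
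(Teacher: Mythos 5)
Your high-level framework is the same as the paper's: reduce everything via Fact \ref{Fact:ProveMHG} to (a) excluding triangle-inequality violations and (b) realizing the geodesics $(1,k,k+1)$, and treat each of $\rho,\rho^{-1},\tau_0,\tau_1$ separately with the bipartite and small-diameter cases set aside. But the proposal has genuine gaps on both directions. For necessity, the hard content is not producing forbidden perimeters (the ``smallest-perimeter violating triple'' calculation you describe only gives \emph{upper} bounds on $C,C'$); it is showing that certain large-perimeter triangles \emph{are} realized, so that $C,C'$ are pinned down exactly. For $\rho^{-1}$ this means proving $(\delta,\delta,\delta)$ is realized, which the paper does by a parity-split argument using connectivity of $\tilde\Gamma_{\delta/2}$ (Fact \ref{Fact:LocalAnalysis}) together with Proposition \ref{Prop:Realize:i,i,2k} and Lemma \ref{Fact:C:delta'}; for $\tau_0$ in the non-bipartite case one must first prove that $\Gamma$ is \emph{antipodal} (Claim \ref{claim:antipodal}), a structural argument about distances between $\Gamma_{\delta-1}$ and $\Gamma_\delta$ that no amount of calculation with the explicit permutation will produce. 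None of this appears in your plan, and ``a direct calculation with the four explicit permutations recovers the generic rows'' is not accurate.

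For sufficiency, your appeal to Proposition \ref{Prop:Realize:i,i,2k} for the odd-$k$ realization clauses does not work for $\tau_\epsilon$: that proposition only realizes isoceles types $(i,i,2k)$, whereas the triple needed is $(\delta,\delta-k+1,k+1)$ (for $\epsilon=1$) or $(\delta-1,\delta-k,k+1)$ (for $\epsilon=0$). The paper instead uses the antipodal law for $\epsilon=0$ and, for $\epsilon=1$, an explicit construction of vertices in $\Gamma_{\delta-k+1}$ and $\Gamma_\delta$ resting on the new Lemma \ref{lemma:diameterofsubgraph} about $\diam(\Gamma_{\delta-i})$. (For $\rho$ all the relevant preimages are in fact geodesics of perimeter $2\delta$, not perimeter-$(2\delta+1)$ triangles as you assert; for $\rho^{-1}$ the one non-geodesic case is handled by Corollary \ref{Fact:realizeoddp}, not Proposition \ref{Prop:Realize:i,i,2k}.) Finally, your claim that $\mathcal{S}$ must be empty is both outside the scope of the theorem --- which constrains only $K_1,K_2,C,C'$ and deliberately says nothing about Henson constraints --- and false in the exceptional cases, where graphs with nonempty $\mathcal{S}$ are still twistable.
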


Note that we do not assume that $\Gamma$ is of known type. If $\Gamma$ is of known type, then its isomorphism type will be determined by its numerical parameters together with a set of Henson constraints. But by Theorem \ref{Thm:Twistable:Graphs}, twistability depends only on the value of the numerical parameters.

The values of the parameters given in the tables correspond to the realization or omission of certain triangle types which are either of the form $(k,k,1)$ or of some fixed perimeter, and will be proved in that form.

We break up our analysis into two subsections: one addressing the necessity of these parameter values for twistability, and another addressing the sufficiency of these parameter values for twistability.

\subsection{Necessity of the restrictions on the parameters}

In this section we prove the following.

\begin{prop}\label{prop:necessity}
Let $\sigma$ be one of the permutations $\rho,\rho^{-1},\tau_0$ or $\tau_1$, with ${\delta \geq 3}$. Then the metrically homogeneous graphs of generic type whose images $\Gamma^{\sigma}$ are also metrically homogeneous graphs must have numerical parameters among those shown in Table \ref{Table:Twistable}.
\end{prop}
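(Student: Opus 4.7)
My overall strategy is to take each permutation $\sigma \in \{\rho, \rho^{-1}, \tau_0, \tau_1\}$ in turn and translate the metric conditions on $\Gamma^\sigma$ back to $\Gamma$ via $\sigma^{-1}$. The translation dictionary is that a triple $(a,b,c)$ is realized in $\Gamma^\sigma$ if and only if $(\sigma^{-1}(a), \sigma^{-1}(b), \sigma^{-1}(c))$ is realized in $\Gamma$. By Fact \ref{Fact:ProveMHG}, the intrinsic conditions making $\Gamma^\sigma$ a metrically homogeneous graph are the triangle inequality on every realized triple together with the realization of every geodesic triple $(1,k,k+1)$ for $k<\delta$. Moreover, $\Gamma^\sigma$ is itself of generic type (apply Proposition \ref{Prop:Twist:Nongeneric} to $\sigma^{-1}$ acting on $\Gamma^\sigma$), so I may also invoke Proposition \ref{Prop:Realize:i,i,2k} inside $\Gamma^\sigma$ to produce additional realized triangles of the form $(i,i,2k)$.

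For $\sigma=\rho$, I start from $\rho^{-1}(1)=\delta$. The geodesic $(1,1,2)$ in $\Gamma^\rho$ pulls back to $(\delta,\delta,1)$ in $\Gamma$, forcing $K_2=\delta$. For every $c \in \{3,\ldots,\delta\}$ the triple $(1,1,c)$ violates the triangle inequality in $\Gamma^\rho$, so its pullback $(\delta,\delta,\rho^{-1}(c))$ is absent from $\Gamma$; as $c$ ranges over $\{3,\ldots,\delta\}$ the value $\rho^{-1}(c)$ runs over $\{2,\ldots,\delta-1\}$, forbidding triangles of every perimeter from $2\delta+2$ to $3\delta-1$, and this pins $C_0=2\delta+2$ and $C_1=2\delta+3$. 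Finally, Proposition \ref{Prop:Realize:i,i,2k} applied in $\Gamma^\rho$ yields a $(2,2,2)$ triangle there, whose pullback is $(1,1,1)\in\Gamma$, so $K_1=1$. The case $\sigma=\rho^{-1}$ follows by symmetry: $\Gamma^{\rho^{-1}}$ admits $\rho$ as a twist back to $\Gamma$, so applying the $\rho$-analysis to $\Gamma^{\rho^{-1}}$ and then translating via $\rho$ yields exactly the $\rho^{-1}$ row.

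For $\sigma=\tau_\epsilon$ I use that $\tau_\epsilon$ is an involution with $\tau_\epsilon(1)=\delta+\epsilon-1$ and $\tau_\epsilon(2)=2$ for $\delta\geq 3$. The same scheme applies: pulling back $(1,1,c)$ for $c\geq 3$ excludes a family of triangles in $\Gamma$ from which I extract the minimal forbidden even and odd perimeters $2(\delta+\epsilon)+2$ and $2(\delta+\epsilon)+1$; examining which $(k,k,1)$ triangles in $\Gamma$ survive as pullbacks of admissible $\Gamma^{\tau_\epsilon}$-triangles yields $K_1=\lfloor(\delta+\epsilon)/2\rfloor$ and $K_2=\lceil(\delta+\epsilon)/2\rceil$ generically. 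The bipartite subcase, with $K_1=\infty$ and $K_2=0$, is treated separately: if $\Gamma$ has no odd-perimeter triangle, then the same must hold for $\Gamma^{\tau_\epsilon}$, and a parity check on $\tau_\epsilon$ forces $\delta\equiv\epsilon\pmod 2$ together with the remaining entries in the bipartite row.

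The main obstacle is the exceptional cases $\sigma=\tau_1$ with $\delta\in\{3,4\}$, where the pullback inequalities above are too coarse to isolate a unique parameter tuple: more than one $(K_1,K_2,C,C')$ satisfies all of them. Here I would enumerate by hand the remaining possibilities, a finite check since perimeters are bounded by $3\delta\leq 12$, and verify triangle by triangle which tuples are consistent with both the triangle inequality in $\Gamma^{\tau_1}$ and the geodesic-realization requirements. This enumeration yields exactly the six exceptional rows listed in Table \ref{Table:Twistable}.
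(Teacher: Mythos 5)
Your general strategy --- translating the conditions of Fact \ref{Fact:ProveMHG} through $\sigma^{-1}$ and invoking Proposition \ref{Prop:Realize:i,i,2k} in $\Gamma^{\sigma}$ --- is the same as the paper's, and your treatment of $\rho$ is close to correct. But there are genuine gaps. First, even in the $\rho$ case, pulling back $(1,1,c)$ only forbids the specific types $(\delta,\delta,d)$ for $d\in\{2,\dots,\delta-1\}$; it does not forbid \emph{all} triangles of perimeter $2\delta+2,\dots,3\delta-1$, nor does it touch $(\delta,\delta,\delta)$. You need Fact \ref{Fact:Realize:delta,delta,d} (any triangle of perimeter $2\delta+d$ forces $\diam(\Gamma_\delta)\geq d$) together with the connectivity of $\Gamma_\delta$ (Fact \ref{Fact:LocalAnalysis}) to rule out $(\delta,\delta,\delta)$ and to pass from $\diam(\Gamma_\delta)=1$ to $C=2\delta+2$, $C'=2\delta+3$ (via Lemma \ref{Fact:C:delta'}). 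Second, and more seriously, the $\rho^{-1}$ case does \emph{not} ``follow by symmetry.'' The numerical parameters of $\tilde\Gamma=\Gamma^{\rho^{-1}}$ do not determine which triangle types are realized in $\tilde\Gamma$ (the graphs are not assumed to be of known type), so you cannot simply ``translate via $\rho$'' to read off the parameters of $\Gamma$. The paper instead exhibits, for each odd perimeter $2k-1<2\delta+1$, an explicit triangle type of that perimeter whose $\rho^{-1}$-image violates the triangle inequality, invokes Corollary \ref{Fact:realizeoddp} to conclude $K_1=K_2=\delta$, and then proves $(\delta,\delta,\delta)$ is realized in $\Gamma$ by a delicate parity case analysis using Proposition \ref{Prop:Realize:i,i,2k} and connectivity of the spheres $\tilde\Gamma_i$. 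None of that is recoverable from your symmetry remark.

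Third, for $\tau_0$ in the non-bipartite case the table entries force $C=2\delta+1$, $C'=2\delta+2$, i.e.\ $\Gamma$ must be \emph{antipodal} (Fact \ref{fact:antipodalSmallC}). Pulling back $(1,1,c)$ only constrains the distances occurring in $\Gamma_{\delta-1}$ (you get that the unique distance there is $2$); getting from that to antipodality is the core of the paper's Lemma \ref{lemma:necessity:notbipartiteantipodal} and requires a separate geometric argument comparing distances between $\Gamma_{\delta-1}$ and $\Gamma_{\delta}$. Your sketch omits this entirely, and the $K_1,K_2$ computation you propose in fact relies on the antipodal perimeter bound. Finally, for the exceptional $\tau_1$ cases a ``finite consistency check on parameter tuples'' is not sufficient: one must show that certain triangle types (e.g.\ $(1,3,3)$ when $\delta=4$) are \emph{forced} to be realized, which the paper obtains from structure theory such as Fact \ref{Fact:LocalAnalysis:K1le2} and, for $\delta=3$, the complete classification of Fact \ref{Fact:MH3}; mere consistency with the triangle inequality and geodesic conditions leaves more tuples alive than actually occur.
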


The numerical parameters were defined in Definition \ref{defn:numpar}, Section \ref{sec:basics}.

We consider twists individually in the following order: $\sigma = \rho, \rho^{-1}, \tau_{\epsilon}$.

\begin{lemma}\label{lemma:necessity:rho}
Let $\Gamma$ be a metrically homogeneous graph of generic type such that $\Gamma^{\rho}$ is also a metrically homogeneous graph. Then the associated numerical parameters $K_1,K_2,C,C'$ for $\Gamma$ must be $1,\delta,2\delta+2,2\delta+3$ respectively.
\end{lemma}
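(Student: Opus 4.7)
The plan is to pull back metric constraints from $\Gamma^{\rho}$ to $\Gamma$ via the identity that a triangle $(a,b,c)$ is realized in $\Gamma$ if and only if $(\rho(a),\rho(b),\rho(c))$ is realized in $\Gamma^{\rho}$. Two arithmetic facts will be used repeatedly: $\rho^{-1}(1)=\delta$ and $\rho^{-1}(2)=1$, so in particular edges of $\Gamma^{\rho}$ correspond to diametrically opposite pairs in $\Gamma$.

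I first show $K_{2}=\delta$. The geodesic $(1,1,2)$ is realized in $\Gamma^{\rho}$ by Observation~\ref{fact:geodesics}, so its $\rho^{-1}$-preimage $(\delta,\delta,1)$ is realized in $\Gamma$, forcing $K_{2}\geq\delta$. For $K_{1}=1$ I first observe that $\Gamma^{\rho}$ is itself of generic type: by Proposition~\ref{Prop:Twist:Nongeneric} applied to $\Gamma^{\rho}$ with the twist $\rho^{-1}$, the non-generic possibilities require $\rho^{-1}$ to be the transposition $(1,2)$ or a cycle automorphism, neither of which matches $\rho^{-1}$ when $\Gamma$ is of generic type with $\delta\geq 3$. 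Proposition~\ref{Prop:Realize:i,i,2k} then supplies $(2,2,2)$ in $\Gamma^{\rho}$, whose preimage $(1,1,1)$ in $\Gamma$ gives $K_{1}\leq 1$.

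For the perimeter parameters, the lower bound $C_{1}\geq 2\delta+3$ is immediate from $(\delta,\delta,1)$ having odd perimeter $2\delta+1$. The substance is the upper bounds: I will prove every triangle $(a,b,c)$ in $\Gamma$ with $a\leq b\leq c\leq\delta$ and perimeter $2\delta+2$ or $2\delta+3$ is forbidden. The richest case is $c=\delta$: since $\rho(\delta)=1$, the triangle inequality in $\Gamma^{\rho}$ reduces to $|\rho(a)-\rho(b)|\leq 1$. The parity split of $\rho$ (even on $[1,\delta/2]$, odd on $(\delta/2,\delta]$) typically forces the gap to be at least $2$ or $3$, disposing of $(\delta,\delta,2)$ for all $\delta\geq 3$ and of most triangles of the form $(\delta-k+1,\delta,k)$ in the relevant perimeter range. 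Triangles with $c<\delta$ are handled analogously by tracking the image perimeter as a function of the parity split of $(a,b,c)$ relative to $\delta/2$.

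The principal obstacle is a small residue of \emph{balanced} triangles whose $\rho$-images are valid metric triangles --- chiefly $(3,3,3)$ at $\delta=3$ (image $(1,1,1)$), $(3,4,3)$ and $(4,4,4)$ at $\delta=4$, and $(4,4,5)$ at $\delta=5$. For these I would close the argument either by applying Proposition~\ref{Prop:Realize:i,i,2k} a second time inside $\Gamma^{\rho}$ and pulling back a realized triangle whose coexistence with the residual triangle forces an impossible four-point configuration in $\Gamma$, or by an ad hoc verification at the relevant small $\delta$ using the already established values $K_{1}=1$, $K_{2}=\delta$ together with admissibility constraints coupling $C_{0}$ and $C_{1}$.
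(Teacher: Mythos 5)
Your first half is sound and matches the paper: pulling back $(1,1,2)$ and $(2,2,2)$ from $\Gamma^{\rho}$ (after noting $\Gamma^{\rho}$ is of generic type) gives $(\delta,\delta,1)$ and $(1,1,1)$ in $\Gamma$, hence $K_1=1$, $K_2=\delta$, and $C_1\geq 2\delta+3$. The gap is in the second half, where you propose to forbid each triangle of perimeter $2\delta+2$ or $2\delta+3$ directly by pulling back the triangle inequality. The ``residue'' of triangles whose $\rho$-images are legitimate metric triangles is not confined to $\delta\leq 5$ as you claim: for every even $\delta$ the triangle $(\delta/2+1,\delta/2+1,\delta)$ has perimeter $2\delta+2$ and image $(\delta-1,\delta-1,1)$, which satisfies the triangle inequality; likewise at $\delta=6$ one finds $(4,5,5)$ of perimeter $14$ with image $(5,3,3)$ and $(5,5,5)$ of perimeter $15$ with image $(3,3,3)$, and analogous families persist for all $\delta$. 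Worse, the obvious way to exclude such an image, say $(\delta-1,\delta-1,1)$ from $\Gamma^{\rho}$, is to pull it back through $\rho^{-1}$ --- which returns exactly the triangle you are trying to forbid, so the argument is circular. Your fallback of ``ad hoc verification at the relevant small $\delta$'' therefore cannot close the proof, and the ``impossible four-point configuration'' alternative is not an argument as stated.

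The paper avoids this entirely by changing target: instead of forbidding triangles perimeter by perimeter, it determines $\diam(\Gamma_{\delta})$. Any distance $k$ realized in $\Gamma_{\delta}$ gives a triangle $(\delta,\delta,k)$ with image $(1,1,\rho(k))$, forcing $k\in\{1,\delta\}$; connectivity of $\Gamma_{\delta}$ (Fact \ref{Fact:LocalAnalysis}) excludes $k=\delta$ when $\delta\geq 3$, so $\diam(\Gamma_{\delta})=1$. Then Lemma \ref{Fact:C:delta'} (whose engine is Fact \ref{Fact:Realize:delta,delta,d}: a triangle of perimeter $2\delta+d$ forces $\diam(\Gamma_{\delta})\geq d$) yields $C=2\delta+2$ and $C'=2\delta+3$ in one stroke. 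If you want to salvage your direct approach, you would have to import Fact \ref{Fact:Realize:delta,delta,d} at exactly this point to dispose of the residual triangles; without it, the proof is incomplete.
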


\begin{proof}
The triangle types $(2,2,2)$ and $(1,1,2)$ must both be realized in $\Gamma^{\rho}$, by the definition of generic type.
Therefore their inverse images $(1,1,1)$ and $(\delta,\delta,1)$ must both be realized in $\Gamma$. Thus we already know that 
\begin{align*}
    K_1=1, K_2 = \delta,\text{ and $C \geq 2\delta+2.$}
\end{align*}


Consider a distance $k$ realized in $\Gamma_{\delta}$, that is, the triangle type $(\delta,\delta,k)$ is realized in $\Gamma$. Under $\rho$, this is mapped to $(1,1,\rho(k))$, so $\rho(k) \leq 2$. Thus
$k = 1$ or $\delta$. Since $\Gamma_{\delta}$ is connected (Fact \ref{Fact:LocalAnalysis}) and $\delta \geq 3$, the distance $\delta$ cannot occur. Thus, $\Gamma_{\delta}$ has diameter at most $1$. As the distance $1$ occurs in $\Gamma_{\delta}$, the diameter equals $1$. It follows from Lemma \ref{Fact:C:delta'} that $C = 2\delta+2$ and $C' = 2\delta+3$.
\end{proof}

\begin{lemma}\label{lemma:necessity:rhoinv}
Let $\Gamma$ be a metrically homogeneous graph of generic type such that $\Gamma^{\rho^{-1}}$ is also a metrically homogeneous graph. Then the associated numerical parameters $K_1,K_2,C,C'$ for $\Gamma$ must be $\delta,\delta,3\delta+1,3\delta+2$ respectively.
\end{lemma}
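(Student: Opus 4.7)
The strategy is to apply Lemma \ref{lemma:necessity:rho} to $\Gamma^{\rho^{-1}}$ itself and then translate the resulting parameters back through $\rho^{-1}$. Since $(\Gamma^{\rho^{-1}})^{\rho}=\Gamma$, and $\Gamma^{\rho^{-1}}$ is a metrically homogeneous graph of generic type (Proposition \ref{Prop:Twist:Nongeneric} rules out the non-generic possibilities, as none of them admits $\rho^{-1}$ as a twist for $\delta\geq 3$), the hypotheses of Lemma \ref{lemma:necessity:rho} apply with $\Gamma^{\rho^{-1}}$ in place of $\Gamma$ and $\rho$ in place of $\sigma$. This yields the parameters of $\Gamma^{\rho^{-1}}$: $K_{1}^{\star}=1$, $K_{2}^{\star}=\delta$, $C^{\star}=2\delta+2$, $C'^{\star}=2\delta+3$. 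The proof of that lemma furthermore establishes that $(\Gamma^{\rho^{-1}})_{\delta}$ has diameter $1$, which will be used repeatedly.

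To obtain $K_{1}=K_{2}=\delta$, the plan is to transport each triangle type $(k,k,1)$ in $\Gamma$ to its image $(\rho^{-1}(k),\rho^{-1}(k),\delta)$ in $\Gamma^{\rho^{-1}}$ and to rule it out for every $k<\delta$ by cases. When $k$ is even with $k<\delta$, one has $\rho^{-1}(k)=k/2<\delta/2$, so the triangle inequality already fails in the image. When $k=1$, the image $(\delta,\delta,\delta)$ is excluded by the diameter-$1$ property of $(\Gamma^{\rho^{-1}})_{\delta}$. When $k$ is odd with $1<k<\delta$, one has $\rho^{-1}(k)=\delta-(k-1)/2$ and the image has perimeter $3\delta-k+1\geq 2\delta+2$, which exceeds $C^{\star}$ or $C'^{\star}$ of matching parity and is therefore forbidden in the amalgamation class of $\Gamma^{\rho^{-1}}$. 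The remaining case $k=\delta$ produces the image $(\lceil\delta/2\rceil,\lceil\delta/2\rceil,\delta)$, which is realized: for $\delta$ even it is the geodesic type $(\delta/2,\delta/2,\delta)$ (Observation \ref{fact:geodesics}), and for $\delta$ odd it has perimeter $2\delta+1<C^{\star}$, is not of $(k,k,1)$-type, and involves a distance other than $1$ or $\delta$ so that no Henson substructure applies; hence it lies in the amalgamation class.

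Finally, for $C=3\delta+1$ and $C'=3\delta+2$, which are the maximal possible values of these invariants, the claim reduces to exhibiting, for every feasible perimeter $p\in(2\delta,3\delta]$, a realized triangle in $\Gamma$ of that perimeter. The natural family is $(\delta,\delta,p-2\delta)$; its image in $\Gamma^{\rho^{-1}}$ is $(\lceil\delta/2\rceil,\lceil\delta/2\rceil,\rho^{-1}(p-2\delta))$, of perimeter at most $2\delta+1<C^{\star}$, and by the same case analysis as in the previous paragraph this image is unobstructed (it is either a $(k,k,1)$-triangle with apex $k\in[K_{1}^{\star},K_{2}^{\star}]=[1,\delta]$, or a triangle without edge of length $1$ to which neither the $K$, $C$, nor Henson constraints apply). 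The main obstacle in the argument is the odd-$k$ case of the middle step, where one must invoke that in an MHG of generic type with $C^{\star}=2\delta+2$ and $C'^{\star}=2\delta+3$, all perimeters greater than these bounds of matching parity remain forbidden in the amalgamation class: this must be handled with care by reference to the general theory recalled in the appendix.
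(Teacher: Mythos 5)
Your setup is the same as the paper's: apply Lemma \ref{lemma:necessity:rho} to $\Gamma^{\rho^{-1}}$, note $\operatorname{diam}\bigl((\Gamma^{\rho^{-1}})_{\delta}\bigr)=1$, and transport triangle types through $\rho^{-1}$. Your ``forbidden'' direction is essentially sound: the even-$k$ and $k=1$ cases are fine, and the odd-$k$ case is rescued not by an appeal to ``the amalgamation class'' (the definition of $C_0,C_1$ only forbids the single least perimeter of each parity, and $\Gamma^{\rho^{-1}}$ is not assumed to be of known type) but by Fact \ref{Fact:Realize:delta,delta,d} combined with $\operatorname{diam}\bigl((\Gamma^{\rho^{-1}})_{\delta}\bigr)=1$, which forbids \emph{all} perimeters exceeding $2\delta+1$ in $\Gamma^{\rho^{-1}}$; you flag this, so it is a repairable looseness. (The paper instead routes $K_1=K_2=\delta$ through Fact \ref{Fact:K1} and Corollary \ref{Fact:realizeoddp} on minimal odd perimeters, which has the side benefit that once $K_1=\delta$ is known, $(\delta,\delta,1)$ is realized automatically.)

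The genuine gap is in your ``realized'' direction, and it occurs twice: for $(\delta,\delta,1)$ when $\delta$ is odd, and, much more seriously, for the whole family $(\delta,\delta,d)$ needed to push $C$ up to $3\delta+1$ --- in particular $(\delta,\delta,\delta)$. Your argument in both places is that the image triangle is ``unobstructed'' by the $K$, $C$, and Henson constraints and ``hence lies in the amalgamation class.'' That is not a proof of realization: membership in an amalgamation class would only guarantee embedding into the corresponding Fra\"iss\'e limit, and $\Gamma^{\rho^{-1}}$ is known only to share its numerical parameters with that limit, not to equal it (the paper explicitly does not assume known type). A positive realization argument is required, and this is precisely the hard half of the paper's proof: it reduces $C,C'$ to showing $\operatorname{diam}(\Gamma_{\delta})=\delta$ via Lemma \ref{Fact:C:delta'}, and then establishes that $(\delta/2,\delta/2,\delta/2)$ (resp.\ $(\frac{\delta+1}{2},\frac{\delta+1}{2},\frac{\delta+1}{2})$) is realized in $\Gamma^{\rho^{-1}}$ using the new structural result Proposition \ref{Prop:Realize:i,i,2k} together with connectivity of the spheres $\Gamma_i$ (Fact \ref{Fact:LocalAnalysis}), with a separate case analysis according to the parity of $\delta$ and of $\frac{\delta+1}{2}$. (Your odd-$\delta$ instance of $(\delta,\delta,1)$ can be repaired by Lemma \ref{lemma:diameterofsubgraph}, which gives $\operatorname{diam}\bigl((\Gamma^{\rho^{-1}})_{(\delta+1)/2}\bigr)=\delta$, but you do not invoke it.) As written, the proposal assumes exactly what is hardest to prove.
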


\begin{proof}
For ease of notation, we write $\tilde{\Gamma} = \Gamma^{\rho^{-1}}$.  

The graph $\tilde{\Gamma}$ is twistable by $\rho$, and is of generic type by Proposition \ref{Prop:Twist:Nongeneric}, so its numerical parameters are given by Lemma \ref{lemma:necessity:rho}. We denote them by $\tilde{K_1},\tilde{K_2},\tilde{C},\tilde{C}'$.

We begin by showing the following.

\begin{claimlemma}
$K_1 = K_2 = \delta.$ 
\end{claimlemma}

\begin{claimlemmaproof}
By Corollary \ref{Fact:realizeoddp}, in order to prove the claim, it suffices to find triangle types of all odd perimeters less than $2\delta+1$ which are not realized in $\Gamma$, as well as a triangle type of perimeter $2\delta+1$ which is realized in $\Gamma$. We first find forbidden triangles of perimeter $2k-1$ for $2 \leq k \leq \delta$.


As $\tilde{C} = 2\delta+2$ and $\tilde{C}'=2\delta+3$ (or by the proof of that fact), we have that $\tilde{\Gamma}_{\delta}$ has diameter $1$. Since the triangle type $(\delta,\delta,\delta)$ is not realized in $\tilde{\Gamma}$, the triangle type $(1,1,1)$ is not realized in $\Gamma$. 


Thus we turn our attention to $k$ satisfying $3 \leq k \leq \delta$ and consider the triangle type $(2,2 \lceil k/2 \rceil -2, 2 \lfloor k/2 \rfloor -1)$. This triangle type has perimeter $2k -1$ and is mapped under $\rho^{-1}$ to $(1,\lceil k/2 \rceil -1, \delta - \lfloor k/2 \rfloor +1)$. This triple violates the triangle inequality, since $\lfloor k/2 \rfloor + \lceil k/2 \rceil = k$ and $k < \delta +1$. Thus the triangle types $(2,2 \lceil k/2 \rceil -2, 2 \lfloor k/2 \rfloor -1)$ must be forbidden in $\Gamma$..

To see that some triangle type of perimeter $2\delta+1$ is indeed realized in $\Gamma$, we argue according to the parity of $\delta$. If $\delta$ is even, then the triangle type $(1,\delta,\delta)$ is the image of $(\delta,\delta/2,\delta/2)$ under $\rho^{-1}$, which is of geodesic type. Thus the triangle type $(1,\delta,\delta)$ is realized in $\Gamma$. If $\delta$ is odd, we consider the triangle type ${(3,\delta-1,\delta-1)}$. This has the image $(\delta-1,\frac{\delta-1}{2},\frac{\delta-1}{2})$ under $\rho^{-1}$, which again is of geodesic type. Thus the triangle type $(3,\delta-1,\delta-1)$ is realized in $\Gamma$.

This proves the claim. In particular, $C > 2\delta+1$.
\end{claimlemmaproof}

We turn our attention now to $C$ and $C'$. Here we use some additional structure theory. Since $K_2 = \delta$, we may apply Lemma \ref{Fact:C:delta'} to get that ${C = {2\delta + \delta' +1}}$ and $C' = C+1$ where $\delta'$ is the diameter of $\Gamma_{\delta}$. It remains to show that $\delta'=\delta$, or in other words that a triangle of type $(\delta,\delta,\delta)$ occurs in $\Gamma$.

The permutation $\rho^{-1}$ takes $(\delta,\delta,\delta)$ to $(\frac{\delta+\epsilon}{2},\frac{\delta+\epsilon}{2},\frac{\delta+\epsilon}{2})$, where $\epsilon$ is the parity of $\delta$.

We first consider the case when $\delta$ is even. 
Proposition \ref{Prop:Realize:i,i,2k} gives us that $(\delta/2,\delta/2,2)$ is realized in $\tilde{\Gamma}$ and therefore the triangle type $(\delta,\delta,4)$ is realized in $\Gamma$. Thus $\delta' = \diam(\Gamma_{\delta}) \geq 4$. Moreover, $\Gamma_{\delta}$ is connected by Fact \ref{Fact:LocalAnalysis}. Thus the distance $2$ is realized in $\Gamma_{\delta}$ and hence the triangle type $(\delta/2,\delta/2,1)$ is realized in $\tilde{\Gamma}$. Therefore we may again apply Fact \ref{Fact:LocalAnalysis} to get that $\tilde{\Gamma}_{\delta/2}$ is connected. The distance $\delta$ occurs in $\tilde{\Gamma}_{\delta/2}$, as seen from the geodesic type $(\delta/2,\delta/2,\delta)$, so the distance $\delta/2$ also occurs in $\tilde{\Gamma}_{\delta/2}$. 
Thus the triangle type $(\delta/2,\delta/2,\delta/2)$ is realized in $\tilde{\Gamma}$ and the triangle type $(\delta,\delta,\delta)$ is realized in $\Gamma$.

We now consider the case when $\delta$ is odd. In this case we need the triangle type $(\frac{\delta+1}{2},\frac{\delta+1}{2},\frac{\delta+1}{2})$ to be realized in $\tilde{\Gamma}$. If we can show that the diameter of $\tilde{\Gamma}_{\frac{\delta+1}{2}}$ is at least $\frac{\delta+1}{2}$, then we may argue as we did for $\delta$ even. 


Let $\epsilon'$ be the parity of $j=\frac{\delta+1}{2}$. We show that $\tilde{\Gamma}$ contains the triangle type $(j,j,j+\epsilon')$. The value $j + \epsilon'$ is even and we claim that $j + (j+\epsilon')/2 \leq \delta$, or equivalently $3j + \epsilon' \leq 2\delta$. This is clearly true for $\delta \geq 5$, and for $\delta = 3$, we have that $j = 2$ and $\epsilon'=0$, and thus this inequality holds for all odd $\delta$. 
Thus Proposition \ref{Prop:Realize:i,i,2k} tells us that $(j,j,j+\epsilon')$ is realized in $\tilde{\Gamma}$, and thus are done with the case when $\frac{\delta+1}{2}$ is even.

We are left then with the case $\delta$ odd and $\frac{\delta+1}{2}$ is odd. Note that this implies that $\delta \geq 5$. As in the case when $\delta$ even, we may deduce that $\tilde{\Gamma}_{\frac{\delta+1}{2}}$ is connected. The distance $\frac{\delta+1}{2}+1$ occurs in $\tilde{\Gamma}_{\frac{\delta+1}{2}}$, since by Proposition \ref{Prop:Realize:i,i,2k} the triangle type $(\frac{\delta+1}{2},\frac{\delta+1}{2},\frac{\delta+1}{2}+1)$ is realized in $\tilde{\Gamma}$. Thus the connectivity of $\tilde{\Gamma}_{\frac{\delta+1}{2}}$ yields that the distance $\frac{\delta+1}{2}$ is also realized in $\tilde{\Gamma}_{\frac{\delta+1}{2}}$ and therefore $(\delta,\delta,\delta)$ is realized is $\Gamma$.

Our claim is now complete.
\end{proof}

Our analysis of the parameter values compatible with a twist of the form $\tau_{\epsilon}$ is somewhat more involved, and involves some exceptional cases, as seen in Table \ref{Table:Twistable}. We break up our analysis into a series of lemmas. 

\begin{lemma}\label{lemma:necessity:taubipartite}
Let $\Gamma$ be a metrically homogeneous graph of generic type and diameter at least $3$ such that $\Gamma^{\tau_{\epsilon}}$ is metrically homogeneous. Suppose moreover that $\Gamma$ is bipartite. Then $\delta \equiv \epsilon \pmod 2$ and $C_0 = 2(\delta + \epsilon) +2$.
\end{lemma}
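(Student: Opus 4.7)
The plan is to prove the two assertions by analyzing images of specific triangles under $\tau_\epsilon$ and invoking the bipartiteness of $\Gamma$, which requires every realized triangle to have even perimeter.

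To establish $\delta \equiv \epsilon \pmod{2}$, I will argue by contradiction. Suppose $\delta+\epsilon$ is odd and set $m=(\delta+\epsilon-1)/2$, so $m\geq 1$ and $m+1\leq \delta$. Then the geodesic triangle $(1,m,m+1)$ is realized in $\Gamma^{\tau_\epsilon}$ by Observation \ref{fact:geodesics}. From the definition, $\tau_\epsilon(1)=\delta+\epsilon-1=2m$, and since $\min(m,\delta+\epsilon-m)=\min(m+1,\delta+\epsilon-(m+1))=m$, the map $\tau_\epsilon$ either swaps $m$ with $m+1$ (when $m$ is odd) or fixes them both (when $m$ is even). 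In either case the image triangle in $\Gamma$ has perimeter $4m+1$, which is odd, contradicting the bipartiteness of $\Gamma$.

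Now assume $\delta+\epsilon$ is even, which in particular forces $\delta+\epsilon\geq 4$ and hence $\tau_\epsilon(2)=2$. For the lower bound $C_0\geq 2(\delta+\epsilon)+2$, I will use that $(1,1,2)$ is realized in $\Gamma^{\tau_\epsilon}$, which forces $(\delta+\epsilon-1,\delta+\epsilon-1,2)$ to be realized in $\Gamma$ with perimeter $2(\delta+\epsilon)$. Combined with the defining inequality $C_0>2\delta$, this yields $C_0\geq 2(\delta+\epsilon)+2$: trivially when $\epsilon=0$, and because $2(\delta+\epsilon)=2\delta+2>2\delta$ when $\epsilon=1$.

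For the upper bound $C_0\leq 2(\delta+\epsilon)+2$, I will exhibit a forbidden triangle in $\Gamma$ of this perimeter by pulling back a triangle that violates the triangle inequality in $\Gamma^{\tau_\epsilon}$. When $\epsilon=0$ and $\delta\geq 4$, the triangle $(\delta-1,\delta,3)$ has $\tau_0$-image $(1,\delta,\delta-3)$, which violates the triangle inequality since $1+(\delta-3)<\delta$; hence $(\delta-1,\delta,3)$ is forbidden in $\Gamma$. When $\epsilon=1$ and $\delta\geq 5$, the triangle $(\delta,\delta,4)$ has $\tau_1$-image $(1,1,4)$, again failing the triangle inequality, so $(\delta,\delta,4)$ is forbidden in $\Gamma$. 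Each such triangle has perimeter $2(\delta+\epsilon)+2$, establishing the bound. The main obstacle is the borderline small-diameter case $\delta=3$, $\epsilon=1$, where the target value $2(\delta+\epsilon)+2=10$ exceeds the maximum possible triangle perimeter $3\delta=9$; in this edge case the lower bound already yields $C_0\geq 10$ and the upper bound $C_0\leq 10$ holds vacuously.
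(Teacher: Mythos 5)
Your parity argument and your lower bound $C_0\geq 2(\delta+\epsilon)+2$ are both correct and essentially match the paper's approach (the paper pulls back the geodesic $\bigl(\frac{\delta+\epsilon-1}{2},\frac{\delta+\epsilon-1}{2},\delta+\epsilon-1\bigr)$ rather than your $(1,m,m+1)$, but the mechanism is identical: the pullback has odd perimeter, which bipartiteness of $\Gamma$ forbids).

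The upper bound, however, has a genuine gap. By Definition \ref{defn:numpar}, $C_0$ is the least even number greater than $2\delta$ such that \emph{no} triangle of that perimeter is realized in $\Gamma$. Exhibiting a single forbidden triangle type of perimeter $2(\delta+\epsilon)+2$ --- your $(\delta-1,\delta,3)$ for $\epsilon=0$ or $(\delta,\delta,4)$ for $\epsilon=1$ --- therefore does not establish $C_0\leq 2(\delta+\epsilon)+2$: some \emph{other} triangle type of that perimeter could still be realized, in which case that perimeter would not qualify and $C_0$ would be strictly larger. Since the lemma does not assume $\Gamma$ is of known type, you cannot appeal to the catalog to conclude that the realized triangles are exactly those permitted by the numerical parameters. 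What is needed is to show that \emph{every} triangle of perimeter $2(\delta+\epsilon)+2$ is forbidden. The paper does this by assuming one is realized and invoking Fact \ref{Fact:Realize:delta,delta,d} to produce a triangle of type $(\delta,\delta,d)$ with $d\geq 2\epsilon+2$; for $\epsilon=1$ its image $(1,1,\tau_1(d))$ contradicts the bipartiteness of $\Gamma^{\tau_1}$ unless $d=2$, while for $\epsilon=0$ one must first pass from a pair at distance $d\geq 2$ in $\Gamma_\delta$ to a pair at distance $d'\geq\min(d+2,\delta-1)\geq 3$ in $\Gamma_{\delta-1}$ (using bipartiteness to place the chosen neighbours in $\Gamma_{\delta-1}$), whose image $(1,1,\tau_0(d'))$ then gives the contradiction. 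Your treatment of the edge case $\delta=3$, $\epsilon=1$ is fine, but the general upper bound needs this reduction from an arbitrary perimeter-$2(\delta+\epsilon)+2$ triangle to one of the special form.
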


\begin{proof}
When $\delta \not \equiv \epsilon \pmod 2$, then $\tau_{\epsilon}^{-1} = \tau_{\epsilon}$ maps the geodesic $(\frac{\delta+\epsilon-1}{2}, \frac{\delta+\epsilon-1}{2}, 2 \frac{\delta+\epsilon-1}{2})$ to either $(\frac{\delta+\epsilon-1}{2},\frac{\delta+\epsilon-1}{2},\delta+\epsilon-2 \frac{\delta+\epsilon-1}{2}) $ or $(\delta+\epsilon - \frac{\delta+\epsilon-1}{2}, \delta + \epsilon - \frac{\delta+\epsilon-1}{2},\delta+\epsilon - 2 \frac{\delta+\epsilon-1}{2})$. In either case, this implies that a triangle type of odd perimeter in $\Gamma$ is being mapped to a geodesic in $\Gamma^{\tau_{\epsilon}}$. This is a contradiction, since bipartite graphs have no triangle types of odd perimeter. Thus $\tau_{\epsilon}$ is not a viable twist.


When $\delta \equiv \epsilon \pmod 2$, the parities of the distances between elements of $\Gamma$ are preserved under $\tau_{\epsilon}$. Thus the image $\Gamma^{\tau_{\epsilon}}$ is bipartite, with the same parts as $\Gamma$.

We turn our attention now to $C_0$ and we show first that $C_0 \geq 2(\delta + \epsilon) + 2$. If $\epsilon = 0$, this holds by definition. If $\epsilon = 1$, then $\tau_{\epsilon}$ maps the triangle type $(\delta,\delta,2)$ to $(1,1,2)$ which is a geodesic and therefore is realized in $\Gamma^{\tau_{\epsilon}}$. Thus $C_0 > 2\delta+2$, and therefore $C_0 \geq 2(\delta + \epsilon) + 2$.

Thus in order to show that $C_0 = 2(\delta + \epsilon) +2$, it suffices to prove that any triangle type of diameter $2(\delta + \epsilon) +2$ is forbidden in $\Gamma$. Working towards a contradiction, we assume that such a triangle type is realized in $\Gamma$. Fact \ref{Fact:Realize:delta,delta,d} then says that $\Gamma$ has a triangle of type $(\delta,\delta,d)$ with $d \geq 2\epsilon + 2$.


If $\epsilon = 0$, then we consider a pair of vertices $u,v \in \Gamma_{\delta}$ at distance $d$. Using homogeneity, we take $u',v'$ adjacent to $u$ and $v$ respectively so that ${d(u',v) \geq \min(d+1,\delta-1)}$ and $${d(u',v') \geq \min(d(u',v) +1,\delta-1) \geq \min(d+2,\delta-1)}.$$ Let $d' = d(u',v')$.

As $\Gamma$ is bipartite we have $u',v' \in \Gamma_{\delta-1}$ and thus $u',v'$ and the basepoint form a triangle of type $(\delta-1,\delta-1,d')$. The permutation $\tau_0$ maps this triangle type to $(1,1,\tau_0(d'))$. As $\Gamma^{\tau_0}$ is bipartite we find $\tau_0(d') = 2$ and hence $d' = 2$. But as $\delta \equiv \epsilon \pmod 2$, $\delta$ is even and thus
\begin{equation*}
    d' \geq \min(d+2,\delta-1) \geq 3,
\end{equation*}
a contradiction.

If $\epsilon = 1$, then our assumption and Fact \ref{Fact:Realize:delta,delta,d} would imply that $\Gamma$ contains the triangle type $(\delta,\delta,d)$ for some $d \geq 4$. The permutation $\tau_1$ sends this triangle type to $(1,1,\tau_1(d))$ and as $\Gamma^{\tau_1}$ is bipartite we have $\tau_1(d) = 2$ and $d = 2$, which is a contradiction.

Therefore every triangle type of perimeter $2(\delta + \epsilon) + 2$ is forbidden, and thus $C_0 = 2(\delta + \epsilon) + 2$.
\end{proof}

\begin{lemma} \label{lemma:necessity:taunotbipartite}
Let $\Gamma$ be a metrically homogeneous graph of generic type such that $\Gamma^{\tau_{\epsilon}}$ is metrically homogeneous. Suppose furthermore that $\Gamma$ is not bipartite. Then one of the following holds:

\begin{itemize}
\item The unique distance occurring in $\Gamma_{\delta+\epsilon-1}$ is $2$, and if $\epsilon =0$ then $\delta \geq 4$;
\item $\epsilon=0, \delta = 3$; in this case, we have $\Gamma \simeq \Gamma^3_{1,2,7,8}$ (the generic antipodal graph of diameter $3$);
\item $\Gamma$ is in one of the exceptional cases listed with $\epsilon =1$ and $\delta \leq 4$, and the distance $\delta$ occurs in $\Gamma_{\delta}$.
\end{itemize}

\end{lemma}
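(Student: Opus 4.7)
The plan is to analyze the set of distances realized in $\Gamma_{\delta+\epsilon-1}$, which under the twist equals $(\Gamma^{\tau_\epsilon})_1$. First I would observe that $\tau_\epsilon(\delta+\epsilon-1) = 1$, since $\min(\delta+\epsilon-1, 1) = 1$ is odd. A distance $k$ is realized in $\Gamma_{\delta+\epsilon-1}$ iff the triangle type $(\delta+\epsilon-1, \delta+\epsilon-1, k)$ is in $\Gamma$, and under $\tau_\epsilon$ its image $(1, 1, \tau_\epsilon(k))$ must satisfy the triangle inequality in $\Gamma^{\tau_\epsilon}$. This forces $\tau_\epsilon(k) \leq 2$. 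A direct check gives $\tau_\epsilon^{-1}(\{1,2\}) = \{2, \delta+\epsilon-1\}$ when $\delta+\epsilon \geq 4$, while in the borderline case $\delta=3,\epsilon=0$ (where $\tau_0 = (1,2)$) we instead get $k \in \{1,2\}$. Moreover, the distance $2$ is actually realized in $\Gamma_{\delta+\epsilon-1}$, by applying Proposition \ref{Prop:Realize:i,i,2k} to $\Gamma$ with $i = \delta+\epsilon-1$ and $k=1$, producing the triangle $(\delta+\epsilon-1,\delta+\epsilon-1,2)$.

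The proof then splits based on whether the remaining candidate distance $\delta+\epsilon-1$ is also realized. If it is not, then $2$ is the unique realized distance, placing us in the first bullet; the boundary case $\epsilon=0,\delta=3$ must be handled separately, since there the distance restriction was only $\{1,2\}$. In that sub-case the absence of distance $3$ inside $\Gamma_2$ means $C \leq 2\delta+1 = 7$, so by Fact \ref{fact:antipodalSmallC} the graph $\Gamma$ is antipodal of diameter $3$. Claim \ref{claim:diam3nonbipartite} then validates the twist, and one identifies $\Gamma \simeq \Gamma^3_{1,2,7,8}$ by matching the forced numerical parameters $K_1=1,K_2=2,C=7,C'=8$, giving the second bullet.

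Finally, suppose the distance $\delta+\epsilon-1$ is realized, so $(\delta+\epsilon-1,\delta+\epsilon-1,\delta+\epsilon-1)$ lies in $\Gamma$ and its $\tau_\epsilon$-image $(1,1,1)$ forces $K_1(\Gamma^{\tau_\epsilon}) = 1$. For $\epsilon=0$ and $\delta \geq 4$ I would aim for a contradiction: $\tau_0$ fixes $\delta$ and $2$ and swaps $1 \leftrightarrow \delta-1$, so the image of any triangle $(\delta-1,\delta,k)$ is $(1,\delta,\tau_0(k))$, and the triangle inequality in $\Gamma^{\tau_0}$ forces $\tau_0(k) \in \{\delta-1,\delta\}$, hence $k \in \{1,\delta\}$. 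Thus every pair $(u,v) \in \Gamma_{\delta-1} \times \Gamma_\delta$ has $d(u,v) \in \{1,\delta\}$. Combining this rigid bipartite-like structure with Fact \ref{Fact:Neighbors:i+-1} (each vertex of $\Gamma_{\delta-1}$ has two neighbors in $\Gamma_\delta$) and with the realization of $(\delta-1,\delta-1,\delta-1)$ produces an incompatibility with $\Gamma$ being of generic type. For $\epsilon=1$, the analogous argument applied to $\tau_1$-images of triangles $(\delta,\delta-1,k)$ severely restricts the distances between $\Gamma_{\delta-1}$ and $\Gamma_\delta$ and yields $\delta \leq 4$; the residual cases $\delta\in\{3,4\}$ are then matched against the exceptional list of Table \ref{Table:Twistable} by a direct enumeration using Fact \ref{Fact:Realize:delta,delta,d} together with the twist-induced constraints on $C$ and $C'$. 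The main obstacle is carrying out the contradiction for $\epsilon=0,\delta\geq 4$ and the bound $\delta\leq 4$ for $\epsilon=1$; both arguments require threading several twist-induced triangle inequalities through the structural facts available for generic-type MHGs.
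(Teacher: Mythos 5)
Your opening move coincides with the paper's: a distance $k$ in $\Gamma_{\delta+\epsilon-1}$ gives a triangle $(\delta+\epsilon-1,\delta+\epsilon-1,k)$ whose $\tau_\epsilon$-image $(1,1,\tau_\epsilon(k))$ forces $\tau_\epsilon(k)\leq 2$, hence $k\in\{2,\delta+\epsilon-1\}$ (with the $\delta=3,\epsilon=0$ case set aside). But there are genuine gaps in what follows. First, a local error: you cannot get the realization of distance $2$ in $\Gamma_{\delta+\epsilon-1}$ from Proposition \ref{Prop:Realize:i,i,2k} when $\epsilon=1$, since then $i+k=\delta+1>\delta$ and the hypothesis fails (and it must fail in general: an antipodal $\Gamma$ has $|\Gamma_\delta|=1$). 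The paper instead pulls back the geodesic type $(1,1,2)$, which is necessarily realized in $\Gamma^{\tau_\epsilon}$, to $(\delta+\epsilon-1,\delta+\epsilon-1,2)$ in $\Gamma$. Similarly, in the $\delta=3,\epsilon=0$ subcase, forbidding $(2,2,3)$ does not by itself give $C\leq 7$: the perimeter-$7$ type $(1,3,3)$ is not excluded by your distance analysis, so antipodality does not yet follow; the paper instead argues that $(2,2,3)$ must be excluded by one of the numerical parameters and eliminates $K_1$ and $K_2$ as explanations (the latter via Fact \ref{Fact:LocalAnalysis:K1le2}), forcing $C_1=7$, and then invokes the diameter-$3$ classification.

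The more serious issue is that the two steps you flag as "the main obstacle" are exactly the substance of the lemma, and your sketched routes do not go through as stated. For $\epsilon=0$, $\delta\geq 4$: knowing $d(u,v)\in\{1,\delta\}$ for $u\in\Gamma_{\delta-1}$, $v\in\Gamma_\delta$ does not visibly contradict genericity (one checks that taking $u,u'$ at distance $\delta-1$ and $v$ adjacent to $u$ merely forces $d(u',v)=\delta$, which is consistent). The paper's argument is different and short: realization of $(\delta-1,\delta-1,\delta-1)$ in $\Gamma$ gives $K_1=1$ for $\Gamma^{\tau_0}$, so Fact \ref{Fact:LocalAnalysis:K1le2} puts an edge in $(\Gamma^{\tau_0})_{\delta-1}$, whose preimage $(1,1,\delta-1)$ violates the triangle inequality unless $\delta\leq 3$. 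For $\epsilon=1$, your proposed analysis of $(\delta,\delta-1,k)$ only yields $k\in\{1,3,\delta-1\}$ and gives no bound on $\delta$; the paper instead uses $K_1(\Gamma^{\tau_1})=1$ to realize $(2,2,1)$ in $\Gamma^{\tau_1}$, whose preimage $(2,2,\delta)$ forces $\delta\leq 4$, and then carries out the explicit parameter computations for $\delta=3,4$ that you defer to "direct enumeration." As written, the proposal establishes the easy trichotomy but not the content of any of its three branches.
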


\begin{proof}
We prove this result via a series of claims.

\begin{claimlemma}\label{claimlemma:casethree}
If $\delta = 3$ and $\epsilon = 0$ then $\Gamma \simeq \Gamma_{1,2,7,8}^{3}$.
\end{claimlemma}

\begin{claimlemmaproof}
By Fact \ref{Fact:MH3}, any triangle type realized in the canonical metrically homogeneous graph $\Gamma_{K_1,K_2,C,C'}^3$ with the same numerical parameters as $\Gamma$ will also be realized in $\Gamma$.
Thus any forbidden triangle types must be directly excluded by one of the parameters. 
The triangle type $(2,3,3)$ corresponds under $\tau_0$ to the triple $(1,1,3)$, which violates the triangle inequality.
Therefore the type $(2,2,3)$ must be excluded from $\Gamma$, and is hence excluded either by the value of a parameter $K_1$ or $K_2$, or by the value of $C_1$. As $\Gamma$ is not bipartite, the type $(2,2,3)$ is not excluded by $K_1$. 
An equivalent definition of $K_2$ would forbid triangles of type $(i,j,k)$ for which $i + j + k > 2K_2 + 2\min(i,j,k)$ and odd (see, for example, \cite[Definition 1.17]{Che-HOGMH}).
Thus, to be excluded by $K_2$ would mean that $7> 2K_2 + 2 \min(2,2,3) = 2K_2+4$ and hence $K_2 = 1$.
By the definition of $K_1$ and $K_2$ we have $K_1 = 1$ as well, and as $\delta-1 =2$, Fact \ref{Fact:LocalAnalysis:K1le2} gives a contradiction. So this triangle type is not excluded by $K_2$.
Thus the triangle type $(2,2,3)$ can only be excluded by the parameter $C_1$, that is, $C_1 = 7$. Now we may use the classification from \cite{ACM-MH3} to identify $\Gamma$ (cf. Theorem 1). 
\end{claimlemmaproof}

\begin{claimlemma}\label{claim:distance2occurs}
If $\delta+\epsilon > 3$, then the only possible distances which may occur in $\Gamma_{\delta + \epsilon - 1}$ are $2$ and $\delta + \epsilon -1$, and at least the distance $2$ does occur.
\end{claimlemma}

\begin{claimlemmaproof}
Consider a triangle type of the form $(\delta + \epsilon -1, \delta + \epsilon -1, k)$ realized in $\Gamma$ with $1 \leq k \leq \delta$. The permutation $\tau_{\epsilon}$ maps this triangle type to $(1,1,\tau_{\epsilon}(k))$. Thus $\tau_{\epsilon}(k) \leq 2$ and $k$ is either $2$ or $\delta + \epsilon -1$, as claimed. For $k = 2$, as $\delta+\epsilon>3$ we have $(1,1,\tau_{\epsilon}(k)) = (1,1,2)$, which is a geodesic type. So the distance $2$ does occur.
\end{claimlemmaproof}


\begin{claimlemma}\label{claimlemma:distancerealized}
If $\epsilon = 0$ and $\delta \geq 4$, then the distance $\delta -1$ is not realized in $\Gamma_{\delta -1}$.
\end{claimlemma}

\begin{claimlemmaproof}
Suppose that $\epsilon = 0$ and that $\Gamma_{\delta-1}$ does realize the distance $\delta -1$. Then $(\delta-1,\delta-1,\delta-1)^{\tau_0} = (1,1,1)$ is realized in $\Gamma^{\tau_{\epsilon}}$. By Fact \ref{Fact:LocalAnalysis:K1le2},
the triangle type $(\delta -1, \delta-1,1)$ is also realized in $\Gamma^{\tau_0}$. Thus its inverse image $(1,1,\delta-1)$ must be in $\Gamma$, and therefore $\delta \leq 3$, a contradiction.
\end{claimlemmaproof}

\begin{claimlemma}\label{claimlemma:exceptional}
If $\epsilon = 1$ and the distance $\delta$ is realized in $\Gamma_{\delta}$, then $\delta \leq 4$ and $\Gamma$ is one of the listed exceptional cases.
\end{claimlemma}

\begin{claimlemmaproof}
We show first that there is a triangle of type $(2,2,\delta)$ in $\Gamma$, and in particular $\delta \leq 4$.
Note that since $\delta$ is realized in $\Gamma_{\delta}$, we have $K_1=1$ for $\Gamma^{\tau_{1}}$ since $(\delta,\delta,\delta)$ in $\Gamma$ corresponds to $(1,1,1)$ in $\Gamma^{\tau_{1}}$. We may apply Proposition \ref{Prop:Realize:i,i,2k} to get that there is a triangle of type $(2,2,1)$ in $\Gamma^{\tau_1}$, and therefore a triangle of type $(2,2,\delta)$ in $\Gamma$. Therefore $\delta \leq 4$.

\textit{Case 1: $\delta =3$}

We are assuming that the triangle type $(3,3,3)$ is realized in $\Gamma$ and we have also shown that the triangle type $(2,2,3)$ is realized in $\Gamma$, since $\delta = 3$. By Claim \ref{claim:distance2occurs}, the triangle type $(2,3,3)$ is also realized in $\Gamma$, and the triangle type $(1,3,3)$ is not realized in $\Gamma$. In particular, there are triangles of perimeters $7,8,9$ in $\Gamma$ and thus $C = 10$ and $C' = 11$.
Moreover, $K_1 \leq K_2 \leq 2$. Since all the metrically homogeneous graphs of diameter $3$ are known, we refer to the catalog in  \cite{ACM-MH3} to deduce that $K_2 = 2$ and $K_1 = 1$ or $2$. These correspond to the first and third exceptional cases in Table \ref{Table:Twistable}, as claimed.

\textit{Case 2: $\delta = 4$}

In this case, by assumption, the triangle type $(4,4,4)$ is realized in $\Gamma$. Thus, $C_0 = 14$ and $\Gamma$ is not antipodal. The triangle type $(3,4,4)$ would be mapped under $\tau_1$ to $(1,1,3)$ and therefore must be forbidden. As this is the only possible triangle type of perimeter $11$, we see that $C_1 \leq 11$. The triangle type $(2,3,4)$ is mapped to $(1,2,3)$ and thus is realized in $\Gamma$. Since this triangle type has perimeter $9$, we also have that $C_1 = 11$.

The triangle type $(4,4,1)$ is forbidden from being realized in $\Gamma$ since it would be mapped under $\tau_{1}$ to $(1,1,4)$. We see however that the triangle type $(2,2,1)$ is realized in $\Gamma$, since it is mapped to $(2,2,4)$. 
Thus $K_2 < 4$ and $K_1 \leq 2$.

We may therefore apply Fact \ref{Fact:LocalAnalysis:K1le2} to get that $(1,3,3)$ is realized in $\Gamma$, yielding $K_2 = 3$.

Once again, $K_1 = 1$ or $2$ corresponds to the first and third exceptional cases for $\delta = 4$, as claimed.

Thus the claim holds in all cases. 
 \end{claimlemmaproof}
 
We argue now that Lemma \ref{lemma:necessity:taunotbipartite} follows from Claims \ref{claimlemma:casethree}, \ref{claim:distance2occurs}, \ref{claimlemma:distancerealized}, and \ref{claimlemma:exceptional}. Recall from Proposition \ref{Prop:Twist:Nongeneric} that we may assume $\delta \geq 3$.

If $\delta + \epsilon < 4$, then $\delta = 3$ and $\epsilon = 0$, then we arrive at the second case mentioned the lemma with Claim \ref{claimlemma:casethree} providing the additional information about $\Gamma$.

We suppose then that
\begin{align*}
    \delta + \epsilon \geq 4.
\end{align*}

If the distance $\delta + \epsilon -1$ does not occur in $\Gamma_{\delta}$ then by Claim \ref{claim:distance2occurs} the only distance occurring in $\Gamma_{\delta}$ is $2$, as in the first case of our lemma. Thus we finally suppose that
\begin{align*}
    \text{The distance } \delta + \epsilon -1 \text{ occurs in } \Gamma_{\delta}.
\end{align*}
The Claim \ref{claimlemma:distancerealized} shows that $\epsilon = 1$. As we are supposing that the distance $\delta$ is realized in $\Gamma_{\delta}$, Claim \ref{claimlemma:exceptional} shows that we are in one of the corresponding exceptional cases with $\delta \leq 4$.
\end{proof}

\begin{lemma}\label{lemma:necessity:notbipartiteantipodal}
Let $\Gamma$ be a metrically homogeneous graph such that $\Gamma^{\tau_0}$ is metrically homogeneous. Suppose moreover that $\Gamma$ is not bipartite. Then $\Gamma$ is antipodal and $K_1 = \left \lfloor \frac{\delta}{2} \right \rfloor$, $K_2 = \left \lceil \frac{\delta}{2} \right \rceil$.
\end{lemma}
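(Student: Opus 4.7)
The plan is to leverage Lemma \ref{lemma:necessity:taunotbipartite} twice---once on $\Gamma$ itself and once on $\Gamma^{\tau_0}$---to obtain symmetric control of the local structure at both ends of the graph, then derive antipodality and the values of $K_1$ and $K_2$ from that control. First I would apply Lemma \ref{lemma:necessity:taunotbipartite} with $\epsilon = 0$; the exceptional cases there have $\epsilon = 1$ and hence do not arise, leaving either $\delta = 3$ with $\Gamma \cong \Gamma^3_{1,2,7,8}$, or $\delta \geq 4$ with $\Gamma_{\delta-1}$ containing only the distance $2$. In the first case, $\Gamma^3_{1,2,7,8}$ is the generic antipodal graph of diameter $3$, which has $(K_1, K_2) = (1, 2) = (\lfloor 3/2 \rfloor, \lceil 3/2 \rceil)$, so the conclusion is immediate.

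For the case $\delta \geq 4$, I next bootstrap a symmetric statement about $\Gamma_1$ by applying Lemma \ref{lemma:necessity:taunotbipartite} to $\Gamma^{\tau_0}$. To do so, $\Gamma^{\tau_0}$ must be shown non-bipartite: when $\delta$ is odd this is direct from Lemma \ref{lemma:necessity:taubipartite} (bipartiteness would force $\delta$ even), and when $\delta$ is even the permutation $\tau_0$ preserves the parity of each distance (each swapped pair $\{i, \delta - i\}$ has matching parity), so a bipartite $\Gamma^{\tau_0}$ would force $\Gamma$ bipartite against hypothesis. The lemma then yields that $(\Gamma^{\tau_0})_{\delta-1}$ has only distance $2$ in $\Gamma^{\tau_0}$. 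Since $(\Gamma^{\tau_0})_{\delta-1} = \Gamma_{\tau_0(\delta-1)} = \Gamma_1$ as sets, and distance $2$ in $\Gamma^{\tau_0}$ corresponds to distance $\tau_0^{-1}(2) = 2$ in $\Gamma$, this means $\Gamma_1$ also has only distance $2$ in $\Gamma$; in particular $K_1 \geq 2$.

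To establish antipodality, use Fact \ref{fact:antipodalSmallC} together with Fact \ref{Fact:Realize:delta,delta,d}: it suffices to forbid every triangle $(\delta, \delta, d)$ with $d \geq 1$. Suppose such a triangle is realized by vertices $u, w_1, w_2$, and pick $v \in \Gamma_{\delta-1}(u)$ adjacent to $w_1$ via Fact \ref{Fact:Neighbors:i+-1}. Applying $\tau_0$ to the triangle $(\delta-1, \delta, d(v, w_2))$ yields image $(1, \delta, \tau_0(d(v, w_2)))$, whose triangle inequality forces $d(v, w_2) \in \{1, \delta\}$. The subcase $d(v, w_2) = 1$ makes $v$ a common neighbor of $w_1, w_2$, so $d \leq 2$; the option $d = 1$ then produces a $(1, 1, 1)$ triangle against $K_1 \geq 2$. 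The subcase $d(v, w_2) = \delta$ places $w_2 \in \Gamma_\delta(v)$, and iterating the same dichotomy from $v$'s perspective while using $\delta \geq 4$ to eliminate the inflated options also drives the argument back to $d = 2$. To exclude the residual $(\delta, \delta, 2)$: every common neighbor of $w_1, w_2$ must lie in $\Gamma_{\delta-1}(u)$ (else $(\delta, \delta, 1)$ arises, already forbidden), and the generic-type independence of common neighbors of vertices at distance $2$ combines with the only-distance-$2$ structure on both $\Gamma_1$ and $\Gamma_{\delta-1}$ to yield the required contradiction.

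Once antipodality is in hand, Corollary \ref{fact:antipodaladdstodelta} gives $K_1 + K_2 = \delta$; combined with $K_1 \geq 2$ and $K_1 \leq K_2$, this forces $K_1 \leq \lfloor \delta/2 \rfloor$. For the matching lower bound, apply $\tau_0$ to the realized triangle $(K_1, K_1, 1)$: its image $(\tau_0(K_1), \tau_0(K_1), \delta - 1)$ must be a valid triangle in $\Gamma^{\tau_0}$, and the triangle inequality $\delta - 1 \leq 2\tau_0(K_1)$ together with the definition of $\tau_0$ and the only-distance-$2$ constraint on $\Gamma_{\delta-1}$ pins $K_1 \geq \lfloor \delta/2 \rfloor$. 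Hence $K_1 = \lfloor \delta/2 \rfloor$ and $K_2 = \lceil \delta/2 \rceil$. The principal obstacle in carrying out this plan is the fine-grained handling of the residual $(\delta, \delta, 2)$ triangle in the antipodality argument, which is not excluded by any single common-neighbor or $\tau_0$-image argument in isolation but requires combining both only-distance-$2$ structures with the generic-type independence property.
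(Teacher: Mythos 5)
Your proof breaks down exactly where you flag ``the principal obstacle'': the exclusion of the residual triangle type $(\delta,\delta,2)$. The two ingredients you propose to combine are mutually \emph{consistent} with the configuration you need to refute. If $w_1,w_2\in\Gamma_\delta(u)$ are at distance $2$, their common neighbours all lie in $\Gamma_{\delta-1}(u)$ and are therefore pairwise at distance $2$ --- which is precisely an infinite independent set, exactly what generic type predicts; viewed from $w_1$ they lie in $\Gamma_1(w_1)$ and are again pairwise at distance $2$. Nothing contradictory follows from independence plus the only-distance-$2$ structure. A contradiction \emph{can} be extracted, but it needs a further idea. For instance, your own dichotomy $d(v,w)\in\{1,\delta\}$ for $v\in\Gamma_{\delta-1}(u)$, $w\in\Gamma_\delta(u)$, once $d(v,w)=\delta$ is eliminated, shows that \emph{every} vertex of $\Gamma_{\delta-1}(u)$ is adjacent to \emph{every} vertex of $\Gamma_\delta(u)$; since any neighbour of $w\in\Gamma_\delta(u)$ lies at distance $\geq\delta-1$ from $u$ and $\Gamma_\delta(u)$ is independent, this gives $\Gamma_1(w)=\Gamma_{\delta-1}(u)$ for all $w\in\Gamma_\delta(u)$, so two vertices at distance $2$ have identical neighbourhoods; by homogeneity this holds for every pair at distance $2$, contradicting the existence of a geodesic of length $3$. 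That (or an equivalent) step is missing from what you wrote. Relatedly, the subcase $d(v,w_2)=\delta$ is not disposed of by ``iterating the dichotomy from $v$'s perspective''; the correct elimination is metric: a neighbour $v''\in\Gamma_{\delta-1}(u)$ of $w_2$ would satisfy $d(v,v'')\geq\delta-1>2$ with $v,v''$ both in $\Gamma_{\delta-1}(u)$.

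For comparison, the paper avoids all of this: it never bootstraps to $\Gamma_1$ and never applies the $\tau_0$-image dichotomy to prove antipodality. Instead it proves antipodality purely metrically from the single hypothesis that $\Gamma_{\delta-1}$ realizes only the distance $2$: first $d(u,v)\leq\delta-2$ for $u\in\Gamma_{\delta-1}$, $v\in\Gamma_\delta$, then an extremal argument (take $u_1\in\Gamma_{\delta-1}$, $v_1\in\Gamma_\delta$ with $d(u_1,v_1)$ maximal and extend by one step) that forces $|\Gamma_\delta|=1$. Two smaller points: applying Lemma \ref{lemma:necessity:taunotbipartite} to $\Gamma^{\tau_0}$ requires knowing $\Gamma^{\tau_0}$ is of generic type (this follows from Proposition \ref{Prop:Twist:Nongeneric} but should be said); and in your final step the inequality $\delta-1\leq 2\tau_0(K_1)$ yields $K_1\geq\lfloor\delta/2\rfloor$ only when $\min(K_1,\delta-K_1)$ is even --- in the odd case $\tau_0(K_1)=\delta-K_1$ and the triangle inequality gives an \emph{upper} bound on $K_1$, so you need the antipodal perimeter bound (as in the paper's Claim \ref{claim:kvalues}) to close that case.
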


\begin{proof}

Note that for this lemma we are only working with $\epsilon = 0$.

In Lemma \ref{lemma:necessity:taunotbipartite} the third case is excluded, and in the second case our lemma holds since the graph is $\Gamma^3_{1,2,7,8}$.
Thus we restrict ourselves to the remaining case of Lemma \ref{lemma:necessity:taunotbipartite}, where the unique distance realized in $\Gamma_{\delta-1}$ is $2$ and $\delta \geq 4$.

\begin{claimlemma} \label{claim:boundeddistance}
For $u \in \Gamma_{\delta-1}, v \in \Gamma_{\delta}$, we have $d(u,v) \leq \delta -2$.
\end{claimlemma}

\begin{claimlemmaproof}
If $d(u,v) = \delta$, then we take $v' \in \Gamma_{\delta-1}$ adjacent to $v$. That would imply that $d(u,v') \geq \delta-1$, but since both $u$ and $v'$ are in $\Gamma_{\delta-1}$, we have $d(u,v') = 2$. As $\delta \geq 4$, this is a contradiction.

If $d(u,v) = \delta-1$, we find a $v'$ adjacent to $v$ such that $d(u,v') = \delta$. By the previous paragraph, $v' \not \in \Gamma_{\delta}$. But then $v' \in \Gamma_{\delta-1}$, and since the unique distance realized in $\Gamma_{\delta-1}$ is $2$, we would have that $\delta = 2$, a contradiction.
\end{claimlemmaproof}

\begin{claimlemma}  \label{claim:antipodal}
$\Gamma$ is antipodal.
\end{claimlemma}

\begin{claimlemmaproof}
Working towards a contradiction, we assume there are two distinct points $u,v$ in $\Gamma_{\delta}$ and we assume without loss of generality that $\delta' = d(u,v) = \diam(\Gamma_{\delta})$. We notice first that there must be a $u' \in \Gamma_{\delta-1}$ adjacent to $u$ with $d(u',v) \geq \min(\delta'+1,\delta-1)$. Indeed, if $\delta' < \delta$, then by the homogeneity of $\Gamma_{\delta}$, we may take $u'$ to be adjacent to $u$ with $d(u',v) = \delta'+1$. Then $u'$ is in $\Gamma_{\delta-1}$. If on the other hand $\delta' = \delta$, we may simply take any $u'$ in $\Gamma_{\delta-1}$ that is adjacent to $u$.

Take $u_1 \in \Gamma_{\delta-1}, v_1 \in \Gamma_{\delta}$ with $d(u_1,v_1)$ maximal; in particular, $d(u_1,v_1) \geq d(u,v')$. By the previous claim $d(u_1,v_1) < \delta$ and thus there is $v'$ adjacent to $v_1$ with $d(u_1,v') = d(u_1,v_1)+1$. By the choice of $u_1,v_1$, we have $v' \not\in \Gamma_{\delta}$, so $v' \in \Gamma_{\delta-1}$. But
\begin{equation*}
    d(u_1,v') \geq d(u',v) + 1 \geq \min(\delta'+2,\delta) > 2
\end{equation*}
a contradiction.

Thus, $\Gamma$ must be antipodal.
\end{claimlemmaproof}

\begin{claimlemma} \label{claim:kvalues}
$K_1 = \lfloor \frac{\delta}{2} \rfloor$ and $K_2 = \lceil \frac{\delta}{2} \rceil$.
\end{claimlemma}

\begin{claimlemmaproof}
We begin by considering any triangle type of the form $(i,i,1)$ which is realized in $\Gamma$. If $\min(i,\delta-i)$ is even then $\tau_0$ sends this triangle type to $(i,i,\delta-1)$. The triangle inequality and the perimeter bound afforded by antipodality would then yield 
\begin{align*}
\delta -1 \leq 2i \leq \delta + 1. 
\end{align*}
If $\min(i,\delta-i)$ is odd, the corresponding inequalities found would be $$\delta -1 \leq 2(\delta-i) \leq \delta+1$$ which are equivalent to the inequalities in the even case. Thus we have that $K_1 \geq \lfloor \frac{\delta}{2} \rfloor$ and $K_2 \leq \lceil \frac{\delta}{2} \rceil$. Recalling that $K_1 + K_2 = \delta$ for antipodal graphs --- see, for example, {\cite[pg. 13]{ACM-MH3}} ---  our claim is shown.
\end{claimlemmaproof}

Claims \ref{claim:antipodal} and \ref{claim:kvalues} prove the lemma.
\end{proof}

The following will be used in the proof of Lemma \ref{lemma:necessity:nonbipartiteorexceptional}.

\begin{lemma}\label{lemma:exceptionalcases}
Let $\Gamma$ be a metrically homogeneous graph such that $\tilde{\Gamma} = \Gamma^{\tau_1}$ is also metrically homogeneous. Suppose moreover that $\Gamma$ satisfies the following conditions.
\begin{itemize}
\item $\delta = 3$ or $4$
\item The distance $\delta$ is realized in $\Gamma_{\delta}$
\item The numerical parameters associated with $\Gamma$ are those associated with one of the exceptional cases in Table \ref{Table:Twistable}, namely one of the following.
\begin{align*}
K_1 \leq 2 && K_2 = \delta -1 && C = \delta + 7 && C' = 3\delta+2\\
K_1 = 1 && K_2 = \delta -1 && C = 2\delta+3 && C' = C+1
\end{align*}
\end{itemize}

If the unique distance occurring in $\tilde{\Gamma}_{\delta}$ is $2$ then $K_1 = 2$ and $\tilde{\Gamma}$ is also one of the exceptional cases listed, with parameters 
\begin{align*}
\tilde{K_1} = 1 && \tilde{K_2} = \delta-1 && \tilde{C} = 2\delta+3 && \tilde{C'} = 2\delta+4
\end{align*}
\end{lemma}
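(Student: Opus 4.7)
The plan is to push the hypothesis through $\tau_1$ to determine $K_1$, and then to compute the remaining parameters of $\tilde\Gamma$ by translating representative triangle types. For both values $\delta \in \{3,4\}$ a direct computation shows that $\tau_1$ restricts to the transposition $(1,\delta)$ fixing every other distance. Under this identification, the hypothesis that the only distance realized in $\tilde\Gamma_\delta$ is $2$ becomes the statement that the only triangle of type $(1,1,m)$ in $\Gamma$ is the geodesic $(1,1,2)$; in particular $(1,1,1)$ is forbidden, so $K_1\geq 2$. Combined with the two listed exceptional parameter profiles this excludes profile (b) (in which $K_1=1$) and pins the parameters of $\Gamma$ to $(K_1,K_2,C,C')=(2,\,\delta-1,\,\delta+7,\,3\delta+2)$.

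With these parameters in hand, I verify the four target parameters of $\tilde\Gamma$ by translating representative triangle types. For $\tilde K_1=1$, the triangle $(1,1,1)$ in $\tilde\Gamma$ pulls back to $(\delta,\delta,\delta)$ in $\Gamma$, which is realized by the hypothesis on $\Gamma_\delta$. For $\tilde K_2=\delta-1$, the type $(\delta,\delta,1)$ in $\tilde\Gamma$ pulls back to $(1,1,\delta)$, violating the triangle inequality for $\delta\geq 3$, while the type $(\delta-1,\delta-1,1)$ in $\tilde\Gamma$ pulls back to $(\delta-1,\delta-1,\delta)$ in $\Gamma$, whose realization follows from the standard forbidden-triangle rules applied to $(2,\delta-1,\delta+7,3\delta+2)$ in each of the cases $\delta=3$ (perimeter $7$) and $\delta=4$ (perimeter $10$).

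For $\tilde C=2\delta+3$ and $\tilde C'=2\delta+4$, I would enumerate triangle types of diameter at most $\delta$ and perimeter just above $2\delta$. Perimeter $2\delta+3$ is forbidden in $\tilde\Gamma$ since its only diameter-$\leq\delta$ triangle type $(\delta-1,\delta,\delta)$ maps to $(\delta-1,1,1)$, violating the triangle inequality. Perimeter $2\delta+4$ is vacuously forbidden when $\delta=3$ and, when $\delta=4$, is represented only by $(\delta,\delta,\delta)$ which maps to $(1,1,1)$, forbidden by $K_1=2$. The smaller perimeters $2\delta+1$ and $2\delta+2$ are realized in $\tilde\Gamma$ via the triangles $(2,\delta-1,\delta)$ and $(2,\delta,\delta)$, whose $\tau_1$-images $(2,\delta-1,1)$ and $(2,1,1)$ are geodesic-type in $\Gamma$ and hence realized. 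The main obstacle is executing these enumerations correctly alongside the small parameter check for $(\delta-1,\delta-1,\delta)\in\Gamma$; everything else is formal once the first paragraph has fixed the numerical invariants of $\Gamma$.
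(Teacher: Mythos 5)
Your overall strategy is sound and agrees with the paper's on the first two parameters: you pin $K_1=2$ by pulling $(1,1,m)$-triangles through $\tau_1$, and you get $\tilde K_1=1$ from $(\delta,\delta,\delta)$ exactly as the paper does. Where you diverge is in how you certify $\tilde K_2\geq\delta-1$ and $\tilde C'=2\delta+4$: the paper invokes structural lemmas (Fact \ref{Fact:LocalAnalysis:K1le2} applied to the non-antipodal graph $\tilde\Gamma$ with $\tilde K_1=1$, and Lemma \ref{Fact:C:delta'} using that the unique distance in $\tilde\Gamma_\delta$ is $2$), whereas you enumerate triangle types by perimeter and translate them. Your route is workable, but note that your appeal to ``standard forbidden-triangle rules'' to realize $(\delta-1,\delta-1,\delta)$ in $\Gamma$ is doing real work: for $\delta=4$ it is covered by Fact \ref{Fact:evenPdiam4} (even perimeter $10<C_0=14$), but for $\delta=3$ the triangle $(2,2,3)$ has odd perimeter $7\neq 2K_1+1$, so ``not excluded by the parameters implies realized'' needs the diameter-$3$ classification (Fact \ref{Fact:MH3}). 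The paper's structural route avoids this entirely.

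Two concrete slips in your $\delta=3$ case. First, the unique triangle type of perimeter $2\delta+3=9$ with entries at most $3$ is $(3,3,3)$, not $(\delta-1,\delta,\delta)=(2,3,3)$ (which has perimeter $8$); its image $(1,1,1)$ satisfies the triangle inequality and is excluded only because $K_1=2$, not by a triangle-inequality violation. Second, $(2,\delta-1,1)=(2,2,1)$ is not a geodesic type when $\delta=3$; it is realized because $K_1=K_2=2$, which is how the paper argues this step. Both conclusions survive with the corrected justifications, so the proof goes through once these are repaired, but as written the $\delta=3$ branch of your perimeter analysis is incorrect.
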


\begin{proof}
If $K_1 = 1$, then the triangle type $(1,1,1)$ is realized in $\Gamma$, and therefore the triangle type $(\delta,\delta,\delta)$ is realized in $\tilde{\Gamma}$, contradicting the assumption that the unique distance in $\tilde{\Gamma}_{\delta}$ is $2$. Thus, $K_1 = 2$.

In these cases the triangle type $(\delta,\delta,\delta)$ is realized in $\Gamma$, so the type $(1,1,1)$ is realized in $\tilde{\Gamma}$, yielding \begin{equation*}
\tilde{K_1} = 1
\end{equation*}
The distance $\delta$ is $\tilde{\Gamma}$ corresponds to the distance $1$ is $\Gamma$, but the relation $d(x,y) = 1$ is not a pairing on $\Gamma$. That is, given a vertex $v \in \Gamma$, there is more than one vertex $v'$ such that $d(v,v') = 1$. Thus $\tilde{\Gamma}$ is not antipodal, and we may apply Fact to get 
\begin{equation*}
\tilde{K_2} \geq \delta-1
\end{equation*}
The distance $1$ does not occur in $\tilde{\Gamma}_{\delta}$, since the triple $(1,1,\delta) = (\delta,\delta,1)^{\tau_1}$ violates the triangle inequality. Thus
\begin{equation*}
\tilde{K_2} = \delta-1
\end{equation*}
Since by assumption the unique distance in $\tilde{\Gamma}_{\delta}$ is $2$, Lemma \ref{Fact:C:delta'} of the Appendix tells us that $\tilde{C}' = 2\delta + 4$, and $\tilde{C} = 2\delta+3$ or $\tilde{C}= 2\delta+1$.

Thus it remains to show that 
\begin{align*}
\tilde{C} \neq 2\delta+1
\end{align*}
We argue then that a triangle of type $(2,\delta-1,\delta)$ is realized in $\tilde{\Gamma}$. 

The image $(2,\delta-1,\delta)^{\tau_1} = (2,\delta-1,1)$ is a geodesic, and therefore realized in $\Gamma$ if $\delta = 4$. It is of type $(2,2,1)$ if $\delta = 3$. This is realized is $\Gamma$ since $K_2 = 2$; hence $(2,\delta-1,\delta)$ is realized in $\tilde{\Gamma}$, leaving $\tilde{C} = 2\delta+3$.

This concludes the proof.
\end{proof}

\begin{lemma}\label{lemma:necessity:nonbipartiteorexceptional}
Let $\Gamma$ be a metrically homogeneous graph such that $\Gamma^{\tau_1}$ is metrically homogeneous and assume that $\Gamma$ is not bipartite. Then either the parameters $K_1,K_2,C,C'$ have the values $\lfloor \frac{\delta+1}{2} \rfloor, \lceil \frac{\delta+1}{2} \rceil, 2\delta +3, 2\delta +4$ respectively, or $\Gamma$ is in one of the exceptional cases of Table \ref{Table:Twistable} with $\delta \leq 4$.
\end{lemma}

\begin{proof}
If the distance $\delta$ occurs in $\Gamma_{\delta}$, then Lemma \ref{lemma:necessity:taunotbipartite} provides our desired result. So we suppose that the unique distance occurring in $\Gamma_{\delta}$ is $2$.

If the distance $\delta$ occurs in $\Gamma^{\tau_1}$ then by Lemma \ref{lemma:necessity:taunotbipartite} the assumptions of Lemma \ref{lemma:exceptionalcases} are applicable to $\Gamma^{\tau_1}$, yielding that $\Gamma$ is one of the exceptional cases of Table \ref{Table:Twistable}. Thus we also suppose that the unique distance realized in $(\Gamma_{\delta})^{\tau_1}$ is $2$.

Fact \ref{Fact:Realize:delta,delta,d} then implies that no triangle realized in $\Gamma$ or in $\Gamma^{\tau_1}$ can have perimeter greater than $2\delta+2$.


Consider a triangle type of the form $(i,i,1)$ realized in $\Gamma$. The permutation $\tau_1$ sends this triangle type to either $(i,i,\delta)$ or $(\delta-i+1,\delta-i+1,\delta)$. The triangle inequality and the perimeter bounds applied to these two triangle types 
yield the following pairs of inequalities in the two cases, respectively.
\begin{align*}
\delta \leq 2i \leq \delta +2 \\
\delta \leq 2(\delta-i+1) \leq \delta +2.
\end{align*}
Note that these two pairs of inequalities are in fact equivalent and may be written in the form 
\begin{equation*}
\left\lfloor \frac{\delta+1}{2} \right\rfloor \leq i \leq \left\lceil \frac{\delta+1}{2} \right\rceil
\end{equation*}
Given that $\Gamma$ is not bipartite, the triangle type $(i,i,1)$ must be realized for some finite $i$. Thus $\lfloor \frac{\delta+1}{2} \rfloor \leq K_1 \leq K_2 \leq \lceil \frac{\delta+1}{2} \rceil$.

We now consider separately the case when $\delta$ is odd and the case when $\delta$ is even.

If $\delta$ is odd, then the values $K_1$ and $K_2$ are squeezed to be 
\begin{align*}
K_1 = K_2 = \frac{\delta+1}{2}.
\end{align*}
 Since $\tau_1 ^ {-1} = \tau_1$, the same applies for $\Gamma^{\tau_1}$. Thus $(1,\frac{\delta+1}{2},\frac{\delta+1}{2})$ is realized in $\Gamma^{\tau_1}$ and gets mapped under $\tau_1$ to $(\frac{\delta+1}{2},\frac{\delta+1}{2},\delta)$, so $\Gamma$ realizes a triangle type of perimeter $2\delta +1$. 
The graph $\Gamma$ also realizes the triangle type $(\delta,\delta,2)$ of perimeter $2\delta +2$.
By Fact \ref{Fact:Realize:delta,delta,d}, $\Gamma$ contains no triangle of perimeter larger than $2\delta + 2$. Thus, 
\begin{align*}
    C = 2\delta +3 \text{   and   } C' = 2 \delta + 4.
\end{align*}

If $\delta$ is even, then $\tau_1$ maps the geodesic triangle type $(1,\delta/2,\delta/2+1)$ from $\Gamma^{\tau_1}$ to $(\delta/2,\delta/2+1,\delta)$ in $\Gamma$. This triangle type has perimeter $2\delta +1$. Using the same reasoning as in the case of $\delta$ odd, we have again that $C = 2\delta +3$ and $C' = 2\delta +4$.

We address now the parameters $K_1$ and $K_2$. In the even case, the above inequalities are 
\begin{align*}
\delta/2 \leq K_1 \leq K_2 \leq \delta/2 + 1.    
\end{align*}

 The geodesic type $(\delta/2,\delta/2,\delta)$ is mapped to $(1,\delta/2,\delta/2)$ and hence ${K_1 = \delta/2}$.
 
As $\Gamma_{\delta}$ has diameter $2$, Lemma \ref{lemma:diameterofsubgraph} yields that $\diam(\Gamma_{\delta/2+1}) = \delta$. The same may be said for $\Gamma^{\tau_1}$. Thus the triangle type $(\delta/2 + 1, \delta/2+1,\delta)$ is in $\Gamma^{\tau_1}$ and therefore the triangle type $(1,\delta/2+1,\delta/2+1)$ is in $\Gamma$. This tells us that $K_2 = \delta/2+1$.
\end{proof}

\begin{proof}[Proof of Proposition \ref{prop:necessity}]

Lemmas \ref{lemma:necessity:rho} and \ref{lemma:necessity:rhoinv} deal with the cases of $\rho$ and $\rho^{-1}$. Lemma \ref{lemma:necessity:taubipartite} treats $\tau_1$ in the bipartite case. Lemmas \ref{lemma:necessity:notbipartiteantipodal} and \ref{lemma:necessity:nonbipartiteorexceptional} treat the cases of $\tau_0$ and $\tau_1$ respectively, in the non-bipartite case, including the exceptional cases. 
\end{proof}

\subsection{Sufficiency of the restrictions of the parameters}

We aim at the following.

\begin{prop}\label{prop:sufficiency}
Let $\sigma$ be one of the permutations $\rho, \rho^{-1}, \tau_0,$ or  $\tau_1$, with $\delta \geq 3$. Let $\Gamma$ be a metrically homogeneous graph whose numerical parameters are given in Table \ref{Table:Twistable}. Then $\Gamma$ is twistable by the corresponding permutation $\sigma$.
\end{prop}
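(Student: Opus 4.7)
The plan is to apply Fact \ref{Fact:ProveMHG} throughout: to establish that $\Gamma^\sigma$ is a metrically homogeneous graph, it suffices to verify two things, namely (i) the triangle inequality holds in $\Gamma^\sigma$, and (ii) every geodesic-type triangle $(1,k,k+1)$ with $k < \delta$ is realized in $\Gamma^\sigma$. Both conditions translate immediately to conditions on $\Gamma$: for (i), each triple $(i,j,k)$ that would violate the triangle inequality in $\Gamma^\sigma$ must have preimage $(\sigma^{-1}(i),\sigma^{-1}(j),\sigma^{-1}(k))$ not realized in $\Gamma$; for (ii), each triple $(\sigma^{-1}(1),\sigma^{-1}(k),\sigma^{-1}(k+1))$ must be realized in $\Gamma$. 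So the task reduces to a finite verification, for each choice of $\sigma \in \{\rho,\rho^{-1},\tau_0,\tau_1\}$ and each line of Table \ref{Table:Twistable}, that the arithmetic of the preimage triples is controlled by the numerical parameters of $\Gamma$.

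For each permutation I would work through the two verification tasks case by case. For $\sigma = \rho$ with parameters $(1,\delta,2\delta+2,2\delta+3)$: a forbidden triple $(i,j,k)$ in $\Gamma^\rho$ with $i+j<k$ has preimage whose parity structure under $\rho^{-1}$ either forces perimeter $\geq C$ (hence forbidden by the $C, C'$ values) or forces the appearance of a $(\delta,\delta,m)$ triangle with $m\geq 2$, which is ruled out by the fact that $\Gamma_\delta$ has diameter exactly $1$ (this is what $C = 2\delta+2$ encodes, via Lemma \ref{Fact:C:delta'}); the required geodesics $(\rho^{-1}(1),\rho^{-1}(k),\rho^{-1}(k+1))$ in $\Gamma$ turn out to be actual geodesics (of perimeter $2\delta$ when $k$ is even) or to have perimeter at most $2\delta+1$ and thus are realized by the $C,C'$ values together with Proposition \ref{Prop:Realize:i,i,2k} and connectivity of $\Gamma_\delta$. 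The case $\sigma = \rho^{-1}$ follows formally by applying the already-established $\rho$ case to $\Gamma^{\rho^{-1}}$, since the parameters in the second row of the table are exactly those that the first row produces after twisting. For $\tau_\epsilon$ in the generic non-bipartite case, the parameters $K_1 = \lfloor(\delta+\epsilon)/2\rfloor$, $K_2 = \lceil(\delta+\epsilon)/2\rceil$ together with $C = 2(\delta+\epsilon)+1$, $C' = 2(\delta+\epsilon)+2$ are precisely calibrated so that the image under $\tau_\epsilon$ of a $(1,k,k+1)$-geodesic is itself a triangle of the admissible perimeter, and conversely forbidden triples map to triples violating the perimeter or $K$ bounds. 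In the bipartite case $K_1 = \infty$, the parity preservation of $\tau_\epsilon$ (when $\delta \equiv \epsilon \pmod 2$) immediately gives that $\Gamma^{\tau_\epsilon}$ is bipartite on the same bipartition, and the remaining verification uses $C_0 = 2(\delta+\epsilon)+2$ together with Lemma \ref{lemma:diameterofsubgraph} and Proposition \ref{Prop:Realize:i,i,2k}.

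The main obstacle I expect is the six exceptional cases at $\sigma = \tau_1$, $\delta \in \{3,4\}$. For these, the parameters alone are not sufficient to determine the isomorphism type of $\Gamma$: there can be nontrivial Henson-type constraints, and we must verify that these are preserved under $\tau_1$. I plan to handle these by invoking the classification of metrically homogeneous graphs of small diameter from \cite{ACM-MH3}, identifying the precise $\Gamma$ in each exceptional row, computing explicitly which triangle types are realized or forbidden, applying $\tau_1$ to each such type, and checking that the resulting collection still forms a valid amalgamation class of the same generic form. Since in diameters $3$ and $4$ the triangle inequality involves only finitely many triples, this reduces to a short enumeration in each of the six cases. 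An additional symmetry observation saves work: $\tau_1$ is an involution, so the exceptional rows come in pairs that are $\tau_1$-images of each other, and verifying one member of each pair automatically establishes the other. After these checks, Proposition \ref{prop:sufficiency} follows by combining all the cases.
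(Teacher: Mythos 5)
Your overall framework is the same as the paper's: reduce everything to Fact \ref{Fact:ProveMHG}, translate the two conditions into statements about preimage (or image) triples in $\Gamma$, and verify these from the numerical parameters, permutation by permutation. However, two of your reductions do not work as stated. First, the claim that the case $\sigma=\rho^{-1}$ ``follows formally by applying the already-established $\rho$ case to $\Gamma^{\rho^{-1}}$'' is circular: Lemma \ref{lemma:sufficiency:rho} takes as hypothesis a \emph{metrically homogeneous graph} of generic type with parameters $(1,\delta,2\delta+2,2\delta+3)$, and to apply it to $\Gamma^{\rho^{-1}}$ you would already need to know that $\Gamma^{\rho^{-1}}$ is a metrically homogeneous graph --- which is exactly what is to be proved. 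Knowing that every graph with the first-row parameters twists by $\rho$ onto a graph with the second-row parameters tells you nothing about an \emph{arbitrary} $\Gamma$ with the second-row parameters unless you can first exhibit it as such a twist. The paper instead carries out a separate direct verification (its Lemma \ref{lemma:sufficiency:rhoinv}), checking that $(i,j,k)^{\rho}$ is forbidden in $\Gamma$ for $i+j<k$ (using $K_1=K_2=\delta$ via Corollary \ref{Fact:realizeoddp} and the bound $C=3\delta+1$) and that $(1,k,k+1)^{\rho}$ is realized; the arithmetic is genuinely different from the $\rho$ case and has to be done. Your symmetry claim that the exceptional rows ``come in pairs that are $\tau_1$-images of each other, so verifying one establishes the other'' fails for the same reason.

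Second, your treatment of the exceptional cases departs from what the theorem allows. The proposition is explicitly stated without assuming $\Gamma$ is of known type, so you cannot ``identify the precise $\Gamma$'' from the parameters: no classification is invoked (or available in the paper's citations) for diameter $4$, and even in diameter $3$ the parameters determine $\Gamma$ only up to Henson constraints. Moreover the worry about preserving Henson constraints is a red herring for sufficiency: Fact \ref{Fact:ProveMHG} requires only that $\Gamma^{\tau_1}$ satisfy the triangle inequality and realize the geodesics $(1,k,k+1)$; no amalgamation-class or $(1,\delta)$-space condition needs to be re-verified. The paper's Lemma \ref{lemma:sufficiency:smalldeltatau1} reduces the exceptional cases to four concrete triangle conditions (namely $K_2<\delta$; $(\delta,\delta,2)$ realized; and for $\delta=4$, $(2,3,4)$ realized and $(3,4,4)$ forbidden) and establishes them directly from the parameters using the local-analysis facts (connectivity of $\Gamma_\delta$ and $\Gamma_3$, etc.). Your proposal would need to be repaired along these lines before it constitutes a proof.
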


We begin with the case of $\rho$.

\begin{lemma}\label{lemma:sufficiency:rho}
Let $\Gamma$ be a metrically homogeneous graph of generic type with numerical parameters 
\begin{align*}
K_1 = 1 && K_2 = \delta && C = 2\delta + 2 && C' = 2\delta+3.
\end{align*}
Then $\Gamma^\rho$ is metrically homogeneous.
\end{lemma}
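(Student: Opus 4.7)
The plan is to apply Fact \ref{Fact:ProveMHG}, which reduces the task to two checks: $\Gamma^\rho$ is a metric space, and every triangle of type $(1,k,k+1)$ with $1 \le k < \delta$ is realized in $\Gamma^\rho$.

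For the first check, I would verify that the triangle inequality is preserved when a realized triangle of $\Gamma$ is pushed through $\rho$. Fix any triangle $(a,b,c)$ realized in $\Gamma$ and inspect the image $(\rho(a),\rho(b),\rho(c))$. A case analysis on how many of $a,b,c$ exceed $\delta/2$ handles this. When all three values lie on the same side of $\delta/2$, or when the argument singled out in an inequality $\rho(x)+\rho(y)\ge\rho(z)$ is on the same side as another of $x,y$, the required inequality reduces directly to the triangle inequality for $(a,b,c)$ in $\Gamma$. The remaining cases reduce to the perimeter bound $a+b+c\le 2\delta+1$, which holds because $C=2\delta+2$ and $C'=2\delta+3$ together forbid every triangle of perimeter at least $2\delta+2$. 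When all three arguments exceed $\delta/2$, the derivation additionally uses the elementary inequality $2c\ge\delta+1$ coming from $c>\delta/2$.

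For the second check, the triangle $(1,k,k+1)$ in $\Gamma^\rho$ pulls back under $\rho$ to $(\delta,\rho^{-1}(k),\rho^{-1}(k+1))$ in $\Gamma$, using $\rho^{-1}(1)=\delta$. When $k$ is even this reads $(\delta,k/2,\delta-k/2)$, which is of geodesic type and so is realized by Observation \ref{fact:geodesics}. When $k$ is odd, it reads $(\delta,(k+1)/2,\delta-(k-1)/2)$, a triangle of perimeter $2\delta+1$.

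The main obstacle is realizing this odd-perimeter family, since those triangles are not geodesic. Here I would exploit the fact, already observed in the proof of Lemma \ref{lemma:necessity:rho}, that $\Gamma_\delta$ has diameter exactly $1$: the value $K_2=\delta$ produces the distance $1$ inside $\Gamma_\delta$ while $C=2\delta+2$ excludes every larger distance there, so $\Gamma_\delta$ is a clique. Starting from a geodesic $v_0,v_1,\dots,v_\delta$ in $\Gamma$ and writing $j=(k+1)/2$, I would choose a neighbor $v_\delta'$ of $v_\delta$ lying inside $\Gamma_\delta$ so that $d(v_j,v_\delta')=\delta-j+1$, the largest value in the admissible range $\{\delta-j-1,\delta-j,\delta-j+1\}$ forced by the triangle inequality. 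This choice is made possible by Proposition \ref{Prop:Realize:i,i,2k} combined with the homogeneity of $\Gamma$, and it produces the required triangle $(\delta,j,\delta-j+1)$. Connectivity of $\Gamma^\rho$ as a graph is then automatic from the realization of all $(1,k,k+1)$ triangles.
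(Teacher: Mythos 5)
Your overall strategy is the same as the paper's: reduce via Fact \ref{Fact:ProveMHG} to checking that images of realized triangles satisfy the triangle inequality and that the $\rho^{-1}$-preimages of the geodesics $(1,k,k+1)$ are realized in $\Gamma$. Your first check is sound (the case analysis does reduce everything to the triangle inequality or the perimeter bound $a+b+c\leq 2\delta+1$), as is the even-$k$ half of the second check. You have also correctly computed that for $k$ odd the preimage is $(\delta,\,\delta-\tfrac{k-1}{2},\,\tfrac{k+1}{2})$, a non-geodesic triangle of perimeter $2\delta+1$; this is a genuine subtlety, and in fact the paper's own proof miswrites this triple as the geodesic $(\delta,\tfrac{k-1}{2},\delta-\tfrac{k-1}{2})$ and so passes over it.

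However, your argument for realizing that triangle has a real gap. Writing $j=(k+1)/2$, the entire content of the odd case is the existence of a neighbor $v_\delta'$ of $v_\delta$ inside the clique $\Gamma_\delta$ with $d(v_j,v_\delta')=\delta-j+1$ rather than $\delta-j$; a priori every vertex of $\Gamma_\delta$ could sit at distance exactly $\delta-j$ from $v_j$. You justify the choice by ``Proposition \ref{Prop:Realize:i,i,2k} combined with homogeneity,'' but that cannot work: the proposition produces only triangles of type $(i,i,2k)$, all of even perimeter, and it holds in every generic-type graph with no hypothesis on the parameters, whereas $(\delta,j,\delta-j+1)$ has odd perimeter $2\delta+1$ and is forbidden, for instance, in every antipodal graph. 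A parameter-free tool plus homogeneity cannot force a parameter-dependent realization. The gap can be closed using Lemma \ref{lemma:diameterofsubgraph} instead: since $\diam(\Gamma_\delta)=1$, the sphere $\Gamma_{\delta-m}$ with $m=j-1$ has odd diameter $2m+1$. Take $u,v\in\Gamma_{\delta-m}$ at distance $2m+1$ and extend a geodesic from the basepoint through $u$ to $a\in\Gamma_\delta$ with $d(u,a)=m$. The triangle inequality gives $d(v,a)\geq m$ and the perimeter bound gives $d(v,a)\leq m+1$; the value $m$ would make $\{a,u,v\}$ a triangle of type $(m,m,2m+1)$, which violates the triangle inequality, so $d(v,a)=m+1$ and the triangle on $v$, $a$ and the basepoint has the required type $(\delta,\delta-j+1,j)$.
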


\begin{proof}
Here we make use of Fact \ref{Fact:ProveMHG} of the Appendix, which tells us that in order to show that $\Gamma^{\rho}$ is metrically homogeneous, it suffices to check that the triangle type $(i,j,k)$ is not realized in $\Gamma^{\rho}$ for $i + j < k$, and that the triangle type $(1,k,k+1)$ is realized in $\Gamma^{\rho}$ for $1 \leq k < \delta$. Thus we need to check that the corresponding triangle type $(i,j,k)^{\rho^{-1}}$ is not realized in $\Gamma$ and that $(1,k,k+1)^{\rho^{-1}}$ is realized in $\Gamma$.

\begin{claimlemma} 
For $i+j < k $, the triangle type $(i,j,k)^{\rho^{-1}}$ is not realized in $\Gamma$.
\end{claimlemma}

\begin{claimlemmaproof}
We argue according to the parities of $i,j$ and $k$.

If $i,j$ and $k$ are all even, then the triple $(i,j,k)^{\rho^{-1}}$ violates the triangle inequality, since $i/2 + j/2 < k/2$. If $i$ and $j$ are even but $k$ is odd, then the triple $(i,j,k)$ still violates the triangle inequality. Indeed, then ${i + j \leq k-2}$ and therefore $i + j + k \leq 2k-2 \leq 2\delta -2$. Then we conclude that $${i/2 + j/2 \leq \delta - k/2 -1 \leq \delta - \frac{k-1}{2}}$$. As $(i,j,k)^{\rho^{-1}} = (i/2,j/2,\delta-\frac{k-1}{2})$, this triple violates the triangle inequality.

If $i$ and $j$ are both odd, then we show that the perimeter of $(i,j,k)^{\rho^{-1}}$ is greater than $2\delta+1$, thereby violating the $C,C'$ bounds. Indeed, if $k$ is also odd, then $(i,j,k)^{\rho^{-1}}$ is $(\delta-\frac{i-1}{2},\delta-\frac{j-1}{2},\delta-\frac{k-1}{2})$ and therefore has perimeter $3\delta - \frac{i+j+k-3}{2}$. Since $i + j < k \leq \delta$, we have that $$3\delta - \frac{i+j+k-3}{2} > 3\delta-\frac{2k-3}{2} \geq 2\delta+3/2$$. 
If $k$ is even, then $(i,j,k)^{\rho^{-1}} =(\delta-\frac{i-1}{2},\delta-\frac{j-1}{2},k/2)$ and thus has perimeter $2\delta + \frac{k-i-j}{2} + 1 > 2\delta + 1$.

If $i$ and $j$ have opposing parity, then we show that $(i,j,k)^{\rho^{-1}}$ violates the triangle inequality. We assume without loss of generality that $i$ is odd and $j$ is even.
If $k$ is even, then $(i,j,k)^{\rho^{-1}} = (\delta-\frac{i-1}{2},j/2,k/2)$ and $j/2 + k/2 < \delta - \frac{i-1}{2}$ because $i+j+k < 2k-1 \leq 2\delta-1$.
If $k$ is odd, then ${(i,j,k)^{\rho^{-1}} = (\delta-\frac{i-1}{2},j/2,\delta-\frac{k-1}{2})}$ and $(\delta-\frac{k-1}{2}) + j/2 < \delta - \frac{i-1}{2}$ because $i+j < k$.

Thus none of these triples may be realized in $\Gamma$ and our claim is shown.
\end{claimlemmaproof}

\begin{claimlemma} \label{claim:taubipartite}
For $1 \leq k < \delta$, the triangle type $(1,k,k+1)^{\rho^{-1}}$ is in $\Gamma$.
\end{claimlemma}

\begin{claimlemmaproof}

By definition, $(1,k,k+1)^{\rho^{-1}}$ is either $(\delta,k/2,\delta-k/2)$ or $(\delta,\frac{k-1}{2},\delta-\frac{k-1}{2})$. In either case, $(1,k,k+1)^{\rho^{-1}}$ is of geodesic type and therefore will indeed be realized in $\Gamma$.
\end{claimlemmaproof}
The lemma follows.
\end{proof}

\begin{lemma}\label{lemma:sufficiency:rhoinv}
Let $\Gamma$ be a metrically homogeneous graph of generic type with finite diameter $\delta$ with associated numerical parameters
\begin{align*}
    K_1 = \delta && K_2 = \delta && C = 3\delta+1 && C' = C+1
\end{align*}
Then $\Gamma^{\rho^{-1}}$ is metrically homogeneous.
\end{lemma}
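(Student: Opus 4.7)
The plan is to mirror the structure of Lemma \ref{lemma:sufficiency:rho}, invoking Fact \ref{Fact:ProveMHG} for $\Gamma^{\rho^{-1}}$. Since $(\rho^{-1})^{-1} = \rho$, a triangle $(a,b,c)$ is realized in $\Gamma^{\rho^{-1}}$ if and only if $(\rho(a),\rho(b),\rho(c))$ is realized in $\Gamma$. So it suffices to establish (a) whenever $i+j<k$, the triple $(\rho(i),\rho(j),\rho(k))$ is not realized in $\Gamma$; and (b) for each $1 \leq k < \delta$, the triangle $(\rho(1),\rho(k),\rho(k+1)) = (2,\rho(k),\rho(k+1))$ is realized in $\Gamma$.

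For (b), I would split into cases determined by the position of $k$ relative to $\delta/2$. When $k+1 \leq \lfloor \delta/2 \rfloor$ the triangle is $(2,2k,2k+2)$, a geodesic, so realized by Observation \ref{fact:geodesics}. When $k > \delta/2$, setting $m = \delta-k \geq 1$, the triangle becomes $(2,2m+1,2m-1)$; for $m=1$ this is again the geodesic $(1,2,3)$, and for $m \geq 2$ the triangle has even perimeter $4m+2 \leq 2\delta$, which is safely below $C' = 3\delta+2$, satisfies the strict triangle inequality, and is not of the forbidden shape $(a,a,1)$ with $a<\delta$, hence realized in $\Gamma$. The transitional case $k=\lfloor \delta/2 \rfloor$ yields a triangle of perimeter $2\delta+1$, still comfortably inside the $C,C'$ bounds and again not of form $(a,a,1)$ with $a<\delta$, so realized by the generic-type structure of $\Gamma$.

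For (a), I would organize by whether each of $i,j,k$ is at most $\delta/2$ (in which case $\rho$ doubles it, producing an even image) or strictly greater (in which case $\rho$ maps it to the small odd value $2(\delta-\cdot)+1$). In the all-small case the inequality $i+j<k$ transports directly to the violation $2i+2j<2k$ of the triangle inequality in $\Gamma$. When one of the three is large and the other two small, comparing $2i+2j$ with $2(\delta-k)+1$ still forces a triangle inequality failure in $\Gamma$ whenever $i+j+k \leq \delta$, which covers the bulk of such triples; the residual cases push the perimeter $2(i+j)+2(\delta-k)+1$ into a range where the generalized $K_2 = \delta$ condition of \cite[Definition 1.17]{Che-HOGMH} (odd-perimeter triangles $(a,b,c)$ with $a+b+c > 2K_2+2\min(a,b,c)$ are forbidden) is activated. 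When two or three of $i,j,k$ are large the image triple acquires an odd-parity perimeter approaching or exceeding $3\delta+1 = C$; exploiting $i+j<k \leq \delta$ together with the parity of the image gives in each such case either perimeter above $C'$ or an instance of the generalized $K_2$-constraint.

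The main obstacle will be the mixed-parity cases in (a), where the image triple in $\Gamma$ may simultaneously satisfy the triangle inequality and have perimeter below the $C,C'$ thresholds, so that the forbidden-ness must be extracted entirely from $K_1=K_2=\delta$. The payoff is that the inequality $i+j<k$ implies a precise arithmetic relation between the image perimeter and its minimum side, which places the image outside the admissible range permitted by $K_2 = \delta$. Carrying out the parity bookkeeping in parallel with the four-case analysis of Claim 1 in Lemma \ref{lemma:sufficiency:rho}, and invoking the perimeter bounds or the generalized $K$-condition as appropriate in each case, will complete the verification of (a) and hence the lemma.
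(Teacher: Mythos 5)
Your overall strategy---transporting triples into $\Gamma$ via $\rho$ and checking the two conditions of Fact \ref{Fact:ProveMHG}---is exactly the paper's, and your all-small case of (a) and the purely geodesic cases of (b) are fine. But several of the specific mechanisms you propose do not work. In (b) you twice justify realization by saying a triple has perimeter "safely below $C'$" and is "not of the forbidden shape $(a,a,1)$": that inference is invalid, since $\Gamma$ is not assumed to be of known type, so consistency with the numerical parameters does not imply realization. For $k>\delta/2$, $m\geq 2$, you are saved by accident: $(2,2m+1,2m-1)$ is itself a geodesic (note $2+(2m-1)=2m+1$, so it does not satisfy the strict triangle inequality as you claim), hence realized by Observation \ref{fact:geodesics}. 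But the transitional case $k=\lfloor\delta/2\rfloor$ yields a genuinely non-geodesic triple of perimeter $2\delta+1$, and the only available tool is Corollary \ref{Fact:realizeoddp}(2): since $K_1=\delta$, every triangle of perimeter $2K_1+1=2\delta+1$ is realized (via the isometrically embedded $(2\delta+1)$-cycle of Fact \ref{Fact:K1}). Without invoking that, this case is a genuine gap.

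In (a) the forbidding mechanisms you name are wrong in both mixed cases. When $i,j\leq\delta/2<k$, the image $(2i,2j,2(\delta-k)+1)$ has odd perimeter $2\delta+2(i+j-k)+1\leq 2\delta-1$; the generalized $K_2$ condition you cite requires perimeter \emph{exceeding} $2K_2+2\min=2\delta+2\min$ and is therefore never activated here. The correct tool is the $K_1$ side: odd perimeter strictly below $2K_1+1=2\delta+1$ is forbidden (Corollary \ref{Fact:realizeoddp}(1)). When two of $i,j,k$ are large (necessarily $j$ and $k$, since $i+j<k\leq\delta$ forces at least one of $i,j$ to be at most $\delta/2$; three large is impossible), the image $(2i,2(\delta-j)+1,2(\delta-k)+1)$ has \emph{even} perimeter strictly less than $2\delta+2$, nowhere near $3\delta+1$ as you assert; it is excluded simply because $2i+2(\delta-k)+1<2(\delta-j)+1$ is equivalent to $i+j<k$, a direct triangle-inequality violation. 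So the skeleton of your plan is right, but the case-by-case justifications must be replaced as above.
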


\begin{proof}
Our reasoning proceeds as in the proof of the previous lemma. That is, we show that $\Gamma^{\rho^{-1}}$ is metrically homogeneous by verifying that the triangle type $(i,j,k)$ for $i + j < k \leq \delta$ is not realized in $\Gamma^{\rho^{-1}}$ while the triangle types $(1,k,k+1)$ for $1 \leq k < \delta$ are. 
We work instead with the graph $\Gamma$ and the images of these triples under $\rho$.

\begin{claimlemma} 
For $i + j < k \leq \delta$, the triangle type $(i,j,k)^{\rho}$ is not in $\Gamma$.
\end{claimlemma}

\begin{claimlemmaproof}
If $k \leq \delta/2$, then $(i,j,k)^{\rho} = (2i,2j,2k)$. Since ${2i + 2j < 2k}$, this triple is not realized in $\Gamma$. 

If $i \leq \delta/2 < j$, then 
\begin{equation*}
    (i,j,k)^{\rho} = (2i,2(\delta-j)+1,2(\delta-k)+1). 
\end{equation*}
Since $2i + 2(\delta-k)+1 < 2(\delta-j)+1$, this triple cannot be realized in $\Gamma$.

The remaining case to consider is $i,j \leq \delta/2 < k$. In this case, 
\begin{equation*}
(i,j,k)^{\rho} = (2i,2j,2(\delta-k)+1). 
\end{equation*}
The perimeter here is $2(\delta+i+j-k)+1$ which is odd and less than $2\delta+1$, and thus by Corollary \ref{Fact:realizeoddp} is also excluded from being realized in $\Gamma$.

Thus no such triangle type $(i,j,k)$ will be realized in $\Gamma$.
\end{claimlemmaproof}

\begin{claimlemma} 
For $1 \leq k < \delta$, the triangle type $(1,k,k+1)^{\rho}$ is in $\Gamma$.
\end{claimlemma}

\begin{claimlemmaproof}
If $k \neq \lfloor \delta/2 \rfloor$, then $(1,k,k+1)^{\rho}$ is of geodesic type and is realized in $\Gamma$. If $k = \lfloor \delta/2 \rfloor$, then 
\begin{equation*}
(1,k,k+1)^{\rho} = (2,2k,2(\delta-(k+1))+1)    
\end{equation*}
which has perimeter $2\delta+1$. We apply Corollary \ref{Fact:realizeoddp} to see that this triangle type must be realized in $\Gamma$.
\end{claimlemmaproof}
\end{proof}

\begin{lemma}\label{lemma:sufficiency:smalldeltatau1}
Let $\Gamma$ be a metrically homogeneous graph of generic type. Suppose that $\delta =3$ or $4$ and that the parameters $K_1,K_2,C,C'$ for $\Gamma$ are among those in the table below. Then $\Gamma^{\tau_1}$ is also metrically homogeneous.

\begin{table}[ht]
\centering
\begin{tabular}{ccccc|ccccc}
$\delta$&$K_1$&$K_2$&$C$&$C'$
&$\delta$&$K_1$&$K_2$&$C$&$C'$
\\
\hline
$3$&$\infty$&$0$&$7$&$10$
&$4$&$1$&$3$&$11$&$14$ \\
$3$&$1$&$2$&$10$&$11$
&$4$&$1$&$3$&$11$&$12$ \\
$3$&$1$&$2$&$9$&$10$
&$4$&$2$&$3$&$11$&$12$\\
$3$&$2$&$2$&$9$&$10$
&$4$&$2$&$3$&$11$&$14$\\
$3$&$2$&$2$&$10$&$11$\\
\end{tabular}
\end{table}

\end{lemma}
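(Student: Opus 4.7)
The plan is to invoke Fact~\ref{Fact:ProveMHG}, which reduces the claim ``$\Gamma^{\tau_1}$ is metrically homogeneous'' to two concrete conditions: (i) $\Gamma^{\tau_1}$ satisfies the triangle inequality, and (ii) $\Gamma^{\tau_1}$ realizes each geodesic triangle $(1,k,k+1)$ with $k<\delta$. Because $\tau_1$ is an involution, both conditions translate into conditions on $\Gamma$: for (i), no $\tau_1$-image of a non-metric triple may be realized in $\Gamma$; for (ii), the $\tau_1$-image of each short geodesic triangle must be realized in $\Gamma$. Since the table lists only nine specific parameter vectors with $\delta \in \{3,4\}$, this becomes a manageable, explicit check.

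At $\delta=3$, $\tau_1=(1\,3)$, so under (i) the only relevant preimage is $(3,3,1)$ coming from $(1,1,3)$, and under (ii) the required preimages are $(3,3,2)$ and $(1,2,3)$ coming from $(1,1,2)$ and $(1,2,3)$. At $\delta=4$, $\tau_1=(1\,4)(2)(3)$; for (i) the relevant preimages are $(4,4,3)$ from $(1,1,3)$ and $(4,4,1)$ from $(1,1,4)$, while the preimage $(4,2,1)$ of $(1,2,4)$ already violates the triangle inequality and is vacuously excluded; for (ii) the required preimages are $(4,4,2)$, $(2,3,4)$, and $(1,3,4)$. Row by row I would verify the exclusions directly from the numerical parameters: $(3,3,1)$ is forbidden because $K_2\leq 2$ in every $\delta=3$ row (for the bipartite row, $K_1=\infty$ independently excludes it as a triangle of odd perimeter); $(4,4,1)$ is forbidden because $K_2=3<4$; and $(4,4,3)$ has perimeter $11$ and is forbidden by $C_1=11$ in every $\delta=4$ row. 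For the realizations, $(1,2,3)$, $(2,3,4)$, and $(1,3,4)$ are of geodesic type and hence automatically realized by Observation~\ref{fact:geodesics}; the triples $(3,3,2)$ and $(4,4,2)$ have even perimeters $8$ and $10$, which lie below $C_0$ in every relevant row (since $C_0\geq 10$ for $\delta=3$ and $C_0\geq 12$ for $\delta=4$), and their $K_1,K_2$ values are also compatible with realization.

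The main obstacle lies in upgrading ``not excluded by the numerical parameters'' to ``actually realized in $\Gamma$'' for the triangles $(3,3,2)$ and $(4,4,2)$, since the numerical parameters alone do not literally guarantee realization once Henson constraints are admitted. Here I would appeal to the diameter-$3$ classification via Fact~\ref{Fact:MH3}, which guarantees that every triangle not excluded by the numerical parameters is realized in the canonical graph $\Gamma^3_{K_1,K_2,C,C'}$ and therefore in $\Gamma$, and to the analogous diameter-$4$ classification from \cite{ACM-MH3}. Moreover, Henson constraints in the exceptional cases are $(1,\delta)$-spaces and so cannot forbid triangles like $(3,3,2)$ or $(4,4,2)$, whose entries do not simultaneously involve $1$ and $\delta$; so the exceptional cases introduce no additional subtlety beyond the direct combinatorial check outlined above.
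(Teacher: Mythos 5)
Your reduction via Fact~\ref{Fact:ProveMHG} and the translation of the conditions into triangle types of $\Gamma$ matches the paper's strategy, and your treatment of the forbidden triples ($K_2<\delta$ excludes $(\delta,\delta,1)$; $C=11$ excludes $(3,4,4)$; $(1,2,4)$ is vacuous) is exactly what the paper does. But there are two genuine gaps on the realization side.

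First, $(2,3,4)$ is \emph{not} of geodesic type: no side equals the sum of the other two ($2+3=5\neq 4$), so Observation~\ref{fact:geodesics} does not apply to it. This is not a cosmetic slip --- establishing that $(2,3,4)$ is realized when $\delta=4$ is the most substantial part of the paper's proof. The paper argues that $K_2=3$ forces $\Gamma_3$ to contain an edge, hence to be connected by Fact~\ref{Fact:LocalAnalysis}; that $\Gamma_3$ has diameter at least $2$; and then, taking $v\in\Gamma_4$ with neighbor set $I_v\subseteq\Gamma_3$, that $I_v\neq\Gamma_3$ (else $\Gamma_4$ would have diameter at most $2$), so connectivity of $\Gamma_3$ produces $u\in\Gamma_3\setminus I_v$ adjacent to a vertex of $I_v$, giving $d(u,v)=2$ and the desired triangle with the basepoint. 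You need some such argument here.

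Second, your mechanism for upgrading ``not excluded by the parameters'' to ``realized'' is shaky for $\delta=4$. Fact~\ref{Fact:MH3} legitimately covers $\delta=3$, but there is no analogous complete classification for $\delta=4$ in the paper's toolkit, and the paper explicitly does not assume $\Gamma$ is of known type. For $(\delta,\delta,2)$ the paper instead argues intrinsically: $C_0\geq 2\delta+4$ gives a triangle of perimeter $2\delta+2$, so by Fact~\ref{Fact:Realize:delta,delta,d} some distance $d\geq 2$ occurs in $\Gamma_\delta$, and then the connectivity facts (Facts~\ref{Fact:LocalAnalysis}, \ref{Fact:LocalAnalysis:K1le2} when $K_1=1$, and Fact~\ref{Fact:LocalAnalysis:d=2Connected} when $K_1>1$) force the distance $2$ itself to occur in $\Gamma_\delta$. (Alternatively, Fact~\ref{Fact:evenPdiam4} would handle $(4,4,2)$, since its perimeter $10$ is even and below $C_0$; but that fact does not reach the odd-perimeter triangle $(2,3,4)$, so the first gap remains either way.) Your observation that Henson constraints, being $(1,\delta)$-spaces, cannot interfere with these triangles is correct but does not substitute for a positive realization argument.
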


\begin{proof}
By Lemma \ref{Fact:ProveMHG} the structure $\Gamma^{\tau_1}$ will be a metrically homogeneous graph if it does not realize any triple violating the triangle inequality, and does realize all geodesic types of the form $(1,k,k+1)$ with $1 \leq k < \delta$.

Thus for $\delta = 3$, the triple $(1,1,3)$ must be omitted by $\Gamma^{\tau_1}$ and the triangle types $(1,1,2)$ and $(1,2,3)$ must be realized by $\Gamma^{\tau_1}$.

For $\delta = 4$, the triples $(1,1,3), (1,1,4)$ and $(1,2,4)$ must be omitted by $\Gamma^{\tau_1}$ while the triangle types $(1,1,2), (1,2,3), (1,3,4)$ must be realized by $\Gamma^{\tau_1}$.

This translates into the following restrictions for $\Gamma$ when $\delta =3$:
\begin{center}
\begin{tabular}{ c| c }
  \text{Forbidden} & \text{Realized} \\
  \hline
  (1,3,3) & (1,2,3), (2,3,3)
\end{tabular}
\end{center}

\noindent and the following restrictions for $\Gamma$ when $\delta =4$:
\begin{center}
\begin{tabular}{ c | c }
  \text{Forbidden} & \text{Realized} \\
  \hline
  (3,4,4), (1,4,4), (1,2,4) & (2,4,4), (1,3,4), (2,3,4)
\end{tabular}
\end{center}

Equivalently, setting aside geodesic types and triples violating the triangle inequality, we must verify that $\Gamma$ satisfies the following four conditions.
\begin{itemize}
\item $K_2 < \delta$;
\item $\Gamma$ realizes the triangle type $(\delta,\delta,2)$;
\item for $\delta = 4$, $\Gamma$ realizes the triangle type $(2,3,4)$;
\item for $\delta = 4$, $\Gamma$ does not realize the triangle type $(3,4,4)$.
\end{itemize}

Indeed, in every line in our table above, $K_2 < \delta$. In addition, whenever $\delta = 4$ we have $C = 11$ and therefore the triangle type $(3,4,4)$ will be forbidden. This disposes of the first and last conditions.

In every line in our table, $C_0 \geq 2\delta +4$, and therefore $\Gamma$ contains a triangle of perimeter $2\delta+2$. By Fact \ref{Fact:Realize:delta,delta,d}, $\Gamma_{\delta}$ realizes some distance $d \geq 2$. If $K_1 =1$, then Facts \ref{Fact:LocalAnalysis} and \ref{Fact:LocalAnalysis:K1le2} tell us that $\Gamma_{\delta}$ is connected and therefore contains a pair of vertices at distance $2$. If $K_1 > 1$, then Fact \ref{Fact:LocalAnalysis:d=2Connected} tells us that $\Gamma_{\delta}$ is connected by the edge relation $d(x,y) =2$. 
Thus in either case the distance $2$ occurs in $\Gamma_{\delta}$ and our second condition is satisfied.

Suppose then that $\delta = 4$. To conclude our proof, we must show that the triangle type $(2,3,4)$ is realized in $\Gamma$. Thus we must find vertices $u \in \Gamma_3$ and $v \in \Gamma_4$ such that $d(u,v) =2$.

In every line in our table, $K_2 = 3$, and therefore $\Gamma_3$ contains an edge. Fact \ref{Fact:LocalAnalysis} tells us then that $\Gamma_3$ is connected. Moreover we also see that $\Gamma_3$ has diameter at least $2$, since $\Gamma_2$ has diameter $4$. Thus $\Gamma_3$ realizes the distance $2$. Take then $v \in \Gamma_4$ and define $I_v$ to be the set of neighbors of $v$ in $\Gamma_3$. If $I_v = \Gamma_3$ then by homogeneity every vertex of $\Gamma_4$ is adjacent to every vertex of $\Gamma_3$ and $\Gamma_4$ has diameter at most $2$, a contradiction. So $\Gamma_3 \neq I_v$. 

Since $\Gamma_3$ is connected, there is a vertex $u \in \Gamma_3 \setminus I_v$ adjacent to some vertex $v' \in I_v$. It then follows that $d(u,v) =2$ and $u,v$ and the basepoint form the desired triangle.
\end{proof}


\begin{lemma} \label{lemma:sufficiency:taumod2}
Let $\Gamma$ be a bipartite metrically homogeneous graph of generic type with diameter $\delta \equiv \epsilon \pmod 2$ where $\epsilon = 0$ or $1$ and $C_0 =2 (\delta+\epsilon) +2$. Then $\Gamma^{\tau_{\epsilon}}$ is metrically homogeneous.
\end{lemma}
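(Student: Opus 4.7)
The plan is to invoke Fact \ref{Fact:ProveMHG} and reduce the problem to two conditions on $\Gamma^{\tau_{\epsilon}}$: the triangle inequality, and the realization of every geodesic triangle type $(1,k,k+1)$ with $1 \leq k < \delta$. The key observation is that, under the hypothesis $\delta \equiv \epsilon \pmod 2$, the sum $\delta + \epsilon$ is even, so $\tau_{\epsilon}$ fixes every even value in $\{1,\dots,\delta\}$ and sends each odd $i$ to the odd value $\delta + \epsilon - i$; in particular $\tau_{\epsilon}$ preserves the parity of each distance and is an involution. Consequently the bipartition of $\Gamma$ transfers to $\Gamma^{\tau_{\epsilon}}$, so $\Gamma^{\tau_{\epsilon}}$ is again bipartite.

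For the triangle inequality, I would suppose that some triple $(\tau_{\epsilon}(i), \tau_{\epsilon}(j), \tau_{\epsilon}(k))$ is realized in $\Gamma$ and deduce that $(i,j,k)$ satisfies the triangle inequality. Bipartiteness of $\Gamma$ forces the image to have even perimeter, and parity preservation forces $(i,j,k)$ to have even perimeter as well. Thus either all of $i,j,k$ are even---in which case $\tau_{\epsilon}$ fixes them and the conclusion is immediate---or exactly two are odd, say $i$ and $j$. In the second case the image is $(\delta + \epsilon - i, \delta + \epsilon - j, k)$, and its realization in $\Gamma$ yields both $|i - j| \leq k$ (from its own triangle inequality) and $k \leq i + j$ (from the perimeter bound $< C_0 = 2(\delta + \epsilon) + 2$); these inequalities together give the triangle inequality for $(i,j,k)$.

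For the realization of $(1,k,k+1)$ in $\Gamma^{\tau_{\epsilon}}$, one must realize the triple $(\tau_{\epsilon}(1), \tau_{\epsilon}(k), \tau_{\epsilon}(k+1))$ in $\Gamma$. When $k$ is even this triple is $(\delta + \epsilon - 1, k, \delta + \epsilon - k - 1)$, which is itself of geodesic type and hence realized by Observation \ref{fact:geodesics}. When $k$ is odd the image is $(\delta + \epsilon - 1, \delta + \epsilon - k, k+1)$, of even perimeter $2(\delta + \epsilon) < C_0$, with all entries in $[1,\delta]$, and satisfying the triangle inequality by a direct computation from $1 \leq k \leq \delta - 1$.

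The main obstacle is to verify that this non-geodesic triangle is actually realized in $\Gamma$, not just in the associated Fraisse limit. Since its entries are not all drawn from $\{1,\delta\}$, no Henson constraint can forbid it, so the triangle is governed by the numerical parameters alone; I intend to invoke the realization results for bipartite metrically homogeneous graphs from the Appendix, in particular the local analysis of the spheres $\Gamma_i$ via Fact \ref{Fact:LocalAnalysis} together with results in the spirit of Proposition \ref{Prop:Realize:i,i,2k}, to produce the triangle explicitly. A representative case is $\epsilon = 0$, $k = 1$, where the triangle reduces to $(\delta - 1, \delta - 1, 2)$, amounting to showing that distance $2$ occurs in $\Gamma_{\delta - 1}$; the remaining odd-$k$ cases should follow by locating analogous configurations in higher spheres.
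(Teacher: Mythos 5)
The first three-quarters of your plan coincides with the paper's argument: reduce via Fact \ref{Fact:ProveMHG}, note that $\delta+\epsilon$ even makes $\tau_\epsilon$ parity-preserving, kill the triangle-inequality violations by combining bipartiteness with the $C_0$ bound, and dispose of the even-$k$ geodesics $(1,k,k+1)^{\tau_\epsilon}=(\delta+\epsilon-1,k,\delta+\epsilon-k-1)$ as geodesic types. All of that is fine.

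The gap is exactly where you say the "main obstacle" is, and your proposed tools do not close it. First, the remark that no Henson constraint can forbid $(\delta+\epsilon-1,\delta+\epsilon-k,k+1)$ is beside the point: the lemma does not assume $\Gamma$ is of known type (the paper stresses this), so you cannot reduce realization questions to "numerical parameters plus Henson constraints" --- you must produce the triangle by a structural argument inside an arbitrary such $\Gamma$. Second, Proposition \ref{Prop:Realize:i,i,2k} only yields isosceles triangles of type $(i,i,2k)$, whereas the triangle you need for odd $k$ is generally scalene, so "results in the spirit of" that proposition will not hand it to you, and "locating analogous configurations in higher spheres" is not an argument. The paper's proof splits here into two genuinely different mechanisms that your sketch misses. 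For $\epsilon=0$ the hypotheses force $C=2\delta+1$, $C'=2\delta+2$, hence by Fact \ref{fact:antipodalSmallC} $\Gamma$ is \emph{antipodal}; the antipodal law (Fact \ref{fact:antipodallawantipodalgraphs}) then converts the required type $(\delta-1,\delta-k,k+1)$ into the geodesic $(1,k,k+1)$ by replacing one vertex with its antipode, and realization follows. (Your "representative case" $(\delta-1,\delta-1,2)$ is the one easy instance and does not represent the general odd $k$.) For $\epsilon=1$ one first gets $\diam(\Gamma_\delta)=2$ from Fact \ref{Fact:Realize:delta,delta,d}, then uses Lemma \ref{lemma:diameterofsubgraph} to find $u,v\in\Gamma_{\delta-k+1}$ at distance $2k$ (after reducing to $k\le\delta/2$ by the symmetry $k\mapsto\delta-k$ of the type), and builds the triangle $(\delta-k+1,\delta,k+1)$ explicitly from neighbors $u',v'\in\Gamma_\delta$. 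You need both of these ingredients; as written, the realization of the odd-$k$ types is asserted rather than proved.
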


\begin{proof}
By Fact \ref{Fact:ProveMHG}, it suffices to check that the triangle types $(i,j,k)$ for $$i+j<k \leq \delta$$ are not in $\Gamma^{\tau_{\epsilon}}$ and the triangle types $(1,k,k+1)$ for $1 \leq k < \delta$ are in $\Gamma^{\tau_{\epsilon}}$. We work in $\Gamma$ with their images under $\tau_{\epsilon}$.


\begin{claimlemma} 
For $i + j < k \leq \delta$, the triangle type $(i,j,k)^{\tau_{\epsilon}}$ is not in $\Gamma$.
\end{claimlemma}

\begin{claimlemmaproof}
We observe that since $\delta \equiv \epsilon \pmod 2$, the permutation $\tau_{\epsilon}$ preserves parity. Therefore, if $i+j+k$ is odd then $(i,j,k)^{\tau_{\epsilon}}$ is forbidden, as there are no triangle types of odd perimeter in bipartite graphs.

If $i,j,$ and $k$ are all even, then they will all be fixed by $\tau_{\epsilon}$. So the triangle type $(i,j,k)$ is excluded from $\Gamma$.

Thus it remains to consider the case in which one of $i,j,k$ is even and the other two are odd.

If $i$ and $j$ are odd and $k$ is even, then 
\begin{equation*}
 j^{\tau_{\epsilon}} + k^{\tau_{\epsilon}} = \delta+\epsilon -j + k < \delta + \epsilon -i = i^{\tau_{\epsilon}}   
\end{equation*}
and once again by the triangle inequality the triple $(i,j,k)^{\tau_{\epsilon}}$ will not be realized in $\Gamma$.

Finally, suppose that $i$ is even and $j$ and $k$ are odd. In this case, 
\begin{equation*}
 i^{\tau_{\epsilon}} + k^{\tau_{\epsilon}} = \delta+\epsilon + i - k < \delta + \epsilon - j = j^{\tau_{\epsilon}}   
\end{equation*}
so again the triple $(i,j,k)^{\tau_{\epsilon}}$ violates the triangle inequality.
\end{claimlemmaproof}

\begin{claimlemma}
For $1 \leq k < \delta$, the triangle type $(1,k,k+1)^{\tau_{\epsilon}}$ is in $\Gamma$.
\end{claimlemma}

\begin{claimlemmaproof}

If $k$ is even, then 
\begin{equation*}
 (1,k,k+1)^{\tau_{\epsilon}} = (\delta+\epsilon-1,k,\delta+\epsilon-(k+1))   
\end{equation*}
which is of geodesic type and therefore is realized in $\Gamma$.

If $k$ is odd, then 
\begin{equation*}
 (1,k,k+1)^{\tau_{\epsilon}} = (\delta+\epsilon-1,\delta+\epsilon -k,k+1)   
\end{equation*}
We consider the two values of $\epsilon$ separately.

Suppose first that 
\begin{align*}
    \epsilon = 0.
\end{align*}

Then by hypothesis the graph $\Gamma$ is antipodal and the triangle type $(1,k,{k+1})^{\tau_{\epsilon}}$ is $(\delta-1,\delta-k,{k+1})$. Replacing one of the vertices $v$ of this triangle type by its antipodal vertex $v'$ yields the triangle type $(1,k,k+1)$, by the antipodal law (Fact \ref{fact:antipodallawantipodalgraphs}). Therefore the original triangle type must in be $\Gamma$ since this triangle is of geodesic type.


Now suppose
\begin{align*}
\epsilon =1. 
\end{align*}
In this case, $C_0 = 2\delta + 4$ and $(i,j,k)^{\tau_{\epsilon}} = (\delta,\delta-k+1,k+1)$. Moreover, we may apply Fact \ref{Fact:Realize:delta,delta,d}
to get that $\diam(\Gamma_{\delta}) =2$. Since $(\delta,\delta-k+1,k+1)$ is invariant under the substitution of $\delta - k$ for $k$, we may assume that $k \leq \delta/2$. Applying Lemma \ref{lemma:diameterofsubgraph} then, we get that $\diam(\Gamma_{\delta-k+1}) = 2k$. We may take $u,v \in \Gamma_{\delta-k+1}$ at distance $2k$ and $u',v' \in \Gamma_{\delta}$ at distance $k-1$ from $u$ and $v$ respectively. It follows that $d(u',v') \geq 2$ and hence $d(u',v') = 2$. This implies that $d(u,v') \leq k+1$. By the triangle inequality, $d(u,v') \geq k+1$, and therefore $d(u,v') = k+1$. The triangle formed by $u,v'$ and the basepoint has type $(\delta-k+1,\delta,k+1)$, and hence this triangle type is indeed in $\Gamma$.
\end{claimlemmaproof}
\end{proof}

\begin{lemma}\label{lemma:sufficiency:tau}
Let $\Gamma$ be a metrically homogeneous graph of generic type with diameter $\delta$, with the numerical parameters
\begin{align*}
K_1 = \left\lfloor \frac{\delta+\epsilon}{2} \right\rfloor && K_2 = \left\lceil \frac{\delta+\epsilon}{2} \right\rceil && C = 2(\delta+\epsilon) + 1 && C' = C+1.
\end{align*}
Then $\Gamma^{\tau_{\epsilon}}$ is metrically homogeneous.
\end{lemma}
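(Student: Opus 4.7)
The plan is to apply Fact \ref{Fact:ProveMHG}, which reduces metric homogeneity of $\Gamma^{\tau_\epsilon}$ to two assertions: (a) for every triple $(i,j,k)$ with $i+j<k\leq\delta$, the image $(i,j,k)^{\tau_\epsilon}$ is forbidden in $\Gamma$; (b) for every $1\leq k<\delta$, the image $(1,k,k+1)^{\tau_\epsilon}$ is realized in $\Gamma$. Since $\tau_\epsilon$ sends each $i$ either to $i$ or to $(\delta+\epsilon)-i$, both assertions reduce to a case analysis on the parities of the coordinates and on $\delta+\epsilon \bmod 2$. I would also record the corollary that, by the parameter values, $\Gamma$ is antipodal precisely when $\epsilon=0$ (since then $C=2\delta+1$ and $K_1+K_2=\delta$), while for $\epsilon=1$ we have $C_0=2\delta+4$ and the structural consequences from the $\epsilon=1$ argument of Lemma \ref{lemma:sufficiency:taumod2} remain available.

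For task (a), I would verify in each parity pattern that the image $(i,j,k)^{\tau_\epsilon}$ is forbidden in $\Gamma$ for one of three reasons: it still violates the triangle inequality; or its perimeter exceeds $C$ in the appropriate parity (i.e.\ at least $C=2(\delta+\epsilon)+1$ for odd perimeter, at least $C'=2(\delta+\epsilon)+2$ for even perimeter); or it takes the form $(m,m,1)$ with $m$ outside the tight interval $[K_1,K_2]=[\lfloor(\delta+\epsilon)/2\rfloor,\lceil(\delta+\epsilon)/2\rceil]$. The bookkeeping is systematic because replacing a coordinate $i$ by $(\delta+\epsilon)-i$ shifts the perimeter by $(\delta+\epsilon)-2i$; combining one, two or three such shifts against the hypothesis $i+j<k$ pushes the image past $C$ or $C'$ in the appropriate parity.

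For task (b), I would split by the parity of $k$ and the value of $\epsilon$. In many subcases the image $(1,k,k+1)^{\tau_\epsilon}$ is itself a geodesic triple and is therefore realized in $\Gamma$ by Observation \ref{fact:geodesics}. The remaining subcases produce the two nontrivial targets $(\delta-1,\delta-k,k+1)$ when $\epsilon=0$ and $(\delta,\delta-k+1,k+1)$ when $\epsilon=1$. In the antipodal case $\epsilon=0$, applying the antipodal law (Fact \ref{fact:antipodallawantipodalgraphs}) to one vertex of a geodesic $(1,k,k+1)$ yields the required triangle directly. In the case $\epsilon=1$, I would replicate the construction used in the $\epsilon=1$ portion of Lemma \ref{lemma:sufficiency:taumod2}: use Fact \ref{Fact:Realize:delta,delta,d} to obtain $\diam(\Gamma_\delta)=2$, invoke Lemma \ref{lemma:diameterofsubgraph} (after reducing to $k\leq\delta/2$ via the symmetry $k\leftrightarrow\delta-k$ in the image triple) to get $\diam(\Gamma_{\delta-k+1})\geq 2k$, and then a neighbor-taking argument in $\Gamma_\delta$ and $\Gamma_{\delta-k+1}$ to assemble a triangle of type $(\delta-k+1,\delta,k+1)$.

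The main obstacle will be the $\epsilon=1$ subcase of task (b): there $\Gamma$ is not antipodal, the target triangles are not geodesic, and the antipodal trick is unavailable, so the triangles must be built explicitly from the local structure. Task (a) and the $\epsilon=0$ case of task (b) are by contrast routine once the antipodal law and the parameter values are unpacked. A useful sanity check throughout is that $\tau_\epsilon$ is an involution, so the analysis is symmetric under the exchange of $\Gamma$ and $\Gamma^{\tau_\epsilon}$; this lets me reuse each established realization in the opposite direction when needed.
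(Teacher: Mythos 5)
Your overall strategy is the paper's: reduce via Fact~\ref{Fact:ProveMHG} to forbidding the images of triangle-inequality violations and realizing the images of the geodesics $(1,k,k+1)$, split on the parity of $\min(h,\delta+\epsilon-h)$ for each coordinate, handle $\epsilon=0$ by the antipodal law, and handle $\epsilon=1$ by importing the $\Gamma_\delta$/$\Gamma_{\delta-k+1}$ construction from Lemma~\ref{lemma:sufficiency:taumod2}. Task (b) as you outline it is essentially identical to the paper's argument and is fine.

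There is, however, a genuine gap in your toolkit for task (a). You propose to exclude each image $(i,j,k)^{\tau_\epsilon}$ for one of three reasons: violation of the triangle inequality, perimeter at least $C$ (resp.\ $C'$) in the appropriate parity, or being of the form $(m,m,1)$ with $m$ outside $[K_1,K_2]$. This menu is not sufficient. Take $\delta=9$, $\epsilon=1$ (so $K_1=K_2=5$, $C=21$, $C'=22$, and $\Gamma$ is \emph{not} antipodal) and the violating triple $(i,j,k)=(1,3,7)$: all three coordinates have odd $(\delta+\epsilon)$-parity, so the image is $(9,7,3)$. This satisfies the triangle inequality, has odd perimeter $19<C$ and $>2K_1+1$, and is not of the form $(m,m,1)$, so none of your three tools forbids it. What the paper uses here (its Cases~2 and~3) is the stronger consequence of the parameter $K_2$ recorded in \cite[Definition 1.17]{Che-HOGMH} and already invoked in the proof of Claim~\ref{claimlemma:casethree}: any triangle $(a,b,c)$ with $a+b+c$ odd and $a+b+c>2K_2+2\min(a,b,c)$ is forbidden. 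For $(9,7,3)$ this gives $19>2\cdot 5+2\cdot 3=16$, hence forbidden. (In the $\epsilon=0$ case one could alternatively reflect by the antipodal law and land on a forbidden $(m,m,1)$, but for $\epsilon=1$ that escape route is unavailable.) You need to add this general $K_2$-bound to your list of exclusion mechanisms, and then check that the three mechanisms — triangle inequality, $C$/$C'$ perimeter bounds, and the $K_2$-bound — together cover all parity patterns, which is exactly the case analysis the paper carries out.
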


\begin{proof}
By Fact \ref{Fact:ProveMHG}, it suffices to show that violations of the triangle equality are not in $\Gamma^{\tau_{\epsilon}}$ and geodesics of the form $(1,k,k+1)$ for $1 \leq k < \delta$ are in $\Gamma^{\tau_{\epsilon}}$. We work in $\Gamma$ with the images of these triples under $\tau_{\epsilon}$. We begin with the first point.

\begin{claimlemma} 
For $i + j < k \leq \delta$, the triple $(i,j,k)^{\tau_{\epsilon}}$ is not in $\Gamma$;
\end{claimlemma}

\begin{claimlemmaproof} 
For brevity, we refer to the parity of $\min(h,\delta+\epsilon-h)$ as the $(\delta+\epsilon)$-parity of $h$. Note that by definition $h$ and $h^{\tau_{\epsilon}}$ have the same $(\delta+\epsilon)$-parity. We break our argument into cases, based on the relative $(\delta+\epsilon)$-parities of $i$, $j$, and $k$. 

\noindent\textit{Case 1.} The $(\delta+\epsilon)$-parity of $i$ and $j$ are both even.

If the $(\delta+\epsilon)$-parity of $k$ is also even, then $(i,j,k)^{\tau_{\epsilon}} = (i,j,k)$, which violates the triangle inequality.

If the $(\delta+\epsilon)$-parity of $k$ is odd and $\delta+\epsilon -k \geq k$, then $(i,j,k)^{\tau_{\epsilon}}$ still violates the triangle inequality. 

If the $(\delta+\epsilon)$-parity of $k$ is odd and $\delta + \epsilon - k < k$, then $k > (\delta+\epsilon)/2$.

In that case, $\min(k,\delta+\epsilon-k) = \delta + \epsilon -k$ is odd. Then the perimeter of $(i,j,k)^{\tau_{\epsilon}}$ is $i+j+\delta+\epsilon-k < \delta + \epsilon$. Since $K_1 = \lfloor \frac{\delta+\epsilon}{2} \rfloor$, the perimeter of $(i,j,k)^{\tau_{\epsilon}}$ must be even. Thus $i^{\tau_{\epsilon}}$ and $j^{\tau_{\epsilon}}$ have opposite parity, and we assume without loss of generality that $i^{\tau_{\epsilon}}$ is even and $j^{\tau_{\epsilon}}$ is odd.

From this we infer that $(\delta+\epsilon)$ is odd, that $i$ is even and $i^{\tau_{\epsilon}} = i < \frac{\delta+\epsilon}{2}$, and that $j^{\tau_{\epsilon}} = j > \frac{\delta+\epsilon}{2}$. We therefore conclude that
\begin{align*}
i^{\tau_{\epsilon}} + k^{\tau_{\epsilon}} = i + \delta+\epsilon -k < i + \delta + \epsilon - (i+j) = \delta + \epsilon -j < j = j^{\tau_{\epsilon}}
\end{align*}
and hence the triple $(i,j,k)^{\tau_{\epsilon}}$ violates the triangle inequality.

\noindent\textit{Case 2.} The $(\delta+\epsilon)$-parity of $i$ and $j$ are both odd.

\begin{sloppypar}
If the $(\delta+\epsilon)$-parity of $k$ is even, then the perimeter of $(i,j,k)^{\tau_{\epsilon}}$ is ${2(\delta+\epsilon) + k -i -j> 2}$ which is forbidden by the $C, C'$ bounds.
\end{sloppypar}

Suppose the $(\delta+\epsilon)$-parity of $k$ is odd. In this case, the perimeter of $(i,j,k)^{\tau_{\epsilon}}$ is 
\begin{equation*}
    3(\delta+\epsilon)-(i+j+k) > 3(\delta+\epsilon)-2k \geq 2 K_2 + 2(\delta_{\epsilon}-k) = 2K_2 + 2k^{\tau_{\epsilon}}
\end{equation*}
and thus violates the condition associated to $K_2$.

Without loss of generality, the remaining case is the following.

\noindent\textit{Case 3.} The $(\delta+\epsilon)$-parities of $i$ and $j$ are even and odd respectively.

If the $(\delta+\epsilon)$-parity of $k$ is odd, then we have that
\begin{equation*}
i^{\tau_{\epsilon}} + k^{\tau_{\epsilon}} = i + (\delta+\epsilon) -k < (\delta+\epsilon)-j = j^{\tau_{\epsilon}}
\end{equation*}
and therefore this triple violates the triangle inequality.

If the $(\delta+\epsilon)$-parity of $k$ is even then the perimeter of $(i,j,k)^{\tau_{\epsilon}}$ is 
\begin{align*}
    i + (\delta+\epsilon -j) +k \geq (\delta + \epsilon) + 2i \geq 2K_2 + 2i
\end{align*}
and the bound associated with $K_2$ is violated.

We may finally assume that 
\begin{align*}
i > \frac{\delta+\epsilon}{2}   &&  j \leq \frac{\delta+\epsilon}{2}  && k > \frac{\delta+\epsilon}{2}.
\end{align*}

Then the perimeter of $(i,j,k)^{\tau_{\epsilon}}$ is
\begin{equation*}
i + (\delta+\epsilon)-j+k \geq (\delta+\epsilon) + 2i > 2(\delta+\epsilon)
\end{equation*}
which violates the $C$ bounds.\\

We conclude then that every such triple $(i,j,k)^{\tau_{\epsilon}}$ is forbidden is $\Gamma$. 
\end{claimlemmaproof}

\begin{claimlemma} 
For $1 \leq k < \delta$ the triangle type $(1,k,k+1)^{\tau_{\epsilon}}$ is in $\Gamma$.
\end{claimlemma}

\begin{claimlemmaproof} Note that $\min(k,\delta+\epsilon-k) \leq \lfloor \frac{\delta+\epsilon}{2} \rfloor$.

If $\min(k,\delta+\epsilon-k) = \lfloor \frac{\delta+\epsilon}{2} \rfloor$ then $(1,k,k+1)^{\tau_{\epsilon}} = (1,k,k+1)$, with $k$ and $k+1$ either fixed for swapped. So we may assume 
\begin{align*}
    \min(k,\delta+\epsilon-k) < (\delta+\epsilon)/2.
\end{align*}

If $\min(k,\delta+\epsilon-k)$ is even, then $(1,k,k+1)^{\tau_{\epsilon}} = (\delta+\epsilon-1,k,\delta+\epsilon-k-1)$. This is itself of geodesic type, and therefore must be realized in $\Gamma.$

Now suppose that $\min(k,\delta+\epsilon-k)$ is odd. If $\epsilon = 0$, then $\Gamma$ is antipodal and the triangle type under consideration is $(k+1,\delta-k,\delta-1)$.



We may therefore replace one of the vertices of this triangle types with its opposite to obtain the triangle type $(1,k,k+1)$. Since the latter triangle type is realized in $\Gamma$, the former one is as well.

If $\epsilon =1$, then we require the triangle type $(\delta,\delta-k+1,k+1)$ to be realized in $\Gamma$. We follow the proof of Claim \ref{claim:taubipartite} in the proof of Lemma \ref{lemma:sufficiency:taumod2}. We argue that $\Gamma_{\delta}$ has diameter $2$, that we may suppose without loss of generality that $k \leq \delta/2$, and that there are vertices $u,v \in \Gamma_{\delta-k+1}$ at distance $2k$. Then taking $u',v' \in \Gamma_{\delta}$ at distance $k-1$ from $u,v$ respectively, we find $d(u',v') = 2$ and argue as previously that the triangle type formed by $u,v'$ and the basepoint has the desired type. This proves the claim.
\end{claimlemmaproof}
These two claims show that $\Gamma^{\tau_{\epsilon}}$ is indeed a metrically homogeneous graph.
\end{proof}

\begin{proof}[Proof of Proposition \ref{prop:sufficiency}]

This follows immediately from Lemmas \ref{lemma:sufficiency:rho}, \ref{lemma:sufficiency:rhoinv}, \ref{lemma:sufficiency:smalldeltatau1} \ref{lemma:sufficiency:taumod2}, and \ref{lemma:sufficiency:tau}.
\end{proof}

\section{Appendix: Structure of metrically homogeneous graphs}\label{sec:app}


Below is a compilation of relevant facts about the structure of metrically homogeneous graphs. Previously known results are referenced and labeled as ``facts," while the new results are labeled as a ``proposition" and a ``lemma."

We assume familiarity with the terminology and notation of Section \ref{sec:basics}.


\subsection{Known metrically homogeneous graphs}
\begin{fact}[{\cite[Theorem 10, Lemma 8.6]{Che-2P}}]\label{Fact:NongenericType}
Let $\Gamma$ be a metrically homogeneous graph of diameter $\delta\geq 2$
(in particular, $\Gamma$ is connected).
Then one
of the following applies.
\goodbreak
\begin{itemize}
\item $\Gamma$ is finite.
\item $\Gamma$ is complete multipartite with at least two classes (this includes
the case in which $\Gamma$ is complete).
\item $\Gamma$ is the complement $H_n^c$ 
of a Henson graph with $3\leq n<\infty$.
\item $\Gamma$ is one of the tree-like graphs $T_{m,n}$, of infinite diameter.
\item $\Gamma$ is of generic type with diameter at least 2.
\end{itemize}

In particular, if $\Gamma$ is infinite and of finite diameter $\delta\geq 3$, 
then $\Gamma$ is of generic type.
\end{fact}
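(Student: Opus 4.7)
The plan is to first reduce the diameter $2$ case to the Lachlan--Woodrow classification of connected homogeneous graphs, and then to argue that for $\delta \geq 3$ an infinite metrically homogeneous graph is automatically of generic type, with the tree-like graphs $T_{m,n}$ arising as the remaining non-generic case only in the infinite-diameter regime.

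For $\delta = 2$ the metric structure coincides with the edge/non-edge distinction, so a metrically homogeneous graph of diameter $2$ is exactly a connected homogeneous graph of diameter $2$. The Lachlan--Woodrow classification then enumerates the possibilities: finite connected homogeneous graphs, complete multipartite graphs with at least two classes, the Henson complements $H_n^c$ for $3 \leq n < \infty$, and the random graph. A direct verification shows the random graph is of generic type---distance-$2$ pairs have infinitely many common neighbors which themselves induce a random graph (so contain an infinite independent set), and $\Gamma_1$ is again a random graph whose automorphism group admits no nontrivial invariant equivalence relation---while the other infinite cases fail generic type and appear in the list. This disposes of $\delta = 2$.

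For $\delta \geq 3$ I would suppose $\Gamma$ is infinite and not of generic type, and derive a contradiction by case analysis on which of the three defining conditions of generic type fails. If a distance-$2$ pair has only finitely many common neighbors, homogeneity forces this count to be a constant $k$; a local-finiteness argument on the spheres $\Gamma_i$ then shows that $\Gamma$ is either finite (contradicting our assumption) or has infinite diameter, with the infinite-diameter examples being precisely the tree-like graphs $T_{m,n}$, contradicting $\delta < \infty$. If the common neighbors of a distance-$2$ pair induce an edge, then iterating homogeneity at successive distances manufactures obstructions to the existence of geodesics of length $\geq 3$, collapsing the diameter to at most $2$. Finally, if $\Gamma_1$ admits a nontrivial $\Aut(\Gamma_1)$-invariant equivalence relation, I would lift it along geodesics to an $\Aut(\Gamma)$-invariant equivalence relation on $\Gamma$ whose quotient is a metrically homogeneous graph of strictly smaller diameter, and a careful analysis of how the classes sit metrically inside $\Gamma$ then forces either a complete multipartite structure or $\delta = 2$.

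The main obstacle will be the first case. The clean passage from ``finitely many common neighbors per distance-$2$ pair'' to global local finiteness, and thence to the dichotomy between finite graphs and the $T_{m,n}$ family, requires a careful understanding of how triangle types in $\Gamma_1$ propagate outward through the spheres. The identification of the infinite-diameter locally finite examples with $T_{m,n}$ is the most delicate point, since the branching parameters $m$ and $n$ must be read off from the local structure of $\Gamma_1$ and $\Gamma_2$; it is precisely here that one exploits the fact that in the non-generic regime the sphere structure is essentially tree-like, so that geodesic uniqueness at the local level lifts to a rigid global branching pattern.
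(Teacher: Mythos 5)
This statement is not proved in the paper at all: it is imported as Fact \ref{Fact:NongenericType} from Cherlin's work ({\cite[Theorem 10, Lemma 8.6]{Che-2P}}), so there is no in-paper argument to compare yours against; I can only assess your sketch on its own terms, and as a proof it has genuine gaps. The most serious is in your first case for $\delta\geq 3$: the inference from ``each distance-$2$ pair has finitely many common neighbors'' to ``local finiteness of the spheres $\Gamma_i$'' is false. The tree-like graphs $T_{m,n}$ with $m$ or $n$ infinite have distance-$2$ pairs with exactly one common neighbor, yet contain vertices of infinite valency; so the dichotomy you want cannot be reached by a local-finiteness argument, and even in the genuinely locally finite subcase the identification of the infinite examples is essentially Macpherson's classification of infinite locally finite distance-transitive graphs, a major theorem that you defer rather than prove. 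The other two branches are similarly programmatic: that an edge among the common neighbors of a distance-$2$ pair ``collapses the diameter to at most $2$,'' and that a nontrivial $\Aut(\Gamma_1)$-invariant equivalence relation ``lifts along geodesics'' to an invariant equivalence relation on all of $\Gamma$, are precisely the hard steps of the cited source, and neither is carried out or even reduced to a checkable claim.

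A smaller slip: in the $\delta=2$ case your enumeration of the Lachlan--Woodrow list omits the Henson graphs $H_n$ themselves, which are connected, homogeneous, of diameter $2$, and of generic type; the assertion that ``the other infinite cases fail generic type'' is therefore false as stated, though the conclusion survives because $H_n$ falls under the generic-type bullet. Overall your outline has the right broad shape (reduce diameter $2$ to Lachlan--Woodrow, then analyze the failure modes of generic type for $\delta\geq 3$), but all of the substantive content lies in the steps you yourself flag as ``delicate'' and do not supply, so this is a plan for a proof rather than a proof.
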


\begin{fact}\label{Fact:Finite:delta>=3}
The finite metrically homogeneous graphs of diameter at least $3$
are of the following two forms.
\begin{enumerate}
\item An $n$-cycle with $n\geq 6$ $(\delta=\lfloor n/2 \rfloor)$.
\item Diameter 3, antipodal double cover of one of the following.
\begin{enumerate}
\item  $C_5$, 
\index{graph!pentagon}
\item $K_3 \otimes K_3$, or
\item $I_n$ (an independent set of order $n\geq 2$).
\end{enumerate}
\end{enumerate}
\end{fact}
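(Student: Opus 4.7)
The plan is to proceed by case analysis on the valency $k$ of $\Gamma$, which is well-defined by metric homogeneity. First I would rule out $k=1$ (which forces $\delta=1$). For $k=2$, since $\Gamma$ is connected and $2$-regular, it must be a cycle $C_n$, and $\delta = \lfloor n/2\rfloor \geq 3$ forces $n \geq 6$; conversely, metric homogeneity of $C_n$ is immediate from its dihedral symmetry. So the substance of the proof is to show that for $k \geq 3$, $\Gamma$ must be antipodal of diameter exactly $3$ and arises as the double cover of one of the three listed bases.

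To force $\delta = 3$ and antipodality when $k \geq 3$, I would analyze the spheres $\Gamma_i$ around a basepoint. Metric homogeneity implies that $\Gamma$ is distance-regular, so the sizes $|\Gamma_i|$ and the intersection numbers $b_i, c_i$ are all determined. The metric-homogeneity condition is strictly stronger than distance-regularity: in particular, each $\Gamma_i$ must itself be homogeneous as a metric space. Using Fact \ref{fact:antipodalSmallC} (antipodality is equivalent to no triangle having perimeter exceeding $2\delta$), together with the combinatorial restrictions on intersection arrays of high-diameter distance-regular graphs of valency $\geq 3$, I would show that any such finite graph with $\delta \geq 4$ and $k \geq 3$ fails metric homogeneity. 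This part is the main obstacle and requires the bulk of the casework, since finite distance-regular graphs of large diameter and valency are plentiful and one must sift out the genuinely metrically homogeneous ones.

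Having reduced to $\delta = 3$ and antipodality, I would invoke Fact \ref{fact:antipodallawantipodalgraphs}: each vertex $v$ has a unique antipode $v'$ with $d(v,w) = 3 - d(v',w)$. Fix a basepoint $v_0$ with antipode $v_0'$. The neighborhood $H = \Gamma_1(v_0)$ inherits homogeneity as a graph, and via the antipodal involution the full structure of $\Gamma$ on $\{v_0, v_0'\} \cup \Gamma_1(v_0) \cup \Gamma_2(v_0)$ is reconstructible from $H$. By the classification of finite homogeneous graphs (Gardiner; Gol'fand--Klin), $H$ lies in a short list, and checking which members are compatible with extension to an antipodal metrically homogeneous graph of diameter $3$ pins down the three cases: the independent set $I_n$ (yielding the ``crown graph'' cover), the pentagon $C_5$, and the tensor square $K_3 \otimes K_3$.

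Finally, I would verify sufficiency: for each listed base, the standard antipodal double cover of diameter $3$ is metrically homogeneous. This is a routine check using the transitivity of the natural automorphism group on ordered pairs at each prescribed distance, combined with the extension property provided by the antipodal involution. The principal difficulty throughout lies in the case $k \geq 3$, $\delta \geq 4$; everything else is either immediate from the symmetry of cycles or reduces to applying known classifications of small finite homogeneous graphs.
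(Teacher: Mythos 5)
The paper offers no proof of this statement: it is quoted in the Appendix as a known result (the classification of finite metrically homogeneous graphs, due essentially to Cameron), so there is no ``paper proof'' to compare against. Judged on its own terms, your proposal has a genuine gap at exactly the point you identify as ``the main obstacle.'' The entire content of the theorem for valency $k\geq 3$ is the claim that $\delta\geq 4$ is impossible and that $\delta=3$ forces antipodality, and for this you offer only an appeal to ``combinatorial restrictions on intersection arrays of high-diameter distance-regular graphs of valency $\geq 3$.'' That is not a viable route: finite distance-regular graphs of large diameter and valency $\geq 3$ are not classified, are genuinely plentiful (e.g.\ Hamming graphs, Johnson graphs, doubled Odd graphs), and there is no routine computation that reads off failure of metric homogeneity from an intersection array. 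Nothing in the proposal explains why, say, $H(d,2)$ or $J(2k+1,k)$ fails to be metrically homogeneous, so the reduction to $\delta=3$ is asserted rather than proved.

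The tool that actually makes the argument work --- the Gardiner/Gol'fand--Klin classification of finite homogeneous graphs applied to $\Gamma_1$ --- is one you invoke only \emph{after} assuming $\delta=3$ and antipodality. The standard proof inverts this: metric homogeneity makes $\Gamma_1$ a finite homogeneous graph for \emph{every} diameter, so one starts from the short list of possibilities for $\Gamma_1$ (disjoint unions of cliques, complete multipartite graphs, $C_5$, $K_3\otimes K_3$) and then analyzes, in each case, how far the spheres $\Gamma_2,\Gamma_3,\dots$ can extend; this is where the diameter bound and the antipodality come from. As written, your argument uses the local classification only for the easy endgame and leaves the load-bearing step unsupported. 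The valency-$2$ case and the final sufficiency checks are fine.
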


The following result is given in \cite{ACM-MH3} as Theorem 1, in the form of a completely explicit catalog.
\begin{fact}[{\cite[Theorem 1]{ACM-MH3}}]\label{Fact:MH3}
The metrically homogeneous graphs of diameter 3 are all of known type, that is, either finite or of the form $\Gamma^{\delta}_{K_1,K_2,C,C'}$ with Henson constraints $\mathcal{S}$.
\end{fact}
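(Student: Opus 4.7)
The plan is to split into cases via Fact~\ref{Fact:NongenericType}. Since $\Gamma$ has diameter exactly $3$, it cannot be complete multipartite or $H_n^c$ (both of diameter $\leq 2$), nor can it be $T_{m,n}$ (infinite diameter). So $\Gamma$ is either finite — and then falls under the explicit classification of Fact~\ref{Fact:Finite:delta>=3} — or $\Gamma$ is of generic type. The substance of the theorem lies in the generic-type case.

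For $\Gamma$ of generic type and diameter $3$, I would read off the numerical parameters $(K_1, K_2, C_0, C_1)$ from $\Gamma$ via Definition~\ref{defn:numpar} and then let $\mathcal{S}$ consist of the minimal $(1,3)$-spaces forbidden in $\Gamma$ but not already excluded by the numerical parameters. By construction, every finite metric subspace of $\Gamma$ lies in $\mathcal{A}^3_{K_1, K_2, C_0, C_1, \mathcal{S}}$. The task is the converse: show that every member of $\mathcal{A}^3_{K_1, K_2, C_0, C_1, \mathcal{S}}$ embeds into $\Gamma$. Once this is in hand, the metric homogeneity of $\Gamma$ together with Fra\"{i}ss\'{e}'s theorem force $\Gamma \simeq \Gamma^3_{K_1, K_2, C_0, C_1, \mathcal{S}}$.

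The core step is a one-point extension argument. Given a finite metric space $M \in \mathcal{A}^3_{K_1, K_2, C_0, C_1, \mathcal{S}}$ and an embedding of $M \setminus \{v\}$ into $\Gamma$, one seeks a vertex of $\Gamma$ realizing the prescribed distances to the image. Whenever $v$ must be at distance $2$ from some point of $M$, the generic-type hypothesis — infinitely many independent common neighbors for every pair at distance $2$ — provides abundant flexibility, and a short inductive argument using the numerical parameters locates such a vertex. When the only distances from $v$ that occur are $1$ and $3$, the problem reduces to a Henson-style completion inside the $(1,3)$-reduct, handled directly by the choice of $\mathcal{S}$.

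The main obstacle is demonstrating that $\mathcal{S}$ can indeed be taken to consist only of $(1,3)$-spaces — equivalently, that no minimal forbidden configuration in $\Gamma$ beyond the triangle constraints involves the distance $2$ as an internal edge. This requires showing that any potential new forbidden pattern with an internal distance-$2$ edge is already blocked by $K_1, K_2, C_0, C_1$, which is proved by a careful size-induction exploiting the abundance of common neighbors at distance $2$ in generic type. A parallel delicate task is verifying the amalgamation property for $\mathcal{A}^3_{K_1, K_2, C_0, C_1, \mathcal{S}}$ so that its Fra\"{i}ss\'{e} limit exists and is metrically homogeneous; in diameter $3$ this reduces to a manageable case check, since the $(1,3)$-reduct behaves essentially as an ordinary graph and the possible antipodal and bipartite subcases (where $K_1 + K_2 = 3$ or $K_1 = \infty$) impose only a small number of additional rigidities that must be cross-checked against the Henson constraints.
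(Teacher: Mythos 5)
This statement is quoted verbatim from \cite[Theorem 1]{ACM-MH3} and the present paper offers no proof of it at all --- it is imported as a known fact, so there is no ``paper's proof'' to compare against. Judged on its own terms, your proposal correctly identifies the overall architecture of such a classification (reduce to generic type via Fact~\ref{Fact:NongenericType}, read off the numerical parameters, isolate the residual forbidden $(1,3)$-spaces as $\mathcal{S}$, and then prove a two-sided containment of amalgamation classes so that Fra\"{i}ss\'{e}'s theorem pins down the isomorphism type). That is indeed the shape of the argument in \cite{ACM-MH3}.

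However, the proposal is an outline, not a proof: every genuinely difficult step is asserted rather than established. The claim that ``a short inductive argument using the numerical parameters locates such a vertex'' in the one-point extension, the claim that every minimal forbidden configuration involving an internal distance-$2$ edge is already excluded by $K_1,K_2,C_0,C_1$, and the verification that $\mathcal{A}^3_{K_1,K_2,C,C',\mathcal{S}}$ has the amalgamation property for each admissible parameter set are precisely the content of the cited theorem, and each occupies a substantial portion of \cite{ACM-MH3}. In particular, the admissibility constraints linking the parameters (which values of $(K_1,K_2,C,C')$ actually arise, and which Henson constraints are compatible with which parameter sets, especially in the antipodal and bipartite cases) are nowhere derived in your sketch, yet without them the ``manageable case check'' for amalgamation cannot even be set up. So while the strategy is sound, the proposal does not constitute a proof; for the purposes of this paper the correct move is simply to cite \cite[Theorem 1]{ACM-MH3}, as the author does.
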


\begin{fact}[{\cite[Lemma 2.7]{Che-MH4}}]\label{Fact:evenPdiam4}
Let $\Gamma$ be a metrically homogeneous graph of diameter $4$, with associated numerical parameters $K_1,K_2,C_0,C_1$. Then any triangle of even perimeter $p < C_0$ embeds isometrically into $\Gamma$.
\end{fact}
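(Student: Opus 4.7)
The plan is to enumerate the triangle types of even perimeter with largest side at most $\delta = 4$, and verify each case separately. These are $(1,1,2)$ of perimeter $4$; $(1,2,3)$ and $(2,2,2)$ of perimeter $6$; $(1,3,4), (2,2,4), (2,3,3)$ of perimeter $8$; $(2,4,4)$ and $(3,3,4)$ of perimeter $10$; and $(4,4,4)$ of perimeter $12$. The four geodesic types $(1,1,2), (1,2,3), (1,3,4), (2,2,4)$ embed automatically by Observation \ref{fact:geodesics}, leaving $(2,2,2), (2,3,3), (2,4,4), (3,3,4)$, and $(4,4,4)$ to address.

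The easy case is $(4,4,4)$: the only triangle type with perimeter $12$ and maximum side at most $4$ is $(4,4,4)$ itself, so the hypothesis $12 < C_0$ directly produces such a triangle. For the perimeter-$10$ types, the hypothesis $C_0 > 10$ yields at least one of $(2,4,4)$ or $(3,3,4)$ in $\Gamma$. My plan is then to show that the existence of one forces the other via homogeneity: starting from a $(2,4,4)$ triangle consisting of the basepoint and two vertices in $\Gamma_4$ at mutual distance $2$, I would replace one of the $\Gamma_4$-vertices by a suitably chosen neighbor lying in $\Gamma_3$ and track its distance to the other $\Gamma_4$-vertex; the triangle inequality in diameter $4$ forces the resulting triple to be $(3,3,4)$, and the reverse passage is analogous.

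For the perimeter-$6$ and $-8$ types $(2,2,2)$ and $(2,3,3)$, the bound $p < C_0$ carries no direct content since $C_0 > 2\delta = 8$ always. One must therefore construct these triangles by hand. The strategy is to show that $\Gamma_2$ realizes the distance $2$ (which, together with the basepoint, gives a $(2,2,2)$ triangle) and that $\Gamma_3$ realizes the distance $2$ (giving $(2,3,3)$). Both reductions follow from connectivity of the layers $\Gamma_i$ and lower bounds on their diameters, using the local-structure results such as Fact \ref{Fact:LocalAnalysis}, together with the hypothesis $\delta = 4$ which forces $\Gamma_2$ and $\Gamma_3$ to have nontrivial diameter.

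The main obstacle will be establishing the required connectivity and diameter facts for $\Gamma_2$ and $\Gamma_3$ without circularity: the detailed structure of each $\Gamma_i$ depends on which triangle types are realized in $\Gamma$, which is exactly what the lemma is trying to control. I would therefore split into cases according to whether $K_1 = 1$ or $K_1 \geq 2$ (using Fact \ref{Fact:LocalAnalysis:K1le2} in the former case and Fact \ref{Fact:LocalAnalysis:d=2Connected} in the latter) to ensure the layer $\Gamma_i$ is rich enough to realize the needed distance. The even-perimeter restriction is used in that the constructions above never produce a triangle whose perimeter has parity $C_1$, so the only possible obstruction is the bound $C_0$ itself.
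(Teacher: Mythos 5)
This ``Fact'' is not proved in the paper at all --- it is quoted from \cite[Lemma 2.7]{Che-MH4} --- so there is no internal proof to compare against; I can only assess your argument on its own terms. Most of it is sound: the enumeration of even-perimeter types with sides $\leq 4$ is complete; the geodesic types and $(4,4,4)$ are handled correctly by the definition of $C_0$; and $(2,2,2)$, $(2,3,3)$ follow immediately from Proposition~\ref{Prop:Realize:i,i,2k} (take $k=1$, $i=2,3$), so the circularity you worry about for perimeters $6$ and $8$ never materializes --- that proposition and Fact~\ref{Fact:LocalAnalysis:d=2Connected} are unconditional for $i<\delta$. (One caveat: as stated the Fact is false for the finite diameter-$4$ graphs $C_8$ and $C_9$, where $(2,2,2)$ has perimeter $6<C_0$ but does not embed; the result implicitly assumes generic type, which your appeals to the local analysis also require.)

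The genuine gap is in the perimeter-$10$ step. You correctly observe that $C_0>10$ forces at least one of $(2,4,4)$, $(3,3,4)$ to embed, but your mechanism for getting from one to the other does not work: given the basepoint $*$ and $u,v\in\Gamma_4$ with $d(u,v)=2$, a neighbor $u'\in\Gamma_3$ of $u$ satisfies only $d(u',v)\in\{1,2,3\}$, and the triangle inequality in no way forces $d(u',v)=3$; the resulting type could be the geodesic $(1,3,4)$ or the (possibly realized) $(2,3,4)$. Conversely, if you instead choose $u'$ with $d(u,u')=1$, $d(v,u')=3$ (which exists by homogeneity since $(1,2,3)$ is geodesic), you cannot control whether $u'$ lies in $\Gamma_3$ or $\Gamma_4$, and in the latter case you get $(3,4,4)$, not $(3,3,4)$. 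Closing this requires real use of the structure theory rather than the triangle inequality: for instance, a perimeter-$10$ triangle gives $\diam(\Gamma_4)\geq 2$ by Fact~\ref{Fact:Realize:delta,delta,d}, whence the distance $2$ is actually realized in $\Gamma_4$ (the minimal positive distance in $\Gamma_4$ is at most $2$ by Fact~\ref{Fact:Neighbors:i+-1}, and if it is $1$ then $\Gamma_4$ is connected by Fact~\ref{Fact:LocalAnalysis} and so realizes $2$), which yields $(2,4,4)$; and when $\diam(\Gamma_4)=2$, Lemma~\ref{lemma:diameterofsubgraph} gives $\diam(\Gamma_3)=4$ and hence $(3,3,4)$. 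The remaining subcase $\diam(\Gamma_4)\geq 3$ still needs an argument that $\Gamma_3$ has diameter $4$, and nothing in your sketch supplies it. As written, the perimeter-$10$ case is not established.
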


\subsection{Local analysis}

We have the following for antipodal graphs.

\begin{fact}[{\cite[Lemma 6.1]{Che-2P}}]\label{fact:antipodalSmallC}
Let $\Gamma$ be a metrically homogeneous graph of diameter $\delta$. Then $\Gamma$ is antipodal if and only if no triangle has perimeter greater than $2\delta$.
\end{fact}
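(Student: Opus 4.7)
The plan is to prove both directions separately, with the forward direction exploiting the antipodal law (Fact \ref{fact:antipodallawantipodalgraphs}) and the converse being a short contradiction argument using uniqueness.

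For the forward implication, I would suppose $\Gamma$ is antipodal and take a triangle of type $(i,j,k)$ realized by vertices $u,v,w$ with $d(u,v)=i$, $d(u,w)=k$, $d(v,w)=j$. Let $u'$ be the antipode of $u$ provided by Fact \ref{fact:antipodallawantipodalgraphs}. The antipodal law gives $d(u',v) = \delta - i$ and $d(u',w) = \delta - k$, so the triple $(\delta-i,\,\delta-k,\,j)$ is realized by $(u',v,w)$ and in particular satisfies the triangle inequality. From $j \le (\delta-i)+(\delta-k)$ we obtain $i+j+k \le 2\delta$, as required.

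For the converse, assume no triangle of $\Gamma$ has perimeter greater than $2\delta$. Fix $u \in \Gamma$; since $\Gamma$ has diameter $\delta$, by vertex-homogeneity there is at least one $v \in \Gamma$ with $d(u,v) = \delta$. If there were a second vertex $w \neq v$ with $d(u,w) = \delta$, then $d(v,w) \ge 1$, so the metric triangle $(u,v,w)$ would have perimeter $2\delta + d(v,w) > 2\delta$, contradicting the assumption. Hence $v$ is unique, and $\Gamma$ is antipodal by definition.

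The main (very mild) obstacle is simply to be sure the antipodal law is available as a black box from Fact \ref{fact:antipodallawantipodalgraphs}; once that is invoked, both directions reduce to one application of the triangle inequality. No further structure theory is needed, and the argument works uniformly for all $\delta$ without splitting into cases.
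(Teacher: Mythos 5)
The paper does not actually prove this statement: it is quoted verbatim as Fact~\ref{fact:antipodalSmallC} from \cite[Lemma 6.1]{Che-2P}, so there is no internal proof to compare against. Judged on its own terms, your argument is correct and is essentially the standard one. The converse direction is clean: uniqueness of the antipode follows immediately because a second vertex $w\neq v$ at distance $\delta$ from $u$ would create a triangle of perimeter $2\delta+d(v,w)>2\delta$ (and vertex-homogeneity does guarantee that every vertex realizes the distance $\delta$, since some pair does). The forward direction via the antipodal law is also right: applying $d(u',\cdot)=\delta-d(u,\cdot)$ to $v$ and $w$ and then the triangle inequality on $(u',v,w)$ gives $i+j+k\le 2\delta$ in one line.

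One small caveat against your closing remark that the argument ``works uniformly for all $\delta$'': Fact~\ref{fact:antipodallawantipodalgraphs}, as stated in this paper, carries the hypothesis $\delta\ge 3$, so strictly speaking your forward direction needs a (trivial) separate check for $\delta\le 2$, where the antipodal graphs are $K_2$ and the complete multipartite graphs with classes of size $2$ and the perimeter bound is immediate. Likewise both directions implicitly require $\delta<\infty$, but that is harmless since antipodality is only defined for finite diameter.
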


\begin{fact}[{\cite[Theorem 11]{Che-2P}}]\label{fact:antipodallawantipodalgraphs}
Let $\Gamma$ be a metrically homogeneous graph and antipodal of diameter $\delta \geq 3$. Then for each $u \in \Gamma$ there exists a $u' \in \Gamma$ at distance $\delta$ from $u$ such that the antipodal law
\begin{equation*}
    d(u,v) = \delta - d(u',v)
\end{equation*}
holds for every $v \in \Gamma$.

In particular, the map $u \mapsto u'$ is a central involution of $\Aut(\Gamma)$.
\end{fact}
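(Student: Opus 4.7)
The plan is to derive the antipodal law directly from the definition of antipodality together with Fact~\ref{fact:antipodalSmallC}, and then deduce that the antipode map is a central involutive automorphism.

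First I would fix $u \in \Gamma$ and let $u'$ be its unique antipode, which exists by the definition of an antipodal graph. For any $v \in \Gamma$, the triangle inequality applied to $\{u,u',v\}$ gives $d(u,v)+d(u',v)\geq d(u,u')=\delta$. For the matching upper bound, Fact~\ref{fact:antipodalSmallC} tells us that the triangle on $\{u,u',v\}$ has perimeter at most $2\delta$, hence $d(u,v)+d(u',v)\leq 2\delta-\delta=\delta$. Combining the two inequalities yields $d(u,v)=\delta-d(u',v)$, which is the antipodal law.

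Next I would show that the map $\alpha\colon u\mapsto u'$ is an automorphism of $\Gamma$. Given $u_1,u_2$, two applications of the antipodal law---once for the pair $(u_1,u_1')$ with $v=u_2'$ and once for $(u_2,u_2')$ with $v=u_1$---yield
\[
d(u_1',u_2')=\delta-d(u_1,u_2')=\delta-\bigl(\delta-d(u_1,u_2)\bigr)=d(u_1,u_2),
\]
so $\alpha$ is an isometry. Since $\alpha$ is bijective (it has inverse $\alpha$ itself, as noted below) and preserves the distance $1$, it is a graph automorphism of $\Gamma$.

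Finally I would verify that $\alpha$ is an involution and central. For the involution property, $d(u',u)=\delta$ forces $(u')'=u$ by uniqueness of antipodes. For centrality, given any $\sigma\in\Aut(\Gamma)$, we have $d(\sigma(u),\sigma(u'))=d(u,u')=\delta$, so $\sigma(u')$ is the unique antipode of $\sigma(u)$, which means $\sigma(u')=\alpha(\sigma(u))$ and hence $\sigma\alpha=\alpha\sigma$. The only place requiring slight care is the isometry step, but this reduces to two bookkeeping applications of the antipodal law, so no real obstacle arises.
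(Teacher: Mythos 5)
Your argument is correct. Note, however, that the paper does not prove this statement at all: it is labelled a ``Fact'' precisely because it is imported from \cite{Che-2P} (Theorem 11 there), so there is no internal proof to compare against. What you have done is show that the antipodal law is an easy consequence of Fact \ref{fact:antipodalSmallC}: the triangle inequality on $\{u,u',v\}$ gives $d(u,v)+d(u',v)\geq\delta$, the perimeter bound $\leq 2\delta$ gives the reverse inequality, and the degenerate cases $v\in\{u,u'\}$ hold trivially; the passage from the law to the isometry property, the involution property, and centrality is exactly the standard bookkeeping you describe. The one caveat worth making explicit is that all of the genuine content now sits in the direction of Fact \ref{fact:antipodalSmallC} asserting that an antipodal metrically homogeneous graph has no triangle of perimeter exceeding $2\delta$ --- that implication is where homogeneity is actually used, and your derivation is only non-circular to the extent that this lemma is established in \cite{Che-2P} independently of the antipodal law (which its numbering, Lemma 6.1 versus Theorem 11, suggests it is). Within the present paper's framework, where both statements are taken as black boxes, your reduction is sound and is in fact a clean way to see why the two facts sit together.
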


The following rephrases
Proposition 5.1 of \cite{Cam-Cen}, where the statement is given in terms of the first two of our three conditions, and in greater generality (for the distance transitive case).

\begin{fact}
\label{Fact:CharacterizeMHG}
Let $(\Gamma,d)$ be a homogeneous integer-valued metric space
and let $(\Gamma,E)$ be $\Gamma$ viewed as a graph with edge relation
$d(x,y)=1$.

Then the following are equivalent.
\begin{enumerate}
\item $(\Gamma,E)$ is a metrically homogeneous graph, and $d$ is the
graph metric.
\item $(\Gamma,E)$ is connected.
\item $(\Gamma,d)$ contains all triangles of type $(1,k,k+1)$ with $k$ less than 
the diameter of $(\Gamma,d)$.
\end{enumerate}
\end{fact}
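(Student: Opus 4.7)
The plan is to prove the equivalence via the cycle $(1) \Rightarrow (2) \Rightarrow (3) \Rightarrow (1)$. The implication $(1) \Rightarrow (2)$ is immediate from the definition, since every metrically homogeneous graph is, by convention, connected.

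For $(3) \Rightarrow (1)$, I would let $d'$ denote the graph distance of $(\Gamma, E)$ and show that $d = d'$. Since each edge has $d$-length $1$, applying the triangle inequality along a path yields $d \leq d'$. For the reverse inequality I induct on $d(x,y)$: the cases $d(x,y) \leq 1$ are immediate from the definition of $E$, and for $d(x,y) = k+1 \geq 2$, condition (3) supplies a triangle of type $(1, k, k+1)$ in $(\Gamma, d)$; the homogeneity of $(\Gamma, d)$ then provides an isometry carrying the ``$(k+1)$''-edge of this triangle onto the pair $\{x, y\}$, producing a vertex $z$ with $d(x,z) = 1$ and $d(z,y) = k$. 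The induction hypothesis gives $d'(z,y) = k$, so $d'(x,y) \leq 1 + k = d(x,y)$. With $d = d'$ in hand, $(\Gamma, E)$ is connected and, since $(\Gamma, d)$ is homogeneous, is a metrically homogeneous graph whose graph metric is $d$.

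The interesting step is $(2) \Rightarrow (3)$. Fix $k$ with $1 \leq k < \diam(\Gamma, d)$ and pick any pair $x, z$ with $d(x,z) = k+1$, which exists by the choice of $k$. Using connectedness of $(\Gamma, E)$, take a path $x = u_0, u_1, \ldots, u_m = z$ with $d(u_{i-1}, u_i) = 1$ for every $i$. Consider the sequence $b_i := d(u_i, z)$. By the triangle inequality applied to $u_{i-1}, u_i, z$, consecutive values satisfy $|b_i - b_{i-1}| \leq 1$. Since $b_0 = k+1$ and $b_m = 0$, let $i^\ast$ be the largest index with $b_{i^\ast} = k+1$. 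By the choice of $i^\ast$ we have $b_{i^\ast + 1} \leq k$, while the bounded-step property forces $b_{i^\ast + 1} \geq k$, so $b_{i^\ast + 1} = k$. The triple $(u_{i^\ast}, u_{i^\ast+1}, z)$ then realizes a triangle of type $(1, k, k+1)$.

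The main technical obstacle is bridging, in $(2) \Rightarrow (3)$, from purely graph-theoretic connectivity to a concrete metric configuration in $(\Gamma, d)$. The key observation that makes this work is that the $d$-distance to a fixed vertex can change by at most one across an $E$-edge, so along any $E$-path every integer value in the relevant range is necessarily attained; selecting the \emph{last} index at which the value $k+1$ occurs produces the required adjacent pair whose $d$-distances to $z$ are consecutive, which is exactly the $(1,k,k+1)$ triangle demanded by (3).
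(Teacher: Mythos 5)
The paper does not actually prove this statement: it records it as a known fact, citing Proposition 5.1 of Cameron's census paper (which is stated there in terms of conditions (1) and (2) only, and in the more general distance-transitive setting), so there is no in-paper argument to compare yours against. Your proof is correct and self-contained, and the cycle $(1)\Rightarrow(2)\Rightarrow(3)\Rightarrow(1)$ is a natural decomposition: $(3)\Rightarrow(1)$ uses homogeneity of $(\Gamma,d)$ to transport the $(1,k,k+1)$ triangle onto any pair at distance $k+1$ and then inducts to identify $d$ with the graph metric, and $(2)\Rightarrow(3)$ is the discrete intermediate-value argument along an $E$-path. The only point to tighten is the opening of $(2)\Rightarrow(3)$: from $k<\diam(\Gamma,d)$ alone one gets a pair at distance $\geq k+1$, not necessarily a pair at distance exactly $k+1$ (a general integer-valued metric space can skip intermediate values). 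This is harmless here because the repair is the very observation you make next: starting from $x,z$ with $d(x,z)=m\geq k+1$, the sequence $b_i=d(u_i,z)$ along an $E$-path changes by at most one per step and descends from $m$ to $0$, so it attains the value $k+1$, and taking the last index where it does so yields the required adjacent pair. With that one-line adjustment the argument is complete.
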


This may be phrased more usefully for our purposes as follows.
\begin{fact}
\label{Fact:ProveMHG}
Let $\Gamma$ be a metrically homogeneous graph and $\sigma$
a permutation of the language. Then the following are equivalent.
\begin{itemize}
\item $\Gamma^\sigma$ is a metrically homogeneous graph
\item $\Gamma^\sigma$ is a metric space, and contains all triangles of
type $(1,k,k+1)$ for $k$ less than the diameter of $\Gamma$.
\end{itemize}
\end{fact}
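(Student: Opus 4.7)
The plan is to reduce to Fact~\ref{Fact:CharacterizeMHG} applied to $\Gamma^\sigma$. First I would verify the prerequisites of that Fact, namely that whenever $\Gamma^\sigma$ is a metric space, it is automatically a \emph{homogeneous} integer-valued metric space. Homogeneity is inherited from $\Gamma$: since $\sigma$ is a permutation of the relation symbols, $\Aut(\Gamma^\sigma)=\Aut(\Gamma)$, so every finite partial isomorphism of $\Gamma^\sigma$ extends to a full automorphism. The values of the metric lie in $\{0,1,\dots,\delta\}$, because $\sigma$ is a bijection on the set of distance symbols, so $\Gamma^\sigma$ takes the same set of distances as $\Gamma$ and in particular has the same diameter $\delta$.

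For the forward direction, suppose $\Gamma^\sigma$ is a metrically homogeneous graph. By definition this means $\Gamma^\sigma$ is connected and homogeneous under its path metric, so in particular it is a metric space. Applying condition $(1)\Rightarrow(3)$ of Fact~\ref{Fact:CharacterizeMHG} to $\Gamma^\sigma$, it realizes every triangle type $(1,k,k+1)$ for $k$ less than its diameter, which equals $\delta$.

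For the converse, suppose $\Gamma^\sigma$ is a metric space realizing every $(1,k,k+1)$ for $1\le k<\delta$. By the first paragraph, $\Gamma^\sigma$ is a homogeneous integer-valued metric space with diameter $\delta$, so Fact~\ref{Fact:CharacterizeMHG} applies, and condition $(3)\Rightarrow(1)$ yields that $\Gamma^\sigma$ viewed as a graph with edge relation $d(x,y)=1$ is metrically homogeneous and that $d$ coincides with the graph metric. This is exactly the conclusion we want.

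There is really no substantial obstacle here: the statement is almost a direct reformulation of Fact~\ref{Fact:CharacterizeMHG}, and the only content beyond that is the observation $\Aut(\Gamma^\sigma)=\Aut(\Gamma)$ together with the fact that $\sigma$ preserves the diameter. The only place one must be slightly careful is in checking that ``metric space'' is really the right hypothesis to add on the right-hand side of the equivalence; Fact~\ref{Fact:CharacterizeMHG} presupposes an integer-valued metric, so one cannot drop it, and conversely connectedness of $\Gamma^\sigma$ (condition (2) of that Fact) follows from the existence of the geodesics $(1,k,k+1)$ by a straightforward induction on $k$ once the triangle inequality is in hand.
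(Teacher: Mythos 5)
Your proposal is correct and follows exactly the route the paper intends: the paper presents this Fact as a direct rephrasing of Fact~\ref{Fact:CharacterizeMHG} applied to $\Gamma^\sigma$, with the implicit observations that relabelling the relations by $\sigma$ preserves homogeneity and the set of realized distances (hence the diameter). You have simply made those implicit verifications explicit, which is fine.
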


\begin{fact}[{\cite[Lemma 15.6]{Che-HOGMH}}]
\label{Fact:Realize:delta,delta,d}
Let $\Gamma$ be a metrically homogeneous graph of diameter $\delta$
which contains a triangle of perimeter $2\delta+d$. Then $\Gamma_{\delta}$
has diameter at least $d$.
\end{fact}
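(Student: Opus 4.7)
The plan is to start with a triangle of perimeter $2\delta+d$ on vertices $x,y,z$ in $\Gamma$, with $d(x,y)=a$, $d(y,z)=b$, $d(x,z)=c$ so that $a+b+c=2\delta+d$ and each side is at most $\delta$. Using $x$ as the basepoint, I would produce two points of $\Gamma_\delta$ at distance at least $d$ by extending each of $y$ and $z$ radially outward from $x$ and then applying the triangle inequality along a three-segment path from $y$ to $z$.

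First I would construct a sequence $y=y_0,y_1,\ldots,y_{\delta-a}=u$ in $\Gamma$ with $y_{j+1}$ adjacent to $y_j$ and $d(x,y_{j+1})=d(x,y_j)+1$. At each step this is possible because the triangle type $(d(x,y_j),1,d(x,y_j)+1)$ is of geodesic type, hence realized in $\Gamma$ by Observation \ref{fact:geodesics}; homogeneity then lets us extend the pair $(x,y_j)$ already present in $\Gamma$ by a new vertex playing the role of the third point. The result is a vertex $u$ with $d(x,u)=\delta$, and since the triangle inequality gives $d(y,u)\le\delta-a$ while $d(y,u)\ge d(x,u)-d(x,y)=\delta-a$, one also has $d(y,u)=\delta-a$. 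Performing the same construction starting from $z$ produces $v\in\Gamma$ with $d(x,v)=\delta$ and $d(z,v)=\delta-c$; so $u,v\in\Gamma_\delta$.

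Finally, I would apply the triangle inequality along the walk $y\to u\to v\to z$:
\[
b=d(y,z)\le d(y,u)+d(u,v)+d(v,z)=(\delta-a)+d(u,v)+(\delta-c),
\]
which gives $d(u,v)\ge a+b+c-2\delta=d$. Thus $\Gamma_\delta$ has diameter at least $d$, as required. I don't foresee any real obstacle: each radial extension is effected by a single geodesic triangle realized in $\Gamma$, the vertex $z$ plays no role during the construction of $u$ (and vice versa), and the bound on $d(u,v)$ is a direct consequence of the perimeter hypothesis together with the triangle inequality.
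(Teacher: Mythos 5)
Your proof is correct. Note, however, that the paper does not actually prove this statement: it is quoted verbatim as a known result (Lemma 15.6 of Cherlin's memoir, cited as \cite{Che-HOGMH}), so there is no in-paper argument to compare yours against. Your argument is the natural one and is sound: each radial extension step is justified by realizing the geodesic type $(m,1,m+1)$ and transporting it by homogeneity onto the pair $(x,y_j)$, the equalities $d(y,u)=\delta-a$ and $d(z,v)=\delta-c$ follow from the two-sided triangle inequality exactly as you say, and the final chain $d(y,z)\le d(y,u)+d(u,v)+d(v,z)$ yields $d(u,v)\ge a+b+c-2\delta=d$ with $u,v\in\Gamma_\delta$ relative to the basepoint $x$. (The case $d\le 0$ is vacuous, and the construction degenerates harmlessly when $a=\delta$ or $c=\delta$.) This is a perfectly good self-contained justification of the cited fact.
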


\begin{fact}[{\cite[Lemma 15.5]{Che-HOGMH}}]
\label{Fact:Neighbors:i+-1}
Let $\Gamma$ be a metrically homogeneous graph of generic type.
Suppose $1\leq i\leq \delta$.
Then for $u\in \Gamma_{i\pm1}$, the set $\Gamma_1(u) \cap \Gamma_i$ is
infinite. 
\end{fact}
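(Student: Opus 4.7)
The plan is to treat the two sub-cases $u\in\Gamma_{i+1}$ and $u\in\Gamma_{i-1}$ via a uniform strategy: in each case exhibit an auxiliary vertex $y$ with $d(u,y)=2$ whose common neighbors with $u$ are pinned to $\Gamma_i$ by the triangle inequality, and then invoke the generic-type hypothesis that any two vertices at distance $2$ have infinitely many common neighbors.

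For $u\in\Gamma_{i+1}$ with $i\geq 1$, I would take any geodesic $v_0=x_0,x_1,\dots,x_{i+1}=u$, available by connectedness and Observation \ref{fact:geodesics}. Two applications of the triangle inequality give $d(x_{i-1},u)=2$. Any common neighbor $w$ of $x_{i-1}$ and $u$ then satisfies $i=d(v_0,u)-1\leq d(v_0,w)\leq d(v_0,x_{i-1})+1=i$, forcing $w\in\Gamma_i$. Hence $\Gamma_1(u)\cap\Gamma_i$ contains the infinite set of common neighbors supplied by generic type. The dual case $u\in\Gamma_{i-1}$ with $2\leq i\leq\delta-1$ is handled by extending a geodesic through $u$ two more steps to some $y\in\Gamma_{i+1}$, noting $d(u,y)=2$, and repeating the argument; the edge case $u=v_0$, $i=1$ reduces to the infinitude of $\Gamma_1$ itself, which is part of the definition of generic type.

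The main obstacle is the remaining sub-case $u\in\Gamma_{\delta-1}$, $i=\delta$, for which no geodesic extension past $u$ is available inside $\Gamma$. My plan here is first to produce a single vertex $v\in\Gamma_1(u)\cap\Gamma_\delta$ by transporting the terminal edge of some length-$\delta$ geodesic onto $u$ via homogeneity; and then to upgrade to infinitely many by applying the first sub-case at $v\in\Gamma_\delta$ to obtain infinitely many $u_1,u_2,\dots\in\Gamma_1(v)\cap\Gamma_{\delta-1}$, and transferring this abundance across the bipartite structure between $\Gamma_{\delta-1}$ and $\Gamma_\delta$. The delicate point is that a straight bipartite double-count is insufficient: the full strength of generic type --- both the independent-set clause for distance-$2$ common neighbors and the absence of a non-trivial $\Aut(\Gamma_1)$-invariant equivalence relation --- must be brought to bear to rule out the possibility that every $u\in\Gamma_{\delta-1}$ has only finitely many neighbors in $\Gamma_\delta$, and this is the step I would expect to occupy most of the work.
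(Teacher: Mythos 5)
This statement is a Fact cited from \cite[Lemma 15.5]{Che-HOGMH}; the paper offers no proof of it, so there is nothing internal to compare your argument against. Judged on its own terms, your treatment of the cases $u\in\Gamma_{i+1}$ (any $i$) and $u\in\Gamma_{i-1}$ with $i\leq\delta-1$ is correct and is the standard argument: pin the common neighbors of $u$ and a distance-$2$ partner between two consecutive spheres via the triangle inequality, then invoke the infinitude clause of generic type. The edge case $u=v_0$, $i=1$ is also fine, since any vertex at distance $2$ from $v_0$ already forces $\Gamma_1$ to be infinite.

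The remaining case $u\in\Gamma_{\delta-1}$, $i=\delta$ is, however, a genuine gap and not merely an unfinished computation: you give only a plan (``transfer the abundance across the bipartite structure\dots this is the step I would expect to occupy most of the work'') with no actual argument, and no such argument can exist for the statement as transcribed here. If $\Gamma$ is antipodal of generic type --- e.g.\ $\Gamma^3_{1,2,7,8}$, which the paper itself invokes --- then $\Gamma_\delta$ is a single vertex, so $\Gamma_1(u)\cap\Gamma_\delta$ has exactly one element for $u\in\Gamma_{\delta-1}$ and is never infinite. Your instinct that this sub-case is delicate is therefore right for a stronger reason than you suspect: the full strength of generic type cannot rescue it, and the cited source must carry a restriction (non-antipodality, or a weaker conclusion such as ``nonempty'' at $i=\delta$) that the Fact as stated in this paper elides. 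A complete write-up should either add that hypothesis explicitly or split off the $i=\delta$ case with the correct, weaker conclusion; note that the paper's own uses of this Fact at $i=\delta$ (in Lemma \ref{thm:imageof2} and Lemma \ref{Fact:C:delta'}) occur in contexts where antipodality is excluded or immaterial.
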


\begin{fact}[{\cite[Lemma 15.4]{Che-HOGMH}}]
\label{Fact:LocalAnalysis:d=2Connected}

Let $\Gamma$ be a metrically homogeneous graph of generic type 
and diameter $\delta$.
Suppose $i\leq \delta$, and suppose also that if $i=\delta$ then $K_1>1$.
Then the metric space $\Gamma_i$ is connected with respect to the edge relation
defined by 
$$d(x,y)=2$$
\end{fact}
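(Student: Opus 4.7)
The plan is to show that the equivalence relation $\sim$ on $\Gamma_i$ defined by $u \sim v$ when there exists a chain $u = u_0, u_1, \ldots, u_n = v$ in $\Gamma_i$ with $d(u_{k-1}, u_k) = 2$ for each $k$, has exactly one class. Note that $\sim$ is invariant under the basepoint stabilizer $\Aut(\Gamma)_b$ used to define $\Gamma_i$, so its classes form a block system for the transitive action of $\Aut(\Gamma)_b$ on $\Gamma_i$.

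I would begin with the base case $i = 1$. Since $\Gamma$ is of generic type, $\Gamma_1$ admits no nontrivial $\Aut(\Gamma_1)$-invariant equivalence relation, so $\sim$ is either the diagonal or the universal relation. If it were the diagonal, $\Gamma_1$ would contain no pair at distance $2$, making $\Gamma_1$ a clique; then the infinite set of common neighbors of $b$ and any vertex $v$ at distance $2$ from $b$ would lie inside $\Gamma_1$ and fail to be independent, contradicting the generic-type hypothesis.

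For $i \geq 2$, the approach is to exhibit, for each $u \in \Gamma_i$, an infinite $d = 2$-neighborhood inside $\Gamma_i$, and then propagate. Given $u \in \Gamma_i$, pick a neighbor $w$ of $u$ from $\Gamma_{i-1}$ (available by connectivity), or from $\Gamma_{i+1}$ when $i < \delta$. By Fact \ref{Fact:Neighbors:i+-1} the set $S_w := \Gamma_1(w) \cap \Gamma_i$ is infinite and contains $u$. When $K_1 > 1$, every pair of distinct vertices in $S_w$ must be at graph distance $2$: otherwise a distance-$1$ pair together with $w$ would form a triangle of type $(1,1,1)$, forbidden by $K_1 > 1$. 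Thus $u$ has infinitely many $d = 2$-neighbors inside $\Gamma_i$; iterating at each of these and varying $w$ spreads the $d = 2$-component of $u$ throughout $\Gamma_i$, using the homogeneity of $\Gamma_i$ to guarantee the components have the same size. The hypothesis that $K_1 > 1$ whenever $i = \delta$ is exactly what this argument needs in the top layer, where the auxiliary $w$ must come from $\Gamma_{\delta - 1}$.

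The main obstacle is the case $i < \delta$ with $K_1 = 1$, because then $S_w$ may include pairs at distance $1$ and the clean argument above breaks down. Here I would take $w \in \Gamma_{i+1}$ and exploit the richer structure in the intermediate layers together with the generic-type requirement that any two vertices at distance $2$ possess infinitely many independent common neighbors. Concretely, given $u, u' \in \Gamma_i$, one descends through an edge into $\Gamma_{i-1}$ or $\Gamma_1$ (where connectivity under $d = 2$ is already known), traverses by $d = 2$-steps, and lifts back, using the independence of common neighbors at each step to ensure that successive lifts remain at graph distance $2$ rather than $1$. Verifying that this routing stays inside $\Gamma_i$ and does not get trapped in an intermediate block is the delicate technical core of the proof.
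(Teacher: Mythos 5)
First, a point of comparison: the paper does not prove this statement at all --- it is quoted from \cite[Lemma 15.4]{Che-HOGMH} as a known structural fact --- so your proposal can only be judged on its own merits rather than against an argument in the text. The $i=1$ case is sound: primitivity of $\Gamma_1$ is part of the definition of generic type, the relation generated by $d(x,y)=2$ is invariant, and the clique alternative contradicts the independence of the common neighbours of two vertices at distance $2$. The local computation for $K_1>1$ is also correct: all of $S_w=\Gamma_1(w)\cap\Gamma_i$ is pairwise at distance $2$, since a distance-$1$ pair in $S_w$ together with $w$ would give a triangle of type $(1,1,1)$.

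However, the passage from ``each $u\in\Gamma_i$ has infinitely many $d=2$-neighbours inside $\Gamma_i$'' to ``$\Gamma_i$ is $d=2$-connected'' is a genuine gap, and it is the heart of the matter. The classes of your relation $\sim$ do form a block system with blocks of equal size, but a transitive action can perfectly well have infinitely many infinite blocks; for $i\geq 2$ you cannot invoke primitivity, and indeed Fact \ref{Fact:LocalAnalysis} records that $\Gamma_i$ is imprimitive (antipodal) when $\delta=2i$, so ``spreading throughout $\Gamma_i$'' is exactly what must be proved rather than asserted. Since $\sim$ is invariant under the basepoint stabilizer, homogeneity shows that whether $u\sim v$ holds depends only on $d(u,v)$; what is actually required is that every distance realized in $\Gamma_i$ --- in particular the distance $1$, when edges occur in $\Gamma_i$ --- joins $\sim$-equivalent pairs, and your proposal never addresses this. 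The remaining case $K_1=1$, $i<\delta$ is left as a declaration of intent: lifting a $d=2$-step $(w,w')$ in $\Gamma_{i-1}$ to adjacent vertices $u,u'\in\Gamma_i$ only bounds $d(u,u')$ by $4$ and need not produce a $d=2$-step, so the ``descend, traverse, lift'' routing is precisely the unproved technical core that you acknowledge, not a proof of it.
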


\begin{fact}[{\cite[Thm.~1.29]{Che-HOGMH}}]\label{Fact:LocalAnalysis}

Let $\Gamma$ be a metrically homogeneous graph of generic type and of 
diameter $\delta$, and suppose $i\leq \delta$. 
Suppose that $\Gamma_i$ contains an edge.
Then $\Gamma_i$ is a metrically homogeneous graph (and, in particular, is connected).

\goodbreak
Furthermore, $\Gamma_i$ is primitive and of generic type, apart from the following two cases.

\goodbreak
\begin{enumerate}
\item $i=\delta$;  \\
$K_1=1$; $\{C_0,C_1\}=\{2\delta+2,2\delta+3\}$;\\ 
$\Gamma_\delta$ is an infinite complete graph 
(hence not of generic type). 
\item $\delta=2i$; \\ 
$\Gamma$ is antipodal (hence $\Gamma_i$ is imprimitive, namely antipodal).
\end{enumerate}

\end{fact}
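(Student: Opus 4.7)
The plan is to proceed in three stages: first establishing that $\Gamma_i$ is a metrically homogeneous graph, then pinning down the exceptional cases that obstruct generic type or primitivity.

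First, I would prove metric homogeneity of $\Gamma_i$ by lifting. Fix the basepoint $b$ whose distance-$i$ sphere is $\Gamma_i$. Given any two finite isometric subspaces $A,B \subseteq \Gamma_i$, the sets $A\cup\{b\}$ and $B\cup\{b\}$ are isometric in $\Gamma$ via the map sending $b$ to $b$ and the chosen isometry on $A$; by metric homogeneity of $\Gamma$ this extends to $f\in\Aut(\Gamma)$, and since $f$ fixes $b$ it preserves $\Gamma_i$ setwise. Thus $f\!\restriction\!\Gamma_i$ is an isometry of $\Gamma_i$ extending the original, so $\Gamma_i$ is homogeneous as a metric space. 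To see $\Gamma_i$ is connected as a graph (under $d(x,y)=1$), use the hypothesis that $\Gamma_i$ contains an edge together with Fact \ref{Fact:CharacterizeMHG}: it suffices to verify all triangles $(1,k,k{+}1)$ for $k$ less than the diameter of $\Gamma_i$ are realized, and these are obtained by repeatedly extending a geodesic using homogeneity applied inside $\Gamma_i$.

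Second, I would address generic type. A metrically homogeneous graph is of generic type iff any two vertices at distance $2$ have infinitely many common neighbors forming an independent set, and $\Gamma_{i,1}$ admits no non-trivial invariant equivalence relation. For the ``infinitely many common neighbors'' property, take $u,v\in\Gamma_i$ at distance $2$ in $\Gamma_i$; their common neighbors in $\Gamma_i$ are common neighbors in $\Gamma$ lying on the distance-$i$ sphere from $b$. Using Fact \ref{Fact:Neighbors:i+-1} and the genericity of $\Gamma$, one can show these common neighbors are infinite in number unless the configuration $(b,u,v)$ forces a degeneracy. The key degeneracy occurs at $i=\delta$: there, a distance-$2$ pair in $\Gamma_\delta$ has its common neighbors split between $\Gamma_{\delta-1}$ and $\Gamma_\delta$, and when $K_1=1$ together with the $C$-constraints $\{C_0,C_1\}=\{2\delta{+}2,2\delta{+}3\}$ the entire sphere collapses to an infinite complete graph (no independent set of common neighbors is available, so generic type fails).

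Third, I would analyze primitivity. The natural suspect for a non-trivial invariant equivalence relation is antipodality: when $\delta=2i$, any $u\in\Gamma_i$ has a unique ``antipode'' $u^\ast\in\Gamma_i$ at distance $\delta$ from $u$ (inherited via the antipodal law, Fact \ref{fact:antipodallawantipodalgraphs}), and pairing $u$ with $u^\ast$ yields a non-trivial $\Aut(\Gamma_i)$-invariant equivalence relation, so $\Gamma_i$ becomes antipodal. Conversely, assuming $\delta\neq 2i$ and excluding the $i=\delta$ complete-graph case, I would argue primitivity by showing any invariant equivalence relation $\sim$ on $\Gamma_i$ must be trivial: if $u\sim v$ for $u\neq v$, then homogeneity of $\Gamma_i$ (step one) propagates this to any pair at the same distance, and the genericity established in step two forces the equivalence classes to absorb all vertices, contradicting non-triviality.

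The main obstacle will be step two: controlling exactly when the common-neighbor count remains infinite and when an independent set of such neighbors exists. This requires a careful case analysis driven by the numerical parameters $(K_1,K_2,C_0,C_1)$, identifying precisely that only the two listed exceptional configurations produce a failure. Everything else (homogeneity, connectivity, the antipodal imprimitivity) is structural and relatively direct; isolating the $i=\delta$, $K_1=1$, $\{C_0,C_1\}=\{2\delta+2,2\delta+3\}$ collapse is the delicate bookkeeping that drives the statement.
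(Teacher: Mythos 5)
First, a point of comparison: the paper does not prove this statement at all --- it is quoted as a known result, Theorem~1.29 of \cite{Che-HOGMH}, so the benchmark is not an in-paper argument but Cherlin's local analysis, which is a substantial piece of structure theory in that memoir. Your first step (lifting a finite partial isometry of $\Gamma_i$ to an automorphism of $\Gamma$ fixing the basepoint) is correct and is indeed how one sees that $\Gamma_i$ is a homogeneous metric space. Your identification of the two exceptional configurations is also right, and the bookkeeping is sound in both: in case~1 the parameters $K_1=1$, $\{C_0,C_1\}=\{2\delta+2,2\delta+3\}$ forbid every type $(\delta,\delta,k)$ with $k\geq 2$, so $\Gamma_\delta$ is an infinite complete graph; in case~2 the antipodal involution sends $\Gamma_i$ to $\Gamma_{\delta-i}$, hence preserves $\Gamma_i$ exactly when $\delta=2i$ and then induces a nontrivial pairing.

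The genuine gaps are in the two places you call ``relatively direct.'' (i) Connectivity: to invoke Fact~\ref{Fact:CharacterizeMHG} you must actually exhibit triangles of type $(1,k,k+1)$ \emph{inside} $\Gamma_i$, i.e., four-point configurations of $\Gamma$ with three points equidistant (at distance $i$) from the basepoint. ``Repeatedly extending a geodesic using homogeneity applied inside $\Gamma_i$'' is circular: homogeneity only transports configurations that already embed, and you cannot extend geodesics within $\Gamma_i$ before knowing that the induced metric on $\Gamma_i$ agrees with its graph metric --- which is precisely the statement being proved. The honest version of this step is a genuine construction of paths lying inside spheres, of the kind carried out in Lemma~\ref{Lemma:GeodesicPath} and Proposition~\ref{Prop:Realize:i,i,2k} of the appendix, and it uses generic type of $\Gamma$ essentially. (ii) Primitivity: ``genericity forces the equivalence classes to absorb all vertices'' is not an argument. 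An $\Aut(\Gamma_i)$-invariant equivalence relation is a union of distance classes of $\Gamma_i$, and one must rule out each candidate union (for instance a bipartition by parity of distance, or a pairing given by some distance $d<\diam \Gamma_i$ other than the antipodal one); that requires a case analysis on which distances occur in $\Gamma_i$ and with what multiplicities, driven by the numerical parameters, not a formal consequence of homogeneity plus genericity. As written, the proposal is a correct roadmap, but the two load-bearing steps are missing.
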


The following fact is technical, though quite helpful. 
\begin{lemma}\label{Fact:C:delta'}
Let $\Gamma$ be a metrically homogeneous graph of diameter $\delta$
and let ${\delta'=\diam(\Gamma_{\delta})}$.

If $\Gamma$ is bipartite then $C=2\delta+1$ and $C'=2\delta+\delta'+2$.

If $\Gamma$ is not bipartite and one of the following conditions $(a,b)$ applies,
then $C'$ is $2\delta+\delta'+2$, and either $C$ is $2\delta+\delta'+1$ or $\delta'=2$ and $C= 2\delta+1$.
\begin{enumerate}[label=(\alph*)]
\item $K_2=\delta$; or
\item The set 
\begin{align*}
D_\delta&=\{d \hspace{1mm}|\hspace{1mm} \mbox{$d>0$ and there is a triangle of type $(\delta,\delta,d)$ in $\Gamma$}\}
\end{align*}
is an interval. 
\end{enumerate}
\end{lemma}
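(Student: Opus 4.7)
The proof will combine Fact \ref{Fact:Realize:delta,delta,d}---which bounds every triangle perimeter in $\Gamma$ by $2\delta+\delta'$, with equality attained by the triangle $(\delta,\delta,\delta')$ furnished by the definition of $\delta'$---with an inductive analysis showing that sufficiently many perimeters in $(2\delta,2\delta+\delta']$ are realized to pin down $C$ and $C'$ exactly. The upper bound alone already gives $C_0,C_1\le 2\delta+\delta'+2$, so the content is in showing that both $C_0$ and $C_1$ actually equal the predicted values.

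In the bipartite case all triangle perimeters in $\Gamma$ are even, so $C_1=2\delta+1$ vacuously and hence $C=2\delta+1$, while $\delta'$ is forced to be even. To conclude $C_0=2\delta+\delta'+2$ I will show that $D_\delta$ contains every even integer in $[2,\delta']$, so that every even perimeter in $(2\delta,2\delta+\delta']$ is realized. I plan a downward induction on $d$: given $u,v\in\Gamma_\delta$ at distance $d\ge 4$, Fact \ref{Fact:Neighbors:i+-1} supplies $v'\in\Gamma_{\delta-1}$ adjacent to $v$; by bipartite parity $d(u,v')\in\{d-1,d+1\}$, and then a neighbor $v''\in\Gamma_\delta$ of $v'$ satisfies $d(u,v'')\in\{d-2,d\}$. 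The infinitude of available neighbors combined with the homogeneity of $\Gamma$ will let me arrange $d(u,v'')=d-2$, completing the induction step.

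In the non-bipartite case, hypothesis (a) reduces to (b): $K_2=\delta$ gives the triangle $(\delta,\delta,1)$, so $\Gamma_\delta$ contains an edge, and Fact \ref{Fact:LocalAnalysis} then identifies $\Gamma_\delta$ as a metrically homogeneous graph, forcing $D_\delta=[1,\delta']$. Under (b), write $D_\delta=[d_0,\delta']$. The triangles $(\delta,\delta,d)$ for $d\in D_\delta$ realize every perimeter of one parity on $[2\delta+d_0,2\delta+\delta']$; for the opposite parity I will use triangles of type $(\delta,\delta-1,d\pm 1)$, obtained by choosing a neighbor $v'\in\Gamma_{\delta-1}$ of some $v\in\Gamma_\delta$ at distance $d$ from $u\in\Gamma_\delta$ via Fact \ref{Fact:Neighbors:i+-1} and noting $d(u,v')\in\{d-1,d,d+1\}$, with the two extremes attainable by varying $v'$. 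With both parities filled on $(2\delta,2\delta+\delta']$ the formulas $C'=2\delta+\delta'+2$ and $C=2\delta+\delta'+1$ follow. The sole degeneration is when $D_\delta=\{2\}$, forcing $\delta'=2$ and no odd perimeter above $2\delta$: then $C_1=2\delta+1$ and $C=2\delta+1$, as stated.

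The main technical obstacle will be the two filling-in arguments---the bipartite induction and the production of opposite-parity triangles under (b). In each case the triangle inequality together with parity only narrows the eligible distances to a short list; the actual realization of the desired value requires genuine use of the abundance of neighbors from Fact \ref{Fact:Neighbors:i+-1}, combined with the homogeneity of $\Gamma$. Once these realizations are secured, the rest of the argument is bookkeeping on which perimeters appear in $\Gamma$ and the resulting values of $C_0$ and $C_1$.
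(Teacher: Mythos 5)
Your overall architecture does match the paper's: the upper bounds $C\le 2\delta+\delta'+1$ and $C'\le 2\delta+\delta'+2$ come from Fact \ref{Fact:Realize:delta,delta,d}, and the substance is realizing enough perimeters in $(2\delta,2\delta+\delta']$. But the two ``filling-in'' steps that you yourself flag as the technical core are exactly where the argument fails as sketched. In the bipartite induction you need a neighbor $v''\in\Gamma_\delta$ of $v'$ with $d(u,v'')=d-2$; the infinitude of neighbors of $v'$ in $\Gamma_\delta$ (Fact \ref{Fact:Neighbors:i+-1}) is perfectly consistent with \emph{all} of them lying at distance $d$ from $u$, and homogeneity only transports configurations that already embed in $\Gamma$ --- it cannot manufacture the four-point configuration consisting of the basepoint, $u$, $v'$, $v''$ with the prescribed distances. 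The step can be closed, but by a different tool: since $K_1=\infty>1$ in the bipartite case, Fact \ref{Fact:LocalAnalysis:d=2Connected} makes $\Gamma_\delta$ connected under the relation $d(x,y)=2$, and walking along such a path from $u$ to a vertex at distance $\delta'$ realizes every even distance in $[2,\delta']$ by a discrete intermediate-value argument. No neighbor-by-neighbor induction through $\Gamma_{\delta-1}$ is needed.

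In the non-bipartite case you have also misread what hypothesis (b) buys: if $D_\delta=[d_0,\delta']$ is an interval of \emph{integers}, the triangles $(\delta,\delta,d)$ already realize perimeters of both parities throughout $[2\delta+d_0,2\delta+\delta']$, so your ``opposite parity'' device is aimed at a non-problem. The genuine residual issues are (i) showing $d_0\le2$, which you never address (it follows from Fact \ref{Fact:Neighbors:i+-1}: a vertex of $\Gamma_{\delta-1}$ has two neighbors in $\Gamma_\delta$, necessarily at distance at most $2$), and (ii) the single perimeter $2\delta+1$. For (ii) your $(\delta,\delta-1,d\pm1)$ triangles rest on the same unproved claim that the extreme values of $d(u,v')$ are ``attainable by varying $v'$.'' The paper's route is structural rather than combinatorial: when $K_2<\delta$, every $a\in\Gamma_1$ and $b\in\Gamma_\delta$ satisfy $d(a,b)=\delta-1$ (the triple with the basepoint must have type $(1,\delta-1,\delta)$), so $a$ together with two vertices of $\Gamma_\delta$ at distance $\delta'$ gives a triangle of type $(\delta-1,\delta-1,\delta')$, supplying the odd perimeter below $2\delta+2$. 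Until the realization steps are grounded in such structural facts rather than in ``abundance of neighbors plus homogeneity,'' the proof is incomplete.
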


\begin{proof}
We consider first the case when $\Gamma$ is antipodal. Then $\Gamma_{\delta}$ consists of a single vertex, and in particular $\diam(\Gamma_{\delta}) = 0$. By Fact \ref{fact:antipodalSmallC}, $C = 2\delta+1$ and $C' = 2\delta+2$, and our assertion holds.

We turn then to the case when $\Gamma$ is not antipodal, and therefore there are at least two distinct vertices in $\Gamma_{\delta}$. From Fact \ref{Fact:Realize:delta,delta,d}, no triangle of perimeter $2\delta + d$ for $d > \delta'$ is realized in $\Gamma$. Thus 
\begin{align*}
    C \leq 2\delta + \delta' + 1 && C' \leq 2\delta + \delta' + 2
\end{align*}

Let $d_0 > 0$ denote the minimum distance between two distinct vertices in $\Gamma_{\delta}$. We observe that $d_0 \leq 2$. Indeed, given a vertex $w \in \Gamma_{\delta-1}$, Fact \ref{Fact:Neighbors:i+-1} tells us that there are two distinct vertices $u,v$ in $\Gamma_{\delta}$ and adjacent to $w$. Thus $d(u,v) \leq 2$; homogeneity yields that $d_0 \leq 2$.

If $\Gamma$ is bipartite, then $C = 2\delta + 1$. Moreover, the set $D_{\delta}$ consists solely of even numbers and therefore $\delta' = \max D_{\delta}$ is even. Thus $C' = C_0 \geq 2\delta+\delta'+2$. As we have already shown that $C' \leq 2\delta+\delta'+2$, we have that $C' = 2\delta+\delta'+2$.

Thus we turn our attention to the case
\begin{align*}
    \text{$\Gamma$ neither antipodal nor bipartite}
\end{align*}

We first suppose $$K_2 = \delta.$$

Then fact \ref{Fact:LocalAnalysis} tells us that $D_{\delta}$ is the interval from $1$ to $\delta'$. Hence $C \geq 2\delta+\delta'+1$. Our above inequalities therefore yield that $C = 2\delta+\delta'+1$ and $C' = 2\delta+\delta'+2$. 

We finally consider the case
\begin{align*}
    \text{$\Gamma$ antipodal and not bipartite.}
\end{align*}

Since $K_2 < \delta$, the set $D_{\delta}$ is the interval from $d_0$ to $\delta'$. We argued above that $d_0 \leq 2$. The assumption $K_2 < \delta$ yields that $d_0 \neq 1$, and therefore $D_{\delta}$ is the interval from $2$ to $\delta'$.

Let $v_*$ be a basepoint of $\Gamma$ and $a \in \Gamma_1$, $b \in \Gamma_{\delta}$. Then $d(a,b) \geq \delta-1$ and therefore the triangle $(v_*,a,b)$ has type either $(1,\delta-1,\delta)$ or $(1,\delta,\delta)$. As $K_2 < \delta$, this type is $(1,\delta-1,\delta)$ and therefore $d(a,b) = \delta-1$.

There exists a pair of vertices in $\Gamma_{\delta}$ at distance $\delta'$ from each other. Thus homogeneity ensures that we can find $b' \in \Gamma_{\delta}$ where $d(b,b') = \delta'$. The triangle $(a,b,b')$ has type $(\delta-1,\delta-1,\delta')$. This triangle has perimeter $2\delta+\delta'-2$, which is larger than $2\delta$ when $\delta' \geq 3$. Thus, using our above inequalities, we deduce that when $\delta' \geq 3$, we have  $C = 2\delta + \delta' + 1$ and $C' = 2\delta + \delta' + 2$.

To conclude, we treat the case
\begin{equation*}
    d_0 = \delta' = 2.
\end{equation*}
Then our above inequalities become
\begin{align*}
    C \leq 2\delta + 3 && C' \leq 2\delta + 4.
\end{align*}
Since $d_0 = 2$ we have that $C_0 \geq 2\delta + 4$. Thus $C' = C_0 = 2\delta + 4$ and ${C \in \{2\delta+1, 2\delta+3\}}$, as desired.
\end{proof}

\begin{fact}[{\cite[Prop.~1.30]{Che-HOGMH}}]\label{Fact:LocalAnalysis:K1le2}
Let $\Gamma$ be a metrically homogeneous graph of diameter $\delta$.
Suppose 
$$K_1\leq 2$$ 
Then for $2\leq i\leq \delta-1$, $\Gamma_i$ contains an edge,
unless $i=\delta-1$, $K_1=2$,  and $\Gamma$ is antipodal.
\end{fact}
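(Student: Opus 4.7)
The statement rephrases as: the triangle type $(i,i,1)$ embeds in $\Gamma$ for $2 \leq i \leq \delta - 1$, except when $i = \delta - 1$, $K_1 = 2$, and $\Gamma$ is antipodal. The assumption $K_1 \leq 2$ provides a base realization of $(K_1, K_1, 1)$ with $K_1 \in \{1, 2\}$, so my plan is to propagate this upward in $i$ by finite amalgamation and to isolate the single step where the propagation must fail.

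For the inductive step, I would argue as follows. Given $v_*, u, v \in \Gamma$ realizing $(i,i,1)$ with $u \sim v$, I want to adjoin neighbors $u', v'$ of $u, v$ lying in $\Gamma_{i+1}$ with $d(u',v') = 1$. By homogeneity it suffices to embed the five-vertex metric space $\{v_*, u, v, u', v'\}$ into $\Gamma$ under the prescribed distances (in particular $d(u, v') = d(v, u') = 2$). The triangle types that arise are $(i, i, 1)$, $(i, 1, i+1)$, $(i, i+1, 2)$, $(1, 1, 2)$, and the targeted $(i+1, i+1, 1)$. All but the last are either the hypothesized triangle or are geodesic or near-geodesic types readily available in $\Gamma$, so the amalgam realizes the goal precisely when $(i+1, i+1, 1)$ is not itself excluded by $K_2$ or by the perimeter parameters $C, C'$; Henson constraints are $(1,\delta)$-spaces and do not forbid any triangle $(k,k,1)$ with $k < \delta$.

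The main obstacle, which I expect to require the most care, is ruling out this last exclusion for every $i$ in the stated range, and then verifying that the exclusion is indeed forced in the exceptional case. On the perimeter side, $(i+1, i+1, 1)$ has perimeter $2i+3 \leq 2\delta - 1$, below $C$ in every relevant configuration, so $C, C'$ impose no constraint. For the $K_2$ bound I would split on antipodality. If $\Gamma$ is not antipodal, structural results such as Lemma \ref{Fact:C:delta'} (together with the local analysis surrounding Fact \ref{Fact:LocalAnalysis}) should yield $K_2 \geq \delta - 1$, so the $K_2$-bound is never tight in the target range. If $\Gamma$ is antipodal, I would invoke the antipodal law (Fact \ref{fact:antipodallawantipodalgraphs}): any realization of $(\delta - 1, \delta - 1, 1)$ by adjacent $u, v \in \Gamma_{\delta-1}$ would, upon replacing the basepoint $v_*$ by its antipode $v_*^\ast$, produce a triangle of type $(1,1,1)$, forcing $K_1 = 1$. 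Thus $K_1 = 2$ together with antipodality forces $(\delta-1, \delta-1, 1)$ to fail, which is precisely the stated exception, while in every other case the induction reaches $i = \delta - 1$ without obstruction.
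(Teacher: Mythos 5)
First, a point of comparison: the paper does not prove this statement at all --- it is quoted as background from \cite[Prop.~1.30]{Che-HOGMH} --- so your proposal must be judged on its own merits, and it has two genuine gaps.

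The most serious is the appeal to amalgamation. You reduce the inductive step to embedding a five-point metric space into $\Gamma$ and then assert that the amalgam succeeds ``precisely when $(i+1,i+1,1)$ is not itself excluded by $K_2$ or by the perimeter parameters.'' Homogeneity only extends isometries between configurations that \emph{already} embed in $\Gamma$; it does not let you realize a finite metric space merely because each of its triangles is realized. That embedding principle is available for the catalog graphs $\Gamma^{\delta}_{K_1,K_2,C,C',\mathcal{S}}$, whose age is an amalgamation class determined by forbidden triangles and Henson constraints, but this fact must hold for an arbitrary metrically homogeneous graph --- the paper is explicit that it does not assume $\Gamma$ is of known type, and the fact is used precisely in that unclassified setting. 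A correct argument has to construct the configuration inside $\Gamma$ by hand, in the style of Lemma \ref{Lemma:GeodesicPath}, exploiting generic type (infinitely many common neighbors of points at distance $2$), and must separately dispose of the non-generic cases via Fact \ref{Fact:NongenericType}, which you do not address.

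The second gap is circularity in the non-antipodal case. To rule out exclusion of $(i+1,i+1,1)$ by $K_2$ you invoke ``structural results such as Lemma \ref{Fact:C:delta'} \dots should yield $K_2\geq\delta-1$.'' But $K_2\geq\delta-1$ is exactly (part of) the conclusion: $K_2$ is by definition the largest $k$ with $(k,k,1)$ realized, so asserting that $\Gamma_{\delta-1}$ contains an edge \emph{is} asserting $K_2\geq\delta-1$, and Lemma \ref{Fact:C:delta'} concerns $C,C'$ and $\diam(\Gamma_{\delta})$, giving no lower bound on $K_2$. Relatedly, the claim that $(i,i+1,2)$ is ``readily available'' is unsupported: Corollary \ref{Fact:realizeoddp} guarantees realization only of odd perimeter exactly $2K_1+1$, whereas $(i,i+1,2)$ has perimeter $2i+3>2K_1+1$ once $i>K_1$. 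Your treatment of the exceptional case via the antipodal law (Fact \ref{fact:antipodallawantipodalgraphs}) is, by contrast, correct and does identify why the exception occurs.
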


\begin{fact}[{\cite[Lemma 13.15]{Che-HOGMH}}]\label{Fact:K1}
Let $\Gamma$ be a metrically homogeneous graph containing some 
triangle of odd perimeter, and let $p$ be the least (odd) number which is
the perimeter of such a triangle. Then the following hold.
\begin{enumerate}
\item A $p$-cycle embeds isometrically in $\Gamma$.
\item $p\leq 2\delta+1$.
\item $p=2K_1+1$.
\end{enumerate}
\end{fact}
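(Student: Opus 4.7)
The plan is to prove all three parts together by establishing (1) first, and then reading (2) and (3) off the isometric $p$-cycle. The easy direction is immediate: the triangle of type $(K_1, K_1, 1)$ has odd perimeter $2K_1 + 1$, so by minimality $p \le 2K_1 + 1$; and since the existence of that triangle forces $K_1 \le \delta$, this already gives $p \le 2\delta + 1$. The content is in the reverse inequality $p \ge 2K_1 + 1$, which I would extract from the cycle produced in (1).

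For (1) I would start with a triangle $(u,v,w)$ realizing the minimum odd perimeter $p$, with side lengths $a,b,c$, and splice geodesics $u \leadsto v$, $v \leadsto w$, $w \leadsto u$ into a closed walk $C$ of length $a+b+c=p$. The claim is that the $p$ vertices of $C$ are distinct and form an isometrically embedded $p$-cycle. Suppose otherwise: pick $x,y \in C$ with $d_\Gamma(x,y)=d$ strictly less than the shorter $C$-distance $\ell$. The two $xy$-arcs of $C$ have lengths $\ell$ and $p-\ell$, and splicing the $xy$-shortcut with each of them in turn yields two closed walks of lengths $d+\ell$ and $d+(p-\ell)$; their sum $p+2d$ is odd, so one of them has odd length, bounded above by $d + (p-1)/2 < p$.

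I would then extract from this odd closed walk a shortest odd simple cycle $D$ of length $q' < p$, and rerun the same shortcut argument to conclude $D$ is isometrically embedded in $\Gamma$ (any chord would produce an even shorter odd cycle). On an isometric odd $q'$-cycle, any edge $\{x',y'\}$ together with the unique vertex $z'$ of $D$ at $D$-distance $(q'-1)/2$ from both endpoints forms a triangle of type $((q'-1)/2, (q'-1)/2, 1)$ in $\Gamma$ of odd perimeter $q' < p$, contradicting the minimality of $p$. Hence $C$ has no chord and (1) holds. Applying this same edge-antipode construction directly to $C$ itself produces a triangle of type $((p-1)/2, (p-1)/2, 1)$ in $\Gamma$, so $K_1 \le (p-1)/2$, giving $p \ge 2K_1 + 1$. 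Combined with the easy direction, this yields (3), and then (2) follows as noted.

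The one genuine obstacle is the self-referential loop in the shortcut argument: reducing to a strictly shorter odd closed walk is not enough on its own, since the minimality hypothesis on $p$ refers to triangles. The resolution is the two-stage extraction sketched above (odd closed walk $\Rightarrow$ a shortest odd cycle, which must then be isometric, which then contains a $(k,k,1)$-triangle via the edge-antipode construction). This requires careful parity bookkeeping throughout but needs no structural theory beyond the minimality of $p$ and the elementary fact that any odd closed walk contains an odd simple cycle of no greater length.
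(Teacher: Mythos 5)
The paper itself contains no proof of this statement: it is imported as a ``Fact'' from \cite[Lemma 13.15]{Che-HOGMH}, so there is no in-paper argument to compare yours against. Judged on its own, your proof is correct. The chain you set up --- minimal odd triangle perimeter equals the odd girth; a shortest odd cycle is isometrically embedded by the parity/shortcut splicing argument; an isometric odd $q$-cycle contains a triangle of type $\bigl(\tfrac{q-1}{2},\tfrac{q-1}{2},1\bigr)$ via the edge-antipode --- is the standard route to this result, and you correctly identify and resolve the self-referential loop by passing through a globally shortest odd simple cycle rather than trying to induct on the walk directly. It is worth noting that your argument uses only connectivity and the path metric, never homogeneity, so it proves the statement for arbitrary connected graphs.

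One point of logical ordering, not a gap: your ``easy direction'' $p\leq 2K_1+1$ silently presupposes $K_1<\infty$, which is not part of the hypothesis (an odd-perimeter triangle need not have a side of length $1$, e.g.\ type $(2,2,3)$). The bound $K_1\leq(p-1)/2<\infty$ only becomes available after the edge-antipode construction on the isometric $p$-cycle, so the two inequalities must be derived in that order; once stated that way the argument closes cleanly, and $(2)$ follows from $(3)$ together with $K_1\leq\delta$ exactly as you say.
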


This fact implies the following.
\begin{corfact}\label{Fact:realizeoddp}
Let $\Gamma$ be a metrically homogeneous graph with $K_1 < \infty$. Then the following hold.
\begin{enumerate}
    \item Any triangle type of odd perimeter $p < 2 K_1 + 1$ is not realized in $\Gamma$.
    \item Any triangle type of perimeter $2 K_1 + 1$ must be realized in $\Gamma$.
\end{enumerate}
\end{corfact}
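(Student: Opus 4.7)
The plan is to derive the corollary directly from Fact \ref{Fact:K1}, with (1) following by a short contradiction and (2) by an explicit construction inside the isometrically embedded $p$-cycle.

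For (1), I would argue by contradiction: suppose some triangle type of odd perimeter $p < 2K_1 + 1$ is realized in $\Gamma$. Then $\Gamma$ contains a triangle of odd perimeter, so Fact \ref{Fact:K1} applies and the least such perimeter equals $2K_1 + 1$. This directly contradicts $p < 2K_1 + 1$, so no such triangle is realized.

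For (2), set $p = 2K_1 + 1$. Since $K_1 < \infty$, the triangle $(K_1, K_1, 1)$ is realized in $\Gamma$ and has odd perimeter $p$, so Fact \ref{Fact:K1}(1) yields an isometric embedding of the cycle $C_p$ into $\Gamma$. Given a triangle type $(a,b,c)$ with $a+b+c = p$, I would select three vertices $u,v,w$ on $C_p$ whose consecutive arc lengths, traversed in one fixed direction, are $a$, $b$, $c$; this is possible precisely because $a+b+c = p$. In $C_p$ the distance between two vertices is the minimum of the two arc lengths joining them. The key observation is that because $p$ is odd, the triangle inequality on $(a,b,c)$ is strict: from $a \leq b+c = p - a$ one gets $2a \leq p$, and equality is impossible since $p$ is odd, so $a < p/2$, and similarly for $b$ and $c$. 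Hence the minimum in each case is attained by the short arc, so the pairwise $C_p$-distances of $u,v,w$ are exactly $a,b,c$. Transferring along the isometric embedding realizes $(a,b,c)$ in $\Gamma$.

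There is essentially no obstacle here beyond the parity observation above; the fact that $\max(a,b,c) \leq K_1 \leq \delta$ (which follows from the same strict inequalities together with $p \leq 2\delta+1$ from Fact \ref{Fact:K1}(2)) ensures that $a,b,c$ are legitimate distances. Thus the corollary reduces to packaging Fact \ref{Fact:K1} in a form convenient for later triangle-counting arguments.
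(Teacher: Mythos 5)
Your proof is correct and matches the paper's intended derivation: the paper states this corollary without proof immediately after Fact \ref{Fact:K1}, and the argument it has in mind is precisely yours --- part (1) from the minimality of $p=2K_1+1$ among odd perimeters, and part (2) by placing three points with arc lengths $a,b,c$ on the isometrically embedded $p$-cycle, where the oddness of $p$ makes the triangle inequality strict so that each pairwise distance is the short arc.
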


\subsection{Extension of the theory of metrically homogeneous graphs}
The following is new. 
\begin{lemma} \label{lemma:diameterofsubgraph}
Let $\Gamma$ be a metrically homogeneous graph of diameter $\delta$ and let $\delta'$ be the diameter of $\Gamma_{\delta}$. Then for $i \leq (\delta-\delta')/2$, the diameter of $\Gamma_{\delta-i}$ is $\delta' + 2i$.
\end{lemma}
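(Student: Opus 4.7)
The plan is to proceed by induction on $i$; the base case $i=0$ is immediate from the definition of $\delta'$. For the inductive step I assume $\diam(\Gamma_{\delta-j})=\delta'+2j$ for every $j\le i$, with $i+1\le(\delta-\delta')/2$, and show $\diam(\Gamma_{\delta-i-1})=D'$, where $D=\delta'+2i$ and $D'=D+2$. Throughout I use the basepoint $v_*$ implicit in the definition of the layers $\Gamma_k$.

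The upper bound is easy: by Observation~\ref{fact:geodesics} the geodesic triangle type $(\delta-i-1,1,\delta-i)$ is realized in $\Gamma$, so metric homogeneity applied to the $2$-point subspace $\{v_*,u\}$ shows that every $u\in\Gamma_{\delta-i-1}$ has a neighbor $u^{\uparrow}\in\Gamma_{\delta-i}$. Consequently, for any $u,v\in\Gamma_{\delta-i-1}$, taking such neighbors $u^{\uparrow},v^{\uparrow}$ gives $d(u,v)\le 1+\diam(\Gamma_{\delta-i})+1=D+2=D'$ by the inductive hypothesis.

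For the lower bound the key step is to realize a specific $4$-point metric pattern $P=\{x_1,x_2,x_3,x_4\}$ in $\Gamma$, with $d(x_1,x_2)=d(x_3,x_4)=1$, $d(x_2,x_3)=D$, $d(x_1,x_3)=d(x_2,x_4)=D+1$, and $d(x_1,x_4)=D'$. Since $D'\le\delta$, there is a shortest path $y_0,y_1,\dots,y_{D'}$ in $\Gamma$ with $d(y_j,y_k)=|j-k|$, and setting $x_1=y_0,\ x_2=y_1,\ x_3=y_{D'-1},\ x_4=y_{D'}$ yields the required distances, so $P$ embeds in $\Gamma$. Now choose $p,q\in\Gamma_{\delta-i}$ with $d(p,q)=D$ via the inductive hypothesis. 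The isometry $x_2\mapsto p,\ x_3\mapsto q$ between $2$-point subspaces of $\Gamma$ extends to an automorphism $\phi$ of $\Gamma$ by metric homogeneity, producing $u=\phi(x_1)$ and $v=\phi(x_4)$ with $d(u,p)=d(v,q)=1$ and $d(u,v)=D'$.

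It remains to verify $u,v\in\Gamma_{\delta-i-1}$. Since $u$ is a neighbor of $p\in\Gamma_{\delta-i}$, we have $u\in\Gamma_{\delta-i-1}\cup\Gamma_{\delta-i}\cup\Gamma_{\delta-i+1}$. The middle option fails because $d(u,q)=D+1$ would exceed $\diam(\Gamma_{\delta-i})=D$. The last option fails for $i\ge 1$: taking a neighbor $q^{\uparrow}\in\Gamma_{\delta-i+1}$ of $q$ (which exists by the same neighbor-lifting argument used for the upper bound), we would have $d(u,q^{\uparrow})\ge d(u,q)-1=D$, but by the inductive hypothesis $\diam(\Gamma_{\delta-i+1})=D-2$; and for $i=0$ the option is vacuous since $\Gamma_{\delta+1}=\emptyset$. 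Symmetric reasoning handles $v$. The main obstacle is constructing the $4$-point pattern $P$: the crucial insight is that extracting it from a single long geodesic forces all the inter-distances at once, whereas a pure amalgamation argument from $3$-point pieces would leave the decisive distance $d(u,v)=D'$ undetermined.
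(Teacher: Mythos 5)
Your proof is correct and follows essentially the same route as the paper's: induction on the layer index, producing a pair at distance $\delta'+2(i+1)$ in $\Gamma_{\delta-i-1}$ by attaching neighbors to an extremal pair in $\Gamma_{\delta-i}$ and ruling out the other layers via the inductive diameter bounds. Your explicit extraction of the $4$-point pattern $P$ from a geodesic of length $D'$ is a nice touch: it justifies the existence of the extended configuration (which the paper's proof takes for granted when it ``takes $u',v'$ adjacent to $u,v$ at distance $d(u,v)+2$'') and, by pinning down the cross-distances $d(u,q)=d(v,p)=D+1$, lets you place $u$ and $v$ independently, avoiding the paper's longer case analysis.
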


\begin{proof}
This is proved by induction on $i$, with the base case $i=0$ holding by definition. 

Suppose $i > 0, \delta'+2i \leq \delta$, and $\Gamma_{\delta-j}$ has diameter $\delta'+2j-2$ for $j < i$. We show first that the diameter of $\Gamma_{\delta-i}$ is at least $\delta'+2i$.

Take $u,v \in \Gamma{\delta-(i-1)}$ at distance $\delta'+2i-2$
and $u',v'$ adjacent to $u,v$ respectively at distance $d(u,v)+2$. 
It suffices to show that $u',v'$ are both in $\Gamma_{\delta-i}$ to conclude. Note that $u',v' \in \{\Gamma_{\delta-(i-2)},\Gamma_{\delta-(i-1)},\Gamma_{\delta-i}\}$.
Assume towards a contradiction that we do not have $u',v' \in \Gamma_{i-1}$. Our inductive diameter bounds imply that we do not have $u',v' \in \Gamma_{\delta-(i-1)}$, nor do we have $u',v' \in \Gamma_{\delta-(i-2)}$. 

Suppose then that $u' \in \Gamma_{\delta-(i-1)}, v' \in \Gamma_{\delta-(i-2)}$. In that case, we take $v'' \in \Gamma_{\delta-(i-1)}$ adjacent to $v'$ and then $d(u',v'')$ violates the diameter bound of $\Gamma_{\delta-(i-2)}$. Thus we may assume that $u' \in \Gamma_{\delta-i}$. 

This leaves us with $v' \in \Gamma_{\delta - j}$ with $j = i-1$ or $i -2$. We therefore take $u'' \in \Gamma_{\delta-j}$ with $d(u',u'') = i-j$, and find
\begin{equation*}
    d(u'',v') \geq d(u',v') - d(u',u'') = d(u,v)+2-(i-j) = \delta' + i + j > \delta + 2j,
\end{equation*}
which is a violation of the diameter bound on $\Gamma_{\delta-j}$. Thus we have a contradiction. Hence, $u',v' \in \Gamma_{\delta-(i-j)}$.

This shows that the diameter of $\Gamma_{\delta-i}$ is at least $\delta'+2i$, and the reverse inequality follows similarly: if $u,v \in \Gamma_{\delta-i}$ and $u',v'$ are adjacent to $u,v$ respectively and lie in $\Gamma_{\delta-i+1}$, induction and the triangle inequality give ${d(u,v) \leq \delta'+2i}$.
\end{proof}

\begin{prop}[Distances realized in $\Gamma_i$] \label{Prop:Realize:i,i,2k}
Let $\Gamma$ be a metrically homogeneous graph of generic type.
Suppose $k\leq i$ and $i+k\leq \delta$.
Then $\Gamma$ contains a triangle of type $(i,i,2k)$.
\end{prop}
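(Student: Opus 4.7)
The plan is to proceed by nested induction, the outer on $k$ and the inner on $i$ starting from $i = k$. For the outer base case $k = 1$, I would realize $(i, i, 2)$ for $1 \leq i \leq \delta - 1$: the case $i = 1$ is immediate from the definition of generic type, and for $i \geq 2$ I would apply Fact \ref{Fact:LocalAnalysis:d=2Connected}, which ensures that $\Gamma_i$ is connected via the edge relation $d(x, y) = 2$, hence contains a pair at distance $2$ (the set $\Gamma_i$ being infinite by Fact \ref{Fact:Neighbors:i+-1}).

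For the outer inductive step with $k \geq 2$, I would run an inner induction on $i$. The inner base $i = k$ is the geodesic triangle $(k, k, 2k)$, realized by Observation \ref{fact:geodesics} because $2k \leq i + k \leq \delta$. For the inner step from $i - 1$ to $i$, the inductive hypothesis supplies $u, v \in \Gamma$ at distance $2k$ together with a vertex $w$ satisfying $d(w, u) = d(w, v) = i - 1$, and the task is to find a neighbor $w'$ of $w$ with $d(w', u) = d(w', v) = i$, so that $(w', u, v)$ realizes $(i, i, 2k)$. By Fact \ref{Fact:Neighbors:i+-1}, $w$ has infinitely many neighbors at distance $i$ from $u$, and symmetrically infinitely many at distance $i$ from $v$.

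The hard part will be forcing a common element of these two infinite families, equivalently, embedding the four-point metric space on $\{u, v, w, w'\}$ with the prescribed distances into $\Gamma$. Three of the four sub-triangles are known to be realized: two are the geodesics $(i - 1, 1, i)$, and the third is the inner inductive hypothesis $(i - 1, i - 1, 2k)$. The fourth sub-triangle is the target $(i, i, 2k)$, which I would handle by an amalgamation argument: its perimeter $2(i + k) \leq 2\delta$ is even, so it is forbidden neither by $C_0$ nor by the $K_j$ (which constrain only odd-perimeter triangles), and its distances $i$ and $2k$ both fall strictly between $1$ and $\delta$ in every non-trivial inner step, so Henson constraints are also irrelevant. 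With the four-point configuration free of obstructions, metric homogeneity---together with the fact that $\Gamma_1(w)$ is itself a metrically homogeneous graph of generic type by Fact \ref{Fact:LocalAnalysis}---delivers the required $w'$ and closes the induction.
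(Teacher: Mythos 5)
Your base case and the overall induction scaffolding are fine, but the step you yourself flag as ``the hard part'' contains a genuine gap that the rest of the argument cannot absorb. You need a common point $w'$ of two infinite subsets of $\Gamma_1(w)$, i.e.\ you need to realize the four-point metric space on $\{u,v,w,w'\}$, and you propose to get it because the missing triangle $(i,i,2k)$ is ``free of obstructions'': even perimeter at most $2\delta$, not excluded by $K_1,K_2,C,C'$ or Henson constraints. That inference is only valid for graphs known to be of the form $\Gamma^{\delta}_{K_1,K_2,C,C',\mathcal{S}}$, where realization is governed by an amalgamation class determined by the parameters. Proposition \ref{Prop:Realize:i,i,2k} is stated for an arbitrary metrically homogeneous graph of generic type precisely because the classification is not assumed (the paper stresses that this proposition is an \emph{extension} of the known structure theory, needed exactly to avoid assuming $\Gamma$ is of known type). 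Metric homogeneity lets you move configurations that already embed; it does not let you conclude that an unforbidden configuration embeds. Likewise, Fact \ref{Fact:LocalAnalysis} tells you $\Gamma_1(w)$ is itself metrically homogeneous of generic type, but $u$ and $v$ lie outside $\Gamma_1(w)$ (they are at distance $i-1\geq k\geq 2$ from $w$ in every non-trivial inner step), so the internal structure of $\Gamma_1(w)$ gives you no control over which of its points are at distance $i$ from $u$ and from $v$. Nothing in the cited facts forces the two infinite families to meet.

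The paper circumvents this amalgamation problem by a direct construction (Lemma \ref{Lemma:GeodesicPath}): by induction on $k$ it builds a path $(v_0,\dots,v_k)$ in $\Gamma_k$ with consecutive distances $2$ and total length $2k$, together with an apex $u\in\Gamma_{2k}$ at distance $k$ from every $v_\ell$. The only existence principle used is the definitional clause of generic type --- two points at distance $2$ have infinitely many common neighbors forming an independent set --- which supplies, for $v\in\Gamma_k$ and $v'\in\Gamma_{k-2}$ at distance $2$, a pair of common neighbors in $\Gamma_{k-1}$ at distance $2$ from each other; homogeneity then transports this configuration wherever it is needed. The proposition follows by taking a geodesic $(i-k,2k,i+k)$ and rebasing at its near endpoint. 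If you want to keep your inductive frame, you would need to replace the ``no obstructions, hence realized'' step with an explicit construction of the common neighbor along these lines; as written, that step does not go through.
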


We begin the proof of Proposition \ref{Prop:Realize:i,i,2k} with a lemma.

\begin{lemma}\label{Lemma:GeodesicPath}
Let $\Gamma$ be a metrically homogeneous graph of generic type
and diameter $\delta$. 
Suppose $k\leq \delta/2$.

Then there is a geodesic path $(v_0,\dots,v_k)$ in $\Gamma_k$ with
\begin{align*}
d(v_\ell,v_{\ell+1})&=2 \mbox{ for $0\leq \ell<k$}
\end{align*}
and there is a vertex $u\in \Gamma_{2k}$ such that $d(u,v_\ell)=k$ for all
$\ell\leq k$.
\end{lemma}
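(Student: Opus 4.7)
The plan is to proceed by induction on $k$. For the base case $k = 1$, I invoke the definition of generic type directly. Since $\delta \geq 2k = 2$, I choose any vertex $u \in \Gamma_2$. By the definition of generic type, $v_*$ and $u$ have infinitely many common neighbors forming an independent set, so any two distinct common neighbors $v_0, v_1 \in \Gamma_1$ satisfy $d(v_0, v_1) = 2$ and $d(u, v_\ell) = 1 = k$ for $\ell = 0, 1$.

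For the inductive step from $k-1$ to $k$ (with $2k \leq \delta$), I start with a geodesic path $v_* = w_0, w_1, \ldots, w_{2k} = u$ of length $2k$, which exists by Observation \ref{fact:geodesics}. Setting $v_0 := w_k$ gives an initial vertex in $\Gamma_k \cap \Gamma_k(u)$. To build the remaining vertices $v_1, \ldots, v_k$ one at a time, I plan to produce, at each step $\ell$, a new vertex $v_{\ell+1}$ lying in $\Gamma_k(v_*) \cap \Gamma_k(u)$ at distance $2$ from $v_\ell$ and at distance $2(\ell+1-j)$ from each earlier $v_j$. The existence of vertices in the required spheres comes from Fact \ref{Fact:Neighbors:i+-1}, which supplies an abundance of neighbors in adjacent spheres, combined with the independent-neighbors condition from generic type; the inductive hypothesis applied to shifted configurations (e.g.\ with some $v_j$ in place of $v_*$) controls the longer-range distances.

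The main obstacle will be confirming that the stepwise extension produces a true geodesic path in $\Gamma_k$, i.e.\ that $d(v_0, v_k) = 2k$, and that each $v_{\ell+1}$ has exactly the prescribed distances to all previously placed vertices. My strategy is to organize these distance requirements as a finite metric space on $\{v_*, v_0, \ldots, v_{\ell+1}, u\}$ and verify that this metric space is consistent with the combinatorial restrictions on $\Gamma$; then Fact \ref{Fact:ProveMHG} together with an amalgamation argument shows the configuration embeds in $\Gamma$. Genericity of $\Gamma$ (and in particular, no obstruction arising from the parameters $K_1, K_2, C, C'$ within the permitted range $k + k \leq \delta$) is what ultimately justifies the amalgamation. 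This inductive construction delivers the full pyramid, completing the proof.
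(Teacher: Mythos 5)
There is a genuine gap at the heart of your inductive step: you propose to verify that the desired configuration on $\{v_*, v_0,\dots,v_{\ell+1},u\}$ is ``consistent with the combinatorial restrictions on $\Gamma$'' and then conclude via ``Fact \ref{Fact:ProveMHG} together with an amalgamation argument'' that it embeds. No such amalgamation principle is available here. The lemma (and Proposition \ref{Prop:Realize:i,i,2k}, which it serves) is precisely part of the machinery the paper needs \emph{because} $\Gamma$ is not assumed to be of known type; for an arbitrary metrically homogeneous graph of generic type one cannot assert that every metric configuration compatible with the parameters $K_1,K_2,C,C'$ embeds --- that statement is only known for the catalogued graphs $\Gamma^{\delta}_{K_1,K_2,C,C',\mathcal{S}}$. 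Moreover, Fact \ref{Fact:ProveMHG} is a criterion for $\Gamma^{\sigma}$ to be a metrically homogeneous graph; it says nothing about realizing a prescribed finite metric space inside $\Gamma$. So the step that is supposed to place $v_{\ell+1}$ at distance $2$ from $v_\ell$, at distance $2(\ell+1-j)$ from every earlier $v_j$, \emph{and} at distance $k$ from both $v_*$ and $u$ simultaneously is exactly the hard content of the lemma, and your proposal does not supply it.

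The paper avoids this by never trying to control all pairwise distances directly. It first builds only the geodesic path: given $v_0,v_k\in\Gamma_k$ at distance $2k$, it takes neighbors $a,b\in\Gamma_{k-1}$, applies the induction hypothesis (plus homogeneity) to get a $2$-step geodesic path $(w_0,\dots,w_{k-1})$ in $\Gamma_{k-1}$ from $a$ to $b$, and then inserts a common neighbor $v_\ell\in\Gamma_k$ of each consecutive pair $w_{\ell-1},w_\ell$ --- the existence of such common neighbors is extracted from the generic type condition alone. The distances then take care of themselves: each $d(v_\ell,v_{\ell+1})\le 2$ because of the common neighbor, while $d(v_0,v_k)=2k$ forces every step to equal $2$ and the path to be geodesic. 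The apex $u$ is then obtained by a second, ``pyramid'' induction, building successively shorter such paths in $\Gamma_{k+i}$ until they degenerate to a single point in $\Gamma_{2k}$, which is joined to each $v_\ell$ by a path of length $k$ and hence lies at distance exactly $k$ from each. If you want to salvage your outline, you would need to replace the amalgamation appeal with an explicit construction of this kind.
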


\begin{proof}
We now show by induction on $k\geq 1$ that the desired geodesic path exists in $\Gamma_k$. We will subsequently find a vertex $u$ with the desired property. 

When $k =1$, the lemma follows from the definition of generic type.
Now suppose $k\geq 2$ and that such a path of length $j$ exists in $\Gamma_j$ for all $j < k$.

As $2k \leq \delta$ and $\Gamma$ is assumed to be connected, there are points $v_0,v_k$ at distance $2k$ in $\Gamma$. There is therefore also a point at distance $k$ from both $v_0$ and $v_k$,
which we may take as the basepoint for $\Gamma$. Then $v_0,v_k\in \Gamma_k$.

Take $a,b$ in $\Gamma_{k-1}$ adjacent to $v_0,v_k$ respectively.
Observe that ${d(a,b) = 2(k-1)}$.

By our induction hypothesis, we have that there is a geodesic path
$(w_0,\dots,w_{k-1})$ in $\Gamma_{k-1}$ with $d(w_\ell,w_{\ell-1})=2$ for $\ell<k-1$. By homogeneity we may suppose that $w_0=a$ and $w_{k-1}=b$.

We claim that for $1\leq \ell\leq k-1$ there is $v_\ell\in \Gamma_k$ adjacent to $w_{\ell-1}$ and to $w_\ell$.

This holds because $\Gamma$ is of generic type. If we take $v\in \Gamma_k$ and $v'\in \Gamma_{k-2}$ at distance $2$ from $v$, then 
as $\Gamma$ is of generic type, there is a pair of points $w,w'$ in $\Gamma_{k-1}$ where both are adjacent to $v,v'$  and $d(w,w')=2$. Since $w$ and $w'$ have the common neighbor $v$ in $\Gamma_k$, the claim follows by homogeneity.

Now we consider the metric path $(v_0,v_1,\dots,v_k)$. Then for $\ell<k$, 
the vertices $v_\ell$, $v_{\ell+1}$ have the common neighbor $w_\ell$, and thus $d(v_\ell, v_{\ell+1})\leq 2$. On the other hand $d(v_0,v_k)=2k$,
and it follows that $d(v_\ell,v_{\ell+1})=2$, 
and that this is a geodesic path in 
$\Gamma_k$.

Thus the desired geodesic path exists.
To complete the proof of the lemma we need to find $u\in \Gamma_{2k}$
at distance $k$ from the vertices $v_\ell$ ($\ell\leq k$).

In the first part of the proof we constructed successively longer paths in $\Gamma_i$ for $i\leq k$. Now we construct successively shorter paths in 
$\Gamma_{k+i}$ for $0 \leq i\leq k$.

Set $v_\ell^0=v_\ell$ for $\ell \leq k$. For $i\leq k$ we construct geodesic paths
$(v_0^i,\dots,v_{k-i}^i)$ in $\Gamma_{k+i}$ inductively,
taking $v_\ell^i$ to be a common neighbor in $\Gamma_{k+i}$ of the 
vertices $v_\ell^i, v_{\ell+1}^i$, as in the claim above.

We then have $d(v_\ell^i,v_{\ell+1}^i)\leq 2$ for $\ell<k-i$, since each such pair has a common neighbor in the previous geodesic path. In particular
$d(v_0^i,v_{k-i}^i)\leq 2(k-i)$. But $d(v_0^{i-1},v_{k-(i-1)}^{i-1})=2(k-(i-1))$ and 
$v_0^{i-1}, v_{k-(i-1)}^{i-1}$ are adjacent to $v_0^i$, $v_{k-i}^i$, respectively, so $d(v_0^i,v_{k-i}^i)\geq 2(k-i)$. It follows that $(v_0^i,\dots,v_{k-i}^i)$ is again a geodesic path.

When $i=k$ our geodesic path degenerates to a point $u=v_0^i$ in $\Gamma_{2k}$.  By the construction, $u$ is connected by a path of length $k$ to each point $v_\ell^0$, and thus $d(u,v_\ell)\leq k$
for all $\ell \leq k$. But as $v_\ell\in \Gamma_k$ and $u\in \Gamma_{2k}$, we find $d(u,v_\ell)=k$, as required.
\end{proof}

\begin{proof}[Proof of Proposition \ref{Prop:Realize:i,i,2k}]
We have $k\leq i$, $i+k\leq \delta$. In particular $\Gamma_{i-k}$
and $\Gamma_{i+k}$ are both well-defined.

As $2k\leq i+k\leq \delta$, there is a geodesic triangle of type $(i-k,2k,i+k)$
in $\Gamma$. 
Therefore we may fix
$a\in \Gamma_{i-k}$ and $b\in \Gamma_{i+k}$ with
$d(a,b)=2k$.

Applying the previous lemma with basepoint $a$ and with $b$ playing the role of $u$ there,
by homogeneity 
we may find a geodesic path $(v_0,\dots,v_k)$ in $\Gamma_k(a)$ with
\begin{align*}
d(v_\ell,v_{\ell+1})&=2 \mbox{ $(\ell<k)$} \mbox{ and }  d(b,v_\ell)=k \mbox{ $(\ell \leq k)$}
\end{align*}

As $d(a,v_\ell)=d(b,v_\ell)=k$ for $\ell\leq k$, and $a \in \Gamma_{i-k}, b \in \Gamma_{i+k}$,
it follows that $v_\ell\in \Gamma_i$. In particular the vertices $v_0,v_k$
and the basepoint of $\Gamma$ form a triangle of type $(i,i,2k)$.
\end{proof}

\newpage

\section*{References}

\bibliography{Rebecca_Coulson_TwistsandTwistability_Feb012018.bbl} 

\begin{thebibliography}{16}
\providecommand{\natexlab}[1]{#1}
\providecommand{\url}[1]{\texttt{#1}}
\expandafter\ifx\csname urlstyle\endcsname\relax
  \providecommand{\doi}[1]{doi: #1}\else
  \providecommand{\doi}{doi: \begingroup \urlstyle{rm}\Url}\fi

\bibitem[Amato et~al.(2016)Amato, Cherlin, and Macpherson]{ACM-MH3}
D.~Amato, G.~Cherlin, and H.~D. Macpherson.
\newblock Metrically homogeneous graphs of diameter 3.
\newblock Preprint, 2016.

\bibitem[Bagheri(2003)]{Bag-TCT}
S.~M. Bagheri.
\newblock On translations of complete first order theories.
\newblock \emph{MLQ Math. Log. Q}, 49\penalty0 (1):\penalty0 87--91, 2003.

\bibitem[Bannai and Bannai(1980)]{BB}
E.~Bannai and E.~Bannai.
\newblock {How many P-polynomial structures can an association scheme have?}
\newblock \emph{European Journal of Combinatorics}, 1:\penalty0 289--298, 1980.

\bibitem[Barbina(2004)]{Barb}
S.~Barbina.
\newblock \emph{Automorphism groups of omega-categorical structures}.
\newblock PhD thesis, University of Leeds, 2004.

\bibitem[Cameron(2013)]{Cam-Cen}
P.~J. Cameron.
\newblock A census of infinite distance transitive graphs.
\newblock \emph{Discrete Mathematics}, 192:\penalty0 11--26, 2013.

\bibitem[Cameron and Tarzi(2007)]{CaT-AmRG}
P.~J. Cameron and S.~Tarzi.
\newblock On the automorphism group of the {$m$}-coloured random graph.
\newblock preprint, 2007.

\bibitem[Cherlin(2011)]{Che-2P}
G.~Cherlin.
\newblock Two problems on homogeneous structures, revisited.
\newblock In M.~Grohe and J.~A. Makowsky, editors, \emph{Model Theoretic
  Methods in Finite Combinatorics, number 558}, pages 319--415. Contemporary
  Mathematics, American Mathematical Society, Providence, RI, 2011.

\bibitem[Cherlin(2013)]{Che-MH4}
G.~Cherlin.
\newblock \emph{Metrically homogeneous graphs of diameter 4}.
\newblock Notes, 2013.

\bibitem[Cherlin(2017)]{Che-HOGMH}
G.~Cherlin.
\newblock Homogeneous ordered graphs and metrically homogeneous graphs.
\newblock Preprint, 2017.

\bibitem[Herwig(1995)]{Her-EPIF}
B.~Herwig.
\newblock Extending partial isomorphisms on finite structures.
\newblock \emph{Combinatorica}, 15\penalty0 (3):\penalty0 365--371, 1995.

\bibitem[Herwig(1998)]{Her-EPI}
B.~Herwig.
\newblock Extending partial isomorphisms for the small index property of many
  {$\omega$}-categorical structures.
\newblock \emph{Israel J. Math}, 107:\penalty0 93--123, 1998.

\bibitem[Hodges(1997)]{hod}
W.~Hodges.
\newblock \emph{A shorter model theory}.
\newblock Cambridge University Press, 1997.

\bibitem[Ivanov(2015)]{Iva-AAG}
A.~Ivanov.
\newblock An {$\omega$}-categorical structure with amenable automorphism group.
\newblock \emph{MLQ Math. Log. Q}, 61\penalty0 (4-5):\penalty0 307--314, 2015.

\bibitem[Junker and Ziegler(2008)]{JZ}
M.~Junker and M.~Ziegler.
\newblock The 116 reducts of $(\mathbb{Q},<,a)$.
\newblock \emph{Journal of Symbolic Logic}, 73\penalty0 (3):\penalty0 861--884,
  2008.

\bibitem[Lachlan and Woodrow(1980)]{LaW-HG}
A.~Lachlan and R.~Woodrow.
\newblock Countable ultrahomogeneous undirected graphs.
\newblock \emph{Transactions of the American Mathematical Society},
  262:\penalty0 51--94, 1980.

\bibitem[Moss(1992)]{Mos-DG}
L.~Moss.
\newblock Distanced graphs.
\newblock \emph{Discrete Mathematics}, 102:\penalty0 287--305, 1992.

\end{thebibliography}

\end{document}